\numberwithin{equation}{section}
\def\th@remark{%
  \thm@headfont{\bfseries}%
  \normalfont 
  \thm@preskip \thm@preskip 
  \thm@postskip\thm@preskip
}
\def\imod#1{\allowbreak\mkern5mu({\operator@font mod}\,\,#1)}
\numberwithin{equation}{section}
\title{Special cycle on Shtukas and categorical trace}
\author{Zeyu Wang}
\address{Massachusetts Institute of Technology, Department of Mathematics, 77 Massachusetts Avenue, Cambridge, MA 02139, USA}
\email{wangzeyu@mit.edu}
\begin{document}

\begin{abstract}
    In this article, we relate the fake special cycle classes $z_{\LL_{\s},r}$ attached to a Hecke eigensheaf $\LL_{\s}\in\Shv_{\Nilp}(\Bun_G)$ introduced in \cite{liu2025higherperiodintegralsderivatives} to the isotypic part of special cycles on Shtukas. As an application, we relate the self-intersection number of the isotypic part of special cycles arising from Rankin--Selberg period to higher derivatives of Rankin--Selberg $L$-functions.
\end{abstract}

\maketitle
\tableofcontents

\section{Introduction}
In \cite{YZ1}\cite{YZ2}, a higher Gross--Zagier formula is proved, which for certain types of cuspidal automorphic representations $\pi$ of $\PGL_2$ over the function field $K(C)$ for some curve $C$ over $\F_q$, it relates the self-intersection numbers of the $\pi$-isotypic part of the Heegner--Drinfeld cycles on the moduli of $\PGL_2$-Shtukas with $r$-legs to the $r$-th derivative of the $L$-functions $L(\pi,s)$ at the central point $s=1/2$. This formula can be regarded as a function field analog of the classical Gross--Zagier formula \cite{gross1986heegner} over the number field.

In \cite{liu2025higherperiodintegralsderivatives}, the authors proved a formula relating a certain norm of some ``fake" special cycle classes arising from Rankin--Selberg period to higher derivatives of Rankin--Selberg $L$-functions. More precisely, for geometrically irreducible Weil local system $\s_{n}, \s_{n-1}$ on $C$ of rank $n,n-1$. Consider $M=H^1(C_{\overline{\FF}_q},\s_{n}\otimes\s_{n-1})\oplus H^1(C_{\overline{\FF}_q},\s_n^*\otimes \s_{n-1}^*)$, which is a vector space naturally equipped with a symmetric bilinear form $\om_M$ arising from cup product. They defined some elements $z_{\LL_{\s}^{\FGV},r}\in (M^{\otimes r})^*$ arising from taking Hecke composed with Frobenius trace of some Hecke operators on a geometric period integral, and proved a formula relating $\om_{(M^{\otimes r})^*}(z_{\LL_{\s}^{\FGV},r},z_{\LL_{\s}^{\FGV},r})$ with the $r$-th derivative of the Rankin--Selberg $L$-function $L(\s_n\otimes\s_{n-1}\oplus\s_n^*\otimes\s_{n-1}^*,s)$ at the central value $s=1/2$. See \cite[Theorem\,1.1]{liu2025higherperiodintegralsderivatives} for the precise formulation.

The formula proved in \cite{liu2025higherperiodintegralsderivatives} was not about the intersection number of special cycles. Therefore, it cannot be regarded as a direct higher-dimensional analog of the higher Gross--Zagier formulas. Also, the fake special cycles considered in \textit{loc.cit} are not seemingly natural from their definition. However, it was claimed in \textit{loc.cit} that these seemingly artificially defined objects are indeed related to special cycle classes on Shtukas. The main subject of this article is to establish this relation. As an application, we prove a formula (Theorem \ref{thm:intro:main}) relating self-intersection numbers of $\s=\s_n\otimes\s_{n-1}$-isotypic part of special cycles on $\GL_n\times\GL_{n-1}$-Shtukas with higher derivatives of the Rankin--Selberg $L$-function $L(\s_n\otimes\s_{n-1}\oplus\s_n^*\otimes\s_{n-1}^*,s)$. This formula can be regarded as a direct generalization of the higher Gross--Zagier formula.

Two features in our formula are different from the higher Gross--Zagier formula in \cite{YZ1}: The first difference is in the spectral decomposition of the cohomology of Shtukas. In \cite{YZ1}, the spectral decomposition was made using the action of Hecke operators on the cohomology of Shtukas, in which the $\s$-isotypic part (actually, $\pi$-isotypic part for the automorphic representation $\pi$ attached to $\s$) was the isotypic part for the associated Hecke character of $\pi$. In this article, we directly construct a subspace of the cohomology of Shtukas (rather than a complete spectral decomposition) and use it as our subspace of the $\s$-isotypic part. However, the geometric Langlands conjecture for $\GL_n$ should imply that our $\s$-isotypic part is indeed the $\s$-isotypic part of the cohomology of Shtukas which can be defined using the (categorical) spectral action constructed in \cite{arinkin2022stacklocalsystemsrestricted} and the categorical trace interpretation of cohomology of Shtukas proved in \cite{arinkin2022automorphicfunctionstracefrobenius}. Therefore, our definition of $\s$-isotypic part should be completely canonical. The second difference lies in the definition of the isotypic part of special cycle classes and the intersection number. One significant difficulty in generalizing the work of \cite{YZ1} to higher-dimensional cases is that the special cycles are rarely compact in general, which makes defining the intersection number a challenge. In \cite{YZ1}, the special cycles are compact, and one can directly take their self-intersection number. However, the special cycles are not compact in our case, and we have to define the intersection number in an ad-hoc way. What we do is the following: Although the special cycles are not compact, one can still talk about their intersection numbers with compactly supported cohomology classes. Therefore, we regard these special cycles as functionals on the $\s$-isotypic part of the (compact support) cohomology of Shtukas (which are finite-dimensional vector spaces), on which the intersection pairing is non-degenerate. Therefore, we can define the ``self-intersection number" as the quadratic norm of these functionals under the dual of the intersection pairings. As far as we know, the non-degeneracy of intersection pairings on the isotypic part, which plays an essential role in our definition, was not previously known. 

\subsection{Main result: higher Rankin--Selberg integrals}\label{sec:intro:mainresult}
In this section, we formulate our main result on the intersection number of Rankin--Selberg cycles on Shtukas. Our main result is Theorem  \ref{thm:intro:main}, which confirms a version of Conjecture \ref{conj:intro:main}.

From \S\ref{sec:intro:sht} to \S\ref{sec:intro:lfunc}, we introduce notations and backgrounds. In \S\ref{sec:intro:mainconj} and \S\ref{sec:intro:mainres}, we formulate the main result.

Throughout the article, we fix a smooth projective geometrically connected curve $C$ over $\FF_q$.

\subsubsection{Moduli space of Shtukas}\label{sec:intro:sht}
We first recall the definition of the moduli space of $\GL_n$-Shtukas. 

Fix a positive integer $r\in\ZZ_{\geq 0}$. Consider $I=\{1,2,\cdots,r\}$. Take $\{\pm 1\}^r_0\sub \{\pm 1\}^r$ to be the subset consisting of sequences $\underline{\e}=(\e_1,\cdots,\e_r)$ such that $\sum_{i=1}^r\e_i=0$. For each $\e\in\{\pm 1\}^r$, we denote the moduli space of (iterated) $\GL_n$-Shtukas with $r$-legs and modification type $\Std_n^{\underline{\e}}$ by $\Sht_{\GL_n,\Std_n^{\underline{\e}}}$. It is is the moduli stack such that for any $\FF_q$-scheme $S$, we have $\Sht_{\GL_n,\Std_n^{\underline{\e}}}(S)$ is the groupoid of tuples \[( (c_i)_{i\in I}, \cE_0\xdashrightarrow{\e_1\cdot c_1}\cE_1 \xdashrightarrow{\e_2\cdot c_2}\cdots \xdashrightarrow{\e_r\cdot c_r}\cE_r, \alpha:\cE_0\cong (\id_C\times\Frob_S)^* \cE_r )\] where:
\begin{itemize}
    \item $c_i\in C(S)$ for $i=1,\cdots,r$;
    \item $\cE_i$ is a vector bundle of rank $n$ on $C\times S$ for $i=0,\cdots,r$;
    \item $\cE_{i-1}\xdashrightarrow{\e_{i}\cdot c_i}\cE_i$ is an isomorphism of vector bundles over $C\times S\backslash \Gamma_{c_i}$ ($\Gamma_{c_i}\sub C\times S$ is the graph of the map $c_i:S\to C$) such that \begin{itemize}
        \item If $\e_i=1$, the inverse of the map induces an inclusion $\cE_{i}\sub\cE_{i-1}$ such that $\cE_{i-1}/\cE_i$ is supported on $\Gamma_{c_i}$ on which it is locally-free of rank $1$;
        \item If $\e_i=-1$, the map induces an inclusion $\cE_{i-1}\sub\cE_i$ such that $\cE_i/\cE_{i-1}$ is supported on $\Gamma_{c_i}$ on which it is locally-free of rank $1$;
        \end{itemize}
    \item $\alpha:\cE_0\cong (\id_C\times\Frob_S)^*\cE_r$ is an isomorphism of vector bundles. Here $\Frob_S:S\to S$ is the relative Frobenius map over $\FF_q$.
\end{itemize}
The moduli space $\Sht_{\GL_n,\Std_n^{\underline{\e}}}$ turns out to be a Deligne-Mumford stack locally of finite type over $C^I$. We use $l_{I}:\Sht_{\GL_n,\Std_n^{\underline{\e}}}\to C^I$ to denote the natural map sending above data to $(c_i)_{i\in I}$. The moduli space $\Sht_{\GL_n,\Std_n^{\underline{\e}}}$ is non-empty if and only if $\underline{\e}\in\{\pm 1\}^r_0$. We have $\dim \Sht_{\GL_n,\Std_n^{\underline{\e}}}=nr$.

When $\underline{\e}\in\{\pm 1\}^r_0$, one has $\pi_0(\Sht_{\GL_n,\Std_n^{\underline{\e}}})\cong \ZZ$ where the isomorphism is given by taking the degree of $\cE_0$ as a vector bundle over $C$. This gives a decomposition into connected components \begin{equation}\label{eq:intro:degreedecompsht}\Sht_{\GL_n,\Std_n^{\underline{\e}}}=\coprod_{d\in \ZZ} \Sht_{\GL_n,\Std_n^{\underline{\e}}}^d.\end{equation}

In our case, we are interested in the moduli space of $\GL_n\times\GL_{n-1}$-Shtukas \begin{equation}
    \Sht_{\GL_n\times\GL_{n-1},(\Std_n\boxtimes\Std_{n-1})^{\underline{\e}}}=  \Sht_{\GL_n,\Std_n^{\underline{\e}}}\times_{C^I}\Sht_{\GL_{n-1},\Std_{n-1}^{\underline{\e}}},
\end{equation} whose decomposition into connected components is \begin{equation}
    \Sht_{\GL_n\times\GL_{n-1},(\Std_n\boxtimes\Std_{n-1})^{\underline{\e}}} =\coprod_{(d_n,d_{n-1})\in\ZZ^2}\Sht_{\GL_n\times\GL_{n-1},(\Std_n\boxtimes\Std_{n-1})^{\underline{\e}}}^{(d_n,d_{n-1})}
\end{equation} in which \[\Sht_{\GL_n\times\GL_{n-1},(\Std_n\boxtimes\Std_{n-1})^{\underline{\e}}}^{(d_n,d_{n-1})}=\Sht_{\GL_n,\Std_n^{\underline{\e}}}^{d_n}\times_{C^I}\Sht_{\GL_{n-1},\Std_{n-1}^{\underline{\e}}}^{d_{n-1}}.\]

\subsubsection{Rankin--Selberg cycles}
The classical story of (everywhere unramified) Rankin--Selberg integrals (over function fields) concerns the integration\footnote{Since we are over function fields, integration here means summation.} of $f\in\Fun_c(\Bun_{\GL_n\times \GL_{n-1}}(\FF_q))$ (the vector space of $\Qlbar$-valued functions with compact support) over the diagonal map $\pi(\FF_q):\Bun_{\GL_{n-1}}(\FF_q)\to \Bun_{\GL_{n}\times \GL_{n-1}}(\FF_q)$ in which the map $\Bun_{\GL_{n-1}}\to \Bun_{\GL_n}$ is given by taking direct sum with the trivial line bundle. Or equivalently speaking, one is interested in the function $\pi(\FF_q)_! 1_{\Bun_{\GL_{n-1}}(\FF_q)}\in \Fun(\Bun_{\GL_n\times \GL_{n-1}}(\FF_q))$ where $\pi(\FF_q)_!$ is summation along fibers.

Note that $\Bun_{\GL_n}(\FF_q)=\Sht_{\GL_n,\Std_n^{\underline{\e}}}$ for $r=0$. The principle of higher integrals is replacing the groupoid $\Bun_G(\FF_q)$ by the moduli stack $\Sht_{G,I}$. In particular, in the Rankin--Selberg case, instead of considering the characteristic function $\pi(\FF_q)_! 1_{\Bun_{\GL_{n-1}}(\FF_q)}\in \Fun(\Bun_{\GL_n\times \GL_{n-1}}(\FF_q))$, one studies the \emph{Rankin--Selberg cycle classes} (or cohomological Rankin--Selberg cycles) \begin{equation}\label{eq:intro:rscycles}\pi_{\Sht,I,!}[\Sht_{\GL_{n-1},\Std_{n-1}^{\underline{\e}}}]\in H^{\BM}_{2(n-1)r}(\Sht_{\GL_n\times\GL_{n-1},(\Std_n\boxtimes\Std_{n-1})^{\underline{\e}}}) \end{equation} where \begin{itemize}
    \item $[\Sht_{\GL_{n-1},\Std_{n-1}^{\underline{\e}}}]\in H^{BM}_{2(n-1)r}(\Sht_{\GL_{n-1},\Std_{n-1}^{\underline{\e}}})$ is the fundamental class of $\Sht_{\GL_{n-1},\Std_{n-1}^{\underline{\e}}}$ as a Borel-Moore homology class;
    \item $\pi_{\Sht,I}:\Sht_{\GL_{n-1},\Std_{n-1}^{\underline{\e}}}\to \Sht_{\GL_n\times\GL_{n-1},(\Std_n\boxtimes\Std_{n-1})^{\underline{\e}}}$ is the diagonal map which is taking direct sum with the trivial vector bundle on the first factor. The map $\pi_{\Sht,I}$ is finite schematic by \cite{yun2022special}.
    \item $\pi_{\Sht,I,!}:H^{BM}_{2(n-1)r}(\Sht_{\GL_{n-1},\Std_{n-1}^{\underline{\e}}})\to H^{\BM}_{2(n-1)r}(\Sht_{\GL_n\times\GL_{n-1},(\Std_n\boxtimes\Std_{n-1})^{\underline{\e}}})$ is the push-forward of Borel-Moore homology class along proper maps.
\end{itemize}
Here, we consider Borel-Moore homology with $\Qlbar$-coefficient for stacks over $\overline{\FF}_q$. For stacks defined over $\FF_q$, if not otherwise specified, we always consider the Borel-Moore homology (or cohomology) of its base change to $\overline{\FF}_q$. In this introduction, we drop all the Tate twist.

\subsubsection{Spectral decomposition of cohomology of Shtukas}
In this article, by cohomology, we by default mean cohomology with compact support in $\Qlbar$-coefficient of stacks over $\overline{\FF}_q$. The cohomology of Shtukas admits a direct sum decomposition into isotypic parts for Langlands parameters.

To apply the machinery of geometric Langlands in positive characteristic, we need to pose the following assumption on the characteristic of the base field:

\begin{assumption}[Assumption on characteristic]\label{assumption:glcchar}
    Throughout the article, we keep the same assumption on the characteristic of the base field as in \cite[\S0.1.9]{GR}.
\end{assumption}

For each split reductive group $G$, we use $\Gc$ to denote the Langlands dual group. In \cite[\S24.1]{arinkin2022stacklocalsystemsrestricted}, the authors define a quasi-compact algebraic stack $\Loc_{\Gc}^{\arith}$ over $\Qlbar$ (denoted $\mathrm{LocSys}_{\Gc}^{\mathrm{arithm}}(X)$ for $X=C$ in \textit{loc.cit}) which is the moduli space of $\Gc$-(Weil) local systems over $C$. This moduli stack has also been considered in \cite{zhu2021coherentsheavesstacklanglands}. 

The ring of global sections $\Gamma(\Loc_{\Gc}^{\arith},\cO_{\Loc_{\Gc}^{\arith}})$ is called the \textit{algebra of excursion operators}. An equivalent form of it was first considered by \cite{genestier2018chtoucasrestreintspourles}. This algebra naturally acts on the cohomology of $G$-Shtukas. In this formulation, the existence of this action is a consequence of \cite[Main Theorem\,0.3.10]{arinkin2022automorphicfunctionstracefrobenius}.

In our case, we get an action of $\Gamma(\Loc_{\GL_n}^{\arith},\cO_{\Loc_{\GL_n}^{\arith}})$ on $\Gamma_c(\Sht_{\GL_n,\underline{\e}},\underline{\Qlbar})$ for each $\underline{\e}\in \{\pm 1\}^r$. It gives rise to a direct sum decomposition \begin{equation}\label{eq:intro:spectraldecompositiongln}\Gamma_c(\Sht_{\GL_n,\underline{\e}},\underline{\Qlbar})=\bigoplus_{s\in \pi_0(\Loc_{\GL_n}^{\arith})} \Gamma_c(\Sht_{\GL_n,\underline{\e}},\underline{\Qlbar})_s\end{equation} where $\Gamma_c(\Sht_{\GL_n,\underline{\e}},\underline{\Qlbar})_s$ is supported on the connected component of $\Loc_{\GL_n}^{\arith}$ indexed by $s$. 

For a Weil local system $\s_n\in\Loc_{\GL_n}^{\arith}(\Qlbar)$, we use $\Loc_{\Gc,\s_n}^{\arith}\sub \Loc_{\Gc}^{\arith}$ to denote its underlying connected component. We use $\Gamma_c(\Sht_{\GL_n,\underline{\e}},\underline{\Qlbar})_{\Loc_{\Gc,\s_n}^{\arith}}$ to denote the corresponding term in the spectral decomposition \eqref{eq:intro:spectraldecompositiongln}.  

The action above extends to an action of $H^*(\Loc_{\GL_n}^{\arith},\cO_{\Loc_{\GL_n}^{\arith}})$ on $\prod_{d\in\ZZ}H^*_c(\Sht_{\GL_n,\underline{\e}}^d,\underline{\Qlbar})$. We use \begin{equation}\label{eq:intro:weilspectraldecompositiongln}(\prod_{d\in\ZZ}H^*_c(\Sht_{\GL_n,\underline{\e}}^d,\underline{\Qlbar}))_{\s_n}\sub \prod_{d\in\ZZ}H^*_c(\Sht_{\GL_n,\underline{\e}}^d,\underline{\Qlbar})\end{equation} to denote the maximal sub-module (scheme theoretically) supported on $\s_n\in \Spec H^0(\Loc_{\GL_n}^{\arith},\cO_{\Loc_{\GL_n}^{\arith}})$. For an irreducible Weil local system $\s_n$, this sub-module turns out to be a perfect complex, and it is what we mean by the $\s_n$-isotypic part of the cohomology of Shtukas. Note that we are taking a direct product instead of a direct sum here to avoid getting an empty sub-module.

We use $\overline{\s}_n$ to denote the base change of $\s_n$ to $\FF_q$, which we call the underlying geometric local system of $\s_n$. We say that $\s_n$ is geometrically irreducible if $\overline{\s}_n$ is irreducible. If $\s_n$ is geometrically irreducible, the underlying connected component $\Loc_{\GL_n,\s_n}^{\arith}$ is isomorphic to $[\Gm/\Gm]$ in which $\Gm$ acts trivially. Here, the first $\Gm$ can be regarded as the moduli of different Weil sheaf structures on the underlying geometric local system $\overline{\s}_n$. The second $\Gm$ is the moduli of automorphisms of $\s_n$. In this case, the isotypic part defined above admits the following conjectural description:
\begin{conj}\label{conj:intro:descriptionofisotypicpart}
    For each $\underline{\e}\in\{\pm 1\}^r_0$, there is a canonical isomorphism \[\Gamma_c(\Sht_{\GL_n,\underline{\e}},\underline{\Qlbar})_{\Loc_{\GL_n,\s_n}^{\arith}}[(n-1)r]\cong \Gamma(C^I,\s_n^{\underline{\e}})\otimes \cO(\Loc_{\GL_n,\s_n}^{\arith})\] where $\s_n^{\underline{\e}}=\boxtimes_{i\in I}\s_n^{\e_i}$. In particular, it gives a canonical isomorphism \begin{equation}\label{eq:intro:shtukaeigensub}(\prod_{d\in\ZZ}H^{*+(n-1)r}_c(\Sht_{\GL_n,\underline{\e}}^d,\underline{\Qlbar}))_{\s_n}\cong H^*(C^I,\s_n^{\underline{\e}}).\end{equation}
\end{conj}

\begin{remark}
    Conjecture \ref{conj:intro:descriptionofisotypicpart} is a consequence of the geometric Langlands conjecture with restricted variation for $\GL_n$ in characteristic $p$ formulated in \cite{arinkin2022stacklocalsystemsrestricted}. In particular, it is a consequence of \cite{GR}.
\end{remark}

\begin{remark}
    While the isomorphism in Conjecture \ref{conj:intro:descriptionofisotypicpart} is claimed to be canonical, there are several different canonical choices indeed. Choice of such an isomorphism can be roughly thought of as a Shtuka analog of the choice of a Hecke eigenform: one can choose it to be Whittaker normalized, $L^2$-normalized... Whenever using such an isomorphism, one should specify which isomorphism is being used.
\end{remark}

For $\GL_n\times\GL_{n-1}$-Shtukas, one can similarly define the action by excursion operators \[H^*(\Loc_{\GL_n\times\GL_{n-1}}^{\arith},\cO_{\Loc_{\GL_n\times\GL_{n-1}}^{\arith}})=H^*(\Loc_{\GL_n}^{\arith},\cO_{\Loc_{\GL_n}^{\arith}})\otimes H^*(\Loc_{\GL_{n-1}}^{\arith},\cO_{\Loc_{\GL_{n-1}}^{\arith}})\] on \[\prod_{(d_n,d_{n-1})\in\ZZ^2}H^*_c(\Sht_{\GL_n\times\GL_{n-1},(\Std_n\boxtimes\Std_{n-1})^{\underline{\e}}}^{(d_n,d_{n-1})},\underline{\Qlbar}).\] One can define the $\s=(\s_n,\s_{n-1})\in \Loc_{\GL_n\times\GL_{n-1}}^{\arith}(\Qlbar)$-isotypic part \begin{equation}\label{eq:intro:nn-1isotypicpart}(\prod_{(d_n,d_{n-1})\in\ZZ^2}H^*_c(\Sht_{\GL_n\times\GL_{n-1},(\Std_n\boxtimes\Std_{n-1})^{\underline{\e}}}^{(d_n,d_{n-1})},\underline{\Qlbar}))_{\s}\sub \prod_{(d_n,d_{n-1})\in\ZZ^2}H^*_c(\Sht_{\GL_n\times\GL_{n-1},(\Std_n\boxtimes\Std_{n-1})^{\underline{\e}}},\underline{\Qlbar}).\end{equation}

\subsubsection{Intersection pairing}
The cohomology of $G$-Shtukas carries an intersection pairing. When $G=\GL_n$, we have the intersection pairing \begin{equation}\label{eq:intro:intersectiongln}
\langle-,-\rangle_{\underline{\e}}^d:
    H_c^{nr}(\Sht_{\GL_n,\underline{\e}}^d,\underline{\Qlbar})\otimes H_c^{nr}(\Sht_{\GL_n,\underline{\e}}^d,\underline{\Qlbar})\xrightarrow{\cup} H_c^{2nr}(\Sht_{\GL_n,\underline{\e}}^d,\underline{\Qlbar})\cong \Qlbar
\end{equation} induced by cup product and taking the degree of $0$-cycles. The following concerns the behavior of the intersection pairing on the $\s_n$-isotypic part \eqref{eq:intro:weilspectraldecompositiongln}:
\begin{conj}\label{conj:intro:intersectionnondeggln}
    For each $d\in\ZZ$, $\underline{\e}\in\{\pm 1\}^r_0$, and irreducible Weil local system $\s_n\in\Loc_{\GL_n}^{\arith}(\Qlbar)$, the restriction of the intersection pairing $\langle-,-\rangle_{\underline{\e}}^d$ defines a non-degenerate bilinear form \begin{equation}\label{eq:intro:diagisotypiccycle}\langle-,-\rangle_{\underline{\e},\s_n}^d:(\prod_{e\in\ZZ} H_c^{nr}(\Sht_{\GL_n,\underline{\e}}^e,\underline{\Qlbar}))_{\s_n}\otimes (\prod_{e\in\ZZ} H_c^{nr}(\Sht_{\GL_n,\underline{\e}}^e,\underline{\Qlbar}))_{\s_n^*}\to \Qlbar.\footnote{Here we are studying the intersection pairing on $\Sht^d_{\GL_n,\underline{\e}}$ which is concentrated in a single degree $d$ on the $\s_n$-isotypic part which spreads into all degrees (accounts for $e\in\ZZ$).}\end{equation}
\end{conj}

Given Conjecture \ref{conj:intro:intersectionnondeggln}, we can use the bilinear form $\langle-,-\rangle_{\underline{\e},\s_n}^d$ to make an identification \[(\prod_{e\in\ZZ} H_c^{nr}(\Sht_{\GL_n,\underline{\e}}^e,\underline{\Qlbar}))_{\s_n^*}^*\cong (\prod_{e\in\ZZ} H_c^{nr}(\Sht_{\GL_n,\underline{\e}}^e,\underline{\Qlbar}))_{\s_n}.\] This gives a non-degenerate bilinear form \begin{equation}\label{eq:intro:dualintersection}
    \langle-,-\rangle_{\underline{\e},\s_n}^{d,*}:(\prod_{e\in\ZZ}H_c^{nr}(\Sht_{\GL_n,\underline{\e}}^e,\underline{\Qlbar}))_{\s_n}^*\otimes (\prod_{e\in\ZZ}H_c^{nr}(\Sht_{\GL_n,\underline{\e}}^e,\underline{\Qlbar}))_{\s_n^*}^*\to \Qlbar
.\end{equation}

For $\GL_n\times\GL_{n-1}$-Shtukas and an irreducible Weil local system $\s=(\s_n,\s_{n-1})\in \Loc_{\GL_n\times\GL_{n-1}}^{\arith}(\Qlbar)$, consider $\s^*=(\s_n^*,\s_{n-1}^*)$. One can similarly define a bilinear form \begin{equation}\label{eq:intro:nn-1isotypicintersection}\begin{split}
    \langle-,-\rangle_{\underline{\e},\s}^{(d_n,d_{n-1}),*}:(\prod_{(e_n,e_{n-1})\in\ZZ^2}H^{2(n-1)r}_c(\Sht_{\GL_n\times\GL_{n-1},(\Std_n\boxtimes\Std_{n-1})^{\underline{\e}}}^{(e_n,e_{n-1})},\underline{\Qlbar}))_{\s}^*\otimes \\ (\prod_{(e_n,e_{n-1})\in\ZZ^2}H^{2(n-1)r}_c(\Sht_{\GL_n\times\GL_{n-1},(\Std_n\boxtimes\Std_{n-1})^{\underline{\e}}}^{(e_n,e_{n-1})},\underline{\Qlbar}))_{\s^*}^*\to\Qlbar
\end{split}.\end{equation} In this setting, we also expect Conjecture \ref{conj:intro:intersectionnondeggln}.

\subsubsection{$L$-function of local system}\label{sec:intro:lfunc}
For a Weil local system $\s$ on $C$, its $L$-function is defined by \begin{equation}
    L(\s,s)=\det (1-q^{-s}\Frob,\Gamma(C,\s))^{-1}=\prod_{i=0}^2\det(1-q^{-s}\Frob,H^i(C,\s))^{(-1)^{i-1}}.
\end{equation} The root number is defined by \begin{equation}\label{eq:intro:rootnumber}
    \e(\s)=\det(\Frob,\Gamma(C,\s)(\frac{1}{2}))^{-1}=\prod_{i=0}^2\det(q^{-1/2}\Frob,H^i(C,\s))^{(-1)^{i-1}}.
\end{equation} 


\subsubsection{Statement of the main conjecture}\label{sec:intro:mainconj}
Before stating our main result, we will first formulate a conjecture, which will be confirmed by our main result under some assumptions that are satisfied in the most interesting cases.

Borel-Moore homology can be naturally regarded as the dual of cohomology with compact support. In our case, we have an isomorphism \[H^{\BM}_{2(n-1)r}(\Sht_{\GL_n\times\GL_{n-1},(\Std_n\boxtimes\Std_{n-1})^{\underline{\e}}})\cong H_{c}^{2(n-1)r}(\Sht_{\GL_n\times\GL_{n-1},(\Std_n\boxtimes\Std_{n-1})^{\underline{\e}}},\underline{\Qlbar})^*.\] Therefore, one can view the special cycle classes \eqref{eq:intro:rscycles} as functionals on the cohomology with compact support and study its restriction on the $\s$-isotypic part \eqref{eq:intro:nn-1isotypicpart}. This gives \begin{equation}\label{eq:intro:isotypicrscycles}
    (\pi_{\Sht,I,!}[\Sht_{\GL_{n-1},\Std_{n-1}^{\underline{\e}}}^d])_{\s}\in (\prod_{(e_n,e_{n-1})\in\ZZ^2}H^{2(n-1)r}_c(\Sht_{\GL_n\times\GL_{n-1},(\Std_n\boxtimes\Std_{n-1})^{\underline{\e}}}^{(e_n,e_{n-1})},\underline{\Qlbar}))_{\s}^*
.\end{equation}

We have the following conjecture:
\begin{conj}\label{conj:intro:main}
    Assume $\s_n,\s_{n-1}$ are irreducible Weil local systems on $C$ with rank $n,n-1$. Take $\s=(\s_n,\s_{n-1})\in\Loc_{\GL_n\times\GL_{n-1}}^{\arith}(\Qlbar)$. Then $(\pi_{\Sht,I,!}[\Sht_{\GL_{n-1},\Std_{n-1}^{\underline{\e}}}^h])_{\s}$ is non-zero for finitely many $h\in\ZZ$. Take \begin{equation}\label{eq:intro:totalrscycle}\begin{split}
        (\pi_{\Sht,I,!}[\Sht_{\GL_{n-1},\Std_{n-1}^{\underline{\e}}}])_{\s}=\sum_{h\in\ZZ}(\pi_{\Sht,I,!}[\Sht_{\GL_{n-1},\Std_{n-1}^{\underline{\e}}}^h])_{\s} \\ \in (\prod_{(e_n,e_{n-1})\in\ZZ^2}H^{2(n-1)r}_c(\Sht_{\GL_n\times\GL_{n-1},(\Std_n\boxtimes\Std_{n-1})^{\underline{\e}}}^{(e_n,e_{n-1})},\underline{\Qlbar}))_{\s}^*.
    \end{split}\end{equation} For any $(d_n,d_{n-1})\in\ZZ^2$, we have
    \begin{equation}\label{eq:intro:main}
    \begin{split}
        \sum_{\underline{\e}\in\{\pm 1\}^r_0} \langle (\pi_{\Sht,I,!}[\Sht_{\GL_{n-1},\Std_{n-1}^{\underline{\e}}}])_{\s},(\pi_{\Sht,I,!}[\Sht_{\GL_{n-1},\Std_{n-1}^{\underline{\e}}}])_{\s^*} \rangle_{\underline{\e},\s}^{(d_n,d_{n-1}),*} \\= q^{\dim\Bun_{\GL_{n-1}}}(\ln q)^{-r-2}\frac{\left(\frac{d}{ds}\right)^{r}\Big|_{s=1/2}\widetilde{L}(\s_n\otimes\s_{n-1}\oplus\s_n^*\otimes\s_{n-1}^*,s)}{\Res_{s=1}\widetilde{L}(\s_n\otimes\s_n^*,s)\Res_{s=1}\widetilde{L}(\s_{n-1}\otimes\s_{n-1}^*,s)}
        \end{split}. 
    \end{equation} Here $\dim\Bun_{\GL_{n-1}}=(n-1)^2(g-1)$ where $g$ is the genus of the curve $C$. The normalized $L$-functions are defined as \begin{equation}\label{eq:intro:normlfunc}
        \widetilde{L}(\s_n\otimes\s_{n-1}\oplus\s_n^*\otimes\s_{n-1}^*,s)=q^{2n(n-1)(g-1)(s-1/2)}L(\s_n\otimes\s_{n-1}\oplus\s_n^*\otimes\s_{n-1}^*,s)
    \end{equation}
    \begin{equation}
        \widetilde{L}(\s_n\otimes\s_n^*,s)=q^{n^2(g-1)s}L(\s_n\otimes\s_n^*,s)
    \end{equation}
        \begin{equation}
        \widetilde{L}(\s_{n-1}\otimes\s_{n-1}^*,s)=q^{(n-1)^2(g-1)s}L(\s_{n-1}\otimes\s_{n-1}^*,s)
    .\end{equation}
\end{conj}

\subsubsection{Statement of the main result}\label{sec:intro:mainres}
Our main result confirms a slightly different formulation of Conjecture \ref{conj:intro:main} when $\s_n,\s_{n-1}$ are geometrically irreducible. 



In this case, instead of working with the subspace \begin{equation}\label{eq:intro:spectralsub2}\begin{split}(\prod_{(e_n,e_{n-1})\in\ZZ^2}H^{2(n-1)r}_c(\Sht_{\GL_n\times\GL_{n-1},(\Std_n\boxtimes\Std_{n-1})^{\underline{\e}}}^{(e_n,e_{n-1})},\underline{\Qlbar}))_{\s}\\ \sub \\ \prod_{(e_n,e_{n-1})\in\ZZ^2}H^{2(n-1)r}_c(\Sht_{\GL_n\times\GL_{n-1},(\Std_n\boxtimes\Std_{n-1})^{\underline{\e}}}^{(e_n,e_{n-1})},\underline{\Qlbar})\end{split}\end{equation} defined via the spectral action, we directly construct an injection \begin{equation}\label{eq:intro:spectralsub3}H^*\xi_{\s,\underline{\e}}:H^r(C^I,(\s_n\otimes\s_{n-1})^{\underline{\e}})\to \prod_{(e_n,e_{n-1})\in\ZZ^2}H^{2(n-1)r}_c(\Sht_{\GL_n\times\GL_{n-1},(\Std_n\boxtimes\Std_{n-1})^{\underline{\e}}}^{(e_n,e_{n-1})},\underline{\Qlbar}).\end{equation} The image of $H^*\xi_{\s,\underline{\e}}$ should coincide with the subspace \eqref{eq:intro:spectralsub2}: While we will not show this in this article, we believe it is an easy consequence of the geometric Langlands conjecture for $\GL_n$, in particular, the result of \cite{GR}. Therefore, the map $H^*\xi_{\s,\underline{\e}}$ should give a choice of the isomorphism \eqref{eq:intro:shtukaeigensub}. The construction of this map will be given in \S\ref{sec:app:isotypicpart}.

Throughout the article after this point, we will always use \eqref{eq:intro:spectralsub3} as our $\s$-isotypic part. The meaning of \eqref{eq:intro:isotypicrscycles} and \eqref{eq:intro:nn-1isotypicintersection} will also be adapted to the isotypic part \eqref{eq:intro:spectralsub3}. We can formulate our main theorem, which will be proved in \S\ref{sec:app:compute}:
\begin{thm}\label{thm:intro:main}
    After replacing the subspace \eqref{eq:intro:spectralsub2} by \eqref{eq:intro:spectralsub3} and assuming
    \begin{itemize}
        \item $\s_n,\s_{n-1}$ are geometrically irreducible,
        \item $p>n$,
    \end{itemize}
    the Conjecture \ref{conj:intro:intersectionnondeggln} and Conjecture \ref{conj:intro:main} hold.
\end{thm}

\begin{remark}
    The first assumption is used for simplicity in constructing a Hecke eigensheaf. The second assumption comes from Assumption \ref{assumption:glcchar}.
\end{remark}

\subsection{Idea of proof of the main result}
In this section, we explain the idea of proof of Theorem  \ref{thm:intro:main}.

From now on, we take $k=\Qlbar$. We work with a split reductive group $G$. To prove Theorem  \ref{thm:intro:main}, we compute explicitly all the terms on the left-hand side of \eqref{eq:intro:main} so that we can explicitly compare both sides. This includes writing down explicitly the isotypic part of Rankin--Selberg cycle classes $(\pi_{\Sht,I,!}[\Sht_{\GL_{n-1},\Std_{n-1}^{\underline{\e}}}^h])_{\s}$ as well as the isotypic part of the intersection pairing $\langle -,- \rangle_{\underline{\e},\s}^{(d_n,d_{n-1}),*}$. These two elements can be uniformly understood by taking categorical trace of the corresponding geometric relative Langlands statements in \cite{BZSV}.

\subsubsection{Categorical trace}\label{sec:intro:cattr}

We first briefly explain the formalism of categorical trace. We refer to \S\ref{sec:cattrace} for a more precise explanation. For a dualizable presentable $(\infty,1)$-category $\cC$ with an endomorphism $F\in\End(\cC)$, its categorical trace is a space $\tr(F,\cC)$. Therefore, to understand a space $S$, one can try to find a pair $(\cC,F)$ such that $S\cong \tr(F,\cC)$. This allows one to study the richer category $\cC$ instead of the space $S$.

In our case, we would like to understand a morphism between spaces $z:S_1\to S_2$. This can be achieved by the formalism of the functoriality of the categorical trace. Suppose we can write $S_1\cong\tr(F_1,\cC_1)$ and $S_2\cong\tr(F_2,\cC_2)$. For each continuous (i.e. colimit preserving) functor $L:\cC_1\to\cC_2$  admitting a continuous right adjoint, given a natural transformation $\eta:L\circ F_1\to F_2\circ L$, there is an induced morphism between spaces $\tr(\eta):\tr(F_1,\cC_1)\to\tr(F_2,\cC_2)$. Therefore, one could look for a natural transformation $\eta$ such that $\tr(\eta)=z$, which changes the study of a map between spaces to the study of a natural transformation.

The discussion above also makes sense when working with categories linear over a symmetric monoidal category $\cA$. In that case, one replaces the word ``spaces" by ``objects in $\cA$" and ``functors" by ``$\cA$-linear functors". We use $\tr_{\cA}(\cC)\in\cA$ to denote the ($\cA$-linear) categorical trace. See Example \ref{eg:lincat} for a more precise treatment.

\subsubsection{Fundamental diagram}
In this section, we pretend that $G$ is semisimple, which is unfortunately not the case since we care about the case $G=\GL_n$. However, this would simplify the situation and is enough for explaining the idea.

In our setting, we would like to understand the sequence of morphisms \begin{equation}\label{diag:intro:funddiagsht}
    V_{\s}^I\langle-d_I\rangle\xrightarrow{\xi_{\s,I}} l_{I,!}(\IC_{V^I}|_{\Sht_{G,I}}\langle -d_I\rangle)\xrightarrow{[Z_{V^I}^X]}\uk_{C^I}
.\end{equation} Here, \begin{itemize}
    \item $I$ is a finite set, $V^I\in\Rep(\Gc^I)$, $\s\in\Loc_{\Gc}^{\arith}(k)$ is a Langlands parameter, $V_{\s}^I$ is the local system on $C^I$ involved in the Tannakian definition of $\s$, $\langle n\rangle=\Pi(\frac{n}{2})[n]$ is the shearing. Here $\Pi$ means changing the parity, which can be safely ignored at this point. $d_I\in\ZZ$;
    \item $\Sht_{G,I}$ is the moduli of $G$-Shtukas with $I$-legs without bounding the poles. $\IC_{V^I}|_{\Sht_{G,I}}\in\Shv(\Sht_{G,I})$ is the sheaf attached by the geometric Satake equivalence. $l_I:\Sht_{G,I}\to C^I$ is the map remembering only the legs. When $G=\GL_n$, $V^I=\Std_n^{\underline{\e}}$, $\s=\s_n$, $d_I=r$, we have $\Gamma_c(l_{I,!}(\IC_{V^I}|_{\Sht_{G,I}}\langle -d_I\rangle))=\Gamma_c(\Sht_{\GL_n,\underline{\e}},\underline{\Qlbar})\langle (n-2)r \rangle$.
    \item $\uk_{C^I}$ is the constant sheaf on $C^I$;
    \item $\xi_{\s,I}:V_{\s}^I\to  l_{I,!}(\IC_{V^I}|_{\Sht_{G,I}})$ is the $\s$-isotypic part map. In $\GL_n$-case as above, ignoring the difference between direct sum and direct product, we have $H^*\xi_{\s,I}=H^*\xi_{\s_n,\underline{\e}}:H^*(C^I,\s_n^{\underline{\e}})\to H^{*+(n-1)r}_c(\Sht_{\GL_n,\underline{\e}},\underline{\Qlbar})((n-1)r/2)$ which is the $\GL_n$-version of the map \eqref{eq:intro:spectralsub3}.
    \item $[Z_{V^I}^X]$ is a special cycle class attached to an affine smooth $G$-variety $X$. In this article, we are particularly interested in two cases: \begin{itemize}
        \item Rankin--Selberg case: $G=\GL_n\times\GL_{n-1}$, $X=\GL_{n-1}\backslash \GL_{n}\times\GL_{n-1}$, $V^I=(\Std_n\boxtimes\Std_{n-1})^{\underline{\e}}$, and \[[Z_{V^I}^X]=\pi_{\Sht,I,!}[\Sht_{\GL_{n-1},\Std_{n-1}^{\underline{\e}}}].\]
        \item Group case: $G=\GL_n\times\GL_n$, $X=\GL_n\backslash \GL_n\times\GL_n$, $V^I=(\Std_n\boxtimes\Std_{n})^{\underline{\e}}$, and \[[Z_{V^I}^X]=\D_{\Sht,I,!}[\Sht_{\GL_{n},\Std_{n}^{\underline{\e}}}]=\langle-,-\rangle_{\underline{\e}}.\] Here, $\D_{\Sht,I}:\Sht_{\GL_{n},\Std_{n}^{\underline{\e}}}\to\Sht_{\GL_{n}\times\GL_n,(\Std_n\boxtimes\Std_{n})^{\underline{\e}}}$ is the diagonal map.
    \end{itemize}

\end{itemize}

Following the principle in \S\ref{sec:intro:cattr}, we switch to understanding a sequence of functors between categories with endomorphisms. We consider the following \emph{fundamental diagram}:
\begin{equation}\label{diag:intro:funddiagcat}\begin{tikzcd}
    \QLisse(C^I) \ar[r, "(-\otimes\LL_{\s})\otimes\id"] \ar[d, "-\otimes V_{\s}^I\langle-d_I\rangle"'] & \Shv_{\Nilp}(\Bun_G)\otimes\QLisse(C^I) \ar[r, "\int_{X,\Nilp,I}"] \ar[d, "(\Frob\times\id)_!\circ T_{V^I\langle-d_I\rangle}" description] & \QLisse(C^I) \ar[d, "\id"] \\
    \QLisse(C^I) \ar[r, "(-\otimes\LL_{\s})\otimes\id"'] \ar[ur, Rightarrow, "\eta_{\s}^{(1)}"] & \Shv_{\Nilp}(\Bun_G)\otimes\QLisse(C^I) \ar[r, "\int_{X,\Nilp,I}"'] \ar[ur, Rightarrow, "\eta_{\frc_{V^I}^X,\Nilp}^{(1)}"'] & \QLisse(C^I)
\end{tikzcd}.\end{equation}

This diagram contains a wealth of information. Let us explain it step by step.
We first explain the categories in \eqref{diag:intro:funddiagcat}: \begin{itemize}
    \item $\QLisse(C^I)$ is the category of sheaves with liss\'e cohomologies on $C^I$ defined in \cite[Definition\,1.2.6]{arinkin2022stacklocalsystemsrestricted};
    \item $\Shv_{\Nilp}(\Bun_G)$ is the category of sheaves on $\Bun_G$ with nilpotent singular support. This is the main player in the geometric Langlands with restricted variation \cite{arinkin2022stacklocalsystemsrestricted}.
\end{itemize}

We then explain the functors between categories, which are horizontal maps in \eqref{diag:intro:funddiagcat}: \begin{itemize}
    \item $\LL_{\s}\in\Shv_{\Nilp}(\Bun_G)$ is a choice of Hecke eigensheaf with eigenvalue $\s\in\Loc_{\Gc}(k)$. When $G=\GL_n$ and $\s=\s_n$ is geometrically irreducible, such an eigensheaf is constructed in \cite{frenkel2002geometric}.
    \item $\int_{X,\Nilp,I}=\Gamma_c(-\otimes\cP_X)\otimes\id$ where $\cP_X\in\Shv(\Bun_G)$ is the period sheaf attached to the affine smooth $G$-variety $X$ as before. See \cite[\S10.3]{BZSV}. We are particularly interested in two cases: \begin{itemize}
        \item $G=\GL_n\times\GL_{n-1}$, $X=\GL_{n-1}\backslash \GL_{n}\times\GL_{n-1}$, $\cP_X=\pi_!\uk_{\Bun_{\GL_{n-1}}}$ where \[\pi:\Bun_{\GL_{n-1}}\to\Bun_{\GL_n\times\GL_{n-1}}.\]
        \item $G=\GL_n\times\GL_{n}$, $X=\GL_{n}\backslash \GL_{n}\times\GL_{n}$, $\cP_X=\D_!\uk_{\Bun_{\GL_{n}}}$ where \[\D:\Bun_{\GL_{n}}\to\Bun_{\GL_n\times\GL_{n}}.\]
    \end{itemize}
\end{itemize}

Now we come to the endomorphisms of categories which are vertical maps in \eqref{diag:intro:funddiagcat}: \begin{itemize}
    \item $T_{V^I}:\Shv_{\Nilp}(\Bun_G)\otimes\QLisse(C^I)\to\Shv_{\Nilp}(\Bun_G)\otimes\QLisse(C^I)$ is the Hecke operator attached to $V^I\in\Rep(\Gc^I)$.
    \item $\Frob:\Bun_G\to\Bun_G$ is the (relative) Frobenius defined over $\FF_q$.
\end{itemize}

Finally, we explain the natural transformations in \eqref{diag:intro:funddiagcat}: \begin{itemize}
    \item The natural transformation $\eta_{\s}^{(1)}$ comes from the Frobenius equivariant structure and Hecke eigen-property of $\LL_{\s}$.
    \item The natural transformation $\eta_{\frc_{V^I}^X,\Nilp}^{(1)}$ comes from a cohomological correspondence $\frc_{V^I}^X$ on the period sheaf $\cP_X$. The procedure of attaching such a natural transformation to a cohomological correspondence will be explained in \S\ref{sec:cctrnt}.
\end{itemize}

Pretending that the horizontal maps preserve compact objects, by the functoriality of categorical trace applied to \eqref{diag:intro:funddiagcat}, we get the upper row in the diagram:
\begin{equation}\label{diag:intro:funddiagtr}
\adjustbox{scale=.7,center}{%
    \begin{tikzcd}
        \tr_{\QLisse(C^I)}(-\otimes V_{\s}^I\langle-d_I\rangle,\QLisse(C^I)) \ar[r, "\tr_{\QLisse(C^I)}(\eta_{\s}^{(1)})"] \ar[d, "\sim"] & \tr_{\QLisse(C^I)}((\Frob\times\id)_!\circ T_{V^I\langle-d_I\rangle},\Shv_{\Nilp}(\Bun_G)\otimes\QLisse(C^I)) \ar[r, "\tr_{\QLisse(C^I)}(\eta_{\frc_{V^I}^X,\Nilp}^{(1)})"] \ar[d, "\LT^{\Serre}", "\sim"']& \tr_{\QLisse(C^I)}(\id, \QLisse(C^I)) \ar[d, "\sim"] \\
        V_{\s}^I\langle-d_I\rangle\ar[r, "\xi_{\s,I}"] & l_{I,!}(\IC_{V^I}|_{\Sht_{G,I}}\langle -d_I\rangle)\ar[r, "\lbrack Z_{V^I}^X\rbrack"] & \uk_{C^I}
    \end{tikzcd}
.}\end{equation}
Note that the lower row of the diagram \eqref{diag:intro:funddiagtr} is \eqref{diag:intro:funddiagsht}. 

We first explain the vertical isomorphisms in \eqref{diag:intro:funddiagtr}:
\begin{itemize}
    \item The left and right vertical isomorphisms follow from the most obvious computation of categorical trace.
    \item The middle map $\LT^{\Serre}$ is defined in \cite[\S5.4]{arinkin2022automorphicfunctionstracefrobenius}. The fact that it is an isomorphism is the main subject of \cite{arinkin2022automorphicfunctionstracefrobenius}. See \S\ref{sec:geo=cat:ltserre} for a discussion.
\end{itemize}

We now explain the commutativity of the diagram \eqref{diag:intro:funddiagtr}:
\begin{itemize}
    \item The commutativity of the left square defines the $\s$-isotypic part map $\xi_{\s,I}:V_{\s}^I\to l_{I,!}(\IC_{V^I}|_{\Sht_{G,I}})$.
    \item The commutativity of the right square will be illustrated in \S\ref{sec:intro:maintool}. In this article, we prove it under Assumption \ref{assumption:goodfil}.
\end{itemize}

\begin{remark}
    In the discussion above, we pretend that the horizontal functors in \eqref{diag:intro:funddiagcat} preserve compact objects. While this is usually satisfied when $G$ is semisimple and $\s$ is irreducible, it mostly fails for non-semisimple split reductive groups, which unfortunately includes the case $G=\GL_n$. In \S\ref{sec:isotypicpartscc2}, we will discuss a replacement of \eqref{diag:intro:funddiagcat} for general split reductive groups.
\end{remark}

\subsubsection{Fake special cycle classes}

Now we only need to understand the outer square of \eqref{diag:intro:funddiagtr}. For this, we need to understand the natural transformation $\eta_{\frc_{V^I}^X,\Nilp}^{(1)}\circ \eta_{\s}^{(1)}$ for the outer square of \eqref{diag:intro:funddiagcat}, and we need to understand how to compute $\tr_{\QLisse(C^I)}(\eta_{\frc_{V^I}^X,\Nilp}^{(1)}\circ \eta_{\s}^{(1)})$. This is the study of fake special cycle classes, which is the main subject of \cite{liu2025higherperiodintegralsderivatives}.

From the cohomological correspondence $\frc_{V^I}^X$, one obtains a map \begin{equation}\label{eq:intro:heckeaction}
    a_{\frc_{V^I}^X,\s}: V_{\s}^I\langle-d_I\rangle\otimes\int_{X,\Nilp}\LL_{\s}\to \uk_{C^I}\otimes\int_{X,\Nilp}\LL_{\s}
.\end{equation} Under the assumption that horizontal maps in \eqref{diag:intro:funddiagcat} preserve compact objects, the complex $\int_{X,\Nilp}\LL_{\s}$ is a perfect complex. Therefore, one arrives at the \emph{fake special cycle class} \begin{equation}\label{eq:intro:fakespecialcycleclasses}\begin{split}
    z_{\frc_{V^I}^X,\s}:V_{\s}^I\langle-d_I\rangle &\xrightarrow{\id\otimes\coev_{\int_{X,\Nilp}\LL_{\s}}} V_{\s}^I\langle-d_I\rangle \otimes\int_{X,\Nilp}\LL_{\s}\otimes (\int_{X,\Nilp}\LL_{\s})^*
    \\&\xrightarrow{\id\otimes\Frob\otimes\id}V_{\s}^I\langle-d_I\rangle \otimes\int_{X,\Nilp}\LL_{\s}\otimes (\int_{X,\Nilp}\LL_{\s})^*
    \\&\xrightarrow{a_{\frc_{V^I}^X,\s}\otimes\id}\uk_{C^I} \otimes\int_{X,\Nilp}\LL_{\s}\otimes (\int_{X,\Nilp}\LL_{\s})^*
    \\&\xrightarrow{\id\otimes\ev_{\int_{X,\Nilp}\LL_{\s}}} \uk_{C^I}
\end{split}.\end{equation} See \eqref{eq:fakespecialcycleclasses} for an equivalent definition of $z_{\frc_{V^I}^X,\s}$. These elements have been studied in \cite[\S1.1.5]{liu2025higherperiodintegralsderivatives}. It turns out that the composition of the top horizontal map in \eqref{diag:intro:funddiagtr} coincides with $z_{\frc_{V^I}^X,\s}$.

We are particularly interested in the following cases:
\begin{itemize}
    \item When $G=\GL_n\times\GL_{n-1}$, $X=\GL_{n-1}\backslash\GL_n\times\GL_{n-1}$. The fake special cycle classes \eqref{eq:intro:fakespecialcycleclasses} are understood via Theorem \ref{thm:app:kolydermain}.
    \item When $G=\GL_n\times\GL_{n}$, $X=\GL_n\backslash\GL_n\times\GL_n$. The fake special cycle classes are described in Theorem \ref{thm:geoLMglntrsht}.
\end{itemize}

During the proof of results above, we heavily use the automorphic commutator relation developed in \cite[\S4]{liu2025higherperiodintegralsderivatives}. See \S\ref{sec:commutatorrelation} for a brief recollection.

\subsection{Main tool: Categorical trace interpretation of special cycle classes}\label{sec:intro:maintool}
In this section, we explain in more detail the right commutative square of \eqref{diag:intro:funddiagtr}. It turns out we can make sense of it and prove it in two cases: \begin{itemize}
    \item Homogeneous minuscule case: $G$ is split reductive, $X=H\backslash G$ for some spherical split reductive subgroup $H\sub G$, $V^I\in\Rep(\Gc^I)$ is irreducible minuscule, $[Z_{V^I}^X]=\pi_{\Sht,I,!}[\Sht_{H,V_H^I}]$ for some irreducible minuscule representation $V_H^I\in\Rep(\Hc^I)$. This includes all the cases we need in the proof of Theorem  \ref{thm:intro:main} and will be introduced in \S\ref{sec:intro:mincycleclass}.
    \item Diagonal case: $H$ is split reductive, $G=H\times H$, $X=H\backslash H\times H$, $V^I=V_H^I\boxtimes V_H^I\in\Rep(\Gc^I)$ for some irreducible representation $V_H^I\in\Rep(\Hc^I)$. This case will be introduced in \S\ref{sec:intro:diagcycleclass}. This case in general will not be used in the proof of Theorem  \ref{thm:intro:main}, but we will use it to prove the non-degeneracy of intersection pairing for $\GL_n$ with arbitrary coweight in Corollary \ref{cor:app:nondegenracy}, which has its own interest.
\end{itemize}

\subsubsection{Minuscule homogeneous special cycle classes} \label{sec:intro:mincycleclass}
Consider a split connected reductive group $G$ and a spherical split connected reductive subgroup $H$. Take $X=H\backslash G$. We choose maximal tori $T\sub G$ and $T_H\sub H$ such that $T_H\sub T$. We use $X_*(T)$, $X_*(T_H)$ to denote the coweight lattices.

For the finite set $I=\{1,2\cdots,r\}$ and a sequence of minuscule coweights $\lambda_{H,I}=(\lambda_{H,1},\cdots,\lambda_{H,r})\in X_*(T_H)^I$, we obtain a sequence of coweights $\lambda_{I}=(\lambda_1,\cdots,\lambda_r)\in X_*(T)^I$ such that $\lambda_i$ in the image of $\lambda_{H,i}$ under the natural map $X_*(T_H)\sub X_*(T)$. Let $V_{\lambda_I}\in\Rep(\Gc^I)$ be the irreducible representation with highest weight $\lambda_I$. We use $\Sht_{G,\lambda_I}\sub \Sht_{G,I}$ to denote the closed Schubert cell of type $\lambda_I$.

We define the \emph{minuscule homogeneous special cycle class} \[[Z_{V^I}^X]=\pi_{\Sht,I,!}[\Sht_{H,\lambda_{H,I}}/C^I]\in H_{-d_{\lambda_I}+2d_{\lambda_{H,I}}}^{BM}(\Sht_{G,I}/C^I,\IC_{V^I}|_{\Sht_{G,I}})\] in which \[ H_{-d_{\lambda_I}+2 d_{\lambda_{H,I}}}^{BM}(\Sht_{G,{\lambda_I}}/C^I,\IC_{V^I}|_{\Sht_{G,I}})=\Hom^0(l_{I,!}(\IC_{V^I}|_{\Sht_{G,I}})\langle  -d_{\lambda_I}+2d_{\lambda_{H,I}}\rangle,\uk_{C^I}) .\] Here,\begin{itemize}
    \item $\pi_{\Sht,I}:\Sht_{H,\lambda_{H,I}}\to\Sht_{G,\lambda_I}$ is finite and schematic by \cite{yun2022special}.\footnote{In some part of this article, we write $\Sht_{G,\lambda_I}=\Sht_{G,V_{\lambda_I}}$.}
    \item $[\Sht_{H,\lambda_{H,I}}/C^I]\in H_{2 d_{\lambda_{H,I}}}^{BM}(\Sht_{H,\lambda_{H,I}}/C^I)\cong H_{2 (d_{\lambda_{H,I}}+r)}^{BM}(\Sht_{H,\lambda_{H,I}})$ is the (relative) fundamental class.
    \item $l_{I}:\Sht_{G,\lambda_{I}}\to C^I$ is the map remembering only the legs.
    \item $d_{\lambda_{H,I}}=\sum_{i=1}^r\langle2\rho_H,\lambda_{H,i}\rangle=\dim(\Sht_{H,\lambda_{H,I}})-\dim(C^I)$, $d_{\lambda_I}=\sum_i\langle 2\rho_G,\lambda_i\rangle$.
\end{itemize}

We now explain the right square in \eqref{diag:intro:funddiagcat}. We first explain the Hecke operator $T_{V^I}$. Consider the correspondence \[\begin{tikzcd}
    \Bun_G\times C^I & \Hk_{G,I} \ar[l, "\lh_I"'] \ar[r, "\rh_I"]& \Bun_G\times C^I 
\end{tikzcd}.\]
For $V^I\in\Rep(\Gc^I)^{\heartsuit}$, via the geometric Satake equivalence, one obtains a (super) sheaf $\IC_{V^I}\in\Shv(\Hk_{G,I})$ normalized such that it is perverse on each fiber of $\Hk_{G,I}\to C^I$ with parity same as $\sum_{i\in I}\langle 2\rho_G,\lambda_i\rangle$. This gives the Hecke operator \[T_{V^I}=\rh_{I,*}(\lh_I^!(-)\otimes^{!} \IC_{V^I})\cong\rh_{I,!}(\lh_I^*(-)\otimes\IC_{V^I}):\Shv(\Bun_G\times C^I)\to\Shv(\Bun_G\times C^I)\] which preserves the full-subcategory $\Shv_{\Nilp}(\Bun_G)\otimes\QLisse(C^I)\sub\Shv(\Bun_G\times C^I)$.

We now turn to the definition of the natural transformation $\eta_{\frc_{V^I}^X,\Nilp}^{(1)}=\eta^{(1)}_{\frc_{\lambda_{H,I}},\Nilp}$ via cohomological correspondence. We leave it to \S\ref{sec:cccor} for notations and operations involving cohomological correspondences.

Consider the diagram
\begin{equation}\label{diag:mincorfund}\begin{tikzcd}
    \Bun_H\times C^I \ar[d, "\pi\times\id"] & \Hk_{H,\lambda_{H,I}} \ar[l, "\lh_{H,I}"'] \ar[r, "\rh_{H,I}"] \ar[d,"\pi_{\Hk,I}"] & \Bun_H\times C^I \ar[d, "\pi\times\id"] \\
    \Bun_G\times C^I & \Hk_{G,\lambda_{I}} \ar[l, "\lh_{I}"'] \ar[r, "\rh_I"] & \Bun_G\times C^I
\end{tikzcd}.\end{equation} We have the relative fundamental class of $\lh_{H,I}:\Hk_{H,\lambda_{H,I}}\to \Bun_H\times C^I$ which can be regarded as a cohomological correspondence \[\begin{split} [\Hk_{H,\lambda_{H,I}}/\Bun_H\times C^I]&\in \Cor_{\Hk_{H,\lambda_{H,I}},\uk\langle 2d_{\lambda_{H,I}}\rangle}(\uk_{\Bun_H\times C^I},\uk_{\Bun_H\times C^I}) \\ &=H_{2d_{\lambda_{H,I}}}^{BM}(\Hk_{H,\lambda_{H,I}}/\Bun_H\times C^I) \\ &=\Hom^0(\rh_{H,I}^*\uk_{\Bun_H\times C^I}\langle 2d_{\lambda_{H,I}} \rangle,\lh_{H,I}^!\uk_{\Bun_H\times C^I} )\end{split}.\]
We apply the push-forward of cohomological correspondence in \S\ref{sec:ccpush}. Since that the right square of \eqref{diag:mincorfund} is pushable, given $(\pi\times\id)_!\uk_{\Bun_H\times C^I}\cong\cP_X\boxtimes\uk_{C^I}$, one can define the \emph{minuscule homogeneous cohomological correspondence} \[\begin{split}\frc_{\lambda_{H,I}}:=\pi_{\Hk,I,!}[\Hk_{H,\lambda_{H,I}}/\Bun_H\times C^I]&\in \Cor_{\Hk_{G,\lambda_I},\uk\langle 2d_{\lambda_{H,I}} \rangle}(\cP_X\boxtimes\uk_{C^I},\cP_X\boxtimes\uk_{C^I})\\
&=\Hom^0(\rh_{I}^*(\cP_X\boxtimes\uk_{C^I})\langle 2d_{\lambda_{H,I}} \rangle ),\lh_{I}^!(\cP_X\boxtimes\uk_{C^I}) \\
&\cong\Hom^0(T_{c^*V_{\lambda_I}}\langle -d_{\lambda_I}+2d_{\lambda_{H,I}} \rangle( \cP_X\boxtimes\uk_{C^I}), \cP_X\boxtimes\uk_{C^I})
\end{split}\] where $c^*$ is the pull-back along the Cartan involution $c:\Gc^I\to\Gc^I$.

Composing with the natural isomorphism $\Frob^*\cP_X\isom \cP_X$, one gets a cohomological correspondence \begin{equation}\label{eq:intro:minhomocc}\begin{split}\frc_{\lambda_{H,I}}^{(1)}& \in \Hom^0(\rh_{I}^*(\Frob\times\id)^*(\cP_X\boxtimes\uk_{C^I})\langle 2d_{\lambda_{H,I}} \rangle,\lh_{I}^!(\cP_X\boxtimes\uk_{C^I}) )\\
&\cong\Hom^0(T_{c^*V_{\lambda_I}}\langle -d_{\lambda_I}+2d_{\lambda_{H,I}} \rangle\circ(\Frob\times\id)^* (\cP_X\boxtimes\uk_{C^I}), \cP_X\boxtimes\uk_{C^I})
\end{split}.\end{equation}

By projection formula, we have natural isomorphisms \[l_{I,!}(T_{V^I}(-)\otimes-)\cong l_{I,!}(-\otimes T_{c^*V^I}(-))\] and \[l_{I,!}(\Frob^*(-)\otimes-)\cong l_{I,!}(-\otimes\Frob_!(-))\] where $l_I:\Bun_G\times C^I\to C^I$. Since $\int_{X,\Nilp,I}=l_{I,!}(-\otimes(\cP_X\boxtimes\uk_{C^I}))$, the cohomological correspondence $\frc_{\lambda_{H,I}}^{(1)}$ gives us a natural transformation \[\eta^{(1)}_{\frc_{\lambda_{H,I}},\Nilp}:\int_{X,\Nilp,I}\circ(\Frob\times\id)_!\circ T_{V_{\lambda_I}}\langle -d_{\lambda_I}+2d_{\lambda_{H,I}}\rangle\to \int_{X,\Nilp,I}.\]

The main result in this part is the following:
\begin{thm}\label{thm:minmain}
    Assuming Assumption \ref{assumption:goodfil}, the right square of \eqref{diag:intro:funddiagtr} is commutative for homogeneous minuscule special cycle classes. That is, we have a commutative square
    \begin{equation}
    \adjustbox{scale=.95,center}{%
        \begin{tikzcd}
            \tr_{\QLisse(C^I)}((\Frob\times\id)_!\circ T_{V_{\lambda_I}}\langle -d_{\lambda_I}+2d_{\lambda_{H,I}}\rangle,\Shv_{\Nilp}(\Bun_G)\otimes\QLisse(C^I)) \ar[d, "\LT^{\Serre}", "\sim"'] \ar[r, "\tr_{\QLisse(C^I)}(\eta^{(1)}_{\frc_{\lambda_{H,I}},\Nilp})"] & \tr_{\QLisse(C^I)}(\id,\QLisse(C^I))   \ar[d, "\sim"] \\
            l_{I,!}(\IC_{V^I}|_{\Sht_{G,I}})\langle -d_{\lambda_I}+ 2d_{\lambda_{H,I}}\rangle \ar[r, "\pi_{\Sht,I,!}\lbrack \Sht_{H,\lambda_{H,I}}/C^I\rbrack"]  & \uk_{C^I}
        \end{tikzcd}
        }
    \end{equation}

\end{thm}

\subsubsection{Diagonal cycle classes} \label{sec:intro:diagcycleclass}
When $G=H\times H$ and $H\sub G$ is the diagonal embedding. Take $X=H\backslash H\times H$. In this section, we consider special cycle classes for possibly non-minuscule modification types. Note that the construction of the natural transformation $\eta_{\frc_{\lambda_{H,I}},\Nilp}^{(1)}$ from the cohomological correspondence $\frc_{\lambda_{H,I}}$ makes sense without assuming that $\lambda_{H,I}$ is minuscule. Only when defining $\frc_{\lambda_{H,I}}$ and $[Z_{V^I}^X]$, the steps involving fundamental classes need a clarification. 

The diagram \eqref{diag:mincorfund} becomes
\begin{equation}\label{diag:diagcorfund}\begin{tikzcd}
    \Bun_H\times C^I \ar[d, "\D\times\id"] & \Hk_{H,\lambda_{H,I}} \ar[l, "\lh_{H,I}"'] \ar[r, "\rh_{H,I}"] \ar[d,"\D_{\Hk,I}"] & \Bun_H\times C^I \ar[d, "\D\times\id"] \\
    \Bun_G\times C^I & \Hk_{G,\lambda_{I}} \ar[l, "\lh_{I}"'] \ar[r, "\rh_I"] & \Bun_G\times C^I
\end{tikzcd}\end{equation} where $\D:\Bun_H\to\Bun_G$ is the diagonal map.

In this case, we consider relative fundamental class \[[\Sht_{H,\lambda_{H,I}}/C^I]\in H_{0}^{BM}(\Sht_{H,\lambda_{H,I}}/C^I,\IC_{V_{\lambda_{H,I}}}^{\otimes 2}|_{\Sht_{H,\lambda_{H,I}}})=\Hom^0(\IC_{V_{\lambda_{H,I}}}^{\otimes 2}|_{\Sht_{H,\lambda_{H,I}}},\om_{\Sht_{H,\lambda_{H,I}}/C^I})\] given by the canonical map \[\IC_{V_{\lambda_{H,I}}}^{\otimes 2}|_{\Sht_{H,\lambda_{H,I}}}\cong \IC_{V_{\lambda_{H,I}}}|_{\Sht_{H,\lambda_{H,I}}}\otimes \DD_{C^I}(\IC_{V_{\lambda_{H,I}}}|_{\Sht_{H,\lambda_{H,I}}})\to \om_{\Sht_{H,\lambda_{H,I}}/C^I}.\] Here we are using $\DD_{C^I}:=\underline{\Hom}(-,\om_{\Sht_{H,\lambda_{H,I}}/C^I}):\Shv(\Sht_{H,\lambda_{H,I}})\to \Shv(\Sht_{H,\lambda_{H,I}})$ to denote the (relative) Verdier duality over $C^I$ and $\om_{\Sht_{H,\lambda_{H,I}}/C^I}$ to denote the (relative) dualizing sheaf. A similar construction gives the relative fundamental class \[[\Hk_{H,\lambda_{H,I}}/\Bun_H\times C^I]\in H_{0}^{BM}(\Hk_{H,\lambda_{H,I}}/\Bun_H\times C^I,\IC_{V_{\lambda_{H,I}}}^{\otimes 2}).\] We call $\frc_{\lambda_{H,I}}=\D_{\Hk,I,!}[\Hk_{H,\lambda_{H,I}}/\Bun_H\times C^I]$ the \emph{diagonal cohomological correspondence}.

In this case, the element \begin{equation}\label{eq:intro:diagcycle}\langle-,-\rangle_{\lambda_{H,I}}:=\D_{\Sht,I,!}[\Sht_{H,\lambda_{H,I}}/C^I]:(l_{H,I,!}(\IC_{V_{\lambda_{H,I}}}|_{\Sht_{H,\lambda_{H,I}}}))^{\otimes 2}\to \uk_{C^I}\end{equation} is called the \emph{diagonal cycle class} and has a significant meaning: It is the intersection pairing on the compact support cohomology of Shtukas. The functor \[\ev:=\int_{X,\Nilp}:\Shv_{\Nilp}(\Bun_G)^{\otimes 2}\to\Vect\] also has a significant meaning: It is the counit for the miraculous duality of $\Shv_{\Nilp}(\Bun_G)$.

We have the following parallel result of Theorem  \ref{thm:minmain}:

\begin{thm}\label{thm:diagmain}
    The right square of \eqref{diag:intro:funddiagtr} is commutative for diagonal special cycle classes. That is, we have a commutative square
    \begin{equation}
        \begin{tikzcd}
            \tr_{\QLisse(C^I)}((\Frob\times\id)_!\circ T_{V^I},\Shv_{\Nilp}(\Bun_G)\otimes\QLisse(C^I)) \ar[d, "\LT^{\Serre}", "\sim"'] \ar[r, "\tr_{\QLisse(C^I)}(\eta^{(1)}_{\frc_{\lambda_{H,I}},\Nilp})"] & \tr_{\QLisse(C^I)}(\id,\QLisse(C^I))   \ar[d, "\sim"] \\
            (l_{H,I,!}(\IC_{V_{\lambda_{H,I}}}|_{\Sht_{H,\lambda_{H,I}}}))^{\otimes 2} \ar[r, "{\langle-,-\rangle_{\lambda_{H,I}}}"]  & \uk_{C^I}
        \end{tikzcd}
    \end{equation}
\end{thm}

\subsubsection{Strategy of proof}
The proof of Theorem \ref{thm:minmain} and Theorem \ref{thm:diagmain} will be given in \S\ref{sec:ss=catproof}. One may think Theorem  \ref{thm:minmain} and Theorem  \ref{thm:diagmain} are purely formal consequences of the six-functor formalism given the deep result of \cite{arinkin2022automorphicfunctionstracefrobenius}. While we do not regard them as deep results themselves, there are some seemingly necessary and non-trivial ingredients involved in the proof.

We only explain the proof of Theorem  \ref{thm:minmain} since the proof of Theorem  \ref{thm:diagmain} is similar after the development of the theory of cohomological correspondences with kernels, which is the main subject of \S\ref{sec:cccor}.

We prove Theorem  \ref{thm:minmain} by establishing a two-step equality \[\tr_{\QLisse(C^I)}(\eta^{(1)}_{\frc_{\lambda_{H,I}},\Nilp})= \tr_{\Sht,C^I}(\frc_{\lambda_{H,I}}) =\pi_{\Sht,I,!}[\Sht_{H,\lambda_{H,I}}/C^I].\] Here, the element $\tr_{\Sht,C^I}(\frc_{\lambda_{H,I}})\in H_{-d_{\lambda_I}+2 d_{\lambda_{H,I}}}^{BM}(\Sht_{G,{\lambda_I}}/C^I,\IC_{V^I}|_{\Sht_{G,I}})$ is the geometric trace of the cohomological correspondence $\frc^{(1)}_{\lambda_{H,I}}$. The notion of geometric trace of cohomological correspondence \footnote{In literature, it is also called sheaf-cycle correspondence.} has been widely used in literature. An incomplete list of its usage includes \cite{FYZ3}\cite{YZ1}\cite{varshavsky2007lefschetz}\cite{gaitsgory2024localtermscategoricaltrace}. It can be regarded as a generalization of the sheaf-function correspondence, in which case the correspondence is taken to be the Frobenius morphism.

The first equality follows from a direct diagram chasing and is completely formal given \cite{arinkin2022automorphicfunctionstracefrobenius}. Its proof will be provided in \S\ref{sec:cat=geo}.

The second equality is more subtle: It involves compatibility between the push-forward of cohomological correspondence and the geometric trace construction. Its proof will be given in \S\ref{sec:geo=cycle}. Note that such compatibility is proved in \cite{varshavsky2007lefschetz}\cite{Lu_2022} for push-forward along proper maps. The problem here is that while the map $\pi_{\Sht,I}:\Sht_{H,\lambda_{H,I}}\to\Sht_{G,\lambda_I}$ is proper, the map $\pi:\Bun_H\to\Bun_G$ (or more essentially the map $\pi_{\Hk,I}:\Hk_{H,\lambda_{H,I}}\to\Hk_{G,\lambda_I}$) is not proper. We remedy this by showing that the map $\pi:\Bun_H\to\Bun_G$ (hence the map $\pi_{\Hk,I}:\Hk_{H,\lambda_{H,I}}\to\Hk_{G,\lambda_I}$) admits a compactification such that the Hecke-Frobenius correspondence is \emph{contracting along the boundary}. The idea of studying contracting correspondence in the theory of cohomological correspondence is not new: It appears in \cite{varshavsky2007lefschetz} and is also used in \cite{gaitsgory2024localtermscategoricaltrace}\cite{feng2024modularityhigherthetaseries}. In these articles, they show that \emph{pull-back} of cohomological correspondence along contracting substacks is compatible with the trace construction. In contrast, we show that \emph{push-forward} of cohomological correspondence along a map admitting contracting boundary is compatible with trace construction, which seems to be new as far as the author knows. This is Theorem \ref{thm:trcontractpush}.

The ``compactification" of $\pi:\Bun_H\to\Bun_G$ is a map $\overline{\pi}:\overline{\Bun}_G^X\to\Bun_G$ coming from the affine degeneration of the spherical variety $X=H\backslash G$. See \S\ref{sec:relativecompactification} for a discussion. This is where the sphericity of $H\sub G$ and Assumption \ref{assumption:goodfil} are used. In the diagonal case $G=H\times H$, this compactification is the famous Drinfeld's compactification $\overline{\D}:\overline{\Bun}_G\to\Bun_G\times\Bun_G$ coming from Vinberg's semigroup. A proof of that such construction gives a compactification (i.e., $\overline{\D}$ is proper when $G$ is semisimple)  is contained in \cite[Appendix\,A]{finkelberg2020drinfeld}. For a general homogeneous spherical variety $X=H\backslash G$, the existence of such a compactification seems well-known to the experts, but we cannot find appropriate literature. We prove this by simplifying and generalizing the argument in \emph{loc.cit}, which also yields a shorter proof in the diagonal case. This is Proposition \ref{prop:relcompact}.

\subsection{Organization of the article}
The organization of this article is as follows: \begin{itemize}
\item In \S\ref{sec:prelim}, we introduce the preliminaries for this article. In particular, we develop the theory of cohomological correspondence with a kernel that appears ubiquitously in this work.
\item In \S\ref{sec:geo=cycle}, we prove the identity between the special cycle classes and the geometric trace of the cohomological correspondences.
\item In \S\ref{sec:cat=geo}, we prove the identity between the geometric trace of cohomological correspondences and the classes obtained from the functoriality of categorical trace.
\item In \S\ref{sec:isotypicpartsc}, we define the $\s$-isotypic part in the cohomology of Shtukas and develop general tools to study the isotypic part of special cycle classes.
\item In \S\ref{sec:rs}, we apply the machinery developed in the previous sections to the Rankin--Selberg case and prove the main result, Theorem \ref{thm:intro:main}.
\end{itemize}

\subsection{Notations and conventions}\label{sec:intro:not}
We now introduce the commonly used notations and conventions in this article.
\subsubsection{Category theory}\label{sec:intro:not:cat}
In this article, by a vector space, we mean a super vector space. By a category, we usually mean an $(\infty,1)$-category. We use $\Space$ to denote the $\infty$-category of spaces and $\Vect$ to indicate the $\infty$-category of (super) vector spaces. For a category $\cC$ with a $t$-structure, we use $\cC^{\heartsuit}$ to denote its heart, which is an abelian category.

For $V\in\Vect$, we define $V\langle 1\rangle=\Pi V[1](1/2)$ where \begin{itemize} \item $\Pi:\Vect\to\Vect $ is the functor changing the parity; \item $[1]:\Vect\to\Vect$ is the shift functor; \item $(1/2)$ is the Tate twist whenever it makes sense (e.g, when working with Frobenius equivariant vector spaces).  \end{itemize}

For a category $\cC$ and two objects $x,y\in\cC$, we use $\Hom(x,y)\in \Space$ to denote the mapping space. We define $\Hom^0(x,y)=\pi_0(\Hom(x,y))\in\Vect$. 

We say a functor between two categories $L:\cC\to\cD$ is continuous if $L$ preserves colimits.

\subsubsection{Sheaf theory}\label{sec:intro:not:sheaf}
In this article, we work with algebraic stacks, which are Artin stacks locally of finite type over a field $F$. For a algebraic stack $X$ over $F$, we use $\Shv(X)=\Shv(X_{\overline{F}})$ to denote the category of (ind-constructible) \'etale sheaves with $k=\Qlbar$-coefficient as introduced in \cite[Appendix\,F]{arinkin2022stacklocalsystemsrestricted}. We use $\Shv(X)_c\sub\Shv(X)$ to denote the full subcategory of constructible sheaves.

\subsubsection{Geometric notations}\label{sec:intro:not:geometricsetup}
In this article, by a $G$-variety $X$, we mean a variety $X$ with a \emph{right} $G$-action.

We fix a smooth projective curve $C$ over $\FF_q$. We use $\Bun_G=\Map(C,[*/G])$ to denote the moduli space of $G$-bundles over $C$. For a smooth affine $G$-variety $X$, we define $\Bun_G^X:=\Map(C,[X/G])$. It is equipped with a map $\pi:\Bun_G^X\to\Bun_G$ which is of finite type. We define the \emph{period sheaf} to be $\cP_X:=\pi_!\uk_{\Bun_G^X}$.

We use $I=\{1,2,\cdots,r\}$ to denote a finite set that serves as the index set of legs. We use $\Hk_{G,I}$ to denote the (iterated) Hecke stack of $G$ with $I$-legs, that is, the moduli space of tuples \[( (c_i)_{i\in I}, \cE_0\xdashrightarrow{c_1}\cE_1 \xdashrightarrow{ c_2}\cdots \xdashrightarrow{c_r}\cE_r)\] where \begin{itemize}
    \item $c_i\in C$ for $i\in I$;
    \item $\cE_{i-1}\xdashrightarrow{c_{i}}\cE_i $ is a map between $G$-bundles on $C-\{c_i\}$ for $i\in I$.
\end{itemize} It gives a correspondence \[\Bun_G\times C^I \xleftarrow{\lh_I} \Hk_{G,I} \xrightarrow{\rh_I} \Bun_G\times C^I\] where $\lh_I$ sends data above to $(\cE_0,(c_i)_{i\in I})$ and $\rh_I$ sends data above to $(\cE_r,(c_i)_{i\in I})$. We use $l_{I}:\Hk_{G,I}\to C^I$ to denote the map remembering the legs (sending data above to $(c_i)_{i\in I}$).

The geometric Satake equivalence attaches each $V^I\in\Rep(\Gc^I)$ a sheaf $\IC_{V^I}\in\Shv(\Hk_G^I)$, which is normalized such that when $V^I=V_{\lambda_I}\in\Rep(\Gc^I)^{\heartsuit}$ which is irreducible with highest weight $\lambda_I=(\lambda_1,\cdots,\lambda_r)\in X_*(T)^I$, the sheaf $\IC_{V^I}$ is perverse, pure of weight $0$, and has the same parity as $\sum_{i\in I}\langle 2\rho,\lambda_i\rangle$. We use $\Hk_{G,\lambda_I}$ (or $\Hk_{G,V_{\lambda_I}}$) to denote the support of $\IC_{V^I}$ (known as the closed Schubert cell). We use $\Hk_{G,\lambda_I}^{\circ}\sub \Hk_{G,\lambda_I}$ to denote the open Schubert cell.

The Hecke operator attached to $V^I\in\Rep(\Gc^I)$ is defined to be the functor \begin{equation}\label{eq:intro:not:hecke} T_{V^I}=\rh_{I,*}(\lh_I^!(-)\otimes^{!} \IC_{V^I})\cong\rh_{I,!}(\lh_I^*(-)\otimes\IC_{V^I}):\Shv(\Bun_G\times C^I)\to\Shv(\Bun_G\times C^I).\end{equation}

We define the moduli space of $G$-Shtukas with $I$-legs to be the stack defined by the fiber product \[\begin{tikzcd}\Sht_{G,I} \ar[r] \ar[d] & \Hk_{G,I} \ar[d, "{(\lh_I,\Frob\circ \rh_I,l_I)}"] \\ \Bun_G\times C^I  \ar[r, "\D_{\Bun_G}\times\id"] & \Bun_G\times\Bun_G\times C^I
\end{tikzcd}.\] We still use $l_{I}:\Sht_{G,I}\to C^I$ to denote the map remembering the legs. We also have the obvious notion $\Sht_{G,\lambda_I}\sub \Sht_{G,I}$ (or $\Sht_{G,V_{\lambda_I}}\sub\Sht_{G,I}$).

We have the moduli space $\Hk_{G,I}^X$ by adding a rational $X$-section to the data of $\Hk_{G,I}$ that is regular for each $\cE_i$. Similarly, we have moduli spaces $\Sht_{G,I}^X$, $\Hk_{G,\lambda_I}^X$, and $\Sht_{G,\lambda_I}^X$.

\subsection{Acknowledgment}
The author would like to express his sincere gratitude to his supervisor, Zhiwei Yun, for his invaluable guidance, insightful feedback, and constant support throughout the writing process of this article. This work would never have come out without his help. This work originates from the collaborative work \cite{liu2025higherperiodintegralsderivatives} of the author and Shurui Liu. The author would like to thank Shurui Liu for many inspiring discussions, especially drawing the author's attention to \cite[Corollary\,3.3.7]{arinkin2022dualityautomorphicsheavesnilpotent}, which is one of the starting points of this work. Also, the author would like to thank Tsao-Hsien Chen, Yiannis Sakellaridis, Jinfeng Song, Yakov Varshavsky, and Lingfei Yi for helpful discussions.

\section{Preliminaries}\label{sec:prelim}
In this section, we introduce the preliminaries for the article.
\begin{itemize}
    \item In \S\ref{sec:cattrace}, we recall the formalism of categorical trace.
    \item In \S\ref{sec:cccor}, we develop the theory of cohomological correspondences with kernel.
    \item In \S\ref{sec:relativecompactification}, we explain the compactification of the map $\pi:\Bun_G^X\to\Bun_G$.
\end{itemize}

\subsection{Categorical trace}\label{sec:cattrace}
In this section, we recall the formalism of categorical trace. We refer to \cite[\S7.3]{zhu2025tamecategoricallocallanglands} for a more detailed treatment.

In the following, by a category, we mean either 1-category, 2-category, $(\infty,1)$-category, or $(\infty,2)$-category.

\begin{defn}[Categorical trace]\label{def:cattr}
    Suppose $(\cC,\otimes,1_{\cC})$ is a symmetric monoidal category. For each dualizable object $x\in\cC$ and an endomorphism $F\in\End_{\cC}(x)$, we define the \emph{categorical trace} of $F$ to be the element \[\tr(F,x)\in\End_{\cC}(1_{\cC})\] defined as the composition \[\begin{tikzcd}
        1_{\cC} \ar[r, "\rmu_x"] & x\otimes x^{\vee} \ar[r,"F\otimes\id"] & x \otimes x^{\vee} \ar[r, "\ev_x"] & 1_{\cC} 
    \end{tikzcd}.\] Here, $x^{\vee}$ is the dual of $x$, $\rmu_x:1_{\cC}\to x\otimes x^{\vee}$ and $\ev_x:x\otimes x^{\vee}\to 1_{\cC}$ are the unit and counit map for the duality between $x$ and $x^{\vee}$.
\end{defn}

When $\cC$ is a 2-category or $(\infty,2)$-category, the categorical trace admits the following functorial property.

\begin{defn}[Functoriality of trace]\label{def:funccattr}
    Suppose $(\cC,\otimes,1_{\cC})$ is a symmetric monoidal 2-category (or $(\infty,2)$-category). For any objects $x,y\in\cC$ and an adjoint pair of morphisms \[\begin{tikzcd}
        x \ar[r, shift left=.75ex, "L"] & y \ar[l, shift left=.75ex, "R"]
    \end{tikzcd},\] we use $R^{\vee}:x^{\vee}\to y^{\vee}$ to denote the map dual to $R$ given by \[R^{\vee}:x^{\vee}\xrightarrow{\id\otimes \rmu_{y}}x^{\vee}\otimes y\otimes y^{\vee}\xrightarrow{\id\otimes R\otimes\id}x^{\vee}\otimes x\otimes y^{\vee}\xrightarrow{\ev_x\otimes\id} y^{\vee}.\] Suppose we are given $F\in\End_{\cC}(x)$, $G\in\End_{\cC}(y)$, and a 2-morphism $\eta:L\circ F\to G\circ L$, we define the 2-morphism \[\tr(\eta):\tr(F,x)\to \tr(G,y)\] to be the 2-morphism from the upper route to the lower route in the diagram \begin{equation}\label{diag:trfuncfunddiag}\begin{tikzcd}
        1_{\cC} \ar[r, "\rmu_x"] \ar[d, "\id"] & x\otimes x^{\vee} \ar[r,"F\otimes\id"] \ar[d, "L\otimes R^{\vee}"] \ar[dl, Rightarrow, "\gamma"] & x \otimes x^{\vee} \ar[r, "\ev_x"] \ar[d, "L\otimes R^{\vee}"] \ar[dl, Rightarrow, "\eta\otimes\id"] & 1_{\cC} \ar[d, "\id"] \ar[dl, Rightarrow, "\delta"] \\
        1_{\cC} \ar[r, "\rmu_y"] & y\otimes y^{\vee} \ar[r,"F\otimes\id"] & y \otimes y^{\vee} \ar[r, "\ev_y"] & 1_{\cC} 
    \end{tikzcd}.\end{equation} Here the 2-morphism $\gamma$ is defined as \[\gamma: (L\otimes R^{\vee})\circ \rmu_x = (L\otimes\ev_x\otimes\id)\circ (\rmu_x\otimes R\otimes\id)\circ \rmu_y\cong ((L\circ R)\otimes\id)\circ \rmu_y\to\rmu_y.\] The 2-morphism $\delta$ is defined as \[\delta:\ev_x\to \ev_x\circ ((R\circ L)\otimes \id)\cong \ev_x\circ (R\otimes \ev_y\otimes \id)\circ (\rmu_y\otimes L\otimes\id))=\ev_y\circ (L\otimes R^{\vee}).\]
\end{defn}

\begin{example}\label{eg:prl}
    Consider the symmetric monoidal $(\infty,2)$-category $(\Pr^{L},\otimes,\mathrm{Space})$ consisting of presentable $(\infty,1)$-categories with continuous (i.e. colimit preserving) functors. Here $\mathrm{Space}\in\Pr^L$ is the category of spaces. The construction above assigns to each dualizable presentable $(\infty,1)$-category $\cC\in \Pr^{L}$ together with a continuous functor $F:\cC\to\cC$ a space $\tr(F,\cC)\in \End_{\Pr^L}(\mathrm{Space})\cong\mathrm{Space}$. Moreover, given another $\cD\in\Pr^L$ equipped with a continuous functor $G:\cD\to\cD$, suppose one has a continuous functor $L:\cC\to\cD$ preserving compact objects (i.e. admits right adjoint $R:\cD\to\cC$ in $\Pr^L$), for each natural transformation $\eta:L\circ F\to G\circ L$, one gets a map between spaces $\tr(\eta):\tr(F,\cC)\to\tr(G,\cD)$.
\end{example}

\begin{example}\label{eg:lincat}
    For each algebra object $(\cA,\otimes)\in\Pr^L$ (i.e. a presentable symmetric monoidal $(\infty,1)$-category such that $\otimes$ is continuous), one can consider the $(\infty,2)$-category of linear categories over $\cA$ defined as $\LinCat_{\cA}:=\mathrm{LMod}_{\cA}(\Pr^L)$. This defines a symmetric monoidal $(\infty,2)$-category $(\LinCat_{\cA},\otimes_{\cA},\cA)$. The discussion in Example \ref{eg:prl} carries over by requiring linearity over $\cA$ everywhere, and we obtain $\tr_{\cA}(F,\cC)\in \End_{\LinCat_{\cA}}(\cA)\cong \cA$ and map $\tr_{\cA}(\eta):\tr_{\cA}(F,\cC)\to\tr_{\cA}(G,\cD)$.
\end{example}

\begin{example}\label{eg:lincatrigid}
    Continuing with Example \ref{eg:lincat}, suppose $\cA$ is rigid. Then $\cC\in\LinCat_{\cA}$ is dualizable if and only if $\cC$ is dualizable as an element in $\Pr^L$. The most important cases for us are when $\cA=\Vect_k$ for a field $k$, and $\cA=\QLisse(C^I)$ for a curve $C$ over $k$ and a finite set $I$. Here, the category $\QLisse(C^I)$ is defined in \cite[Definition\,1.2.6]{arinkin2022stacklocalsystemsrestricted}.
\end{example}

\subsection{Cohomological correspondences with kernel}\label{sec:cccor}
In this section, we develop the theory of cohomological correspondences with kernel. In this section, by an algebraic stack, we mean an Artin stack locally of finite type over a field $F$.

\subsubsection{Definition}

Consider a correspondence of algebraic stacks \[A_1 \xleftarrow{c_1}  C\xrightarrow{c_0}  A_0.\] Given $\cK\in\Shv(C)$.

\begin{defn}
    For any $\cF_0\in\Shv(A_0)$ and $\cF_1\in\Shv(A_1)$, a \emph{cohomological correspondence} between $\cF_0$, $\cF_1$ with kernel $\cK$ is an element \[\frc\in\Cor_{C,\cK}(\cF_0,\cF_1):=\Hom^0(c_0^*\cF_0\otimes\cK,c_1^!\cF_1).\]
\end{defn}

\begin{remark}
    When $\cK=\uk_{C}\langle -d\rangle$ for $d\in \ZZ$, this is the usual notion of cohomological correspondence of degree $d$.
\end{remark}

\subsubsection{Push-forward}\label{sec:ccpush}
Consider a map of correspondences
\begin{equation}\label{diag:functdiag}
\begin{tikzcd}
    A_1\ar[d, "f_{A_1}" '] & C \ar[l,"c_1" ']\ar[r,"c_0"]\ar[d,"f"] & A_0 \ar[d, "f_{A_0}"] \\
    B_1& D \ar[l, "d_1" ']\ar[r, "d_0"]& B_0
\end{tikzcd}.
\end{equation}

Suppose the right square of \eqref{diag:functdiag} is pushable (i.e., the map $C\to A_0\times_{B_0} D$ is proper). Given $\cF_i\in\Shv(A_i)$ for $i=0,1$, $\cK\in\Shv(C)$ and $\cL\in\Shv(D)$ together with a map $\alpha:f^*\cL\to\cK$, one can define the \emph{push-forward} of cohomological correspondence which is a map \[f_!:\Cor_{C,\cK}(\cF_0,\cF_1)\to\Cor_{D,\cL}(f_{A_0,!}\cF_0,f_{A_1,!}\cF_1)\] defined such that for any $\frc\in \Cor_{C,\cK}(\cF_0,\cF_1)=\Hom^0(c_0^*\cF_0\otimes\cK,c_1^!\cF_1)$, we have \[f_!\frc: d_0^*f_{A_0,!}\cF_0\otimes\cL \to f_!c_0^*\cF_0 \otimes \cL\isom f_!(c_0^*\cF_0\otimes f^*\cL)\to f_!(c_0^*\cF_0\otimes\cK)\to f_!c_1^!\cF_1\to d_1^!f_{A_1,!}\cF_1.\] Here, the first map uses the base change map for pushable square (see \cite[\S3.2]{FYZ3}), the second map uses projection formula for $f$, the third map uses $\alpha:f^*\cL\to\cK$, the fourth map uses the map $\frc$, the final map is the Beck-Chevalley base change map for the left square of \eqref{diag:functdiag}.

\begin{example}[Push-forward Borel-Moore homology class]\label{eg:pushbm}
    Consider the map of correspondences
\begin{equation}\label{diag:bmfund}
\begin{tikzcd}
    S\ar[d, "\id" '] & C \ar[l,"c" ']\ar[r,"c"]\ar[d,"f"] & S \ar[d, "\id"] \\
    S& D \ar[l, "d" ']\ar[r, "d"]& S
\end{tikzcd}. 
\end{equation}
The condition that the right square is pushable is equivalent to requiring $f$ to be proper. Note that $\Cor_{C,\cK}(\uk_S,\uk_S)=H_0^{BM}(C/S,\cK)=\Hom^0(\cK,\om_{C/S})$ and a similar statement holds for $D$. The push-forward of cohomological correspondence gives for each $\alpha:f^*\cL\to\cK$ a map \[f_!:H_0^{BM}(C/S,\cK)\to H_0^{BM}(D/S,\cL).\] In particular, when $\cK=\uk_{C}$, $\cL=\uk_D$, and $\alpha$ is the natural map $f^*\uk_{D}\to\uk_C$, this gives the usual proper push-forward of Borel-Moore homology class.

\end{example}

\subsubsection{Pull-back} \label{sec:ccpull}
In this section, we switch to derived algebraic geometry and consider derived algebraic stacks. For applications in this article, one can safely work within classical algebraic geometry and pretend all the quasi-smooth maps involved are smooth. Consider the same diagram \eqref{diag:functdiag} but assume the left square is pullable of defect $d$ (i.e., the map $C\to A_1\times_{B_1} D$ is quasi-smooth of relative dimension $d$). Given $\cF_i\in\Shv(B_i)$ for $i=0,1$, $\cK\in\Shv(D)$ and $\cL\in \Shv(C)$ together with a map $\alpha:\cL\langle -2d\rangle\to f^*\cK$, one can define the \emph{pull-back} of cohomological correspondence which is a map \[f^*:\Cor_{D,\cK}(\cF_0,\cF_1)\to\Cor_{C,\cL}(f^*_{A_0}\cF_0,f^*_{A_1}\cF_1)\] defined such that for any $\frc\in \Cor_{D,\cK}(\cF_0,\cF_1)=\Hom^0(c_0^*\cF_0\otimes\cK,c_1^!\cF_1)$, we have \[f^*\frc:(c_0^*f_{A_0}^*\cF_0)\otimes\cL\to f^*d_0^*\cF_0\otimes f^*\cK\langle 2d\rangle\isom f^*(d_0^*\cF_0\otimes\cK)\langle 2d\rangle\to f^*d_1^!\cF_1\langle 2d\rangle\to c_1^!f_{A_1}^*\cF_1.\] Here, the first maps uses $\alpha$, the third map uses $\frc$, the last map uses the base change map $f^*d_1^!\langle2d\rangle\to c_1^!f_{A_1}^*$ for pull-back square of defect $d$ (see \cite[\S3.5]{FYZ3}).

\begin{example}[Pull-back of Borel-Moore homology class]\label{eg:pullbm}
    Consider diagram \eqref{diag:bmfund}. The condition that the left square is pullable of defect $d$ is equivalent to requiring $f$ to be quasi-smooth of relative dimension $d$. The pull-back of cohomological correspondence gives for each $\alpha:\cL\langle-2d\rangle\to f^*\cK$ a map \[f^*:H_0^{BM}(D/S,\cK)\to H_{0}^{BM}(C/S,\cL).\] When $\cK=\uk_{D}$, $\cL=\uk_C\langle 2d\rangle$, and $\alpha$ is the natural map $\uk_C\to f^*\uk_{D}$, this gives the usual pull-back of Borel-Moore homology class along a quasi-smooth map.
\end{example}

\subsubsection{Specialization}\label{sec:ccspecial}
In this section, we use $\eta$ to denote the generic point of $\AA^1_{F}$ and $s=0\in\AA^1_F(F)$.
For any algebraic stack $A$ over $\AA^1_F$, one has the nearby cycle functor $\Psi:\Shv(A_{\eta})\to\Shv(A_s)$.

Consider a cohomological correspondence over $\AA^1_F$:  \[A_1\xleftarrow{c_1} C \xrightarrow{c_0} A_0.\] Given $\cF_i\in \Shv(A_{i,\eta})$ for $i=0,1$, $\cK\in\Shv(C_{\eta})$, $\cL\in\Shv(C_s)$ together with a morphism $\alpha:\cL\to\Psi\cK$, one can define the \emph{specialization} of cohomological correspondence to be the map \[\Psi:\Cor_{C_{\eta},\cK}(\cF_0,\cF_1)\to \Cor_{C_s,\cL}(\Psi\cF_0,\Psi\cF_1)\] defined such that for any $\frc\in\Cor_{C_{\eta},\cK}(\cF_0,\cF_1)$ one has \[\Psi\frc:(c_{0,s}^*\Psi\cF_0)\otimes \cL\to\Psi (c_{0,\eta}^*\cF_0)\otimes\Psi\cK\to \Psi(c_{0,\eta}^*\cF\otimes\cK)\xrightarrow{\Psi(\frc)}\Psi c_{1,\eta}^!\cF_1\to c_{1,s}^!\Psi\cF_1.\]

\begin{example}[Specialization of Borel-Moore homology class]\label{eg:spbm}
    Suppose the correspondence has the form \[S\xleftarrow{c} C \xrightarrow{c} S.\] The construction above defines for each $\alpha:\cL\to\Psi\cK$ a map \[\Psi:H_0^{BM}(C_{\eta}/S_{\eta},\cK)\to H_0^{BM}(C_s/S_s,\cL),\] which coincides with the usual specialization of Borel-Moore homology class when $\cK=\uk_{C_{\eta}}$, $\cL=\uk_{C_s}$, and $\alpha:\Psi\uk_{C_{\eta}}\to\uk_{C_s}$ is the natural map.
\end{example}

\subsubsection{Geometric trace construction}\label{sec:cctrace}
In the following, we work with algebraic stacks over a base algebraic stack $S$. For any such stack $A$, we define the category $\Shv(A)_{\ULA}\sub\Shv(A)$ to be the full subcategory of sheaves that are ULA over $S$. We use $\DD_S:\Shv(A)_{\ULA}\to \Shv(A)_{\ULA}^{\op}$ to denote the relative Verdier duality functor.

Consider a self-correspondence between algebraic stacks $(A\xleftarrow{c_1}C\xrightarrow{c_0}A)$ with a kernel $\cK\in\Shv(C)$. Given $\cF\in\Shv(A)_{\ULA}$ and a cohomological correspondence $\frc\in\Cor_{C,\cK}(\cF,\cF)=\Hom^0(c_0^*\cF\otimes\cK,c_1^!\cF)$. Consider Cartesian diagram \[ \begin{tikzcd}
    \Fix(C) \ar[d, "p"] \ar[r, "i"] & C \ar[d, "{(c_1,\,c_0)}"] \\
    A \ar[r, "\D"] & A\times_S A
\end{tikzcd}.\]

\begin{defn}
    We define the \emph{geometric trace} of cohomological correspondence $\frc$ to be the element \[\tr_S(\frc)\in H_0^{BM}(\Fix(C)/S,\cK|_{\Fix(C)})=\Hom^0(\cK|_{\Fix(C)},\om_{\Fix(C)/S})\] given by the composition \[\begin{split}
        \cK|_{\Fix(C)}&=i^*\cK \\ &\xrightarrow{i^*\frc} i^*\underline{\Hom}(c_0^*\cF,c_1^!\cF) \\ &\isom i^*(c_1,c_0)^!\underline{\Hom}(\pr_2^*\cF,\pr_1^!\cF)\\ &\to p^!\D^*(\DD_S(\cF)\boxtimes_S\cF) \\ &\isom p^!(\DD_S(\cF)\otimes\cF) \\ &\to p^!\om_{A/S} \\ &\isom \om_{\Fix(C)/S}
    \end{split}.\] The third row uses isomorphism $\underline{\Hom}(c_0^*\cF,c_1^!\cF)\cong (c_1,c_0)^!\underline{\Hom}(\pr_2^*\cF,\pr_1^!\cF)$, the fourth row uses base change map $i^*(c_1\times c_0)^!\to p^!\D^*$ and the natural isomorphism $\underline{\Hom}(\pr_2^*\cF,\pr_1^!\cF)\cong \DD_S(\cF)\boxtimes_S\cF$, the sixth line uses $\DD_S(\cF)\otimes\cF\to\om_{A/S}$.
\end{defn}

\begin{remark}
    When $\cK=\uk_{C}$, the definition above coincides with the definition in \cite[\S4.1]{FYZ3}.
\end{remark}

\begin{remark}
    We call the above construction a geometric trace to distinguish it from the categorical trace considered in other parts of the article. Note that the diagram \eqref{diag:geotracefund} can not be obtained from the diagram \eqref{diag:trfuncfunddiag} by taking $\cC=\LinCat_{\Shv(S)}$, $x=\Shv(A)$, and $y=\Shv(S)$ due to the failure of the categorical Künneth formula. There are two ways to understand the geometric trace via categorical trace: It can be either understood as a categorical trace in Lu--Zheng's category, as in \S\ref{sec:lzcat}, or as a functoriality of categorical trace in the geometric Langlands setting by putting a nilpotent singular support condition which remedies the categorical Künneth formula.
\end{remark}

\subsubsection{Geometric trace as a natural transformation}\label{sec:cctrnt}
Now we give another description of the geometric trace \[\tr_S(\frc)\in \Hom^0(\cK|_{\Fix(C)},\om_{\Fix(C)/S})\cong \Hom^0(f_{\Fix(C),!}\cK|_{\Fix(C)},\uk_{S}).\] Here we use $f_{\Fix(C)}:\Fix(C)\to S$ to denote the structural morphism of $\Fix(C)$. We use $f:A\to S$ (resp, $f_2:A\times_S A\to S$) to denote the structural morphism of $A$ (resp. $A\times_S A$).

Consider correspondence $(A\times_S A\xleftarrow{c_1\times\id} C\times_S A\xrightarrow{c_0\times\id}A\times_S A)$ and the map $\pr_1:C\times_S A\to C$ which is projection onto the first coordinate. We have the lax commutative diagram
\begin{equation}\label{diag:geotracefund}
    \begin{tikzcd}
        \Shv(S) \ar[r, "\D_!f^*"] \ar[d, "\id"] & \Shv(A\times_S A) \ar[dl, Rightarrow, "\gamma_{\cF}"'] \ar[r, "(c_0\times\id)_!(\pr_1^*\cK\otimes (c_1\times\id)^*(-))"] \ar[d, "f_{2,!}(-\otimes\cF\boxtimes_S\DD_S(\cF))" description] & \Shv(A\times_S A) \ar[dl, Rightarrow, "\eta_{\frc\boxtimes\id}"'] \ar[d, "f_{2,!}(-\otimes\cF\boxtimes_S\DD_S(\cF))" description] \ar[r, "f_!\D^*"] & \Shv(S) \ar[dl, Rightarrow, "\delta_{\cF}"] \ar[d, "\id"] \\
        \Shv(S) \ar[r, "\id"] & \Shv(S) \ar[r, "\id"] & \Shv(S) \ar[r, "\id"] & \Shv(S)
    \end{tikzcd}
\end{equation}
We now explain the three natural transformations in \eqref{diag:geotracefund}:
\begin{itemize}
\item The natural transformation $\gamma_{\cF}$ is defined by \[\gamma_{\cF}:f_{2,!}(\D_!f^*(-)\otimes\cF\boxtimes_S\DD_S(\cF))\isom f_{2,!}\D_!(f^*(-)\otimes\cF\otimes\DD_S(\cF))\to f_!(f^*(-)\otimes\om_{A/S})\isom -\otimes f_!f^!\uk_{S}\to\id .\] The first map is given by the projection formula for $\D$, the second map is given by $f_{2,!}\D_!=f_!$ and $\cF\otimes\DD_S(\cF)\to\om_{A/S}$, the third map is given by the projection formula for $f$, the final map is given by the adjunction map for adjoint pair $(f_!,f^!)$.

\item Consider the cohomological correspondence $\frc\boxtimes\id\in\Cor_{C\times_S A,\cK\boxtimes\uk_{A}}(\cF\boxtimes\uk_{A},\cF\boxtimes\uk_{A})$. The natural transformation $\eta_{\frc\boxtimes\id}$ is given by \begin{equation}\label{eq:cctont} \begin{split}
    \eta_{\frc\boxtimes\id}:f_{2,!}((c_0\times\id)_!(\pr_1^*\cK\otimes (c_1\times\id)^*(-))\otimes\cF\boxtimes_S\DD_S(\cF))&\isom f_{2,!}( -\otimes c_{1,!}(c_0^*\cF\otimes\cK)\boxtimes_S\DD_S(\cF)) \\ & \to f_{2,!}(-\otimes \cF\boxtimes_S\DD_S(\cF))
\end{split}.\end{equation} The first isomorphism comes from several applications of base change isomorphisms and projection formula, the second map comes from the cohomological correspondence $\frc:c_{1,!}(c_0^*\cF\otimes\cK)\to\cF$. Note that the middle square in \eqref{diag:geotracefund} is induced from the following more primitive square:
\begin{equation}\label{eq:cctrnt}
    \begin{tikzcd}
        \Shv(A) \ar[d, "f_!(-\otimes\cF)"'] \ar[r, "c_{0,!}(\cK\otimes c_{1}^*(-))"] & \Shv(A) \ar[d, "f_!(-\otimes\cF)"] \ar[dl, Rightarrow, "\eta_{\frc}"] \\
        \Shv(S) \ar[r, "\id"] & \Shv(S)
    \end{tikzcd}
.\end{equation}

\item The natural transformation $\delta_{\cF}$ is given by \[\delta_{\cF}:f_!\D^*\isom f_{2,!}(-\otimes\D_!\uk_{A})\to f_{2,!}(-\otimes \D_!(\cF\otimes^!\DD_S(\cF)))\to f_{2,!}(-\otimes\cF\boxtimes_S\DD_S(\cF)).\] The first isomorphism uses the projection formula for $\Delta$, the second map uses the map $\uk_A\to\cF\otimes^!\DD_S(\cF)$, the third map uses the adjunction map for adjoint pair $(\D_!,\D^!)$.

\end{itemize}

Note that after evaluating two routes from the left-top corner of \eqref{diag:geotracefund} to the right-bottom corner of \eqref{diag:geotracefund} on the element $\uk_{S}\in\Shv(S)$, we get a map \begin{equation} \label{eq:geotraceseconddef}\tr'_S(\frc):f_{\Fix(C),!}\cK|_{\Fix(C)}\cong f_!\D^*(c_0\times\id)_!(\pr_1^*\cK\otimes (c_1\times\id)^*\D_!f^*\uk_{S})\to \uk_{S}.\end{equation}
\begin{lemma}\label{lem:geotrace=diagtrace}
    We have $\tr'_S(\frc)=\tr_S(\frc)\in \Hom^0(f_{\Fix(C),!}\cK|_{\Fix(C)},\uk_{S})$.
\end{lemma}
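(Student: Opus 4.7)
The plan is to unwind both $\tr_S(\frc)$ and $\tr'_S(\frc)$ via an explicit diagram chase and to verify that after applying all the adjunctions and base-change isomorphisms, the two compositions coincide. Since $\Hom^0(\cK|_{\Fix(C)}, \om_{\Fix(C)/S}) \cong \Hom^0(f_{\Fix(C),!}\cK|_{\Fix(C)}, \uk_S)$ by the $(f_{\Fix(C),!}, f_{\Fix(C)}^!)$-adjunction together with $f_{\Fix(C)}^!\uk_S \cong \om_{\Fix(C)/S}$, it suffices to match the two corresponding maps $f_{\Fix(C),!}\cK|_{\Fix(C)} \to \uk_S$.

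First, I would evaluate each of the three natural transformations $\gamma_\cF$, $\eta_{\frc\boxtimes\id}$, $\delta_\cF$ on the unit object $\uk_S \in \Shv(S)$ and identify the resulting maps individually. Using the projection formula for $\D$ together with $f\circ\D = \id$, the map $\delta_\cF(\uk_S)$ reduces to the one induced by the coevaluation $\uk_A \to \cF \otimes^! \DD_S(\cF)$ followed by the adjunction map $\D_!(\cF\otimes^!\DD_S(\cF))\to \cF\boxtimes_S \DD_S(\cF)$; dually, $\gamma_\cF(\uk_S)$ reduces to the map induced by the evaluation $\cF \otimes \DD_S(\cF) \to \om_{A/S}$ composed with the adjunction $f_!f^!\uk_S \to \uk_S$. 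The middle transformation $\eta_{\frc\boxtimes\id}$ applied to $\D_!\uk_A$ rewrites, via a sequence of base-change and projection-formula isomorphisms made explicit in \eqref{eq:cctont}, as the pushforward (along $f_{2,!}(-\otimes\cF\boxtimes_S\DD_S(\cF))$) of the cohomological correspondence $\frc: c_{1,!}(c_0^*\cF \otimes \cK) \to \cF$ tensored with $\DD_S(\cF)$ on the second factor.

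Second, I would assemble these three pieces, use the Cartesian base change $f_!\D^*(c_0\times\id)_!(\pr_1^*\cK \otimes (c_1\times\id)^*\D_!\uk_A) \cong f_{\Fix(C),!}\cK|_{\Fix(C)}$ coming from the square defining $\Fix(C)$, and identify the resulting composition with the six-term definition of $\tr_S(\frc)$ line by line. The main matches are: the isomorphism $\underline{\Hom}(c_0^*\cF, c_1^!\cF) \cong (c_1, c_0)^! \underline{\Hom}(\pr_2^*\cF, \pr_1^!\cF)$ used in the direct definition arises inside $\eta_{\frc\boxtimes\id}$ when transporting $\cF\boxtimes_S\DD_S(\cF)$ across $(c_0\times\id)_!$ and $(c_1\times\id)^*$; the subsequent base change $i^*(c_1, c_0)^!\to p^!\D^*$ is the Beck--Chevalley map packaged inside $\delta_\cF$; and the coevaluation/evaluation pair $\uk_A \to \cF\otimes^!\DD_S(\cF)$ and $\cF\otimes\DD_S(\cF)\to\om_{A/S}$ are exactly the content of $\delta_\cF$ and $\gamma_\cF$ respectively.

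The main obstacle is symbolic bookkeeping rather than conceptual content: one must verify that each Beck--Chevalley map and projection formula appearing implicitly in $\gamma_\cF$, $\eta_{\frc\boxtimes\id}$, and $\delta_\cF$ via \eqref{diag:geotracefund} agrees with its counterpart in the six-step definition of $\tr_S(\frc)$, including the compatibility between the base change along the square defining $\Fix(C)$ and the base change along the larger Cartesian diagram obtained by augmenting with $A \to A\times_S A$. No additional geometric input is required; the identity is a formal consequence of the six-functor formalism once both compositions are expanded into their elementary Beck--Chevalley and projection-formula constituents and reduced to a common normal form.
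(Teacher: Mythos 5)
Your proposal is correct and is precisely the ``routine diagram chase'' that the paper's own proof declares and leaves to the reader: evaluate $\gamma_{\cF}$, $\eta_{\frc\boxtimes\id}$, and $\delta_{\cF}$ on $\uk_S$, and match the resulting Beck--Chevalley maps, projection formulas, and the coevaluation/evaluation pair against the six-step definition of $\tr_S(\frc)$ via the $\underline{\Hom}$--$\otimes$ adjunction. No gap; your outline fills in exactly what the paper omits.
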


\begin{proof}
    This is a routine diagram chase. We leave it to the reader.
\end{proof}

\subsubsection{Geometric Shtuka construction}\label{sec:geoshtcons}
In this section, we assume the base field $F=\FF_q$.

\begin{defn}\label{def:ccsht}
    For each correspondence $c:(A\xleftarrow{c_1}C\xrightarrow{c_0}A)$, we define its \emph{Frobenius twist} to be the correspondence $c^{(1)}:(A\xleftarrow{c_1}C\xrightarrow{\Frob\circ c_0}A)$. For each cohomological correspondence $\frc\in\Cor_{C,\cK}(\cF,\cF)$, we define \[\frc^{(1)}\in \Cor_{C,\cK}(\cF,\cF)\]\footnote{Note that the two vector spaces $\Cor_{C,\cK}(\cF,\cF)$ appearing here are different: The first is defined using $c$ and the second is defined using $c^{(1)}$.} to be the element \[\frc^{(1)}:(c_0^*\circ \Frob^*\cF)\otimes\cK \isom (c_0^*\cF)\otimes\cK\xrightarrow{\frc}c_1^!\cF.\]
\end{defn}

Consider Cartesian diagram \[\begin{tikzcd} \Sht(C) \ar[r, "i"]\ar[d, "p"] & C\ar[d, "{(c_1,\,\Frob\circ c_0)}"] \\ A\ar[r, "\D"] & A\times_S A\end{tikzcd}.\] Note that $\Sht(C)$ is $\Fix(C)$ for the correspondence $c^{(1)}$. Therefore, the construction in the precious section gives a map \[\tr_{\Sht,S}:=\tr_{S}\circ (-)^{(1)}:\Cor_{C,\cK}(\cF,\cF)\to H_0^{BM}(\Sht(C)/S,\cK|_{\Sht(C)}).\]

\subsubsection{Functoriality of geometric trace I}\label{sec:ccfunc}
Now we formulate compatibilities of the geometric trace construction in \S\ref{sec:cctrace} with proper push-forward, smooth pull-back, and specialization. We will leave their proof to \S\ref{sec:lzcat}. 

\begin{thm}[Compatibility with proper push-forward]\label{thm:trproperpush}
    Consider a morphism of correspondences between algebraic stacks over $S$ \begin{equation}\label{diag:trpushdiag}\begin{tikzcd}
        A \ar[d, "f_A"] & C \ar[l, "c_1"'] \ar[r, "c_0"] \ar[d, "f"] & A \ar[d, "f_A"]  \\
        B & D \ar[l, "d_1"'] \ar[r, "d_0"] & B 
    \end{tikzcd}\end{equation} in which $f_A, f$ are proper\footnote{By a proper morphism between algebraic stacks, we mean a map representable in proper algebraic spaces.} (hence the right square is pushable). Then the induced map $\Fix(f):\Fix(C)\to \Fix(D)$ is proper.
    Given $\cF\in \Shv(A)_{\ULA}$ and $\cK\in\Shv(C)$, together with a map $\alpha:f^*\cL\to\cK$, the diagram \[\begin{tikzcd}
        \Cor_{C,\cK}(\cF,\cF) \ar[r, "f_!"] \ar[d, "\tr_S"] & \Cor_{D,\cL}(f_{A,!}\cF,f_{A,!}\cF) \ar[d, "\tr_{S}"] \\
        H_0^{BM}(\Fix(C)/S,\cK|_{\Fix(C)}) \ar[r, "\Fix(f)_!"] & H_0^{BM}(\Fix(D)/S,\cL|_{\Fix(D)})
    \end{tikzcd}\] is commutative. Here the map $\Fix(f)_!:H_0^{BM}(\Fix(C)/S,\cK|_{\Fix(C)})\to H_0^{BM}(\Fix(D)/S,\cL|_{\Fix(D)})$ is induced from the map \[\Fix(\alpha):\Fix(f)^*(\cL|_{\Fix(D)})\isom (f^*\cL)|_{\Fix(C)}\xrightarrow{\alpha|_{\Fix(C)}} \cK|_{\Fix(C)}\] via Example \ref{eg:pushbm}.
\end{thm}

\begin{thm}[Compatibility with smooth pull-back]\label{thm:trpull}
    Consider the diagram \eqref{diag:trpushdiag}. Assume $f_A$ is smooth and the left square is pullable of defect $d$. Then the induced map $\Fix(f):\Fix(C)\to\Fix(D)$ is quasi-smooth of relative dimension $d$. Given $\cF\in\Shv(B)_{\ULA}$, $\cK\in\Shv(D)$, $\cL\in\Shv(C)$, and a map $\alpha:\cL\langle -2d\rangle\to f^*\cK$, the diagram \[\begin{tikzcd}\Cor_{D,\cK}(\cF,\cF) \ar[r, "f^*"] \ar[d, "\tr_S"] & \Cor_{C,\cL}(f_A^*\cF,f_A^*\cF) \ar[d, "\tr_S"] \\
    H_0^{BM}(\Fix(D)/S,\cK|_{\Fix(D)}) \ar[r, "\Fix(f)^*"] & H_0^{BM}(\Fix(C)/S,\cL|_{\Fix(C)})\end{tikzcd}\] is commutative. Here the map $\Fix(f)^*:H_0^{BM}(\Fix(D)/S,\cK|_{\Fix(D)})\to H_0^{BM}(\Fix(C)/S,\cL|_{\Fix(C)})$ is induced from the map \[\Fix(\alpha):\cL|_{\Fix(C)}\langle-2d\rangle\xrightarrow{\alpha|_{\Fix(C)}} (f^*\cK)|_{\Fix(C)}\isom \Fix(f)^*(\cK|_{\Fix(D)})\] via Example \ref{eg:pullbm}.
\end{thm}

Recall we use $\eta$ to denote the generic point of $\AA^1_F$ and $s$ to denote the zero point of $\AA^1_F$.
\begin{thm}[Compatibility with specialization]\label{thm:trspecial}
     Consider an algebraic stack $S$ over $\AA^1_F$.  Given a correspondence between algebraic stacks over $S$: \[A\xleftarrow{c_1} C \xrightarrow{c_0} A,\] together with sheaves $\cF\in\Shv(A_{\eta})_{\ULA}$, $\cK\in\Shv(C_{\eta})$, $\cL\in\Shv(C_{s})$, and a map $\alpha:\cL\to\Psi\cK$, the diagram \[\begin{tikzcd}
        \Cor_{C_{\eta},\cK}(\cF,\cF) \ar[r, "\Psi"] \ar[d, "\tr_{S_{\eta}}"] & \Cor_{C_s,\cL}(\Psi\cF,\Psi\cF) \ar[d, "\tr_{S_s}"] \\
        H_0^{BM}(\Fix(C_{\eta})/S_{\eta},\cK|_{\Fix(C_{\eta})}) \ar[r, "\Psi"] & H_0^{BM}(\Fix(C_{s})/S_{s},\cL|_{\Fix(C_{s})})
    \end{tikzcd}\] is commutative. Here the map $\Psi:H_0^{BM}(\Fix(C_{\eta})/S_{\eta},\cK|_{\Fix(C_{\eta})})\to H_0^{BM}(\Fix(C_{s})/S_{s},\cL|_{\Fix(C_{s})})$ is induced from the map \[\Fix(\alpha):\cL|_{\Fix(C_s)}\to (\Psi\cK)|_{\Fix(C_s)}\to\Psi(\cK|_{\Fix(C_{\eta})})\] via Example \ref{eg:spbm}.
\end{thm}

\begin{remark}
    Note that theorems above with constant kernel were proved in \cite{varshavsky2007lefschetz} and \cite{Lu_2022} by slightly different methods for schemes. The argument of \cite{Lu_2022} can be easily generalized to Artin stacks (see \cite{feng2024modularityhigherthetaseries}). We will also prove these theorems by generalizing the argument of \cite{Lu_2022}.
\end{remark}

\subsubsection{Functoriality of geometric trace II}
Now we generalize Theorem  \ref{thm:trproperpush} to the case that $f$ admits compactification with contracting boundary. The main result in this section is Theorem  \ref{thm:trcontractpush}.

Our strategy follows \cite{varshavsky2007lefschetz}\cite{gaitsgory2024localtermscategoricaltrace}\cite{feng2024modularityhigherthetaseries}, in which they prove compatibilities of geometric trace with restriction to a closed substack on which the correspondence is contracting in different settings. As they did in \textit{loc.cit}, we use deformation to the normal cone, which has very simple behavior along a substack on which the correspondence is contracting.

We begin with the definition of a correspondence contracting on a substack following \cite[Definition\,1.5.1]{varshavsky2007lefschetz} (in the scheme case) and \cite[Definition\,7.2.1]{feng2024modularityhigherthetaseries} (for Artin stacks):
\begin{defn}
    Consider a correspondence between algebraic stacks over $S$ \[c:(A\xleftarrow{c_1} C \xrightarrow{c_0} A).\] Let $Z\sub A$ be a closed substack defined by an ideal sheaf $\cI_Z\sub\cO_A$. \begin{itemize}
        \item We say that $c$ stabilizes $Z$ if $c_0^*\cI_Z\sub c_1^*\cI_Z$.
        \item We say that $c$ is \emph{contracting near} $Z$ if $c_0^*\cI_Z\sub c_1^*\cI_Z$ and there exists $n\in\ZZ_{\geq 0}$ such that $c_0^*\cI_Z^{n}\sub c_1^*\cI_Z^{n+1}$.
    \end{itemize} 
\end{defn}

\begin{example}\label{eg:frobcontract}
    Given a correspondence between algebraic stacks $c=(A\xleftarrow{c_1} C \xrightarrow{c_0} A)$ defined over $S/\FF_q$. Then $c^{(1)}$ is contracting on $Z$ if $c$ stabilizes $Z$.
\end{example}

For $Z$ stabilized by $c$, denote $C_Z:=C\times_A Z$ in which we are using $c_1:C\to A$. Consider the correspondence \[D_Z(A)\xleftarrow{\tilc_1} D_{C_Z}(C)\xrightarrow{\tilc_0} D_Z (A)\] where $D_Z(A)$ is the deformation to the normal cone of $A$ along the closed substack $Z$ and similarly for $D_{C_Z}(C)$. This correspondence is defined over $\AA^1_S$, whose fiber over any $S$-point of $\AA^1_S$ which is disjoint from zero is identical to $(A\xleftarrow{c_1} C \xrightarrow{c_0} A)$, while the fiber over $0\in\AA^1_S$ is identified with \[N_Z(A)\xleftarrow{N(c_1)} N_{C_Z}(C)\xrightarrow{N(c_0)} N_Z (A)\] where $N_Z(A)$ is the normal cone of $A$ along $Z$ and similarly for $N_{C_Z}(C)$.

The following are some easy facts:
\begin{lemma}\label{lem:contract}
    Consider a correspondence $c$ between algebraic stacks over $S$ \[A\xleftarrow{c_1} C \xrightarrow{c_0} A\] with a closed substack $Z\sub A$ on which $c$ is contracting. The following statements are true:
    \begin{enumerate}
        \item The set-theoretic image of $N(c_0):N_{C_Z}(C)\to N_Z(A)$ is contained in $Z\sub N_Z(A)$.
        \item The natural map defines an open and closed embedding $\Fix(C_Z)_{\red} \sub \Fix(C)_{\red}$.
        \item The natural map defines an isomorphism $\Fix(N_{C_Z}(C))_{\red}\cong \Fix(C_Z)_{\red}$.
        \item We have disjoint decomposition $\Fix(D_{C_Z}(C))_{\red}=\Fix(C_Z)_{\red}\times_S\AA^1_S\coprod (\Fix(C)_{\red}\bs\Fix(C_Z)_{\red})\times_S(\AA^1_S\bs\{0\})$. 
    \end{enumerate}
\end{lemma}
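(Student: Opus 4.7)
My plan is to prove the four parts in sequence, with part (2) being the main obstacle: it is the only one that requires pairing the strict contracting hypothesis with a Nakayama-type local argument, while the other parts are essentially formal consequences. I would work locally on $A$ and $C$ in the smooth/\'etale topology so that the ideal sheaves $\cI_Z \subset \cO_A$ and $J := c_1^*\cI_Z \cdot \cO_C \subset \cO_C$ are well-defined; note that $J$ is precisely the ideal of $C_Z$ inside $C$, and the stabilizing hypothesis reads $c_0^*\cI_Z \subset J$.

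For (1), I would use the identification $N_Z(A) = \Spec_Z\bigl(\bigoplus_{n\geq 0}\cI_Z^n/\cI_Z^{n+1}\bigr)$, with $Z$ sitting set-theoretically as the vanishing locus of the augmentation ideal $\bigoplus_{n\geq 1}\cI_Z^n/\cI_Z^{n+1}$, and similarly for $N_{C_Z}(C)$. The stabilizing condition yields a well-defined graded ring map $N(c_0)^\ast : \bigoplus_n \cI_Z^n/\cI_Z^{n+1} \to \bigoplus_n J^n/J^{n+1}$, and the strict contracting hypothesis $c_0^\ast\cI_Z^{n_0}\subset J^{n_0+1}$ says that $N(c_0)^\ast$ kills the degree-$n_0$ piece. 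Since that piece is the $n_0$-th power of the degree-one part $N(c_0)^\ast(\cI_Z/\cI_Z^2)$, the latter consists of nilpotent elements in $\bigoplus_n J^n/J^{n+1}$, hence so is the entire ideal they generate. Thus $N(c_0)^\ast$ sends the augmentation ideal into the nilradical, which translates exactly into the set-theoretic image of $N(c_0)$ being contained in $Z$.

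For (2), closedness of $\Fix(C_Z)_{\red}\hookrightarrow \Fix(C)_{\red}$ is immediate from the closed immersion $C_Z\hookrightarrow C$. For openness I would argue locally around an arbitrary point $x_0\in\Fix(C_Z)_{\red}$, with the goal of showing that a Zariski neighborhood of $x_0$ in $\Fix(C)$ is already contained in $C_Z$ set-theoretically. On the fixed-point subscheme $\Fix(C)\subset C$ the pullbacks $c_0^\ast$ and $c_1^\ast$ agree, so the contracting hypothesis restricts to the inclusion of ideals
\[ c_1^\ast\cI_Z^{n_0} \;=\; c_0^\ast\cI_Z^{n_0} \;\subset\; c_1^\ast\cI_Z^{n_0+1} \;=\; c_1^\ast\cI_Z \cdot c_1^\ast\cI_Z^{n_0} \]
on $\Fix(C)$. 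Because $c_1(x_0)\in Z$, the ideal $c_1^\ast\cI_Z$ lies in the maximal ideal of the stalk $\cO_{\Fix(C),x_0}$, and the finitely generated ideal $c_1^\ast\cI_Z^{n_0}$ satisfies the hypothesis of Nakayama's lemma. Nakayama therefore forces $(c_1^\ast\cI_Z^{n_0})_{x_0}=0$, so the ideal vanishes in a Zariski neighborhood of $x_0$, and that neighborhood of $\Fix(C)_{\red}$ is contained in $V(c_1^\ast\cI_Z)_{\red}=(C_Z)_{\red}$. This is precisely openness of $\Fix(C_Z)_{\red}$ inside $\Fix(C)_{\red}$.

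Part (3) will follow by combining (1) with the observation that the preimage under $N(c_1)$ of the zero section $Z\subset N_Z(A)$ is precisely the zero section $C_Z\subset N_{C_Z}(C)$, together with the fact that on this zero section $N(c_1)$ and $N(c_0)$ restrict to $c_1|_{C_Z}$ and $c_0|_{C_Z}$ respectively. For (4), I would exploit the canonical isomorphism $D_{C_Z}(C)|_{\AA^1_S\setminus\{0\}}\cong C\times_S(\AA^1_S\setminus\{0\})$, which trivializes $\Fix(D_{C_Z}(C))_{\red}$ over $\AA^1_S\setminus\{0\}$ as $\Fix(C)_{\red}\times_S(\AA^1_S\setminus\{0\})$, while (3) identifies the fiber at $t=0$ with $\Fix(C_Z)_{\red}$. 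The deformation contains $C_Z\times_S\AA^1_S$ as a canonical closed subscheme (glueing $C_Z\subset C$ at $t\neq 0$ with the zero section $C_Z\subset N_{C_Z}(C)$ at $t=0$), on which the deformed correspondence reduces to $c|_{C_Z}$ for every $t$; this produces the closed piece $\Fix(C_Z)_{\red}\times_S\AA^1_S$ of $\Fix(D_{C_Z}(C))_{\red}$. Combining with (2) on the $t\neq 0$ fibers and the identification at $t=0$, the complement is forced to be $(\Fix(C)_{\red}\setminus\Fix(C_Z)_{\red})\times_S(\AA^1_S\setminus\{0\})$, giving the disjoint decomposition.
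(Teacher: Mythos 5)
Your write-up takes a genuinely different route from the paper, whose entire proof of this lemma is a citation to \cite{feng2024modularityhigherthetaseries} (Lemma~7.2.4 for (1), Proposition~7.5.4 for (2), Proposition~7.5.5 for (4), with (3) deduced from (1)). You instead supply the underlying arguments, and for (1)--(3) they are correct and are essentially the standard Varshavsky/Feng--Yun--Zhang arguments: the graded description of the normal cone plus nilpotence of the image of the degree-one piece for (1); the Nakayama argument on $\Fix(C)$, where the two pullbacks $c_0^*\cI_Z$ and $c_1^*\cI_Z$ are identified, for (2); and the observation that $N(c_1)^{-1}$ of the zero section is the zero section (because $c_1^*\cI_Z$ generates the ideal of $C_Z$) combined with (1) for (3). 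This is more self-contained than the paper and is a perfectly good way to prove the lemma.

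The one genuine gap is in (4). What you actually establish is that $P_1:=\Fix(C_Z)_{\red}\times_S\AA^1_S$ is closed, that its set-theoretic complement is $P_2:=(\Fix(C)_{\red}\setminus\Fix(C_Z)_{\red})\times_S(\AA^1_S\setminus\{0\})$, and that $P_2$ is open. But the asserted equality $\Fix(D_{C_Z}(C))_{\red}=P_1\coprod P_2$ requires both pieces to be open \emph{and} closed, and being clopen fiberwise over $\AA^1_S$ does not give openness of $P_1$ along the fiber $t=0$: a priori a family of fixed points lying in $\Fix(C)\setminus\Fix(C_Z)$ for $t\neq 0$ could specialize, as $t\to 0$, to a point of the zero section inside $\Fix(N_{C_Z}(C))_{\red}$, which would put a boundary point of $P_2$ inside $P_1$. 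To close this, apply your part (2) to the deformed correspondence itself: $D_Z(A)\leftarrow D_{C_Z}(C)\rightarrow D_Z(A)$ is contracting near the canonical closed substack $Z\times_S\AA^1_S\subset D_Z(A)$ (checked on Rees algebras by the same manipulation as in (1)), and the fixed locus of its restriction to that substack is exactly $P_1$, which is therefore open and closed by (2). With that supplement the decomposition holds as stated.
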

\begin{proof}
    (1) is \cite[Lemma\,7.2.4]{feng2024modularityhigherthetaseries}. (2) is \cite[Proposition\,7.5.4]{feng2024modularityhigherthetaseries}. (3) is a consequence of (1). (4) is \cite[Proposition\,7.5.5]{feng2024modularityhigherthetaseries}.\footnote{The formulation in \textit{loc.cit} is not correct as stated. The proof in \textit{loc.cit} shows what we state here.}
\end{proof}

In this case, we call the nearby cycle functors $\Psi:\Shv(A)\to \Shv(N_Z(A))$ and $\Psi:\Shv(C)\to\Shv(N_{C_Z}(C))$ specializations.

Note the following property of the $\Psi$:
\begin{lemma}\label{lem:spres}
    The natural transformation $\alpha:i_{Z,0}^*\circ\Psi\to i_Z^*$ is an isomorphism. Here $i_{Z,0}:Z\to N_Z(A)$ is the zero section, $i_Z:Z\to A$ is the natural inclusion.
\end{lemma}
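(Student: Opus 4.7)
There is a canonical closed embedding $\tilde\iota : Z\times\AA^1 \hookrightarrow D_Z(A)$ (corresponding to the trivial subfamily supported on $Z$), whose generic fiber is $i_Z : Z\hookrightarrow A$ and whose special fiber is the zero section $i_{Z,0}: Z\hookrightarrow N_Z(A)$. The natural transformation $\alpha$ is built as the composite of the base-change map $i_{Z,0}^{\,*}\,\Psi_{D_Z(A)} \to \Psi_{Z\times\AA^1}\, i_Z^{\,*}$ with the canonical isomorphism $\Psi_{Z\times\AA^1}\cong \mathrm{id}$, which is valid because the family $Z\times\AA^1\to\AA^1$ is constant. The task is to show that $\alpha$ is an isomorphism.

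First I would reduce the problem to an \'etale-local computation on $Z$, so that we may replace $A$ by a smooth atlas. Next, I would exploit the canonical $\Gm$-action on $D_Z(A)$ coming from the Rees-algebra grading $\bigoplus_n \cI_Z^n \cdot t^{-n}$: this action is compatible with the projection to $\AA^1$ (scaling the base coordinate), has $\tilde Z = Z\times\AA^1$ as its fixed locus, and contracts $D_Z(A)$ onto $\tilde Z$ as $t\to 0$ in the sense of the previous subsection. Because the $\Gm$-action is trivial on the generic fiber $A$, any sheaf $\cF$ on $A$ pulls back to a $\Gm$-equivariant sheaf on $D_Z(A)\setminus (\text{special fiber})$, and hence $\Psi \cF$ is naturally $\Gm$-equivariant on $N_Z(A)$ with respect to the scaling action whose fixed locus is $Z$. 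Applying the nearby-cycles analogue of the contracting-correspondence computation (essentially the restriction property of Verdier's specialization functor on the deformation to the normal cone), the stalks of $i_{Z,0}^{\,*}\,\Psi\cF$ are identified with the corresponding stalks of $\cF|_Z$, which gives the required isomorphism.

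The main obstacle is the last step when $Z\hookrightarrow A$ is not regularly embedded, so that $D_Z(A)$ may be singular along $\tilde Z$ and the na\"ive ``smooth base change for nearby cycles'' does not immediately apply to the closed immersion $\tilde\iota$. I would handle this by descending along a smooth cover $A'\to A$ in which $Z\times_A A'\hookrightarrow A'$ becomes regularly embedded (or admits a local product structure $Z'\times V$), in which case $D_{Z'}(A')\cong Z'\times D_{\{0\}}V$ and the compatibility is an instance of the K\"unneth formula for nearby cycles on the second factor, where the contracting $\Gm$-action is transparent. Alternatively, one can directly invoke the specialization machinery developed in \cite{feng2024modularityhigherthetaseries} for Artin stacks, which is precisely the setup used elsewhere in this section.
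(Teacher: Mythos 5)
The paper's ``proof'' of this lemma is a one-line citation to \cite[\S7.5(b)]{gaitsgory2024localtermscategoricaltrace}, so your closing sentence (``directly invoke the specialization machinery'') is in effect the same move the paper makes, and to that extent your proposal lands in the right place. Your geometric picture is also the correct one: the embedded constant family $Z\times\AA^1\hookrightarrow D_Z(A)$, the definition of $\alpha$ as base change composed with $\Psi_{Z\times\AA^1}\cong\mathrm{id}$, and the role of the $\Gm$-contraction and monodromicity of $\Psi\cF$ are exactly the ingredients of the standard argument.

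However, the direct argument you sketch has two genuine gaps. First, the decisive step is asserted by appeal to ``the restriction property of Verdier's specialization functor on the deformation to the normal cone'' --- but that restriction property \emph{is} the statement of the lemma, so as written the argument is circular; the actual content (that for a $\Gm$-monodromic sheaf on the cone the $*$-restriction to the zero section computes the stalk of the original sheaf, via the contraction principle) still has to be proved or quoted. Second, the proposed reduction to the regularly embedded case does not work: smooth base change along $A'\to A$ cannot turn a non-lci closed immersion $Z\hookrightarrow A$ into a regular one, and in the paper's application $Z$ is the (possibly quite singular) reduced boundary of a compactification, so there is no local product structure $Z'\times V$ in general. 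Even where such a product structure exists, the K\"unneth argument for nearby cycles on the second factor only applies to sheaves that are themselves external products, which $\cF$ need not be. So the honest proof is either the full monodromic-contraction argument or the citation; your fallback to \cite{feng2024modularityhigherthetaseries} (or, as the paper does, to \cite[\S7.5(b)]{gaitsgory2024localtermscategoricaltrace}) is the safe route.
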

\begin{proof}
    This is \cite[\S7.5(b)]{gaitsgory2024localtermscategoricaltrace}.
\end{proof}

The following is a direct consequence of Theorem  \ref{thm:trspecial}:

\begin{prop}\label{prop:trspecial}
    Consider a correspondence $c$ between algebraic stacks over $S$ \[A\xleftarrow{c_1} C \xrightarrow{c_0} A\] with a closed substack $Z\sub A$ stabilized by $c$. For $\cF\in\Shv(A)_{\ULA}$ and $\cK\in\Shv(C)$, the diagram
    \[\begin{tikzcd}
        \Cor_{C,\cK}(\cF,\cF) \ar[r, "\Psi"] \ar[d, "\tr_S"] & \Cor_{N_{C_Z}(C),\Psi\cK}(\Psi\cF,\Psi\cF) \ar[d, "\tr_{S}"] \\
        H_0^{BM}(\Fix(C)/S,\cK_{\Fix(C)}) \ar[r, "\Psi"] & H_0^{BM}(\Fix(N_{C_Z}(C))/S,(\Psi\cK)|_{\Fix(N_{C_Z}(C))})
    \end{tikzcd}\] is commutative.
\end{prop}

We have the following corollary which is generalization of \cite[Corollary\,5.6(c)]{gaitsgory2024localtermscategoricaltrace}:
\begin{cor}\label{cor:trvanlocalterm}
    In the setting of Proposition \ref{prop:trspecial}, assume moreover that $c$ is contracting along $Z$. If $\cF|_Z=0$, then the composition \[\Cor_{C,\cK}(\cF,\cF)\xrightarrow{\tr_S} H_0^{BM}(\Fix(C),\cK|_{\Fix(C)})\xrightarrow{i_{C_Z}^*} H_0^{BM}(\Fix(C_Z),\cK|_{\Fix(C_Z)})\] is zero. Here, the map $i_{C_Z}:\Fix(C_Z)\sub \Fix(C)$ is an open and closed embedding on the reduced substacks by Lemma \ref{lem:contract}(2) and $i_{C_Z}^*$ is the natural restriction map.
\end{cor}

\begin{proof}
    Applying Lemma \ref{lem:spres} to $C_Z\sub C$ and using Lemma \ref{lem:contract}(3), we get a natural isomorphism \[(\Psi\cK)|_{\Fix(N_{C_Z}(C))}\cong \cK|_{\Fix(C_Z)}.\] Under this isomorphism, one can check that the restriction map \[i_{C_Z}^*:H_0^{BM}(\Fix(C)/S,\cK|_{\Fix(C)})\to H_0^{BM}(\Fix(C_Z)/S,\cK|_{\Fix(C_Z)})\] is identified with the map \[\Psi:H_0^{BM}(\Fix(C)/S,\cK|_{\Fix(C)})\to H_0^{BM}(\Fix(N_{C_Z}(C))/S,(\Psi\cK)|_{\Fix(N_{C_Z}(C))})\] used in Proposition \ref{prop:trspecial}. By Proposition \ref{prop:trspecial}, we are reduced to show that the composition \[\Cor_{C,\cK}(\cF,\cF)\xrightarrow{\Psi} \Cor_{N_{C_Z}(C),\Psi\cK}(\Psi\cF,\Psi\cF)\xrightarrow{\tr_S} H_0^{BM}(\Fix(N_{C_Z}(C))/S,(\Psi\cK)|_{\Fix(N_{C_Z}(C))})\] is zero. Note that \[\Cor_{N_{C_Z}(C),\Psi\cK}(\Psi\cF,\Psi\cF)=\Hom^0(N(c_0)^*\Psi\cF\otimes\Psi\cK,N(c_1)^!\Psi\cF).\] By Lemma \ref{lem:contract}(1) and Lemma \ref{lem:spres}, we know \[N(c_0)^*\Psi\cF\cong N(c_0)^*i_{Z,0,!}i_{Z,0}^*\Psi\cF\cong N(c_0)^*i_{Z,0,!}i_Z^*\cF=0.\] This implies $\Cor_{N_{C_Z}(C),\Psi\cK}(\Psi\cF,\Psi\cF)=0$ and we are done.
\end{proof}

\begin{defn}\label{def:corrcompact}
    Consider a diagram of correspondences between algebraic stacks over $S$: \[\begin{tikzcd}
        A \ar[d, "f_A"] & C \ar[l, "c_1"'] \ar[r, "c_0"] \ar[d, "f"] & A \ar[d, "f_A"]  \\
        B & D \ar[l, "d_1"'] \ar[r, "d_0"] & B 
    \end{tikzcd}.\] 
    
    \begin{itemize}
        \item We say that $f$ \emph{admits compactification with contracting boundary} if the diagram can be extended to a diagram \[\begin{tikzcd}
        A \ar[d, "j_A"] & C \ar[l, "c_1"'] \ar[r, "c_0"] \ar[d, "j"] & A \ar[d, "j_A"]  \\
        \overline{A} \ar[d, "\overline{f}_{A}"] & \overline{C} \ar[l, "\overline{c}_1"'] \ar[r, "\overline{c}_0"] \ar[d, "\overline{f}"] & \overline{A} \ar[d, "\overline{f}_{A}"] \\
        B & D \ar[l, "d_1"'] \ar[r, "d_0"] & B 
    \end{tikzcd}\] in which $j_A$ and $j$ are open immersions, $f_{\overline{A}},\overline{f}$ are proper, the two top squares are Cartesian, and there exists an ideal sheaf $\cI_Z\sub\cO_A$ defining a closed substack $Z\sub \overline{A}$ on which the middle correspondence $\overline{c}$ is contracting and $Z_{\red}=\partial A:=\overline{A}\bs A$.

    \item We say that $f$ \emph{admits compactification} if the condition above still holds except we only require $Z$ to be stabilized by $\overline{c}$.

    \item For $\cF\in \Shv(A)_{\ULA}$ and a compactification as above, we say that $\cF$ is \emph{good} for the compactification if $j_{A,!}\cF\in \Shv(\overline{A})_{\ULA}$.

    \end{itemize}
\end{defn}

\begin{thm}\label{thm:trcontractpush}
    In the setting of Theorem  \ref{thm:trproperpush} but only requiring that $f$ admits compactification with contracting boundary and $\cF\in\Shv(A)_{\ULA}$ is good for the the compactification, then the map $\Fix(f):\Fix(C)\to\Fix(D)$ is proper, and the same conclusion holds.
\end{thm}

\begin{proof}
    The conclusion that $\Fix(f)$ is proper follows from Lemma \ref{lem:contract}(2).
    We can assume $f^*\cL=\cK$ since the general situation can be deduced from this case. Consider the diagram \begin{equation}\label{cd:trcontractpush1}\begin{tikzcd}
        \Cor_{C,f^*\cL}(\cF,\cF) \ar[r, "j_!"] \ar[d, "\tr_S"] & \Cor_{\overline{C},\overline{f}^*\cL}(j_{A,!}\cF,j_{A,!}\cF) \ar[r, "\overline{f}_!"] \ar[d, "\tr_S"] & \Cor_{D,\cL}(f_{A,!}\cF,f_{A,!}\cF) \ar[d, "\tr_S"] \\
        H_0^{BM}(\Fix(C)/S,f^*\cL|_{\Fix(C)}) \ar[r, "\Fix(j)_!"] & H_0^{BM}(\Fix(\overline{C})/S,\overline{f}^*\cL|_{\Fix(\overline{C})}) \ar[r, "\Fix(\overline{f})_!"] & H_0^{BM}(\Fix(D)/S,\cL|_{\Fix(D)})
    \end{tikzcd}\end{equation} in which each horizontal map is induced by a push-forward of cohomological correspondence. We want to show that the outer square of \eqref{cd:trcontractpush1} is commutative. Theorem  \ref{thm:trproperpush} implies that the right square is commutative. Therefore, we only need to show the commutativity of the left square.

    Define $\partial\overline{C}=\overline{C}\bs C$ and $i_{\partial C}:\partial C\to C$. Consider the diagram \begin{equation}\label{cd:trcontractpush2}\begin{tikzcd}
        \Cor_{C,f^*\cL}(\cF,\cF)  \ar[d, "\tr_S"] & \Cor_{\overline{C},\overline{f}^*\cL}(j_{A,!}\cF,j_{A,!}\cF) \ar[l, "j^*"'] \ar[d, "\tr_S"] &  \\
        H_0^{BM}(\Fix(C)/S,f^*\cL|_{\Fix(C)})  & H_0^{BM}(\Fix(\overline{C})/S,\overline{f}^*\cL|_{\Fix(\overline{C})}) \ar[r, "\Fix(i_{\partial C})^*"] \ar[l, "\Fix(j)^*"'] & H_0^{BM}(\Fix(C_Z)/S,\overline{f}^*\cL|_{\Fix(C_Z)})
    \end{tikzcd}. \end{equation} 
    By Theorem  \ref{thm:trpull}, the left square of \eqref{cd:trcontractpush2} is commutative. 
    Note that the horizontal maps in the left square of \eqref{cd:trcontractpush1} are splittings of the horizontal maps in the left square of \eqref{cd:trcontractpush2} and $\Fix(i_{\partial C})^*\circ \Fix(j)_!=0$ by Lemma \ref{lem:contract}(2). Therefore, we are reduced to show $\Fix(i_{\partial C})^*\circ \tr_S\circ j_!=0$. This follows from Corollary \ref{cor:trvanlocalterm} since $(j_{A,!}\cF)|_{\partial A}=0$.
\end{proof}

We have the following immediate corollary of Theorem  \ref{thm:trcontractpush} and Example \ref{eg:frobcontract}:

\begin{cor}\label{cor:trshtpush}
Consider a morphism of correspondences between algebraic stacks over $S/\FF_q$ \[\begin{tikzcd}
        A \ar[d, "f_A"] & C \ar[l, "c_1"'] \ar[r, "c_0"] \ar[d, "f"] & A \ar[d, "f_A"]  \\
        B & D \ar[l, "d_1"'] \ar[r, "d_0"] & B 
    \end{tikzcd}.\] Assume $f$ admits compactification and $\cF\in \Shv(A)_{\ULA}$ is good for the compactification. Then the induced map $\Sht(f):\Sht(C)\to \Sht(D)$ is proper. Moreover, given $\cF\in \Shv(A)_{\ULA}$ and $\cK\in\Shv(C)$, together with a map $\alpha:f^*\cL\to\cK$, we have a commutative diagram \[\begin{tikzcd}
        \Cor_{C,\cK}(\cF,\cF) \ar[r, "f_!"] \ar[d, "\tr_{\Sht,S}"] & \Cor_{D,\cL}(f_{A,!}\cF,f_{A,!}\cF) \ar[d, "\tr_{\Sht,S}"] \\
        H_0^{BM}(\Sht(C)/S,\cK|_{\Sht(C)}) \ar[r, "\Sht(f)_!"] & H_0^{BM}(\Sht(D)/S,\cL|_{\Sht(D)})
    \end{tikzcd}.\]
    
\end{cor}

\subsubsection{Lu--Zheng category}\label{sec:lzcat}
In this section, we follow the strategy in \cite{Lu_2022} to prove theorems in \S\ref{sec:ccfunc}. We consider variants of $2$-categories considered in \cite{Lu_2022} adapted to cohomological correspondences with kernel. Note that similar construction is also made in \cite{feng2024modularityhigherthetaseries}.

\begin{defn}
    Let $S$ be an algebraic stack. We define the symmetric monoidal $2$-category \[(\LZ(S)_!,\otimes, 1_{\LZ(S)_!} )\] as follows:
    \begin{itemize}
        \item The objects are pairs $(A,\cF)$ where $A$ is an algebraic stack over $S$, and $\cF\in\Shv(A)$.
        \item A morphism from $(A_0,\cF_0)$ to $(A_1,\cF_1)$ is a triple $(c,\cK,\frc)$ where $c=(A_1 \xleftarrow{c_1} C\xrightarrow{c_0} A_0)$ is a correspondence, $\cK\in \Shv(C)$ is a kernel sheaf, and $\frc\in\Cor_{C,\cK}(\cF_0,\cF_1)$ is a cohomological correspondence with kernel $\cK$.
        \item The composition of morphisms $(c,\cK,\frc):(A_0,\cF_0)\to (A_1,\cF_1)$ and $(d,\cL,\frd):(A_1,\cF_1)\to (A_2,\cF_2)$ is $(e,\cM,\fre)$, where $e$ is the outer correspondence in the diagram \[\begin{tikzcd}
            &&E \ar[dl, "c_1'"'] \ar[dr, "d_0'"]&& \\
            &D \ar[dl, "d_1"'] \ar[dr, "d_0"]&&C \ar[dl, "c_1"'] \ar[dr, "c_0"]& \\
            A_2&&A_1&&A_0
        \end{tikzcd}\] where the diamond is Cartesian, the kernel $\cM=d_0'^*\cK\otimes c_1'^*\cL$, and $\fre=\frd\circ\frc$ is the obvious notion of composition of cohomological correspondences.
        \item Given two morphisms $(c,\cK,\frc)$ and $(d,\cL,\frd)$ from $(A_0,\cK_0)$ to $(A_1,\cK_1)$, a $2$-morphism $\eta:(c,\cK,\frc)\to (d,\cL,\frd)$ is a map of correspondences \begin{equation}\label{diag:lzfund}\begin{tikzcd}
            A_1 \ar[d, equal] & C \ar[l, "c_1"'] \ar[r, "c_0"] \ar[d, "f"] & A_0 \ar[d, equal] \\
            A_1 & D \ar[l, "d_1"'] \ar[r, "d_0"] & A_0
        \end{tikzcd}\end{equation} in which $f$ is proper, together with a map $\alpha:f^*\cL\to \cK$ such that $f_!\frc=\frd$.
        \item The monoidal unit $1_{\LZ(S)_!}=(S,\uk_{S})$. The tensor product of objects $(A_0,\cF_0)$ and $(A_1,\cF_1)$ is defined as \[(A_0,\cF_0)\otimes (A_1,\cF_1):=(A_0\times_S A_1, \cF_0\boxtimes_S \cF_1 ).\] The tensor product of morphisms is defined verbatim as in \cite[\S5.1.3]{feng2024modularityhigherthetaseries}.
    \end{itemize}
\end{defn}

Consider the subcategory $\LZ(S)_!^0$ with the same objects, whose $1$-morphisms are those $1$-morphisms $(c,\cK,\frc)$ in $\LZ(S)_!$ with $\cK=\uk_C$, and whose $2$-morphisms are those $(f,\alpha)$ as above such that $\alpha:f^*\uk_D\to\uk_C$ is the tautological map. Then $\LZ(S)_!^0$ is the original $2$-category considered in \cite{Lu_2022} (but considering algebraic stacks instead of schemes) and \cite{feng2024modularityhigherthetaseries} (but working with \'etale sheaf theory rather than motivic sheaf theory).

\begin{lemma}\label{lem:lzdual}
    Any object $(A,\cF)$ such that $\cF$ is ULA over $S$ is dualizable in $\LZ(S)_!$, and whose dual is given by $(A,\DD_{S}(\cF))$. 
\end{lemma}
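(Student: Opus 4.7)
The plan is to explicitly exhibit evaluation and coevaluation $1$-morphisms in $\LZ(S)_!$ and verify the two triangle identities, with the ULA hypothesis entering at exactly one step (the construction of the coevaluation).

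For the counit $\mathrm{ev}\colon (A,\cF)\otimes(A,\DD_S\cF)\to(S,\uk_S)$, I would take the correspondence
\[
S \xleftarrow{f_A} A \xrightarrow{\Delta} A\times_S A
\]
equipped with kernel $\uk_A$, and as cohomological correspondence the canonical pairing
\[
\Delta^*(\cF\boxtimes_S\DD_S\cF)\;\cong\;\cF\otimes\DD_S\cF\;\longrightarrow\;\om_{A/S}\;=\;f_A^!\uk_S,
\]
which is available for any $\cF$. For the unit $\mathrm{coev}\colon (S,\uk_S)\to(A,\DD_S\cF)\otimes(A,\cF)$, I would take
\[
A\times_S A \xleftarrow{\Delta} A \xrightarrow{f_A} S
\]
equipped with kernel $\uk_A$, and as cohomological correspondence the map
\[
\uk_A\;\longrightarrow\;\DD_S\cF\otimes^{!}\cF\;\cong\;\Delta^{!}(\DD_S\cF\boxtimes_S\cF)
\]
that exists precisely because $\cF$ is ULA over $S$: under the ULA identification $\DD_S\cF\otimes^{!}\cF\simeq\underline{\Hom}(\cF,\cF)$, it is the map classifying $\id_\cF$.

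To verify the triangle identities, I would compute the composition
\[
(A,\cF)\xrightarrow{\,\mathrm{coev}\,\otimes\,\id\,}(A,\DD_S\cF)\otimes(A,\cF)\otimes(A,\cF)\xrightarrow{\,\id\,\otimes\,\mathrm{ev}\,}(A,\cF)
\]
as a $1$-morphism in $\LZ(S)_!$. The base changes built into the definition of composition in $\LZ(S)_!$ reduce this composite to a self-correspondence of $A$ supported on a fiber product of diagonals; after collapsing these diagonals one recognizes the kernel as $\uk_A$ and the cohomological correspondence as the round-trip of $\id_\cF$ through the evaluation--coevaluation pair, which equals $\id_\cF$ by the standard biduality identity for ULA sheaves. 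The $2$-morphism to the identity $1$-morphism $(A\xleftarrow{\id}A\xrightarrow{\id}A,\uk_A,\id_\cF)$ is then produced by the diagonal map between the two correspondences, with the map $\alpha$ between kernels being the tautological identification of $\uk$-sheaves. The second triangle identity is verified symmetrically by swapping the roles of $\cF$ and $\DD_S\cF$.

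The main obstacle is bookkeeping, not geometry: composition of $1$-morphisms in $\LZ(S)_!$ tensors kernels along pullbacks and threads through base change and projection isomorphisms, so one must track that the ULA round-trip identity survives these manipulations. Because every kernel in sight is a $\uk$-sheaf and every $\alpha$ between kernels is tautological, the entire verification descends to the sub-$2$-category $\LZ(S)_!^{0}$ of \cite{Lu_2022}, where the dualizability of ULA objects is known; the kernel structure therefore contributes no new difficulty.
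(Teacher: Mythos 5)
Your proposal is correct and ultimately coincides with the paper's argument: the evaluation and coevaluation you write down are exactly the Lu--Zheng duality data, and your closing observation that everything lives in the sub-$2$-category $\LZ(S)_!^0$ (constant kernels, tautological $\alpha$'s) where \cite[Theorem\,2.16]{Lu_2022} applies is precisely how the paper proves the lemma, since dualizability in a monoidal subcategory persists in the ambient category. The explicit verification of the triangle identities you sketch is therefore not needed beyond that citation, though it is a useful sanity check on where the ULA hypothesis enters.
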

\begin{proof}
    The above statement already holds in the subcategory $\LZ(S)_!^0\sub \LZ(S)_!$ by \cite[Theorem\,2.16]{Lu_2022} and any dualizable object in $\LZ(S)_!^0$ is dualizable in $\LZ(S)_!$.
\end{proof}

The following can be checked directly:
\begin{lemma}\label{lem:lzdualmap}
    For a morphism $(c,\cK,\frc):(A_0,\cF_0)\to(A_1,\cF_1)$ in which $\cF_i$ are ULA over $S$ for $i=0,1$, we have $(c,\cK,\frc)^{\vee}=(c',\cK,\frc^{\vee})$ in which $c'=(A_0\xleftarrow{c_0}C\xrightarrow{c_1}A_1)$ and $\frc^{\vee}$ is the image of $\frc$ under the isomorphism \[\begin{split}
        \Hom^0(c_0^*\cF_0\otimes\cK,c_1^!\cF_1) &\cong\Hom^0(\cK,(c_1,c_0)^!(\cF_1\boxtimes_S\DD_{S}(\cF_0)) \\ &\cong \Hom^0(\cK,(c_0,c_1)^!(\DD_{S}(\cF_0)\boxtimes_S\cF_1)) \\ &\cong \Hom^0(c_1^*\DD_{S}(\cF_1)\otimes\cK,c_0^!\DD_{S}(\cF_0))
    \end{split}\]
\end{lemma}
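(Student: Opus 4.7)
The plan is to unwind the definition of the dual of a morphism in a symmetric monoidal $2$-category and directly compute the composition that produces $(c,\cK,\frc)^{\vee}$.

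First, by Lemma \ref{lem:lzdual}, the dualizable objects $(A_i,\cF_i)$ are already dualizable in the full subcategory $\LZ(S)_!^0\subset \LZ(S)_!$, so the evaluation and coevaluation morphisms can be written with trivial kernel. I will explicitly describe them: the coevaluation $\rmu_{(A_i,\cF_i)}\colon (S,\uk_S)\to (A_i,\cF_i)\otimes (A_i,\DD_S(\cF_i))$ is given by the correspondence $A_i\times_S A_i \xleftarrow{\Delta_{A_i}} A_i \xrightarrow{f_{A_i}} S$ with trivial kernel $\uk_{A_i}$, and cohomological correspondence the unit $\uk_{A_i}\to\Delta_{A_i}^!(\cF_i\boxtimes_S\DD_S(\cF_i))\cong \cF_i\otimes^!\DD_S(\cF_i)$ (the last isomorphism uses the ULA hypothesis). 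The evaluation $\ev_{(A_i,\cF_i)}\colon (A_i,\cF_i)\otimes (A_i,\DD_S(\cF_i))\to (S,\uk_S)$ is given by the opposite correspondence with the counit map $\cF_i\otimes\DD_S(\cF_i)\to\om_{A_i/S}$.

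Second, I will compute the composition defining $(c,\cK,\frc)^{\vee}$, namely
\[
(A_1,\DD_S(\cF_1)) \xrightarrow{\id\otimes \rmu_{(A_0,\cF_0)}} (A_1,\DD_S(\cF_1))\otimes (A_0,\cF_0)\otimes (A_0,\DD_S(\cF_0))
\]
\[
\xrightarrow{\id\otimes (c,\cK,\frc)\otimes \id} (A_1,\DD_S(\cF_1))\otimes (A_1,\cF_1)\otimes (A_0,\DD_S(\cF_0)) \xrightarrow{\ev_{(A_1,\cF_1)}\otimes \id}(A_0,\DD_S(\cF_0)),
\]
using the composition rule in $\LZ(S)_!$ (iterated fiber products of correspondences and exterior/pullback tensor of kernels). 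The $A_0$-factor introduced by $\rmu_{(A_0,\cF_0)}$ collapses against the $A_0$-leg of $c$ via the diagonal, and symmetrically the diagonal in $\ev_{(A_1,\cF_1)}$ collapses against the $A_1$-leg; after this simplification the underlying correspondence is precisely $c'=(A_0\xleftarrow{c_0}C\xrightarrow{c_1}A_1)$, and since the coev and ev contribute trivial kernels, the kernel remains $\cK$.

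Third, the bulk of the argument is identifying the resulting cohomological correspondence with $\frc^{\vee}$. By the hom-tensor adjunction and the ULA identification $\underline{\Hom}(c_0^*\cF_0,c_1^!\cF_1)\cong (c_1,c_0)^!(\cF_1\boxtimes_S\DD_S(\cF_0))$, the space $\Cor_{C,\cK}(\cF_0,\cF_1)$ is rewritten as $\Hom^0(\cK,(c_1,c_0)^!(\cF_1\boxtimes_S\DD_S(\cF_0)))$; symmetry in the two factors together with swapping the legs of $c$ converts this into $\Hom^0(c_1^*\DD_S(\cF_1)\otimes\cK,c_0^!\DD_S(\cF_0))$. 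I will check by direct diagram chase that the map produced by the above composition, after pushing past the unit and counit natural transformations, is exactly the image of $\frc$ through this chain of isomorphisms. The main obstacle is not conceptual but bookkeeping: one must keep track of base change, projection formula, and the interchange between $(-)^*$ and $(-)^!$ that the ULA hypothesis provides. Once these natural transformations are organized correctly, the equality $(c,\cK,\frc)^{\vee}=(c',\cK,\frc^{\vee})$ drops out.
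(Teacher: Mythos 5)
Your proposal is correct and coincides with the paper's treatment: the paper offers no written proof, stating only that the lemma ``can be checked directly,'' and your explicit unwinding of the unit/counit data from Lemma \ref{lem:lzdual}, the collapse of the fiber products to the flipped correspondence $c'$ with kernel $\cK$, and the identification of the induced map with $\frc^{\vee}$ is exactly that direct check. The only tiny imprecision is attributing the isomorphism $\Delta_{A_i}^!(\cF_i\boxtimes_S\DD_S(\cF_i))\cong\cF_i\otimes^!\DD_S(\cF_i)$ to the ULA hypothesis (it is just the definition of $\otimes^!$; ULA is what makes the unit and counit into duality data), which does not affect the argument.
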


\begin{lemma}\label{lem:lzrightadjoint}
    For a proper map $f:A\to B$ and $\cF\in \Shv(A)$, consider correspondence $c_f=(B\xleftarrow{f}A=A)$ and cohomological correspondence $\frc_f\in \Cor_{A,\uk_A}(\cF,f_!\cF)$ given by the natural adjunction map $\cF\to f^!f_!\cF$. Then the map \[(c_f,\uk_A,\frc_f):(A,\cF)\to(B,f_!\cF)\] admits right adjoint \[(c_f',\uk_A,\frc_f'):(B,f_!\cF)\to (A,\cF)\] in which $c_f'=(A=A\xrightarrow{f}B)$ and $\frc_f'\in\Cor_{A,\uk_A}(f_!\cF,\cF)$ is the natural map $f^*f_!\cF\cong f^*f_*\cF\to \cF$.
\end{lemma}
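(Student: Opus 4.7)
The plan is to construct the unit and counit of the proposed adjunction explicitly and then check the two triangle identities; all verifications reduce to standard identities in the six-functor formalism (proper base change, projection formula, and the zig-zag identities for the $(f_!, f^!)$ and $(f^*, f_*)$ adjunctions, using $f_!\cong f_*$ since $f$ is proper). Since both $1$-morphisms carry the trivial kernel $\uk_A$, and the proposed unit and counit will carry the tautological kernel datum, the argument lives in the subcategory $\LZ(S)_!^0\sub\LZ(S)_!$ and the kernel datum plays no role.

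Unwinding the two compositions, write $q_0, q_1 : A\times_B A\to A$ for the two projections. The composite $(c_f',\uk_A,\frc_f')\circ(c_f,\uk_A,\frc_f)$ has underlying correspondence $A\xleftarrow{q_1}A\times_B A\xrightarrow{q_0}A$ and cohomological correspondence
\[
q_0^*\cF\xrightarrow{q_0^*(\frc_f)}q_0^*f^!f_!\cF\xrightarrow{\mathrm{BC}}q_1^!f^*f_!\cF\xrightarrow{q_1^!(\frc_f')}q_1^!\cF,
\]
where the middle arrow is (the adjoint form of) proper base change for the Cartesian square defining $A\times_B A$. Symmetrically, $(c_f,\uk_A,\frc_f)\circ(c_f',\uk_A,\frc_f')$ has correspondence $B\xleftarrow{f}A\xrightarrow{f}B$ with cohomological correspondence $f^*f_!\cF\xrightarrow{\frc_f'}\cF\xrightarrow{\frc_f}f^!f_!\cF$.

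I would then define the unit $\eta:\id_{(A,\cF)}\to(c_f',\uk_A,\frc_f')\circ(c_f,\uk_A,\frc_f)$ by the diagonal $\Delta:A\to A\times_B A$ (proper since $f$ is separated) together with its tautological kernel identification, and the counit $\delta:(c_f,\uk_A,\frc_f)\circ(c_f',\uk_A,\frc_f')\to\id_{(B,f_!\cF)}$ by the proper map $f:A\to B$ itself. For $\eta$ to be a valid $2$-morphism I need $\Delta_!(\id_\cF)$ to coincide with the composite displayed above, which by the push-forward formula of \S\ref{sec:ccpush} reduces to the Beck--Chevalley compatibility of the $(f_!,f^!)$-unit and $(f^*,f_*)$-counit with the base-change isomorphism $q_0^*f^!\isom q_1^!f^*$ pulled back along the diagonal. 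For $\delta$ to be a valid $2$-morphism I need $f_!(\frc_f\circ\frc_f')=\id_{f_!\cF}$; the push-forward formula turns this into the composite
\[
f_!\cF\longrightarrow f_!f^*f_!\cF\longrightarrow f_!\cF\longrightarrow f_!f^!f_!\cF\longrightarrow f_!\cF,
\]
where the first two arrows are the unit of $(f^*,f_*)$ on $f_*\cF$ followed by $f_*$ of the counit, and the last two are $f_!$ of the unit of $(f_!,f^!)$ on $\cF$ followed by the counit on $f_!\cF$. Both pairs collapse to the identity by the respective triangle identities.

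Finally, the two triangle identities for the adjunction, obtained by horizontally pasting $\eta$ and $\delta$ with the identity $2$-morphisms on $(c_f,\uk_A,\frc_f)$ and $(c_f',\uk_A,\frc_f')$, reduce once more—after unwinding the horizontal-composition definition in $\LZ(S)_!$—to the zig-zag identities in the six-functor formalism for $f$. I expect the main obstacle to be purely organizational: the bookkeeping of interlocking base-change and projection-formula natural isomorphisms needed to recognize every composed cohomological correspondence as a canonical unit or counit of the six-functor formalism. Once these identifications are made, the triangle identities in $\LZ(S)_!$ follow formally from those for $f$.
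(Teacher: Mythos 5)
Your proposal is correct. The paper itself disposes of this lemma in one line: it observes (as you do) that both $1$-morphisms carry the trivial kernel, so the statement lives in the subcategory $\LZ(S)_!^0$, and then cites \cite[Lemma\,2.9]{Lu_2022}, where exactly this adjunction is established in the original Lu--Zheng category. What you have written is, in effect, a self-contained proof of that cited lemma: your identification of the two composites (the correspondence $A\xleftarrow{q_1}A\times_B A\xrightarrow{q_0}A$ for $R\circ L$ and $B\xleftarrow{f}A\xrightarrow{f}B$ for $L\circ R$), your choice of unit via the diagonal $\Delta:A\to A\times_B A$ (proper because $f$, being proper, is separated) and of counit via $f$ itself, and the reduction of $f_!(\frc_f\circ\frc_f')=\id_{f_!\cF}$ to the two triangle identities for $(f^*,f_*)$ and $(f_!,f^!)$ are all exactly the right computations, and the remaining triangle identities for the $2$-adjunction do indeed collapse to zig-zag identities after unwinding the horizontal composition. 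The only thing your writeup adds beyond the paper is the verification itself; the only thing it lacks is the economy of the citation. No gap.
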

\begin{proof}
    The happens already in $\LZ(S)_!^0$ by \cite[Lemma\,2.9]{Lu_2022}.
\end{proof}

Note that the $1$-category $\End_{\LZ(S)_!}(1_{\LZ(S)_!})$ has its objects consisting of triples $(A,\cK,\frc)$ where $A$ is an algebraic stack over $S$, $\cK\in\Shv(A)$, and $\frc\in\Cor_{A,\cK}(\uk_S,\uk_S)=H^{BM}_0(A/S,\cK)$. For any pair $(A,\cF)\in\LZ(S)_!$ such that $\cF$ is ULA over $S$ together with an endomorphism $(c,\cK,\frc)\in\End_{\LZ(S)_!}(A,\cF)$, applying the categorical trace construction in \S\ref{sec:cattrace}, one gets an object \[\tr((c,\cK,\frc),(A,\cF))\in\End_{\LZ(S)_!}(1_{\LZ(S)_!}).\] The following follows from unwinding definition:
\begin{prop}\label{prop:lzloop}
    We have $\tr((c,\cK,\frc),(A,\cF))=(\Fix(C),\cK|_{\Fix(C)},\tr_S(\frc))\in \End_{\LZ(S)_!}(1_{\LZ(S)_!}) $.
\end{prop}

\begin{proof}[Proof of Theorem  \ref{thm:trproperpush}]
    We claim that there is a $2$-commutative diagram \[\begin{tikzcd}
    (A,\cF) \ar[r, "{(c,\,\cK,\,\frc)}"]\ar[d, "{(c_f,\,\uk_A,\,\frc_f)}"'] & (A,\cF) \ar[d, "{(c_f,\,\uk_A,\,\frc_f)}"] \ar[dl, Rightarrow, "\eta"]\\
    (B,f_!\cF) \ar[r, "{(d,\,\cL,\,f_!\frc)}"'] & (B,f_!\cF)
    \end{tikzcd}.\] To define the $2$-morphism $\eta$, consider the map between correspondence \[\begin{tikzcd}
        B \ar[d, "\id"] & C \ar[d, "{(f,\,c_0)}"] \ar[l, "f\circ c_1"'] \ar[r, "c_0"] & A \ar[d, "\id"] \\
        B & D\times_B A \ar[l, "a"'] \ar[r, "\pr_2"] & A
    \end{tikzcd}\] in which $a: D\times_B A\to B$ is induced from the map $d_1:D\to B$. Note that $(f,c_0)$ is proper by the pushability assumption. Consider the map $(f,c_0)^*\pr_1^*\cL\cong f^*\cL\xrightarrow{\alpha}\cK$. Unwinding the definition of push-forward of cohomological correspondence in \S\ref{sec:ccpush}, we know $(f,c_0)_!(\frc_f\circ\frc)=f_!\frc\circ\frc_f$. Therefore, the data above defines a $2$-morphism $\eta$.

    By Lemma \ref{lem:lzdual} and Lemma \ref{lem:lzrightadjoint}, we can apply the functoriality of categorical trace Definition \ref{def:funccattr} to obtain a $2$-morphism \[\tr(\eta):(\Fix(C),\cK|_{\Fix(C)},\tr_S(\frc))\to (\Fix(D),\cL|_{\Fix(D)},\tr_S(f_!\frc)).\] Using Lemma \ref{lem:lzdualmap} and unwinding the construction of Definition \ref{def:funccattr}, we know that the natural transformation $\tr(\eta)$ contains the data $\Fix(\alpha):\Fix(f)^*(\cL|_{\Fix(D)})\to\cK|_{\Fix(C)}$. This forces $\Fix(f)_!\tr_S(\frc)=\tr_S(f_!\frc)$ and we are done.
\end{proof}

\begin{proof}[Proof of Theorem  \ref{thm:trpull}]
    One can define a $2$-category $\LZ(S)^*$ which has the same objects and $1$-morphisms as $\LZ(S)_!$ but with $2$-morphism given by the same diagram of correspondence \eqref{diag:lzfund} and kernel but with $f$ quasi-smooth of some dimension $d$ together with a map $\alpha:\cL\langle-2d\rangle\to\cK$ such that $f^*\frd=\frc$. Then Lemma \ref{lem:lzdual}, Lemma \ref{lem:lzdualmap}, and Proposition \ref{prop:lzloop} still hold in this case. The analogue of Lemma \ref{lem:lzrightadjoint} is the following easy lemma:
    \begin{lemma}
        For a smooth map $f:A\to B$ of relative dimension $d$ and $\cF\in \Shv(B)$, consider correspondence $c_f'=(A=A\xrightarrow{f} B)$ and the tautological cohomological correspondence $\frc_f'\in\Cor_{A,\uk_A}(\cF,f^*\cF)$. The map \[(c_f',\uk_A,\frc_f'):(B,\cF)\to (A,f^*\cF)\] admits right adjoint \[(c_f,\uk_A\langle 2d\rangle,\frc_f):(A,f^*\cF)\to (B,\cF)\] in which $\frc_f\in\Cor_{A,\uk_A\langle2d\rangle}(f^*\cF,\cF)$ is the natural map $f^*\cF\langle 2d\rangle\cong f^!\cF$.
    \end{lemma}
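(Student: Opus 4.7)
\medskip

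\textbf{Proof proposal.} The statement is the $\LZ(S)^*$-analog of Lemma \ref{lem:lzrightadjoint}, with \emph{proper push-forward} systematically replaced by \emph{smooth pull-back}, so my plan is to construct the unit and counit $2$-morphisms of the adjunction explicitly and then verify the triangle identities, imitating the argument of \cite[Lemma\,2.9]{Lu_2022} (which is what Lemma \ref{lem:lzrightadjoint} quotes). Accordingly, write $L = (c_f',\uk_A,\frc_f')$ and $R = (c_f,\uk_A\langle 2d\rangle, \frc_f)$.

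For the counit $\varepsilon\colon L\circ R\to \id_{(A,f^*\cF)}$, I first unwind the composition $L\circ R$ using the composition law of $\LZ(S)_!$: the underlying correspondence is $A\xleftarrow{\pr_1} A\times_B A\xrightarrow{\pr_2} A$ and, since the kernel of $R$ is $\uk_A\langle 2d\rangle$ and that of $L$ is $\uk_A$, the resulting kernel is $\pr_1^*\uk_A\otimes\pr_2^*\uk_A\langle 2d\rangle \cong \uk_{A\times_B A}\langle 2d\rangle$. To give $\varepsilon$ I would use the relative diagonal $\Delta_f\colon A\to A\times_B A$, which for smooth $f$ of relative dimension $d$ is a regular closed immersion of codimension $d$, hence quasi-smooth of relative dimension $-d$, and the kernel-compatibility data
\[
\uk_A\langle 2d\rangle\xrightarrow{\sim}\Delta_f^*(\uk_{A\times_B A}\langle 2d\rangle).
\]
For the unit $\eta\colon \id_{(B,\cF)}\to R\circ L$, the composition $R\circ L$ has underlying correspondence $B\xleftarrow{f}A\xrightarrow{f}B$ with kernel $\uk_A\langle 2d\rangle$, and I would take the $2$-morphism provided by the smooth map $f\colon A\to B$ itself, with kernel data the tautological map $\uk_B\langle 2d\rangle\to f^*\uk_A\langle 2d\rangle$. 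The cohomological-correspondence condition $f^*\frd=\frc$ (in the notation of the definition of $\LZ(S)^*$) amounts in both cases to the compatibility of the tautological identification $f^!\cF\cong f^*\cF\langle 2d\rangle$ for smooth $f$ with the obvious identities, which is a direct check.

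Finally, I would verify the two triangle identities $(\varepsilon L)\circ(L\eta)=\id_L$ and $(R\varepsilon)\circ(\eta R)=\id_R$. After the compositions of $1$- and $2$-morphisms are written out, both sides become $2$-morphisms whose underlying maps of correspondences are the composites of $\Delta_f$ and $f$ along the appropriate projections; the triangle identities then reduce to the statements $\pr_1\circ\Delta_f=\id_A=\pr_2\circ\Delta_f$ at the level of correspondence spaces and to the canonical identity on the kernel sheaves, together with the compatibility of smooth pull-back with the $f^!\cong f^*\langle 2d\rangle$ identification at the level of cohomological correspondences.

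The only real bookkeeping burden, and the ``hard part'' of the proof, is to keep track of the kernels and their shifts $\langle 2d\rangle$ throughout the compositions; once this is done, the proof is entirely parallel to the Lu--Zheng one and uses no new ingredients beyond the standard duality isomorphism $f^!\simeq f^*\langle 2d\rangle$ for smooth $f$ of relative dimension $d$.
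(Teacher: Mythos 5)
Your proposal is correct and is exactly the verification the paper intends: the lemma is stated without proof (its proper-case analogue, Lemma \ref{lem:lzrightadjoint}, is delegated to \cite[Lemma 2.9]{Lu_2022}), and the intended argument is precisely your unit given by $f:A\to B$ and counit given by the relative diagonal $\Delta_f:A\to A\times_B A$ (quasi-smooth of relative dimension $-d$), with the composite kernels $\uk_{A\times_B A}\langle 2d\rangle$ and $\uk_A\langle 2d\rangle$ and the triangle identities reducing to $\pr_i\circ\Delta_f=\id_A$ as you say. The only blemish is the kernel datum you wrote for the unit --- it should be the tautological identification $\uk_A\langle 2d\rangle\langle -2d\rangle\cong f^*\uk_B$ (the map ``$\uk_B\langle 2d\rangle\to f^*\uk_A\langle 2d\rangle$'' does not typecheck since $f:A\to B$) --- but this is a notational slip, not a gap.
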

    The rest of the proof remains the same as the proper push-forward case.
\end{proof}

\begin{proof}[Proof of Theorem  \ref{thm:trspecial}]
    One can easily reduce to the case $\cL=\Psi\cK$. In this case, one can define $2$-categories $\LZ(S)_{\eta}$ (resp. $\LZ(S)_{s}$) by modifying the definition of $\LZ(S)_!$ by requiring the sheaves defined only on the generic fiber (resp. special fiber) instead. Moreover, on the level of $2$-morphisms, we require the map $f$ in \eqref{diag:lzfund} to be an isomorphism. Then Lemma \ref{lem:lzdual} and Lemma \ref{lem:lzdualmap} are true for $\LZ(S)_{\eta}$ and $\LZ(S)_s$. Consider the (strict) symmetric monoidal colax functor $\Psi:\LZ(S)_{\eta}\to\LZ(S)_s$ defined as follows: On objects, one define $\Psi(A,\cF)=(A,\Psi\cF)$. On $1$-morphisms, one define $\Psi(c,\cK,\frc)=(c,\Psi\cK,\Psi\frc)$. Note that for morphisms \[(A_0,\cF_0)\xrightarrow{(c,\cK,\frc)} (A_1,\cF_1) \xrightarrow{(d,\cL,\frd)} (A_2,\cF_2),\] there is a $2$-morphism \[\begin{split}\Psi((d,\cL,\frd)\circ(c,\cK,\frc))&=(D\times_{A_1}C,\Psi(\cL\boxtimes_{A_{1,\eta}}\cK),\Psi(\frd\circ\frc))\\&\to(D\times_{A_1}C,\Psi(\cL)\boxtimes_{A_{1,s}}\Psi({\cK}),\Psi\frd\circ\Psi\frc)\\&=\Psi(d,\cL,\frd)\circ \Psi(c,\cK,\frc)\end{split}.\] Here, the morphism in the second row is defined by the natural map $\Psi(\cL)\boxtimes_{A_{1,s}}\Psi({\cK})\to \Psi(\cL\boxtimes_{A_{1,\eta}}\cK)$. There is an obvious extension of $\Psi$ to $2$-morphisms. This defines the colax functor, which can be made symmetric monoidal by \cite[Proposition\,3.1]{Lu_2022}.\footnote{This is for schemes, but the same conclusion holds for algebraic stacks since one can check smooth locally.} For $x=(A,\cF)\in\LZ(S)_{\eta}$ where $\cF$ is ULA over $S_{\eta}$, note that $\Psi (x)=(A,\Psi\cF)$ is still dualizable since $\Psi\cF$ is ULA over $S_s$ (by adapting the proof of \cite[Corollary\,3.9]{Lu_2022}). Consider $F=(c,\cK,\frc)\in\End_{\LZ(S)_{\eta}}(x)$. Note that we have $2$-commutative diagram \[\begin{tikzcd}
        1_{\LZ(S)_{s}}\ar[r, bend left, "\Psi(\rmu_x)" {name=F1}] \ar[r,"\rmu_{\Psi(x)}"'{name=G1}] \ar[from=F1.south-|G1, to=G1, Rightarrow, "\gamma"] & \Psi (x)\otimes \Psi (x^{\vee}) \ar[r, "\Psi(F)\otimes\id"] & \Psi (x)\otimes \Psi (x^{\vee}) \ar[r, bend left, "\Psi(\ev_x)"{name=F2}] \ar[r, "\ev_{\Psi(x)}"'{name=G2}] \ar[from=F2.south-|G2, to=G2, Rightarrow, "\delta"] & 1_{\LZ(S)_{s}} \end{tikzcd}\] in which the $2$-morphisms $\gamma$ and $\delta$ are induced from the natural map $\uk_{A_{s}}\to \Psi\uk_{A_{\eta}}$. By the colax property of $\Psi$, we get a $2$-morphism \[\eta:(\Fix(C),\Psi(\cK|_{\Fix(C)_{\eta}}),\Psi\tr_S(\frc))=\Psi(\tr(F,x))\to\tr(\Psi(F),\Psi(x))=(\Fix(C),(\Psi\cK)|_{\Fix(C)_s},\tr_S(\Psi\frc)).\] After checking that this $2$-morphism contains the natural map $\alpha:(\Psi\cK)|_{\Fix(C)_s}\to \Psi(\cK|_{\Fix(C)_{\eta}})$, we get $\Psi\tr_S(\frc)=\tr_S(\Psi\frc)$.

\end{proof}

\subsection{Relative compactification}\label{sec:relativecompactification}
In this section, we assume that the $G$-variety $X$ is affine spherical. We are going to construct a map $\overline{\pi}:\overline{\Bun}_G^X\to \Bun_G$ serving as the \emph{relative compactification} of $\pi:\Bun_G^X\to\Bun_G$. 

\subsubsection{Preliminaries on spherical varieties}
We briefly review the theory of spherical varieties relevant to us. We refer to \cite{sakellaridis2022intersection} for a more detailed treatment.\footnote{The author is grateful to Yiannis Sakellaridis for explaining the theory of affine degeneration of spherical varieties, especially, Lemma \ref{lem:Xcomp}. Any mistake in the section is due to the author.}

In this section, we work with varieties defined over a field $F$. By a spherical variety, we mean a normal $G$-variety with an open $B$-orbit. For an affine algebraic variety $X$ with an action by an algebraic group $G$, we denote $X\sslash G:=\Spec (\cO(X)^G)$. For an affine spherical $G$-variety $X$, the quotient $X\sslash N$ is a $T$-variety, on which the $T$-action factors through a quotient $T\surj T_X$, making $X\sslash N$ a toric $T_X$-variety.

For simplicity, we first discuss the theory when $\mathrm{char} F=0$. In this case, we have a decomposition into irreducible representations $F(X)\cong\bigoplus_{\l\in X^*(T_X)}V_{\l}$. Define $\frc_X\sub X^*(T_X)$ such that $\cO(X)\cong\bigoplus_{\l\in\frc_X}V_{\l}$.

For each $G$-invariant discrete valuation $v$ on $X$, by restricting $v$ to all $B$-eigen functions on $X$, one obtains an element in $X_*(T_X)_{\QQ}$. All elements in $X_*(T_X)_{\QQ}$ obtained in this way generate a cone $\cV\sub X_*(T_X)_{\QQ}$, which is a fundamental domain for the little Weyl group $W_X$ acting on $X_*(T_X)_{\QQ}$.

There exists a canonical filtration $\{F_{\l}\sub \cO(X)\}_{}$ defined as follows: The subspace $F_{\l}\sub \cO(X)$ is the direct sum of all sub irreducible $G$-representations of $\cO(X)$ with highest weight $\mu$ satisfying $\langle \l-\mu,\cV\rangle \leq 0$. Then one can form the Rees algebra $\cO(\sX):=\bigoplus_{\l\in X^*(T_X)}F_{\l}\otimes e^{\l}\sub \cO(X\times T_X)$ and define the affine degeneration of $X$ to be the variety $\sX:=\Spec(\cO(\sX))$. Define $\overline{T_{X,\rmss}}:=\Spec(\bigoplus_{\l\in X^*(T_X), \langle \l,\cV\rangle\leq 0}k\cdot e^{\l})$, which is a toric $T_{X,\rmss}$-variety where $T_{X,\rmss}:=T_X/\cZ(X)^{\circ}$. Here $\cZ(X)^{\circ}$ is the torus satisfying $X_*(\cZ(X)^{\circ})\cong \cV\cap-\cV\cap X_*(T_X)$.

Both varieties $\sX$ and $\overline{T_{X,\rmss}}$ carry natural $G\times T_X$-actions. There is a natural $G\times T_X$-equivariant map $a:\sX\to \overline{T_{X,\rmss}}$. We use $\sX^{\bullet}\sub X$ to denote the open subvariety whose intersection with each fiber of $a$ is the open $G$-orbit of the fiber. Denote $\sX^{\circ}:=a^{-1}(T_X)$. Then there is a canonical isomorphism $\sX^{\circ}\cong X\times T_X/\cZ(X)^{\circ}$ as $G\times T_X$-varieties.

We will need the following fact from \cite[\S2.2.2]{sakellaridis2022intersection}:
\begin{lemma}\label{lem:Xcomp}
    The stack $\sX^{\bullet}/T_X$ is representable by a proper algebraic variety.
\end{lemma}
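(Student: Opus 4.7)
The plan is to identify $\sX^{\bullet}/T_X$ with a toroidal-type partial compactification of the quotient of the open $G$-orbit $X^{\circ}=G/H \subset X$ by $\cZ(X)^{\circ}$, indexed by the valuation cone $\cV$, and then to establish representability and properness separately. To orient the picture, restrict to $\sX^{\circ} = a^{-1}(T_X) \cong X \times T_X/\cZ(X)^{\circ}$; the $T_X$-action here is compatible with the projection to $T_X/\cZ(X)^\circ = T_{X,\rmss}$, and taking the $T_X$-quotient contracts the toric factor back to a point, giving $\sX^{\circ,\bullet}/T_X \cong X^{\circ}/\cZ(X)^{\circ}$. Thus $\sX^{\bullet}/T_X$ is a partial compactification of the homogeneous variety $X^{\circ}/\cZ(X)^{\circ}$, with boundary strata indexed by the faces of $\cV$ (equivalently, by the $T_{X,\rmss}$-orbits on $\overline{T_{X,\rmss}}$).

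First I would establish representability by a finite-type algebraic variety. The crucial input is that $\cZ(X)^{\circ}$ is, by definition of $T_X = T/T_{(X)}$ together with the construction $X_{*}(\cZ(X)^{\circ}) = \cV \cap (-\cV) \cap X_{*}(T_X)$, precisely the connected component of the generic stabilizer for the $T_X$-action, so the $T_X$-action on $\sX^{\bullet}$ has stabilizers that are extensions of finite groups by $\cZ(X)^{\circ}$, and after quotienting out the kernel of $T_X \twoheadrightarrow T_{X,\rmss}$ the action becomes free up to a finite group. Hence $\sX^{\bullet}/T_X$ is a finite-type Deligne--Mumford quotient, and since $\sX$ is normal (being the Rees-algebra degeneration of the normal variety $X$, it is flat over the normal toric variety $\overline{T_{X,\rmss}}$), the quotient is a normal finite-type algebraic space, which one can upgrade to a variety by exhibiting a $G \times T_X$-equivariant ample line bundle descending from the equivariant projective embedding coming from the filtration $\{F_{\lambda}\}$.

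For properness I would apply the valuative criterion. Given a DVR $R$ with fraction field $K$ and residue field $\kappa$, and a $K$-point $\Spec K \to \sX^{\bullet}/T_X$, after passing to a finite extension one lifts to a $K$-point of $\sX^{\bullet}$, which because $\sX^{\circ} \subset \sX^{\bullet}$ is open dense amounts to a $K$-point $x$ of the open $G$-orbit $X^{\circ} = G/H$ together with a $K$-point of $T_X/\cZ(X)^{\circ}$. Viewing $x$ as a map $\Spec K \to X^{\circ}$, the classification of $G$-invariant discrete valuations on $F(X^{\circ})$ shows that the induced valuation on the $B$-eigen-rational functions lies in $\cV \cap X_{*}(T_X)$, and the defining filtration $F_{\lambda} = \bigoplus\{V_{\mu} : \langle \lambda - \mu, \cV \rangle \le 0\}$ is built exactly so that every such valuation extends uniquely, after a translation by an $R$-point of $T_X$, to a morphism $\Spec R \to \sX^{\bullet}$; projecting back to $\sX^{\bullet}/T_X$ gives the required extension, and uniqueness modulo $T_X$ is immediate from the freeness established above.

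The main obstacle is the valuative-criterion step: one needs to check that \emph{every} reduced $G$-invariant valuation arising on a generic $K$-point of $X^{\circ}$ actually lies in $\cV$ (not just in the rational vector space $X_{*}(T_X)_{\QQ}$) and that the Rees-algebra construction furnishes a \emph{unique} $T_X$-orbit of specializations realizing it — this is where the choice of filtration $F_{\lambda}$ tied to the little Weyl group domain $\cV$ is essential, and where one needs to use normality of $\sX$ together with the Luna--Vust classification so that no exotic degenerations outside of the colored fan given by the faces of $\cV$ are required.
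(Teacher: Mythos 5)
Your overall shape (representability first, then properness) matches the paper's, but there are two genuine problems.

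First, the representability step rests on a false description of the stabilizers. You assert that $\cZ(X)^{\circ}$ is the generic stabilizer of the $T_X$-action on $\sX^{\bullet}$, so that the quotient is only a Deligne--Mumford stack ``free up to a finite group.'' In fact $T_X$ acts \emph{freely} on $\sX^{\bullet}$: under the identification $\sX^{\circ}\cong X\times T_X/\cZ(X)^{\circ}$ the $\cZ(X)^{\circ}$-part of the $T_X$-action is the canonical action of $\cZ(X)^{\circ}\subset N_G(H)/H$ on $X=H\backslash G$ by $G$-equivariant automorphisms, which is free; if $\cZ(X)^{\circ}$ really stabilized points, $\sX^{\bullet}/T_X$ would have positive-dimensional automorphism groups and could not be a variety at all. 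The paper proves freeness by reducing to the closed fiber of $a:\sX^{\bullet}\to\overline{T_{X,\rmss}}$, which is homogeneous horospherical with associated torus $T_X$. Moreover, even granting freeness, a free torus quotient of a scheme need not be a scheme or separated; your proposed fix via ``an equivariant ample line bundle descending from the equivariant projective embedding'' has nothing to descend ($\sX$ is affine and the target is proper, not claimed projective). The paper instead covers $\sX^{\bullet}$ by $B\times T_X$-stable affine opens (local structure of spherical varieties) to get schematicity, and proves separatedness using the unique closed $G\times T_X$-orbit together with a valuative-criterion argument on an affine open meeting every $G$-orbit. You do not address separatedness.

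Second, the properness step — which you yourself flag as ``the main obstacle'' — is not carried out, and as sketched it conflates a $K$-point of $X^{\circ}$ with a $G$-invariant valuation: to extract an element of $\cV$ from a $K$-point one must first replace it by a $G(K)$-translate (a nontrivial lemma of Luna--Vust theory). The paper avoids this entirely: having shown $\sX^{\bullet}/T_X$ is a normal separated spherical $G$-variety whose $G$-stable divisors have valuations generating exactly $\cV$, it concludes completeness from the standard criterion that a spherical variety is complete iff the support of its colored fan contains the valuation cone. If you want to keep your hands-on valuative argument, you would need to supply the $G(K)$-translation lemma and verify that the Rees filtration $\{F_{\lambda}\}$ produces the required (unique up to $T_X$) extension over $\Spec R$; as written this is a gap, not a proof.
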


\begin{proof}
    This is mentioned in \cite[\S2.2.2]{sakellaridis2022intersection} without proof. We provide a proof for completeness. First note that $T_X(\overline{F})$ acts freely on $\sX^{\bullet}(\overline{F})$. Indeed, one only needs to check that $T_X(\overline{F})$ acts freely on the $\overline{F}$-points of the special fiber of $\sX^{\bullet}\to \overline{T_{X,\rmss}}$, which follows from the fact that this special fiber is homogeneous horospherical with associated torus $T_X$. Second, note that $\sX^{\bullet}$ can be covered by $T_X$-stable open affine sub-varieties. Indeed, as a spherical $G\times T_X$-variety, $\sX^{\bullet}$ can be covered by $B\times T_X$-stable open affine sub-varieties, which are automatically $T_X$-stable. These two facts imply that the quotient stack $\sX^{\bullet}/T_X$ is representable by a scheme, which is clearly normal and of finite type. 
    
    We claim that $\sX^{\bullet}/T_X$ is a spherical $G$-variety. In fact, we only need to check that $\sX^{\bullet}/T_X$ is separated. Indeed, since $\sX^{\bullet}$ admits a unique closed $G\times T_X$-orbit
    , we know that $\sX^{\bullet}/T_X$ admits an open affine subset $U$ intersecting non-trivially with every $G$-orbit. For any two maps $a_1,a_2:\Spec(R)\to \sX^{\bullet}/T_X$ inducing the same map on $\Spec(\mathrm{Frac}(R))$ for a discrete valuation ring $R$, via translation by $G$, one can assume that $a_1,a_2$ has their image lying in $U$. This implies $a_1=a_2$, hence, the separatedness of $\sX^{\bullet}/T_X$.
    
    Note that the valuations of the $G$-stable divisors of $\sX^{\bullet}/T_X$ generate exactly its valuation cone. We know that the colored cone of $\sX^{\bullet}/T_X$ contains the valuation cone, hence, it is a proper algebraic variety.
\end{proof}

When $\mathrm{char} F >0$, we make the following assumption:
\begin{assumption}\label{assumption:goodfil}
    The $G$-representation $\cO(X)$ admits a good filtration in the sense of \cite[\S II.4.16]{jantzen2003representations}. Moreover, we assume the good filtration can be chosen to be multiplicative. That is, there exists an increasing filtration $\{F_{\lambda}\sub \cO(X)\}_{\lambda\in X^*(T_X)}$\footnote{The partial order on $X^*(T_X)$ is the same as for the filtration $\{F_{\lambda}\}_{\lambda\in X^*(T_X)}$ in characteristic zero case. That is, $\lambda\leq\mu$ if and only if $\langle\lambda-\mu,\cV\rangle \geq 0$.} such that: \begin{itemize}
        \item For each $\lambda\in X^*(T_X)$, \begin{equation} F_{\lambda}/F_{<\lambda}\cong 
        \left\{ 
        \begin{aligned}
             \nabla_{\lambda} &, \lambda\in \frc_X \\
             0&, \lambda\notin \frc_X
        \end{aligned} 
        \right.  \end{equation} Here, $\nabla_{\lambda}\in\Rep(G)$ is the costandard object\footnote{In the notation of \cite{jantzen2003representations}, one has $\nabla_{\lambda}=H^0(\lambda)$.} with highest weight $\lambda$;
        \item The multiplication on $\cO(X)$ preserves the filtration $\{F_{\lambda}\}_{\lambda\in X^*(T_X)}$, that is, it induces $F_{\lambda}\otimes F_{\mu}\to F_{\lambda+\mu}$ for $\lambda,\mu\in X^*(T_X)$.
    \end{itemize}
\end{assumption}
Under this assumption, the entire argument in characteristic zero case can be modified to work using the filtration $\{F_{\lambda}\sub \cO(X)\}_{\lambda\in X^*(T_X)}$.

Here are some examples that Assumption \ref{assumption:goodfil} is satisfied:

\begin{example}[Group case]\label{eg:groupcasegoodfil}
    When $G=H\times H$ and $X=H=H\backslash H\times H$ for a split reductive group $H$, by \cite[\S II.4.20]{jantzen2003representations}, we know that Assumption \ref{assumption:goodfil} is satisfied. Note that the multiplicativity of the filtration is automatic since $X$ is wavefront.\footnote{A spherical variety $X$ is called \emph{wavefront} if $X_*(T)\to X_*(T_X)$ maps the dominant cone in $X_*(T)_{\QQ}$ onto $-\cV\sub X_*(T_X)_{\QQ}$.}
\end{example}

\begin{example}[Rankin--Selberg case]\label{eg:rsgoodfil}
    When $G=\GL_n\times\GL_{n-1}$ and $X=\GL_n=\GL_{n-1}\backslash \GL_n\times\GL_{n-1}$, by applying \cite[\S II.4.24]{jantzen2003representations}, one knows that $\GL_{n-1}\sub\GL_n$ is a good pair (also called a Donkin pair). Therefore, Assumption \ref{assumption:goodfil} is satisfied. The multiplicativity of the filtration is automatic since $X$ is wavefront.
\end{example}

\begin{example}[Symmetric varieties]
    When $X$ is a symmetric variety and $F$ is algebraically closed, by \cite[Theorem\,2]{bao2024coordinateringssymmetricspaces}, one knows that Assumption \ref{assumption:goodfil} is satisfied when $\mathrm{char} F\neq 2$.
\end{example}

\subsubsection{Relative compactification}\label{sec:compactifiedmoduli}
In this section, we assume Assumption \ref{assumption:goodfil} when $\mathrm{char} F >0$. Moreover, we assume that $X=H \backslash G$ is $G$-homogeneous. In this case, we have $\sX^{\circ}\sub\sX^{\bullet}$.

Define \[\overline{\Bun}_G^X:=\Map(C,\sX^{\bullet}/G\sub \sX/G)/T_X\] \[\cB:=\Map(C,\overline{T_{X,\rmss}})/T_X.\] Note that $\overline{\Bun}_G^X$ contains an open substack \[\Bun_G^{X}/\cZ(X)^{\circ}=\Map(C,\sX^{\circ}/G)/T_X\] and $\cB$ contains an open substack \[*/\cZ(X)^{\circ}=\Map(C,T_{X,\rmss})/T_X.\]

We have a Cartesian diagram \begin{equation}\label{cd:compactificationfundcart}\begin{tikzcd}
    \Bun_G^{X}/\cZ(X)^{\circ} \ar[r]\ar[d] & \overline{\Bun}_G^X \ar[d] \\
    */\cZ(X)^{\circ}\ar[r] & \cB
    \end{tikzcd}
\end{equation}

Note that there is a natural map $\overline{\pi}:\overline{\Bun}^X_G\to\Bun_G$.

\begin{prop}\label{prop:relcompact}
    The map $\overline{\pi}:\overline{\Bun}^X_G\to\Bun_G$ is representable in proper algebraic spaces.
\end{prop}

\begin{proof}
Consider $\overline{\Bun}_{G\times T_X}^X:=\Map(C,\sX^{\bullet}/G\times T_X \sub \sX/G\times T_X)$. We have a Cartesian diagram \[\begin{tikzcd}
    \overline{\Bun}_{G}^X \ar[r] \ar[d] & \overline{\Bun}_{G\times T_X}^X \ar[d] \\
    */T_X \ar[r] & \Bun_{T_X}
\end{tikzcd}
\]
where the map $*/T_X\to\Bun_{T_X}$ is induced by the trivial $T_X$-bundle over $C$. This implies that $\overline{\Bun}_{G}^X \to \overline{\Bun}_{G\times T_X}^X$ is a closed immersion. Therefore, we only need to show that each the natural map \[\overline{\pi}_{T_X}:\overline{\Bun}_{G\times T_X}^X\to \Bun_G\] is representable in proper algebraic spaces after restricting to connected components of $\Bun_{T_X}$. 

We use $\xi\in C$ to denote the generic point of the curve $C$. We first show that $\overline{\pi}_{T_X}:\overline{\Bun}_{G\times T_X}^X\to \Bun_G$ is representable in algebraic spaces. By \cite[0DSL]{stacks-project}, we only need to check that for each $x=\cF_G\in\Bun_G(\overline{F})$, the fiber $\overline{\pi}_{T_X}^{-1}(x)$ has trivial stabilizers. In fact, for $y=(\cF_G,\cF_{T_X},s:\cF_G\times_{C}\cF_{T_X}\to\sX)\in \overline{\pi}_{T_X}^{-1}(x)(\overline{F})$, the stabilizer of $y$ is the subgroup of $T_X$ stabilizing the image of $s$, hence is contained in the stabilizer of $s(\widetilde{\xi})\in\sX^{\bullet}(\xi)$ for any $\widetilde{\xi}\in (\cF_G\times_{C}\cF_{T_X})(\xi)$ projecting to $\xi$. Note that $T_X$ acts freely on $\sX^{\bullet}$. It follows that the stabilizer above must be trivial.

Then we show that the map $\overline{\pi}_{T_X}$ is proper. Since $\overline{\pi}_{T_X}$ is of finite type (after restricting to a connected component of $\Bun_{T_X}$) and quasi-separated, using the valuative criterion for properness \cite[0CLY]{stacks-project}, we only need to show that for any discrete valuation ring $R$ with $D=\Spec R$ and $D^{\circ}=\Spec\mathrm{Frac}(R)$, any diagram \[\begin{tikzcd}
    D^\circ \ar[r] \ar[d] & \overline{\Bun}_{G\times T_X}^X \ar[d] \\
    D \ar[r] \ar[ur, dashed] & \Bun_G
\end{tikzcd}\] admits a unique (and up to a unique isomorphism, same for below) dashed arrow making the diagram commutative. Equivalently, this is to say that given any $G$-torsor $a_G:\cF_G\to D\times C$ and a $G$-equivariant map $a_{D^{\circ}}:\cF_G|_{D^{\circ}\times C}\to \sX/T_X$ whose restriction $a_{D^{\circ}}|_{D^{\circ}\times\xi}:\cF_G|_{D^{\circ}\times\xi}\to \sX/T_X$ has its image lying in $\sX^{\bullet}/T_X\sub \sX/T_X$, there exists a unique map $a:\cF_G \to \sX/T_X$ extending $a_{D^{\circ}}$ and whose restriction $a|_{D\times\xi}:\cF_{G}|_{D\times\xi}\to \sX/T_X$ has image lying in $\sX^{\bullet}/T_X\sub \sX/T_X$. Note that $\sX^{\bullet}/T_X$ is a proper algebraic variety by Lemma \ref{lem:Xcomp}, by valuative criterion for properness, we know that there exists a unique map $a_{\xi}:\cF_G|_{D\times \xi}\to \sX^{\bullet}/T_X$ extending $a_{D^{\circ}}|_{D^{\circ}\times\xi}$. This implies that there exists a unique map $a_{2}:\cF_G|_{D\times\xi \cup D^{\circ}\times C}\to \sX/T_X$ extending $a_{D^{\circ}}$ and whose restriction to $\cF_G|_{D\times\xi}$ has its image lying in $\sX^{\bullet}/T_X\sub\sX/T_X$. Since $\cF_G$ is normal, $\cF_G|_{D\times\xi \cup D^{\circ}\times C}\sub \cF_G$ has codimension two complement, and $\sX$ is affine, we know that there exists a unique desired map $a:\cF_G\to \sX/T_X$.

\end{proof}

\begin{remark}
    When $G=H\times H$ and $X=H$ for a split reductive group $H$ of adjoint type, we obtain the well-known fact that the relative compactification $\overline{\Bun}_H\to \Bun_H\times\Bun_H$ is proper. This special case is proved in \cite[\S A.1]{finkelberg2020drinfeld}. Our proof simplifies and generalizes the proof in \textit{loc.cit}.
\end{remark}

\section{Geometric trace and special cycle classes}\label{sec:geo=cycle}

In this section, we relate the minuscule homogeneous special cycle classes in \S\ref{sec:intro:mincycleclass} and the diagonal cycle classes in \S\ref{sec:intro:diagcycleclass} to the geometric Shtuka construction (introduced in \S\ref{sec:geoshtcons}) of the corresponding cohomological correspondences. The main results in this section are Theorem \ref{thm:mingeo=cyc} and Theorem \ref{thm:diaggeo=cyc}.
\begin{itemize}
    \item In \S\ref{sec:genformaffinehomogeneous}, we prove a general theorem which will be used in the proof of the main results of this section.
    \item In \S\ref{sec:minsc} and \S\ref{sec:diagsc}, we formulate and prove the main results in this section.
\end{itemize}

\subsection{General formalism in affine homogeneous case}\label{sec:genformaffinehomogeneous}
Given an affine homogeneous spherical variety $X$. Consider the map of correspondences \begin{equation}\label{cd:hkpush}\begin{tikzcd}
    \Bun_G^X\times C^I \ar[d, "\pi"] & \Hk_{G,I}^X \ar[l, "\hl^X_I"'] \ar[r, "\hr^X_I"] \ar[d, "\pi_{\Hk,I}"] & \Bun_G^X\times C^I \ar[d, "\pi"] \\
    \Bun_G\times C^I & \Hk_{G,I} \ar[l, "\hl_I"'] \ar[r, "\hr_I"] & \Bun_G\times C^I
\end{tikzcd}.\end{equation} We fix dominant coweight $\lambda_I\in X_*(T)_+^I$ and kernel sheaf $\cK\in\Shv(\Hk_{G,I})$ supported on $\Hk_{G,\lambda_I}$. 

\begin{thm}\label{thm:geo=push}
    Assume Assumption \ref{assumption:goodfil} is satisfied. For any $\cF\in\Shv(\Bun_G^X)_{\rmc}$, the diagram
    \[\begin{tikzcd}\Cor_{\Hk_{G,I}^X,\cK|_{\Hk_{G,I}^X}}(\cF\boxtimes\uk_{C^I},\cF\boxtimes\uk_{C^I}) \ar[r, "\pi_{\Hk,I,!}"] \ar[d, "\tr_{\Sht,C^I}"] & \Cor_{\Hk_{G,I},\cK}(\pi_!\cF\boxtimes\uk_{C^I},\pi_!\cF\boxtimes\uk_{C^I}) \ar[d, "\tr_{\Sht,C^I}"] \\
    H_0^{BM}(\Sht_{G,I}^X/C^I,\cK|_{\Sht_{G,I}^X}) \ar[r, "\pi_{\Sht,I,!}"] & H_0^{BM}(\Sht_{G,I}/C^I,\cK_{\Sht_{G,I}})\end{tikzcd}\] is commutative.
\end{thm}

\begin{proof}
    Since $\cK$ is supported on $\Hk_{G,\lambda_I}\sub \Hk_{G,I}$, we can restrict to the substack $\Hk_{G,\lambda_I}^X\sub \Hk_{G,I}^X$ and $\Hk_{G,\lambda_I}\sub \Hk_{G,I}$, hence all the stacks involved will be algebraic stacks. We will use the same notations for maps restricted to these substacks. By Corollary \ref{cor:trshtpush}, we only need to show that $\pi_{\Hk,I}$ is a composition of maps of correspondences admitting compactification and check the goodness in each step. 
    
    For the compactification, we can factor the map of correspondence \eqref{cd:hkpush} as \begin{equation}
        \begin{tikzcd}
            \Bun_G^X\times C^I \ar[d, "\pi_1\times\id"] & \Hk_{G,\lambda_I}^X \ar[l] \ar[r] \ar[d, "\pi_{1,\Hk,I}"] & \Bun_G^X\times C^I \ar[d, "\pi_1\times\id"] \\
            \Bun_G^X/\cZ(X)^{\circ}\times C^I \ar[d, "\pi_2\times\id"] & \Hk_{G,\lambda_I}^X/\cZ(X)^{\circ} \ar[l] \ar[r] \ar[d, "\pi_{2,\Hk,I}"] & \Bun_G^X/\cZ(X)^{\circ}\times C^I \ar[d, "\pi_2\times\id"] \\
    \Bun_G\times C & \Hk_{G,\lambda_I} \ar[l] \ar[r] & \Bun_G\times C^I
        \end{tikzcd}.
    \end{equation}
    We only need to show that both $\pi_{1,\Hk,I}$ and $\pi_{2,\Hk,I}$ admit compactification. For $\pi_{1,\Hk,I}$, one can choose any proper toric embedding $\cZ(X)^{\circ}\sub W$ and consider \[\begin{tikzcd}
        \Bun_G^X\times C^I \ar[d, "j_1\times\id"] & \Hk_{G,\lambda_I}^X \ar[l] \ar[r] \ar[d, "j_{1,\Hk,I}"] & \Bun_G^X\times C^I \ar[d, "j_1\times\id"] \\
         (\Bun_G^X\times W)/\cZ(X)^{\circ}\times C^I \ar[d, "\overline{\pi}_1\times\id"] & (\Hk_{G,\lambda_I}^X\times W)/\cZ(X)^{\circ} \ar[l] \ar[r] \ar[d, "\overline{\pi}_{1,\Hk,I}"] & (\Bun_G^X\times W)/\cZ(X)^{\circ}\times C^I \ar[d, "\overline{\pi}_1\times\id"] \\
            \Bun_G^X/\cZ(X)^{\circ} \times C^I & \Hk_{G,\lambda_I}^X/\cZ(X)^{\circ} \ar[l] \ar[r]  & \Bun_G^X/\cZ(X)^{\circ}\times C^I 
    \end{tikzcd}\] in which $\cZ(X)^{\circ}$ acts diagonally on each stack in the middle row. Here, each upper vertical map is induced by the inclusion map $*=\cZ(X)^{\circ}/\cZ(X)^{\circ}\sub W/\cZ(X)^{\circ}$, and each lower vertical map is the projection to the first factor. Note that the entire middle correspondence admits a map to $W/\cZ(X)^{\circ}$ and the boundary locus is stable because it is the fiber over $\partial W/\cZ(X)^{\circ}\sub W/\cZ(X)^{\circ}$. This shows that the factorization above satisfies the condition in Definition \ref{def:corrcompact} and $\pi_{1,\Hk,I}$ admits compactification. 

    For the map $\pi_{2,\Hk,I}$, we can consider \[\begin{tikzcd}
    \Bun_G^X/\cZ(X)^{\circ} \times C^I \ar[d, "j\times\id"] & \Hk_{G,\lambda_I}^X/\cZ(X)^{\circ} \ar[l] \ar[r] \ar[d, "j_{\Hk,I}"] & \Bun_G^X/\cZ(X)^{\circ}\times C^I \ar[d, "j\times\id"] \\
    \overline{\Bun}_G^X\times C^I \ar[d, "\overline{\pi}\times\id"] & \overline{\Hk}_{G,\lambda_I}^X \ar[l] \ar[r] \ar[d, "\overline{\pi}_{\Hk,I}"] & \overline{\Bun}_G^X\times C^I \ar[d, "\overline{\pi}\times\id"] \\
    \Bun_G\times C^I & \Hk_{G,\lambda_I} \ar[l] \ar[r] & \Bun_G\times C^I
    \end{tikzcd}.\] Here the left and right columns both come from the relative compactification $\Bun_G^X/\cZ(X)^{\circ}\xrightarrow{j}\overline{\Bun}_G^X\xrightarrow{\overline{\pi}}\Bun_G$ introduced in \S\ref{sec:compactifiedmoduli} (whose existence relies on Assumption \ref{assumption:goodfil}). The middle column is defined as follows: recall that $\overline{\Bun}_G^X$ can be identified with (the quotient by $\cZ(X)^{\circ}$ of) the moduli of $(\cE_G,s)$ where $\cE_G$ is a $G$-bundle on $C$ and $s:\cE_G\to\sX$ is a $G$-equivariant map which generically has its image lying in $\sX^{\bullet}\sub \sX$. We take $\overline{\Hk}_{G,\lambda_I}^X$ to be the closed substack of $\overline{\Bun}_G^X\times_{\Bun_G}\Hk_{G, \lambda_I}$ defined as (the quotient by $\cZ(X)^{\circ}$ of) the moduli of tuples $(\cE_{G,1},\cE_{G,2},c_I,a,s)$ where $c_I\in C^I$, $a:\cE_{G,1}|_{C-\{c_I\}}\isom \cE_{G,2}|_{C-\{c_I\}}$ is an isomorphism of $G$-bundles over $C-\{c_I\}$ with its pole at $c_I$ bounded by $\lambda_I$, $s:\cE_{G,2}\to\sX$ is a $G$-equivariant map generically has its image lying in $\sX^{\bullet}\sub\sX$ such that the composition $s\circ a:\cE_{G,1}|_{C-\{c_I\}}\to \sX$ can be extended to the entire $\cE_{G,1}$. From the definition, it is clear that the middle column, and hence the diagram above, is defined.

    The fact that the diagram above satisfies the conditions in Definition \ref{def:corrcompact} follows from Proposition \ref{prop:relcompact} and the diagram below, in which each square is Cartesian \[\begin{tikzcd}
        \Hk_{G,\lambda_I}^X/\cZ(X)^{\circ} \ar[d]\ar[r] & \overline{\Hk}_{G,\lambda_I}^X\ar[d] \\
        \Bun_G^X/\cZ(X)^{\circ}\ar[d]\ar[r] & \overline{\Bun}_G^X\ar[d] \\
        */\cZ(X)^{\circ}\ar[r] & \cB
        \end{tikzcd}.\] Here, the lower square is \eqref{cd:compactificationfundcart}, the upper vertical map can be either the left or right Hecke map.

    The goodness in each step of compactification follows from the fact that $\pi_{1,!}$, $j_{1,!}$, and $j_!$ preserve constructibility. 
\end{proof}

\subsection{Minuscule homogeneous special cycles classes}\label{sec:minsc}
In this section, assume we are in the setting of \S\ref{sec:intro:mincycleclass}.

Consider the diagram \eqref{diag:mincorfund} and take $X=H \backslash G$, we have the following result comparing geometric trace with special cycle class:
\begin{thm}\label{thm:mingeo=cyc}
    Assuming Assumption \ref{assumption:goodfil}, we have \[\pi_{\Sht,I,!}[\Sht_{H,\lambda_{H,I}}/C^I]=\tr_{\Sht,C^I}(\pi_{\Hk,I,!}[\Hk_{H,\lambda_{H,I}}/\Bun_H\times C^I])\in H_{-d_{\lambda_I}+2d_{\lambda_{H,I}}}^{BM}(\Sht_{G,I}/C^I,\IC_{V^I}|_{\Sht_{G,I}}).\]
\end{thm}

\begin{proof}
    Take $\cK=\IC_{V^I}|_{\Sht_{G,I}}$, $\cF=\uk_{\Bun_H}$, and \begin{equation}\label{eq:geo=cycle:mincc}\frc=\frc_{\lambda_{H,I}}=[\Hk_{H,\lambda_{H,I}}/\Bun_H\times C^I]\in\Cor_{\Hk_{H,\lambda_{H,I}},\uk}(\uk_{\Bun_H}\boxtimes\uk_{C^I},\uk_{\Bun_H}\boxtimes\uk_{C^I}\langle -2d_{\lambda_{H,I}}\rangle )\end{equation} in Theorem  \ref{thm:geo=push}, we are reduced to show \[[\Sht_{H,\lambda_{H,I}}/C^I]=\tr_{\Sht,C^I}([\Hk_{H,\lambda_{H,I}}/\Bun_H\times C^I])\in H_{2d_{\lambda_{H,I}}}^{BM}(\Sht_{H,\lambda_{H,I}}/C^I).\] This follows directly from Theorem  \ref{thm:trpull}.
\end{proof}

\subsection{Diagonal cycle classes}\label{sec:diagsc}

In this section, assume we are in the setting of \S\ref{sec:intro:diagcycleclass} and take $G=H\times H$ and $X=H$.
\begin{thm}\label{thm:diaggeo=cyc}
    We have \[\langle-,-\rangle_{\lambda_{H,I}}=\tr_{\Sht,C^I}(\D_{\Hk,I,!}[\Hk_{H, \lambda_{H,I}}/\Bun_H\times C^I]) \in \Hom^0((l_{H,I,!}(\IC_{V_{\lambda_{H,I}}}|_{\Sht_{H,\lambda_{H,I}}}))^{\otimes 2}, \uk_{C^I}).\]
\end{thm}

\begin{proof}
    By Example \ref{eg:groupcasegoodfil}, Assumption \ref{assumption:goodfil} is satisfied. Therefore, we can apply Theorem \ref{thm:geo=push}. Take $\cK=\IC_{\lambda_I}\in\Shv(\Hk_{G,I})$, $\cF=\uk_{\Bun_H}$, and \begin{equation}\label{eq:geo=cycle:diagcc}\frc=[\Hk_{H, \lambda_{H,I}}/\Bun_H\times C^I]\in\Cor_{\Hk_{H,I},\IC_{V_{\lambda_{H,I}}}^{\otimes 2}}(\uk_{\Bun_H\times C^I},\uk_{\Bun_H\times C^I})\end{equation} in Theorem \ref{thm:geo=push}, we are reduced to show \[[\Sht_{H,\lambda_{H,I}}/C^I]=\tr_{\Sht,C^I}([\Hk_{H,\lambda_{H,I}}/\Bun_H\times C^I])\in H_0^{BM}(\Sht_{H,\lambda_{H,I}}/C^I,\IC_{V_{\lambda_{H,I}}}^{\otimes 2}|_{\Sht_{H,\lambda_{H,I}}})).\] Note that restriction along the open Schubert cell $\Sht_{H,\lambda_{H,I}}^{\circ}\sub\Sht_{H, \lambda_{H,I}}$ gives an isomorphism \[H_0^{BM}(\Sht_{H,\lambda_{H,I}}/C^I,\IC_{V_{\lambda_{H,I}}}^{\otimes 2}|_{\Sht_{H,\lambda_{H,I}}})\cong H_{2d_{\lambda_{H,I}}}^{BM}(\Sht_{H,\lambda_{H,I}}^{\circ}/C).\] Applying Theorem  \ref{thm:trpull} for the open immersion, we are reduced to show \[[\Sht_{H,\lambda_{H,I}}^{\circ}/C^I]=\tr_{\Sht,C^I}([\Hk_{H,\lambda_{H,I}}^{\circ}/\Bun_H\times C^I])\in H_{2d_{\lambda_{H,I}}}^{BM}(\Sht_{H,\lambda_{H,I}}^{\circ}/C^I)\] which again follows from Theorem  \ref{thm:trpull}.
\end{proof}

\section{Categorical trace and geometric trace} \label{sec:cat=geo}

In this section, we interpret the geometric Shtuka construction of a special cohomological correspondence as a categorical trace. The main result in this section is Theorem \ref{thm:geo=cat}.
\begin{itemize}
    \item In \S\ref{sec:geo=cat:gl}, we review some basic facts about the geometric Langlands conjecture.
    \item In \S\ref{sec:geo=cat:ltserre}, we review the interpretation of Shtuka cohomology as a categorical trace.
    \item In \S\ref{sec:geo=cat:duality}, we develop some tools to compute the functoriality of categorical trace in our setting.
    \item In \S\ref{sec:geo=cat:mainresult}, we formulate and prove the main result in this section.
\end{itemize}

\subsection{Recollections on geometric Langlands}\label{sec:geo=cat:gl}
Consider the natural embedding $\iota:\Shv_{\Nilp}(\Bun_G)\to\Shv(\Bun_G)$, it admits a right adjoint $\iota_{\rmR}:\Shv(\Bun_G)\to\Shv_{\Nilp}(\Bun_G)$. Also, consider $\iota_2:\Shv_{\Nilp}(\Bun_G^2)\to\Shv(\Bun_G^2)$ and its right adjoint $\iota_{2,\rmR}:\Shv(\Bun_G^2)\to\Shv_{\Nilp}(\Bun_G^2)$. We sometimes omit the functor $\iota$ (resp. $\iota_2$) and regard $\Shv_{\Nilp}(\Bun_G)$ (resp. $\Shv_{\Nilp}(\Bun_G^2)$) as a subcategory of $\Shv(\Bun_G^2)$ (resp. $\Shv(\Bun_G^2)$).

One has the following fundamental result:
\begin{thm}[\cite{GR}]\label{thm:geo=cat:spectralprojector}
    The functor $\iota_{\rmR}:\Shv(\Bun_G)\to\Shv_{\Nilp}(\Bun_G)$ coincides with the Beilinson's spectral projector $\mathsf{P}$ defined in \cite[\S13.4.4]{arinkin2022stacklocalsystemsrestricted}. In particular, $\iota_{\rmR}$ is continuous.
\end{thm}

Consider $\D_!\uk_{\Bun_G}\in \Shv(\Bun_G^2)$ where $\D:\Bun_G\to\Bun_G^2$ is the diagonal map and define \[\rmu:=\iota_R(\D_!\uk_{\Bun_G})\in\Shv_{\Nilp}(\Bun_G^2).\] Define \[\ev:=\Gamma_c\circ\D^*\circ\iota_2:\Shv_{\Nilp}(\Bun_G^2)\to\Vect.\] By \cite[Theorem\,16.3.3]{arinkin2022stacklocalsystemsrestricted}, the exterior tensor product functor \[\boxtimes:\Shv_{\Nilp}(\Bun_G)^{\otimes 2}\isom\Shv_{\Nilp}(\Bun_G^2)\] is an equivalence of categories. In the following, we usually omit the functor $\boxtimes$ and do not distinguish detween $\Shv_{\Nilp}(\Bun_G)^{\otimes 2}$ and $\Shv_{\Nilp}(\Bun_G^2)$. Also, by \cite[Theorem\,F.9.7]{arinkin2022stacklocalsystemsrestricted}, for any finite set $I$, the exterior product \[\boxtimes:\Shv_{\Nilp}(\Bun_G)\otimes\QLisse(C^I)\to\Shv_{\Nilp}(\Bun_G\times C^I)\] is an equivalence of categories. Here we abbreviate $\Shv_{\Nilp}(\Bun_G\times C^I):=\Shv_{\Nilp\times\{0\}}(\Bun_G\times C^I)$. From now on, we do not distinguish between $\Shv_{\Nilp}(\Bun_G)\otimes\QLisse(C^I)$ and $\Shv_{\Nilp}(\Bun_G\times C^I)$ and often omit the exterior product.

Recall the following result from \cite[Theorem\,3.2.2]{arinkin2022dualityautomorphicsheavesnilpotent}:

\begin{thm}
    The category $\Shv_{\Nilp}(\Bun_G)$ admits a self-duality with unit \[\rmu\in \Shv_{\Nilp}(\Bun_G)^{\otimes 2}\] and counit \[\ev:\Shv_{\Nilp}(\Bun_G)^{\otimes 2}\to\Vect.\] One of the adjunction map is given by \[\alpha:(\id\otimes\ev)(\rmu\otimes-)=\pr_{1,!}\circ(\id\times\D)^*(\iota_2\circ\iota_{2,\rmR}(\D_{!}\uk_{\Bun_G})\boxtimes-)\to \pr_{1,!}\circ(\id\times\D)^*((\D_{!}\uk_{\Bun_G})\boxtimes-)\isom \id\] where the first identity follows from definition, the second natural transformation comes from the adjunction map $\iota_2\circ\iota_{2,\rmR}\to\id$, the third isomorphism is given by base change isomorphisms and projection formulas. Another adjunction map $\beta:(\ev\otimes\id)(-\otimes\rmu)\to\id$ admits a similar description.
\end{thm}

Recall the following properties of Hecke operators introduced in \S\ref{sec:intro:not:geometricsetup}:

\begin{lemma}
\begin{enumerate}
\item The right adjoint of $T_{V^I}:\Shv(\Bun_G\times C^I)\to\Shv(\Bun_G\times C^I)$ is \[T_{c^*(V^{I})}\cong\lh_*(\rh^!(-)\otimes^!\IC_{V^I})\cong \lh_!(\rh^*(-)\otimes\IC_{V^I}):\Shv(\Bun_G\times C^I)\to\Shv(\Bun_G\times C^I).\] Here $c^I\in\End(\Gc^I)$ is the Cartan involution.
\item There is a canonical isomorphism of functors $l_{I,!}(-\otimes T_{V^I}(-))\cong l_{I,!}(T_{c^*(V^{I})}(-)\otimes -)$.
\item The functor $T_{V^I}$ preserves the full-subcategory $\Shv_{\Nilp}(\Bun_G\times C^I)\sub\Shv(\Bun_G\times C^I)$, hence, gives a functor $T_{V^I}:\Shv_{\Nilp}(\Bun_G\times C^I)\to \Shv_{\Nilp}(\Bun_G\times C^I)$.
\end{enumerate}
\end{lemma}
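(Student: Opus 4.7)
The plan is to dispatch the three parts of the lemma using the swap involution on the Hecke stack together with the standard properties of geometric Satake.

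For part (1), I would compute the right adjoint of $T_{V^I}$ by chaining abstract adjunctions for $T_{V^I}=\rh_{I,!}(\lh_I^*(-)\otimes\IC_{V^I})$, yielding
\[
T_{V^I}^R \;\cong\; \lh_{I,*}\circ \underline{\Hom}(\IC_{V^I},-)\circ\rh_I^!.
\]
Using constructibility of $\IC_{V^I}$ (supported on a Schubert substack, over which $\lh_I$ is proper so that $\lh_{I,*}=\lh_{I,!}$) together with Verdier self-duality $\DD(\IC_{V^I})\cong\IC_{V^I}$ of IC-sheaves, the internal Hom rewrites as $\IC_{V^I}\otimes^!(-)$, giving $T_{V^I}^R(\cG)\cong \lh_{I,*}(\rh_I^!\cG\otimes^!\IC_{V^I})$. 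To identify this with $T_{c^*V^I}$, I would bring in the swap involution $\sigma:\Hk_{G,I}\xrightarrow{\sim}\Hk_{G,I}$ that reverses the direction of modifications. It satisfies $\rh_I=\lh_I\circ\sigma$ and, via the standard compatibility of geometric Satake with inversion on the affine Grassmannian, $\sigma^*\IC_{V^I}\cong\IC_{c^*V^I}$. Combined with $\sigma_*\sigma^*\cong\id$ and the $\otimes^!$-projection formula for the iso $\sigma$, this converts the given expression $T_{c^*V^I}=\rh_{I,*}(\lh_I^!(-)\otimes^!\IC_{c^*V^I})$ into exactly $\lh_{I,*}(\rh_I^!(-)\otimes^!\IC_{V^I})$.

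For part (2), the ordinary projection formula suffices: letting $p:=l_I\circ\lh_I=l_I\circ\rh_I:\Hk_{G,I}\to C^I$ denote the common leg map, one obtains
\[
l_{I,!}(\cF\otimes T_{V^I}\cG) \;\cong\; p_!\bigl(\rh_I^*\cF\otimes \lh_I^*\cG\otimes \IC_{V^I}\bigr),
\]
and pushing this expression through $\sigma$, using $\sigma^*\rh_I^*=\lh_I^*$, $\sigma^*\lh_I^*=\rh_I^*$, $\sigma^*\IC_{V^I}=\IC_{c^*V^I}$, and $p\circ\sigma=p$, yields $l_{I,!}(T_{c^*V^I}\cF\otimes\cG)$, as desired. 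For part (3), preservation of $\Shv_{\Nilp}$ under $T_{V^I}$ is a standard consequence of the facts that $\IC_{V^I}$ has nilpotent singular support in the $\Bun_G$-direction, the maps $\lh_I$ and $\rh_I$ are smooth on each Schubert stratum (so pull-back preserves the singular support condition), and the Künneth-type identification $\Shv_{\Nilp}(\Bun_G\times C^I)\cong\Shv_{\Nilp}(\Bun_G)\otimes\QLisse(C^I)$ of \cite[Theorem F.9.7]{arinkin2022stacklocalsystemsrestricted} renders the $C^I$-direction automatically lisse.

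The principal obstacle is part (1): getting the conventions right so that Verdier self-duality of $\IC_{V^I}$, the Cartan-involution twist arising from $\sigma$, and the interplay between $\otimes$ and $\otimes^!$ on $\Hk_{G,I}$ line up coherently. Once these conventions are fixed consistently, the identification is immediate, and parts (2) and (3) then follow formally from the swap and the main results of \cite{arinkin2022stacklocalsystemsrestricted}.
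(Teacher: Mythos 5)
Your proposal is correct and follows essentially the same route as the paper, which disposes of all three parts by citing standard references: part (1) and part (2) are exactly the six-functor/projection-formula computations (with the swap involution and $\sigma^*\IC_{V^I}\cong\IC_{c^*V^I}$) that the cited \cite[1.1.5]{arinkin2022dualityautomorphicsheavesnilpotent} and \cite[Lemma\,3.4.8]{arinkin2022dualityautomorphicsheavesnilpotent} unpack. The one caution is part (3): preservation of nilpotent singular support by Hecke functors is a genuine theorem (\cite[Theorem\,14.2.4]{arinkin2022stacklocalsystemsrestricted}) whose proof requires an actual singular-support estimate for pushforward along the Hecke correspondence, not merely smoothness of $\lh_I,\rh_I$ on Schubert strata; the paper treats this as a black box, and your sketch should be read as invoking that theorem rather than reproving it.
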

\begin{proof}
    The first is immediate from the usual six-functor formalism (see \cite[1.1.5]{arinkin2022dualityautomorphicsheavesnilpotent}). The second follows directly from projection formulas (see \cite[Lemma\,3.4.8]{arinkin2022dualityautomorphicsheavesnilpotent}). The third is \cite[Theorem\,14.2.4]{arinkin2022stacklocalsystemsrestricted}.
\end{proof}

\subsection{Shtuka cohomology as a categorical trace} \label{sec:geo=cat:ltserre}
These is a natural map \begin{equation}\label{eq:geo=cat:ltserre}
    \tr_{\QLisse(C^I)}((\Frob\times\id)_!\circ T_{V^I},\Shv_{\Nilp}(\Bun_G\times C^I))\xrightarrow{\LT^{\Serre}} l_{I,!}(\IC_{V^I}|_{\Sht_{G,I}})
\end{equation} defined as  \[\begin{split}
    &\tr_{\QLisse(C^I)}((\Frob\times\id)_!\circ T_{V^I},\Shv_{\Nilp}(\Bun_G\times C^I)) \\ \cong& (\ev\otimes\id )\circ (\Frob_{\Bun_G}\times\id)_!\circ T_{V^I\boxtimes\triv}(\rmu\otimes\uk_{C^I} ) \\ =&l_{I,!}\circ (\Frob_{\Bun_G}\times\id)_!\circ T_{V^I\boxtimes\triv}(\iota\circ\iota_{\rmR}(\D_!\uk_{\Bun_G})\boxtimes\uk_{C^I}) \\
    \to& l_{I,!}\circ (\Frob_{\Bun_G}\times\id)_!\circ T_{V^I\boxtimes\triv}((\D_!\uk_{\Bun_G})\boxtimes\uk_{C^I}) \\
    \cong & l_{I,!}(\IC_{V^I}|_{\Sht_{G,I}})
\end{split}\] in which the third map uses the adjunction $\iota\circ\iota_{\rmR}\to\id$.

By \cite[Theorem\,4.1.2, Theorem\,5.5.6]{arinkin2022automorphicfunctionstracefrobenius}, we have \begin{thm}\label{thm:geo=cat:ltserre}
    The map \eqref{eq:geo=cat:ltserre} is an isomorphism.
\end{thm}

\subsection{Duality of functors}\label{sec:geo=cat:duality}
We would like to apply functoriality of trace construction in \S\ref{sec:cattrace} to functors \[\int_{\cP,\Nilp}:=\Gamma_c(\cP\otimes-):\Shv_{\Nilp}(\Bun_G)\to \Vect\] for some $\cP\in\Shv(\Bun_G)_{\rmc}$. For this purpose, we need to study its right adjoint $\int_{\cP,\Nilp,\rmR}$ and dual functor $\int_{\cP,\Nilp,\rmR}^{\vee}$.

We first study the version without nilpotent singular support. For each constructible sheaf \[\cP\in\Shv(\Bun_G)_{\rmc},\] consider the functor \[\int_{\cP}:=\Gamma_c(\cP\otimes-):\Shv(\Bun_G)\to\Vect.\]

\begin{lemma}
    The functor $\int_{\cP}$ preserves compact objects, hence, it admits a continuous right adjoint.
\end{lemma}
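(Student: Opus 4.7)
The plan is to verify preservation of compact objects directly; the existence of a continuous right adjoint will then follow from the general fact that, between compactly generated stable presentable $\infty$-categories, a continuous functor preserves compacts if and only if its right adjoint is continuous. Continuity of $\int_{\cP}$ itself is immediate, since it is the composition of the continuous functors $\cP\otimes(-)$ and $p_! = \Gamma_c$, where $p:\Bun_G\to\pt$.

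Let $\cF\in\Shv(\Bun_G)$ be compact, i.e.\ constructible with quasi-compact support. Then $\cP\otimes\cF$ is again constructible, with support contained in $\supp(\cF)$, hence still a compact object of $\Shv(\Bun_G)$. It then remains to check that $\Gamma_c$ sends compact objects of $\Shv(\Bun_G)$ to perfect complexes in $\Vect$: choosing a quasi-compact open substack $j:U\hookrightarrow\Bun_G$ of finite type containing $\supp(\cP\otimes\cF)$, one has $\Gamma_c(\cP\otimes\cF)\cong \Gamma_c(U, j^*(\cP\otimes\cF))$, and the right-hand side is perfect by the finiteness of $\Gamma_c$ on constructible sheaves over quasi-compact algebraic stacks of finite type built into the sheaf theory of \cite[Appendix\,F]{arinkin2022stacklocalsystemsrestricted}. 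The only real input is this last finiteness statement; everything else is formal.

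As a sanity check, and to record a form of $\int_{\cP,\rmR}$ that will be useful later, one can describe the right adjoint explicitly via the $(p_!,p^!)$ and $(\cP\otimes-,\,\underline{\Hom}(\cP,-))$ adjunctions as
\[
\int_{\cP,\rmR}(V)\;\cong\;\underline{\Hom}_{\Bun_G}\bigl(\cP,\,\om_{\Bun_G}\otimes V\bigr).
\]
Continuity of this functor in $V$ follows from the fact that $\underline{\Hom}(\cP,-)$ preserves colimits when $\cP$ is compact, together with the continuity of $V\mapsto \om_{\Bun_G}\otimes V$. This gives an alternative route to the same conclusion, bypassing the finiteness input above by trading it for the standard fact that $\underline{\Hom}$ out of a compact sheaf is continuous.
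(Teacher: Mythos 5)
Your argument has a genuine gap, and it comes from conflating ``constructible with quasi-compact support'' with ``compact'' in $\Shv$ of an algebraic stack. For schemes these notions coincide, but for stacks they do not: this is precisely why the paper elsewhere has to introduce the renormalized category $\Shv([*/S])^{\ren}$ ``such that constructible complexes are compact.'' Concretely, $\uk_{B\Gm}$ is constructible with quasi-compact support, yet $\Gamma_c(B\Gm,\uk)$ is an infinite-dimensional complex, so it is neither compact nor does $\Gamma_c$ send it to a perfect complex. This breaks your proof at two points. First, the inference ``$\cP\otimes\cF$ is constructible with support in $\supp(\cF)$, hence still a compact object'' is invalid (the conclusion happens to be true, but not for this reason). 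Second, and fatally, the ``finiteness of $\Gamma_c$ on constructible sheaves over quasi-compact algebraic stacks of finite type'' that you cite as the only real input is false: $\Bun_G$ has points with positive-dimensional automorphism groups, so quasi-compact opens of $\Bun_G$ behave like $BG$ and not like schemes.

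The paper's proof avoids this by testing $\int_{\cP}$ only on the compact \emph{generators} $a_!\cF_S$ of $\Shv(\Bun_G)$, where $a:S\to\Bun_G$ has $S$ an \emph{affine scheme} of finite type (Proposition F.4.7 of AGKRRV). The projection formula gives $\int_{\cP}(a_!\cF_S)\cong\Gamma_c(S,\cF_S\otimes a^*\cP)$, and perfectness now follows from finiteness of compactly supported cohomology of constructible sheaves on finite-type \emph{schemes}, which is where the finiteness statement you want actually holds. Your argument would be repaired by making exactly this reduction. Your ``alternative route'' is also not sound as stated: it justifies continuity of $V\mapsto\underline{\Hom}(\cP,\om_{\Bun_G}\otimes V)$ by invoking that ``$\underline{\Hom}(\cP,-)$ preserves colimits when $\cP$ is compact,'' but $\cP$ is only assumed constructible, not compact, so this bypass does not trade away the finiteness input -- it reintroduces the same unproven compactness.
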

\begin{proof}
    By \cite[Proposition\,F.4.7]{arinkin2022stacklocalsystemsrestricted} the category $\Shv(\Bun_G)$ is generated by compact objects of the form $a_!\cF_S$ where $S$ is an affine scheme equipped with a map $a:S\to\Bun_G$ of finite type, and $\cF_S\in\Shv(S)_{\rmc}$ is a constructible sheaf on $S$. Since $\int_{\cP}a_!\cF_S\cong\Gamma_c(\cF_S\otimes a^*\cP)$ is a perfect complex, we know $\int_{\cP}$ preserves compact objects.
\end{proof}

The right adjoint of $\int_{\cP}$ admits the following explicit description: Consider \[\int_{\cP,\rmR}:=-\otimes\DD(\cP):\Vect\to\Shv(\Bun_G),\] we have an adjoint pair $(\int_{\cP},\int_{\cP,\rmR})$. The first adjunction map is given by \begin{equation}
    \alpha_{\cP}:\int_{\cP}\circ\int_{\cP,\rmR}=\Gamma_c(-\otimes\cP\otimes\DD(\cP))\to\Gamma_c(-\otimes\om_{\Bun_G})\to \id.
\end{equation} Here, the second map is given by the natural map $\cP\otimes\DD(\cP)\to\om_{\Bun_G}$, the third arrow uses the adjunction map for the adjoint pair $(f_!,f^!)$ for $f:\Bun_G\to *$. The second adjunction map is given by \begin{equation}\label{eq:shvbeta}
    \beta_{\cP}:\id\to -\otimes(\cP\otimes^!\DD(\cP))\to (-\otimes\cP)\otimes^!\DD(\cP)\to \Gamma_c(-\otimes\cP)\otimes\DD(\cP)=\int_{\cP,\rmR}\circ\int_{\cP}.
\end{equation} Here, the first map uses the adjunction $\uk_{\Bun_G}\to\cP\otimes^!\DD(\cP)$, the second map comes from the base change map $\D^*(\id\times\D)^!\to \D^!(\D\times\id)^*$ for the Cartesian diagram \[\begin{tikzcd}
    \Bun_G \ar[r, "\D"] \ar[d, "\D"] & \Bun_G^2 \ar[d, "\id\times \D"] \\
    \Bun_G^2 \ar[r, "\D\times \id"] & \Bun_G^3
\end{tikzcd},\] the third map uses the adjunction map for the adjoint pair $(f_!,f^!)$.

By restricting to $\Shv_{\Nilp}(\Bun_G)\sub\Shv(\Bun_G)$, one gets functor \[\int_{\cP,\Nilp}=\Gamma_c(-\otimes\cP):\Shv_{\Nilp}(\Bun_G)\to\Vect\] which has continuous right adjoint \[\int_{\cP,\Nilp,\rmR}=-\otimes \iota_{\rmR}(\DD(\cP)).\] 

The left and right lax-commutative squares in \eqref{diag:trfuncfunddiag} for the functor $\int_{\cP,\Nilp}$ becomes
\begin{equation}\label{diag:Pnilpunit}
    \begin{tikzcd}
        \Vect \ar[r, "\rmu"] \ar[d, "\id"'] & \Shv_{\Nilp}(\Bun_G)^{\otimes 2} \ar[d, "\int_{\cP,\Nilp}\otimes \int_{\cP,\Nilp,\rmR}^{\vee}"] \ar[dl, Rightarrow, "\gamma_{\cP,\Nilp}"] \\
        \Vect \ar[r, "\id"'] & \Vect
    \end{tikzcd}
\end{equation}
\begin{equation}\label{diag:Pnilpcounit}
    \begin{tikzcd}
        \Shv_{\Nilp}(\Bun_G)^{\otimes 2} \ar[r, "\ev"] \ar[d, "\int_{\cP,\Nilp}\otimes \int_{\cP,\Nilp,\rmR}^{\vee}"'] & \Vect  \ar[d, "\id"] \ar[dl, Rightarrow, "\delta_{\cP,\Nilp}"] \\
        \Vect \ar[r, "\id"'] & \Vect
    \end{tikzcd}.
\end{equation}

We now introduce versions of \eqref{diag:Pnilpunit} and \eqref{diag:Pnilpcounit} without the nilpotent singular support condition, which are the left and right squares of the diagram \eqref{diag:geotracefund} for $S=*$, $A=\Bun_G$, $\cF=\cP$. Consider lax-commutative diagrams
\begin{equation}\label{diag:Punit}
    \begin{tikzcd}
        \Vect \ar[r, "\D_!\uk_{\Bun_G}"] \ar[d, "\id"'] & \Shv(\Bun_G^2) \ar[d, "\int_{\cP\boxtimes\DD(\cP)}"] \ar[dl, Rightarrow, "\gamma_{\cP}"] \\
        \Vect \ar[r, "\id"'] & \Vect
    \end{tikzcd}
\end{equation}
\begin{equation}\label{diag:Pcounit}
    \begin{tikzcd}
        \Shv(\Bun_G^2) \ar[r, "\Gamma_c\circ \D^*"] \ar[d, "\int_{\cP\boxtimes\DD(\cP)}"'] & \Vect  \ar[d, "\id"] \ar[dl, Rightarrow, "\delta_{\cP}"] \\
        \Vect \ar[r, "\id"'] & \Vect
    \end{tikzcd}.
\end{equation}
Here the first natural transformation $\gamma_{\cP}$ is defined by \[\gamma_{\cP}:\int_{\cP\boxtimes\DD(\cP)}\D_!\uk_{\Bun_G}\isom \Gamma_c(\cP\otimes\DD(\cP))\to\Gamma_c(\om_{\Bun_G})\to k\] and the second natural transformation $\delta_{\cP}$ is defined by \[\begin{split}\delta_{\cP}:\Gamma_c\circ\Delta^*(-)&\isom \Gamma_c(-\otimes\Delta_!\uk_{\Bun_G})\\ &\to\Gamma_c(-\otimes\D_!(\cP\otimes^!\DD(\cP)))\\ &\isom\Gamma_c(-\otimes\D_!\D^!(\cP\boxtimes\DD(\cP)))\\ &\to \Gamma_c(-\otimes(\cP\boxtimes\DD(\cP)))\\ &=\int_{\cP\boxtimes\DD(\cP)}\end{split}.\]

We want to relate diagrams \eqref{diag:Pnilpunit}\eqref{diag:Pnilpcounit} to \eqref{diag:Punit}\eqref{diag:Pcounit}.

\begin{lemma}
    We have a natural commutative square \begin{equation}\label{diag:shvtonilpP}
    \begin{tikzcd}
        \Shv_{\Nilp}(\Bun_G)^{\otimes 2} \ar[r, "\iota_2\circ\boxtimes"] \ar[d, "\int_{\cP,\Nilp}\otimes \int_{\cP,\Nilp,\rmR}^{\vee}"'] & \Shv(\Bun_G^2) \ar[d, "\int_{\cP\boxtimes\DD(\cP)}"] \\
        \Vect \ar[r, "\id"] & \Vect
        \end{tikzcd}
    \end{equation}
\end{lemma}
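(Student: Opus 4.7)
The plan is to unwind both composites, reduce via the Künneth formula to a single assertion about the counit $\iota\iota_{\rmR} \to \id$, and then invoke the identification of $\iota_{\rmR}$ with the Beilinson spectral projector.

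Evaluating on $\cF_1 \boxtimes \cF_2 \in \Shv_{\Nilp}(\Bun_G)^{\otimes 2}$, definitions give $\int_{\cP,\Nilp}(\cF_1) = \Gamma_c(\iota\cF_1 \otimes \cP)$ and $\int_{\cP,\Nilp,\rmR}(k) = \iota_{\rmR}(\DD(\cP))$. Using the standard expression $F_{\rmR}^{\vee}(-) = \ev(F_{\rmR}(k), -)$ for the dual of a $\Vect$-linear functor under the self-duality $(\rmu, \ev)$, together with $\ev(\cA, \cB) = \Gamma_c(\iota\cA \otimes \iota\cB)$, I obtain
\[
\int_{\cP,\Nilp,\rmR}^{\vee}(\cF_2) = \Gamma_c\bigl(\iota\iota_{\rmR}(\DD(\cP)) \otimes \iota\cF_2\bigr).
\]
On the other side, since $\iota_2 \circ \boxtimes$ agrees with the exterior product $(\iota \times \iota)$ (via \cite[Theorem\,16.3.3]{arinkin2022stacklocalsystemsrestricted}), the Künneth formula for $\Gamma_c$ yields
\[
\int_{\cP \boxtimes \DD(\cP)}(\iota_2(\cF_1 \boxtimes \cF_2)) \cong \Gamma_c(\iota\cF_1 \otimes \cP) \otimes_k \Gamma_c(\iota\cF_2 \otimes \DD(\cP)).
\]
The counit $\iota\iota_{\rmR}(\DD(\cP)) \to \DD(\cP)$ provides the natural transformation between the two composites, and commutativity of the square reduces to the assertion that the induced map $\Gamma_c(\iota\cF_2 \otimes \iota\iota_{\rmR}(\DD(\cP))) \to \Gamma_c(\iota\cF_2 \otimes \DD(\cP))$ is an isomorphism for all $\cF_2 \in \Shv_{\Nilp}(\Bun_G)$ and $\cP \in \Shv(\Bun_G)_{\rmc}$.

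Writing $C$ for the fiber of the counit (so that $\iota_{\rmR}(C) = 0$), this is equivalent to the vanishing $\Gamma_c(\iota\cF_2 \otimes C) = 0$. In other words, I need the continuous functor $\Gamma_c(\iota\cF_2 \otimes -)\colon \Shv(\Bun_G) \to \Vect$ to factor through the Bousfield localization $\iota\iota_{\rmR}$, or equivalently, the right orthogonal of $\iota(\Shv_{\Nilp}(\Bun_G))$ under the ``almost-duality'' pairing $\Gamma_c(- \otimes -)$ on $\Shv(\Bun_G)$ must coincide with $\ker \iota_{\rmR}$, its orthogonal under the $\Hom$ pairing.

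This final vanishing is the only nontrivial step and is the main obstacle. It amounts to the compatibility between the almost-duality $(\D_!\uk_{\Bun_G},\, \Gamma_c \circ \D^*)$ on $\Shv(\Bun_G)$ and the genuine self-duality $(\rmu,\, \ev)$ on $\Shv_{\Nilp}(\Bun_G)$ mediated by $\iota$; this is precisely the heart of the miraculous-duality framework, and it follows from Theorem \ref{thm:geo=cat:spectralprojector} together with the identifications $\rmu = \iota_{\rmR}(\D_!\uk_{\Bun_G})$ and $\ev = \Gamma_c \circ \D^* \circ \iota_2$ as unit and counit of the self-duality of $\Shv_{\Nilp}(\Bun_G)$ established in \cite[Corollary\,3.3.7]{arinkin2022dualityautomorphicsheavesnilpotent}. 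Once this input is in hand, naturality in $(\cF_1, \cF_2, \cP)$ is immediate from the construction.
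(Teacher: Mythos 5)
Your proof follows essentially the same route as the paper: both reduce the commutativity of the square to the single isomorphism $\Gamma_c(-\otimes\iota\iota_{\rmR}(\DD(\cP)))\isom\Gamma_c(-\otimes\DD(\cP))$ of functors on $\Shv_{\Nilp}(\Bun_G)$, i.e.\ to the statement that $\Gamma_c(\iota\cF\otimes-)$ kills $\ker\iota_{\rmR}$ for $\cF$ with nilpotent singular support. One caveat on your final step: deducing this from the self-duality datum of \cite[Corollary\,3.3.7]{arinkin2022dualityautomorphicsheavesnilpotent} together with Theorem \ref{thm:geo=cat:spectralprojector} is not quite automatic, since the triangle identities for $(\rmu,\ev)$ only yield the analogous isomorphism for the specific kernel $\D_!\uk_{\Bun_G}$, whereas you need it for an arbitrary constructible $\DD(\cP)$; the precise input is \cite[Proposition\,3.4.6]{arinkin2022dualityautomorphicsheavesnilpotent}, which is exactly what the paper cites at this point.
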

\begin{proof}
    This following directly from the description $\int_{\cP,\Nilp,\rmR}^{\vee}=\Gamma_c(-\otimes\iota\circ \iota_{\rmR}(\DD(\cP)))\isom \Gamma_c(-\otimes\DD(\cP))$ in which is second isomorphism follows from \cite[Proposition\,3.4.6]{arinkin2022dualityautomorphicsheavesnilpotent}.
\end{proof}

\begin{prop}\label{prop:fundcube}
    We have commutative cubes
    \begin{equation}\label{diag:unitfund}
        \begin{tikzcd}
            \Vect \ar[rr, "\rmu"] \ar[dr] \ar[dd] & & \Shv_{\Nilp}(\Bun_G)^{\otimes 2} \ar[dr, "\iota_2"] \ar[dd, "\int_{\cP,\Nilp}\otimes\int_{\cP,\Nilp,\rmR}^{\vee}"] &  \\
            & \Vect \ar[dd] \ar[rr, "\D_!\uk_{\Bun_G}"] & & \Shv(\Bun_G^2) \ar[dd, "\int_{\cP\boxtimes\DD(\cP)}"] \\
            \Vect \ar[dr] \ar[rr] & & \Vect \ar[dr] & \\
            & \Vect \ar[rr] & & \Vect 
        \end{tikzcd}
    \end{equation} and 
    \begin{equation}\label{diag:counitfund}
        \begin{tikzcd}
            \Shv_{\Nilp}(\Bun_G)^{\otimes 2} \ar[rr, "\ev"] \ar[dr, "\iota_2"] \ar[dd, "\int_{\cP,\Nilp}\otimes\int_{\cP,\Nilp,\rmR}^{\vee}"'] & & \Vect \ar[dr] \ar[dd] &  \\
            & \Shv(\Bun_G^2) \ar[dd, "\int_{\cP\boxtimes\DD(\cP)}"] \ar[rr, "\Gamma_c\circ\D^*"] & & \Vect \ar[dd] \\
            \Vect \ar[dr] \ar[rr] & & \Vect \ar[dr] & \\
            & \Vect \ar[rr] & & \Vect 
        \end{tikzcd}.
    \end{equation}
    In the cube \eqref{diag:unitfund}, the back face is \eqref{diag:Pnilpunit}, the front face is \eqref{diag:Punit}, the right face is \eqref{diag:shvtonilpP}, and the other faces are equipped with the obvious natural transformations. In the cube \eqref{diag:counitfund}, the back face is \eqref{diag:Pnilpcounit}, the front face is \eqref{diag:Pcounit}, the left face is \eqref{diag:shvtonilpP}, and the other faces are equipped with the obvious natural transformations.
\end{prop}
\begin{proof}
    Note that we have commutative diagram \begin{equation}\label{fundcube:1}
        \begin{tikzcd}
            \Gamma_c((\cP\boxtimes\DD(\cP))\otimes\iota_2\iota_{2,\rmR}\D_!\uk_{\Bun_G}) \ar[r] \ar[d, "\sim"] & \Gamma_c((\cP\boxtimes\DD(\cP))\otimes \D_!\uk_{\Bun_G})  \\
            \Gamma_c(\cP\otimes \pr_{1,!}\circ (\id\times\D)^*(\iota_2\iota_{2,\rmR}(\D_!\uk_{\Bun_G})\boxtimes\DD(\cP))) \ar[r] & \Gamma_c(\cP\otimes \pr_{1,!}\circ (\id\times\D)^*(\D_!\uk_{\Bun_G}\boxtimes\DD(\cP))) \ar[u, "\sim"]
        \end{tikzcd}.
    \end{equation}
    Here, the vertical maps are the natural ones only involving six-functor formalism, and both horizontal maps only use the adjunction $\iota_2\circ\iota_{2,\rmR}\to\id$.
    Unwinding definitions, the two natural transformations from the top-left-back corner to the bottom-right-front corner in the cube \eqref{diag:unitfund} can be identified with the two maps $\Gamma_c((\cP\boxtimes\DD(\cP))\otimes\iota_2\iota_{2,\rmR}\D_!\uk_{\Bun_G})\to k$ obtained by composing the two routes from the top-left corner of \eqref{fundcube:1} to the top-right corner of \eqref{fundcube:1} with the map \[\Gamma_c((\cP\boxtimes\DD(\cP))\otimes \D_!\uk_{\Bun_G})\isom \Gamma_c(\cP\otimes\DD(\cP))\to\Gamma_c(\om_{\Bun_G})\to k.\] This proves the commutativity of \eqref{diag:unitfund}. The commutativity of the second square follows from the following commutative square \[\begin{tikzcd}
        \Gamma_c(-\otimes -) \ar[r, "\sim"] \ar[d] & \Gamma_c((-\boxtimes-)\otimes\D_!\uk_{\Bun_G}) \ar[d] \\
        \Gamma_c(\Gamma_c(-\otimes\cP)\otimes\DD(\cP)\otimes-) \ar[r, "\sim"] & \Gamma_c(-\otimes\cP)\otimes\Gamma_c(-\otimes\DD(\cP))
    \end{tikzcd}.\] Here the left vertical map uses the adjunction map \eqref{eq:shvbeta} and the right vertical arrow uses the natural map $\D_!\uk_{\Bun_G}\to\cP\boxtimes\DD(\cP)$. The commutativity of this diagram is encoded in the six-functor formalism and is routine.

\end{proof}

For later use, we need a version of Proposition \ref{prop:fundcube} with legs, which we introduce now. Fix a finite set $I$, consider the map $l_I:\Bun_G\times C^I\to C^I$. Define \[\int_{\cP,I}:=l_{I,!}(-\otimes(\cP\boxtimes\uk_{C^I})):\Shv(\Bun_G\times C^I)\to \Shv(C^I)\] and \[\int_{\cP,\Nilp,I}:=\int_{\cP,I}\circ (\iota\otimes\id)=\int_{\cP,\Nilp}\otimes\id:\Shv_{\Nilp}(\Bun_G)\otimes\QLisse(C^I)\to\QLisse(C^I).\]

\begin{prop}\label{prop:fundcubeI}
    We have commutative cubes
    \begin{equation}\label{diag:unitfundI}
        \begin{tikzcd}
            \QLisse(C^I) \ar[rr, "\rmu\otimes\id"] \ar[dr] \ar[dd] & & \Shv_{\Nilp}(\Bun_G)^{\otimes 2}\otimes\QLisse(C^I) \ar[dr, "\boxtimes\circ(\iota_2\otimes\id)"] \ar[dd, "\int_{\cP,\Nilp}\otimes\int_{\cP,\Nilp,\rmR}^{\vee}\otimes\id"] &  \\
            & \Shv(C^I)\ar[dd] \ar[rr, "(\D\times\id)_!l_{I}^*"] & & \Shv(\Bun_G^2\times C^I) \ar[dd, "\int_{\cP\boxtimes\DD(\cP),I}"] \\
            \QLisse(C^I) \ar[dr] \ar[rr] & & \QLisse(C^I)  \ar[dr] & \\
            & \Shv(C^I) \ar[rr] & & \Shv(C^I)
        \end{tikzcd}
    \end{equation} and 
    \begin{equation}\label{diag:counitfundI}
        \begin{tikzcd}
            \Shv_{\Nilp}(\Bun_G)^{\otimes 2}\otimes\QLisse(C^I) \ar[rr, "\ev\otimes\id"] \ar[dr, "\boxtimes\circ(\iota_2\otimes\id)"] \ar[dd, "\int_{\cP,\Nilp}\otimes\int_{\cP,\Nilp,\rmR}^{\vee}\otimes\id"'] & & \QLisse(C^I) \ar[dr] \ar[dd] &  \\
            & \Shv(\Bun_G^2\times C^I) \ar[dd, "\int_{\cP\boxtimes\DD(\cP),I}"] \ar[rr, "l_{I,!}\circ(\D\times\id)^*"] & & \Vect \ar[dd] \\
            \QLisse(C^I) \ar[dr] \ar[rr] & & \QLisse(C^I) \ar[dr] & \\
            & \Shv(C^I) \ar[rr] & & \Shv(C^I) 
        \end{tikzcd}
    \end{equation}
    in which the natural transformations are natural generalizations of those in Proposition \ref{prop:fundcube}.
\end{prop}

The proof is parallel to the proof of Proposition \ref{prop:fundcube}.

\subsection{Main result}\label{sec:geo=cat:mainresult}

\subsubsection{Special cycle classes as geometric trace}
Given $\cP\in\Shv(\Bun_G)_{\rmc}$ and a cohomological correspondence \[\frc \in \Hom^0(\cP\boxtimes\uk_{C^I},T_{V^I}(\cP\boxtimes\uk_{C^I}))=\Hom^0(T_{c^*(V^{I})}(\cP\boxtimes\uk_{C^I}),\cP\boxtimes\uk_{C^I})=\Cor_{\Hk_{G,I},\IC_{V^I}}(\cP\boxtimes\uk_{C^I},\cP\boxtimes\uk_{C^I}),\] we can apply the second construction of geometric trace via the diagram \eqref{diag:geotracefund}, in which we take $S=C^I$, the correspondence $(A\xleftarrow{c_1} C\xrightarrow{c_0}A)=(\Bun_G\times C^I\xleftarrow{\lh_I}\Hk_{G,I}\xrightarrow{\rh_I} \Bun_G\times C^I)$, $\cK=\IC_{V^I}\in\Shv(\Hk_{G,I})$, $\cF=\cP\boxtimes\uk_{C^I}\in\Shv(\Bun_G\times C^I)$, and the cohomological correspondence $\frc^{(1)}$.

In this case, the diagram \eqref{diag:geotracefund} becomes
\begin{equation}\label{diag:shvfund}\begin{tikzcd}
    \Shv(C^I) \ar[r, "(\D\times\id)_!l_{I}^*"] \ar[d, "\id"] & \Shv(\Bun_G^2\times C^I) \ar[r, "(\Frob_{\Bun_G}\times\id)_!\circ T_{V^I}"] \ar[d, "\int_{\cP\boxtimes\DD{\cP},I}" description] \ar[dl, Rightarrow, "\gamma_{\cP,I}"] & \Shv(\Bun_G^2\times C^I) \ar[r, "l_{I,!}(\D\times\id)^*"] \ar[d, "\int_{\cP\boxtimes\DD{\cP},I}" description] \ar[dl, Rightarrow, "\eta_{\frc^{(1)}}"] & \Shv(C^I) \ar[d, "\id"] \ar[dl, Rightarrow, "\delta_{\cP,I}"] \\
    \Shv(C^I) \ar[r, "\id"] & \Shv(C^I) \ar[r, "\id"] & \Shv(C^I) \ar[r, "\id"] & \Shv(C^I)
    \end{tikzcd}.
\end{equation}

By Lemma \ref{lem:geotrace=diagtrace}, we know that the natural transformation from the upper route to the lower route from the top-left corner to the bottom-right corner of \eqref{diag:shvfund} evaluated at $\uk_{C^I}\in\Shv(C^I)$ gives \[\tr_{\Sht,C^I}(\frc)=\tr_{C^I}(\frc^{(1)})\in H_0^{BM}(\Sht_{G,I}/C^I,\IC_{V^I}|_{\Sht_{G,I}}).\]

\subsubsection{Special cycle classes as categorical trace}
Restricting the natural transformation $\eta_{\frc^{(1)}}$ in \eqref{diag:shvfund} to the full-subcategory $\Shv_{\Nilp}(\Bun_G)\otimes\QLisse(C^I)\sub\Shv(\Bun_G\times C^I)$, we get a natural transformation 
\[\eta_{\frc^{(1)},\Nilp}:\int_{\cP,I,\Nilp}\circ (\Frob\times\id)_!\circ T_{V^I}\to\int_{\cP,I,\Nilp}.\]


Since the functor $\int_{\cP,\Nilp}:\Shv_{\Nilp}(\Bun_G)\to\Vect$ admits continuous right adjoint, using the natural transformation $\eta_{\frc^{(1)},\Nilp}$ and applying the construction Definition \ref{def:funccattr}, we get a natural map \[\tr(\eta_{\frc^{(1)},\Nilp}):\tr_{\QLisse(C^I)}((\Frob\times\id)_!\circ T_{V^I},\Shv_{\Nilp}(\Bun_G\times C^I))\to \tr_{\QLisse(C^I)}(\id, \QLisse(C^I)).\] 

\subsubsection{Relating geometric trace and categorical trace}
The main result in this section is the following:
\begin{thm}\label{thm:geo=cat}
    We have a commutative square
    \[\begin{tikzcd}
        \tr_{\QLisse(C^I)}((\Frob\times\id)_!\circ T_{V^I}  \Shv_{\Nilp}(\Bun_G)\otimes\QLisse(C^I)) \ar[d, "\tr_{\QLisse(C^I)}(\eta^{(1)}_{\frc,\Nilp})"] \ar[r, "\LT^{\Serre}", "
        \sim"'] & l_{I,!}(\IC_{V^I}|_{\Sht_{G,I}}) \ar[d, "\tr_{\Sht,C^I}(\frc)"] \\
        \tr_{\QLisse(C^I)}(\id,\QLisse(C^I)) \ar[r, "\sim"] & \uk_{C^I}
    \end{tikzcd}\]
    
\end{thm}

\begin{proof}
    Using the duality on $\Shv_{\Nilp}(\Bun_G)$ with unit and counit $(\rmu,\ev)$, the element $\tr_{\QLisse(C^I)}(\eta_{\frc^{(1)},\Nilp})$ is computed as the composition of natural transformations in the diagram \begin{equation}\label{diag:nilpfund}\adjustbox{scale=.96}{%
    \begin{tikzcd}
    \QLisse(C^I) \ar[r, "\rmu\otimes\id"] \ar[d, "\id"] & \Shv_{\Nilp}(\Bun_G)^{\otimes 2}\otimes\QLisse(C^I) \ar[r, "(\Frob_{\Bun_G}\times\id)_!\circ T_{V^I}\otimes\id"] \ar[d, "\int_{\cP,I,\Nilp}\otimes\int_{\cP,I,\Nilp,\rmR}\otimes\id" description] \ar[dl, Rightarrow, "\gamma_{\cP,\Nilp}\otimes\id"] & \Shv_{\Nilp}(\Bun_G)^{\otimes 2}\otimes\QLisse(C^I) \ar[r, "\ev\otimes\id"] \ar[d, "\int_{\cP,I,\Nilp}\otimes\int_{\cP,I,\Nilp,\rmR}\otimes\id" description] \ar[dl, Rightarrow, "\eta_{\frc^{(1)},\Nilp}\otimes\id"] & \QLisse(C^I) \ar[d, "\id"] \ar[dl, Rightarrow, "\delta_{\cP,\Nilp}\otimes\id"] \\
    \QLisse(C^I) \ar[r, "\id"] & \QLisse(C^I) \ar[r, "\id"] & \QLisse(C^I) \ar[r, "\id"] & \QLisse(C^I)
    \end{tikzcd}
    }
\end{equation} in which the left and right squares are the two back squares in Proposition \ref{prop:fundcubeI}. By Proposition \ref{prop:fundcubeI}, there is a natural map from the diagram \eqref{diag:nilpfund} to \eqref{diag:shvfund} (such a map means three commutative cubes), which gives our desired identity.

\end{proof}

\subsection{Proof of main result}\label{sec:ss=catproof}
\begin{proof}[Proof of Theorem  \ref{thm:minmain} and Theorem  \ref{thm:diagmain}]
    Take $\cP=\cP_X$ in Theorem  \ref{thm:geo=cat} for the spherical variety $X$ in each case.
    Theorem  \ref{thm:minmain} follows from Theorem  \ref{thm:geo=cat} and Theorem  \ref{thm:mingeo=cyc}. Theorem  \ref{thm:diagmain} follows from Theorem  \ref{thm:geo=cat} and Theorem  \ref{thm:diaggeo=cyc}.
\end{proof}

\section{Isotypic part of special cycle classes}\label{sec:isotypicpartsc}

In this section, we study the restriction of special cycle classes on the isotypic part of the cohomology of Shtukas. 
\begin{itemize}
    \item In \S\ref{sec:isotypicpartsc:geop}, we review some basic properties of the geometric isotypic part.
    \item In \S\ref{sec:isosc1}, we study the isotypic part of special cycle classes for split semisimple groups.
    \item In \S\ref{sec:isotypicpartscc2}, we study the isotypic part of special cycle classes for split reductive groups.
    \item In \S\ref{sec:gmdcase}, we study the isotypic part of special cycle classes which have middle-dimension on the generic fiber.
    \item In \S\ref{sec:isotypicdiag}, we study the isotypic part of diagonal cycle classes.
\end{itemize}

\subsection{Isotypic part of geometric period}\label{sec:isotypicpartsc:geop}
In this section, we recall the geometric isotypic part introduced in \cite{liu2025higherperiodintegralsderivatives} and its basic properties.

For a $\Gc$-local system $\s\in\Loc_{\Gc}(k)$ and a Hecke eigensheaf $\LL_{\s}\in\Shv_{\Nilp}(\Bun_G)$, given an affine smooth $G$-variety $X$, the geometric isotypic part of the $X$-period integral is defined to be the complex $\int_{X,\Nilp}\LL_{\s}\in\Vect$, which we simply call the \emph{geometric isotypic part}.

\subsubsection{Hecke action}\label{sec:heckeaction}
The geometric isotypic part is equipped with Hecke actions, which we now recall. For $V^I\in\Rep(\Gc^I)$, given a cohomological correspondence $\frc_{V^I}\in \Cor_{\Hk_{G,I},\IC_{V^I}\langle -d_I\rangle}(\cP_X\boxtimes\uk_{C^I},\cP_X\boxtimes\uk_{C^I})$, there is an induced natural transformation constructed in \S\ref{sec:cctrnt} \[\eta_{\frc_{V^I}}:\int_{X,\Nilp,I}\circ T_{V^I}\to \int_{X,\Nilp,I}.\] Evaluating the natural transformation $\eta_{\frc_{V^I}}$ at $\LL_{\s}\boxtimes\uk_{C^I}\in\Shv_{\Nilp}(\Bun_G)\otimes\QLisse(C^I)$, we get a map \[a_{\frc_{V^I},\s}:V_{\s}^I \langle -d_I\rangle\otimes \int_{X,\Nilp}\LL_{\s}\to \uk_{C^I}\otimes \int_{X,\Nilp}\LL_{\s} .\] Since there is an obvious isomorphism \[\Hom^0(V_{\s}^I\langle -d_I\rangle\otimes\int_{X,\Nilp}\LL_{\s},\uk_{C^I}\otimes \int_{X,\Nilp}\LL_{\s} )\cong \Hom^0(\Gamma(V_{\s}^I)\langle -d_I + 2r \rangle\otimes \int_{X,\Nilp}\LL_{\s}  ,\int_{X,\Nilp}\LL_{\s}),\] we can also treat $a_{\frc_{V^I},\s}$ as a map \begin{equation}\label{eq:heckeactiononisotypicpart}
    a_{\frc_{V^I},\s}:\Gamma(V_{\s}^I)\langle -d_I + 2r \rangle\otimes \int_{X,\Nilp}\LL_{\s}  \to \int_{X,\Nilp}\LL_{\s}.
\end{equation} We say that the map $a_{\frc_{V^I},\s}$ equips the geometric isotypic part $\int_{X,\Nilp}\LL_{\s}$ with a Hecke action by $\Gamma(V_{\s}^I)\langle -d_I+2r\rangle$. Equivalently, we can write the data above as a map \[a_{\frc_{V^I},\s}:\Gamma(V_{\s}^I)\langle -d_I + 2r \rangle\to\End(\int_{X,\Nilp}\LL_{\s}).\]

One can also fix a point $c^I\in C^I(\overline{\FF}_q)$ and consider the Hecke operator with leg restricted to $c^I$ which is $T_{V^I,c^I}:\Shv_{\Nilp}(\Bun_G)\to \Shv_{\Nilp}(\Bun_G)$. In this case, one can restrict the cohomological correspondence $\frc_{V^I}$ to $\frc_{V^I,c^I}\in \Cor_{\Hk_{G,c^I},\IC_{V^I}\langle -d_I\rangle}(\cP_X,\cP_X)$. Here, we define $\Hk_{G,c^I}:=\Hk_{G,I}\times_{C^I}\{c^I\}$.  It gives a map \begin{equation}\label{eq:heckeactiononisotypicpartfixleg}
    a_{\frc_{V^I,c^I},\s}:V_{\s,c^I}^I\langle -d_I\rangle\otimes\int_{X,\Nilp}\LL_{\s}\to \int_{X,\Nilp}\LL_{\s}.
\end{equation} However, the data of \eqref{eq:heckeactiononisotypicpartfixleg} is completely contained in \eqref{eq:heckeactiononisotypicpart} as follows: Note that there is a map obtained from $(i_!,i^!)$-adjunction for $i:\{c^I\}\to C^I$ \[[c^I]:V_{\s,c^I}^I\to \Gamma(V_{\s}^I)\langle 2r\rangle.\] Then we have $a_{\frc_{V^I,c^I},\s}=a_{\frc_{V^I},\s}\circ ([c^I]\otimes\id)$.

\subsubsection{Associativity}\label{sec:associativity}
The Hecke action \eqref{eq:heckeactiononisotypicpart} enjoys associativity for composition of cohomological correspondences.

Take $I=\{1,2\cdots,r\}$. Given $V_i\in\Rep(\Gc)$ and $\frc_{V_i}\in \Cor_{\Hk_{G,\{i\}},\IC_{V_i}\langle -d_i\rangle}(\cP_X\boxtimes\uk_C,\cP_X\boxtimes\uk_C)$ for $i\in I$, consider the cohomological correspondence arising from composition \[\frc_{V^I}=\frc_{V_1}\circ \frc_{V_2}\circ\cdots\circ \frc_{V_r}\] for $V^I=\boxtimes_{i\in I}V_i$ and $d_I=\sum_{i\in I}d_i$ whose definition is as follows: Note that for each $i\in I$ there is a map \[\Cor_{\Hk_{G,\{i\}},\IC_{V_i}\langle -d_i\rangle}(\cP_X\boxtimes\uk_{C},\cP_X\boxtimes\uk_{C})\to \Cor_{\Hk_{G,I},\IC_{V_i}\langle -d_i\rangle}(\cP_X\boxtimes \uk_{C^I},\cP_X\boxtimes\uk_{C^I}).\] Here the sheaf $\IC_{V_i}\in \Shv(\Hk_{G,I})$ is understood as $\IC_{\triv\boxtimes\cdots\boxtimes V_i\boxtimes\cdots\boxtimes \triv}\in\Shv(\Hk_{G,I})$ for $\triv\boxtimes\cdots\boxtimes V_i\boxtimes\cdots\boxtimes \triv\in \Rep(\Gc^I)$. Putting this together with the usual composition of cohomological correspondences, we get a map \[\begin{split}
\circ:\prod_{i=1}^r \Cor_{\Hk_{G,\{i\}},\IC_{V_i}\langle -d_i\rangle}(\cP_X\boxtimes\uk_{C},\cP_X\boxtimes\uk_{C}) &\to \prod_{i=1}^r \Cor_{\Hk_{G,I},\IC_{V_i}\langle -d_i\rangle}(\cP_X\boxtimes \uk_{C^I},\cP_X\boxtimes\uk_{C^I}) \\
& \to \Cor_{\Hk_{G,I},\IC_{V^I}\langle -d_I\rangle}(\cP_X\boxtimes\uk_{C^I},\cP_X\boxtimes\uk_{C^I})
\end{split}.\]

In this case, we have \[a_{\frc_{V^I},\s}=a_{\frc_{V_1},\s}\circ\cdots\circ a_{\frc_{V_r},\s}\in \Hom^0(\Gamma(V_{\s}^I)\langle -d_I + 2r \rangle\otimes \int_{X,\Nilp}\LL_{\s}  ,\int_{X,\Nilp}\LL_{\s}) \] where the later is understood as the composition \[\begin{split}
    a_{\frc_{V_1},\s}\circ\cdots\circ a_{\frc_{V_r},\s}:\Gamma(V_{\s}^I)\langle -d_I+2r\rangle \otimes \int_{X,\Nilp}\LL_{\s} &= \otimes_{i=1}^r\Gamma(V_{i,\s})\langle-d_i+2\rangle  \otimes \int_{X,\Nilp}\LL_{\s}\\
    &\xrightarrow{\id \otimes a_{\frc_{V_r}}} \otimes_{i=1}^{r-1}\Gamma(V_{i,\s})\langle-d_i+2\rangle  \otimes   \int_{X,\Nilp}\LL_{\s}  \\
    &\cdots \\
    &\xrightarrow{\id \otimes a_{\frc_{V_1}}} \int_{X,\Nilp}\LL_{\s}
\end{split}.\]

\subsubsection{Commutator relation}\label{sec:commutatorrelation}
When the cohomological correspondences $\frc_{V_i}$ come from local special cohomological correspondences as defined in \cite[Definition\,4.39]{liu2025higherperiodintegralsderivatives}, the Hecke action \eqref{eq:heckeactiononisotypicpart} enjoys commutator relations coming from the commutator relations in the local Plancherel algebra. By associativity, it is enough to consider the case $I=\{1,2\}$. Assume each $\frc_{V_i}$ comes from a local special cohomological correspondence $\frc_{V_i}^{\loc}$. Moreover, assume $[\frc_{V_1}^{\loc},\frc_{V_{2}}^{\loc}]=\hbar \cdot \frc^{\loc}_{V_1\otimes V_{2}}$ for some local special cohomological correspondence $\frc^{\loc}_{V_1\otimes V_{2}}$. We use \[\frc_{V_1\otimes V_{2}}\in\Cor_{\Hk_{G,\{1\},\IC_{V_1\otimes V_{2}}}\langle -d_1-d_2+2 \rangle}(\cP_X\boxtimes\uk_{C},\cP_X\boxtimes\uk_{C})\] to denote the corresponding global special cohomological correspondence. In this case, assuming \cite[Conjecture\,4.45]{liu2025higherperiodintegralsderivatives}, we have \[a_{\frc_{V_1}}\circ a_{\frc_{V_2}}-a_{\frc_{V_2}}\circ a_{\frc_{V_1}}=a_{\frc_{V_1\otimes V_2}}\circ (\cup\otimes\id)\in \Hom^0(\Gamma(V_{1,\s})\otimes\Gamma(V_{2,\s})\langle -d_1-d_2+4 \rangle\otimes\int_{X,\Nilp}\LL_{\s},\int_{X,\Nilp}\LL_{\s})\] where $\cup:\Gamma(V_{1,\s})\otimes\Gamma(V_{2,\s})\to\Gamma((V_{1}\otimes V_2)_{\s})$ is the cup product.

\subsection{Isotypic part of special cycle classes I}\label{sec:isosc1}
From now on, assume that $\s\in\Loc_{\Gc}^{\arith}(k)$ is a $\Gc$-Weil local system and the Hecke eigensheaf $\LL_{\s}$ is equipped with a compatible Weil sheaf structure. We drop the subscript $V^I$ in $\frc_{V^I}$ and write $\frc=\frc_{V^I}$.

\subsubsection{Fake special cycle classes}

From the Hecke action \eqref{eq:heckeactiononisotypicpart}, one can construct the \emph{fake special cycle class} \[z_{\frc,\s}\in H^0(\Gamma(V_{\s}^I)\langle-d_I+2r\rangle)^*\cong \Hom^0(V_{\s}^I\langle-d_I\rangle,\uk_{C^I})\] defined as the element satisfying \begin{equation}\label{eq:fakespecialcycleclasses} z_{\frc,\s}(m)=\tr( a_{\frc,\s}(m)\circ\Frob,\int_{X,\Nilp}\LL_{\s})\end{equation} for any $m\in H^0(\Gamma(V_{\s}^I)\langle-d_I+2r\rangle)$. See \eqref{eq:intro:fakespecialcycleclasses} for an alternative but equivalent definition. Here, we are assuming the trace written above is convergent. The ideal case is the following:

\begin{assumption}\label{assumption:vectcompact}
    The complex $\int_{X,\Nilp}\LL_{\s}$ is perfect.
\end{assumption}

We will assume Assumption \ref{assumption:vectcompact} until the end of \S\ref{sec:isosc1}. Under this assumption, the fake special cycle classes admit the following interpretation via categorical trace: Consider the lax-commutative square \begin{equation}\label{diag:semisimplediag1}
    \begin{tikzcd}
        \QLisse(C^I) \ar[r, "-\otimes V_{\s}^I\langle -d_I\rangle "] \ar[d, "-\otimes \int_{X,\Nilp}\LL_{\s}"'] & \QLisse(C^I) \ar[d, "-\otimes \int_{X,\Nilp}\LL_{\s}"] \ar[dl, Rightarrow, "\eta_{a_{\frc,\s}}"] \\
        \QLisse(C^I) \ar[r, "\id"] & \QLisse(C^I)
    \end{tikzcd}
\end{equation} in which the natural transformation is defined as \[\eta_{a_{\frc,\s}}:-\otimes V_{\s}^I\langle-d_I\rangle\otimes\int_{X,\Nilp}\LL_{\s}\xrightarrow{-\otimes a_{\frc,\s}} -\otimes \uk_{C^I}\otimes\int_{X,\Nilp}\LL_{\s}.\] The natural transformation $\eta_{a_{\frc,\s}}$ admits a Frobenius twist \[\eta_{a_{\frc,\s}}^{(1)}:-\otimes V_{\s}^I\langle-d_I\rangle\otimes\int_{X,\Nilp}\LL_{\s}\xrightarrow{-\otimes  a_{\frc,\s}\circ(\id\otimes\Frob)} -\otimes \uk_{C^I}\otimes\int_{X,\Nilp}\LL_{\s}.\] Then one has a commutative diagram \begin{equation}\label{diag:semisimplediag1tr}\begin{tikzcd}\tr_{\QLisse(C^I)}(-\otimes V_{\s}^I\langle-d_I\rangle,\QLisse(C^I)) \ar[d, "\tr_{\QLisse(C^I)}(\eta_{a_{\frc},\s}^{(1)})"] \ar[r, "\sim"] & V_{\s}^I\langle-d_I\rangle \ar[d, "z_{\frc,\s}"] \\
\tr_{\QLisse(C^I)}(\id,\QLisse(C^I)) \ar[r, "\sim"] & \uk_{C^I}\end{tikzcd}.\end{equation} Here, the horizontal maps are the most obvious isomorphisms. The left vertical map is induced by the functoriality of categorical trace in Definition \ref{def:funccattr}.

\subsubsection{Fake versus real I}\label{sec:fakevsreal1}
For a cohomological correspondence \[\frc\in\Cor_{\Hk_{G,I},\IC_{V^I}\langle-d_I\rangle}(\cP_X\boxtimes\uk_{C^I},\cP_X\boxtimes\uk_{C^I})\] and $\s\in \Loc_{\Gc}^{\arith}(k)$. On the one hand, one has the fake special cycle classes \[z_{\frc,\s}\in \Hom^0(V_{\s}^I\langle-d_I\rangle,\uk_{C^I})\] whenever it is well-defined. On the other hand, one has the geometric trace \[\tr_{\Sht,C^I}(\frc)\in \Hom^0(l_{I,!}(\IC_{V^I}|_{\Sht_{G,I}})\langle -d_I\rangle,\uk_{C^I}).\] We would like to relate these two.

The most natural way of doing this is to construct a map \begin{equation}\label{eq:isotypicpart}\xi_{\s,I}:V_{\s}^I\to l_{I,!}(\IC_{V^I}|_{\Sht_{G,I}})\end{equation} which only depends on the Hecke eigensheaf $\LL_{\s}$ (in particular, independent of the $G$-variety $X$) and ask if \begin{equation} \label{eq:computationofisotypicss} z_{\frc,\s}=\tr_{\Sht,C^I}(\frc)\circ \xi_{\s,I}.\end{equation} It turns out that one can achieve this under good assumptions.

In this subsection, we will achieve this under a stronger assumption compared to Assumption \ref{assumption:vectcompact}, which is:
\begin{assumption}\label{assumption:eigenstronglycompact}
    The group $G$ is split semisimple, and the object $\LL_{\s}\in \Shv_{\Nilp}(\Bun_G)$ is compact.
\end{assumption}

\begin{remark}
    Note that since the functor $\int_{X,\Nilp}:\Shv_{\Nilp}(\Bun_G)\to \Vect$ preserves compact objects, Assumption \ref{assumption:vectcompact} is a consequence of Assumption \ref{assumption:eigenstronglycompact}.
\end{remark}

\begin{remark}
    By \cite{GR}, Assumption \ref{assumption:eigenstronglycompact} is true when $G$ is semisimple and $\s\in\Loc_{\Gc}(k)$ is (geometrically) irreducible. Unfortunately, in our application \S\ref{sec:rs}, we are interested in the case $G=\GL_n\times\GL_{n-1}$, which is not semisimple. We will loosen Assumption \ref{assumption:eigenstronglycompact} in \S\ref{sec:isotypicpartscc2}.
\end{remark}

Until the end of \S\ref{sec:fakevsreal1}, we assume Assumption \ref{assumption:eigenstronglycompact}. In this case, there is a natural choice of $\xi_{\s,I}$ which we are going to explain now.

Note that the diagram \eqref{diag:semisimplediag1} naturally factors as \begin{equation}\label{diag:semisimplediag2}\begin{tikzcd}
    \QLisse(C^I) \ar[d, "(-\otimes\LL_{\s})\otimes\id"'] \ar[r, "-\otimes V_{\s}^I\langle-d_I\rangle "] & \QLisse(C^I) \ar[d, "(-\otimes\LL_{\s})\otimes\id"] \ar[dl, Rightarrow, "\eta_{\s,I}"] \\
    \Shv_{\Nilp}(\Bun_G)\otimes\QLisse(C^I) \ar[d, "\int_{X,\Nilp}\otimes\id"'] \ar[r, "T_{V^I\langle-d_I\rangle}"] & \Shv_{\Nilp}(\Bun_G)\otimes\QLisse(C^I) \ar[d, "\int_{X,\Nilp}\otimes\id"] \ar[dl, Rightarrow, "\eta_{\frc,\Nilp}"] \\
    \QLisse(C^I) \ar[r, "\id"] & \QLisse(C^I)
    \end{tikzcd}.
\end{equation} Here, the natural transformation $\eta_{\s,I}$ is the natural map coming from the Hecke eigen-property of $\LL_{\s}$: \[\eta_{\s,I}:-\otimes(\LL_{\s}\boxtimes V_{\s}^I\langle-d_I\rangle)\cong -\otimes T_{V^I\langle-d_I\rangle}(\LL_{\s}\boxtimes\uk_{C^I}).\] One easily checks that \begin{equation}\label{eq:compositionofnt} \eta_{a_{\frc},\s}=\eta_{\frc,\Nilp}\circ \eta_{\s,I}.\end{equation}

By Assumption \ref{assumption:eigenstronglycompact}, all vertical maps in \eqref{diag:semisimplediag2} preserve compact objects. Therefore, by the formalism of Definition \ref{def:funccattr}, the diagram \eqref{diag:semisimplediag2} induces a factorization of the diagram \eqref{diag:semisimplediag1tr}:
\begin{equation}\label{diag:semisimplediag2tr}\begin{tikzcd}\tr_{\QLisse(C^I)}(-\otimes V_{\s}^I\langle-d_I\rangle,\QLisse(C^I)) \ar[d, "\tr_{\QLisse(C^I)}(\eta_{\s,I}^{(1)})"] \ar[r, "\sim"] & V_{\s}^I\langle-d_I\rangle \ar[d, "\xi_{\s,I}"] \\
\tr_{\QLisse(C^I)}((\Frob\times\id)_!\circ T_{V^I\langle-d_I\rangle},\Shv_{\Nilp}(\Bun_G)\otimes\QLisse(C^I)) \ar[r, "\LT^{\Serre}", "\sim"'] \ar[d, "\eta_{\frc,\Nilp}^{(1)}"] & l_{I,!}(\IC_{V^I}|_{\Sht_{G,I}})\langle-d_I\rangle \ar[d, "\tr_{\Sht,C^I}(
\frc
)"] \\
\tr_{\QLisse(C^I)}(\id,\QLisse(C^I)) \ar[r, "\sim"] & \uk_{C^I}\end{tikzcd}.\end{equation} Here, the upper commutative diagram can be taken as the definition of the map $\xi_{\s, I}$. The lower commutative diagram is Theorem  \ref{thm:geo=cat}. By \eqref{eq:compositionofnt}, we know that the outer square of \eqref{diag:semisimplediag2tr} is the square in \eqref{diag:semisimplediag1tr}. This gives the desired identity $z_{\frc,\s}=\tr_{\Sht,C^I}(\frc)\circ \xi_{\s,I}$.

\subsubsection{Refinement}\label{sec:isotypicrefinement}
Note that $l_{I,!}(\IC_{V^I}|_{\Sht_{G,I}})=0$ unless the central characters of different components of $V^I\in\Rep(\Gc^I)$ add up to zero, which we assume in this section. In this case, all the constructions above make sense after adding a superscript $e\in \pi_0(\Bun_G)\cong \pi_1(G)$, which means restricting to the connected component $\Bun_G^e\sub \Bun_G$. This gives us refined notions including Hecke eigensheaf on the connected component $\LL_{\s}^e=\LL_{\s}|_{\Bun_G^e}$, cohomological correspondence \[\frc^e\in\Cor_{\Hk_{G,I}^e,\IC_{V^I}\langle -d_I\rangle}(\cP_X\boxtimes\uk_{C^I},\cP_X\boxtimes\uk_{C^I}),\] Hecke action on the geometric isotypic part \[a^e_{\frc,\s}=a_{\frc^e,\s}:=V_{\s}^I\langle-d_I\rangle \otimes\int_{X,\Nilp}\LL_{\s}^e\to\int_{X,\Nilp}\LL_{\s}^e, \] isotypic part map \begin{equation}\label{eq:refinedisotypicpart}
    \xi_{\s,I}^e:V_{\s}^I\to l_{I,!}(\IC_{V^I}|_{\Sht_{G,I}^e}),
\end{equation} the fake special cycle classes \begin{equation}\label{eq:fakespecialcycleclassesref}z^e_{\frc,\s}:V_{\s}^I\langle-d_I\rangle\to \uk_{C^I}\end{equation} constructed from $a_{\frc,\s}^e$, and the identity \begin{equation} \label{eq:computationofisotypicssrefined} z^e_{\frc,\s}=\tr_{\Sht,C^I}(\frc)\circ \xi_{\s,I}^e.\end{equation}

\subsection{Isotypic part of special cycle classes II}\label{sec:isotypicpartscc2}
In this section, we will generalize the technique in \S\ref{sec:fakevsreal1} to general split reductive groups $G$. Let $Z(G)$ be the center of $G$ and $S=G/[G,G]$. We have a natural map on the Langlands dual groups $\Sc\to\Gc$. Define $\Shv_{\triv}(\Bun_S) \sub \Shv_{\Nilp}(\Bun_S)$ to be the direct summand containing the constant sheaf $\uk_{\Bun_S}$. We have an obvious map $f:\Bun_G\to\Bun_S$.

The main difference between this section and \S\ref{sec:fakevsreal1} is that we replace Assumption \ref{assumption:eigenstronglycompact} by the following which we assume throughout \S\ref{sec:isotypicpartscc2}:

\begin{assumption}\label{assumption:eigencompact}
    The functor $f^*(-)\otimes\LL_{\s}:\Shv_{\triv}(\Bun_S)\to \Shv_{\Nilp}(\Bun_G)$ preserves compact objects.
\end{assumption}

\begin{remark}
    By \cite{GR}, Assumption \ref{assumption:eigencompact} is true whenever $\s\in\Loc_{\Gc}(k)$ is (geometrically) irreducible.
\end{remark}

\subsubsection{Construction of the isotypic part}\label{sec:geoisotypic}

In this section, we are going to construct the map \eqref{eq:isotypicpart} for $V^I\in \Rep(\Gc^I)$ and $\s\in \Loc_{\Gc}^{\arith}(k)$, which only depends on the choice of a Hecke eigensheaf $\LL_{\s}\in\Shv_{\Nilp}(\Bun_G)$ with a Weil sheaf structure compatible with that of $\s$. 

Consider the functor $(-)_{\s}: \Rep(\Gc^I)\to \Shv(\Hk_{S,I})$ defined by $V^I\mapsto V^I_{\s}$. We explain the notation as follows: Note that $\Hk_{S,I}=\coprod_{\lambda_{S,I}\in X_*(S)^I}\Hk_{S,\l_{S,I}}$. When $V^I$ admits a unique central character $\lambda_{S,I}\in X^*(\Sc)^I$, we abuse the notation and denote $V^I_{\s}=l_{S,I}^*V^I_{\s}|_{\Hk_{S,\lambda_{S,I}}}$ in which $l_{S,I}:\Hk_{S,I}\to C^I$ is the map remembering only the legs. In the expression, the later $V^I_{\s}$ is the local system $V_{\s}^I\in\QLisse(C^I)$ while the former $V_{\s}^I$ is a sheaf $V_{\s}^I\in\Shv(\Hk_{S,\lambda_{S,I}})\sub \Shv(\Hk_{S,I})$. The meaning of $V_{\s}^I$ will be clear from the context. In general, $V^I$ is a direct sum of representations with different central characters. We take the direct sum of the construction above for each direct summand of $V^I$.

When $G$ is split reductive but not necessarily semisimple, it is necessary to make the refinement in \S\ref{sec:isotypicrefinement}. We will fix $e\in \pi_0(\Bun_G)$ until the end of this section. We use $\overline{e}\in\pi_0(\Bun_S)$ to denote the image of $e$ under the map $\pi_0(\Bun_G)\to \pi_0(\Bun_S)$. From now on, without loss of generality, we always assume $V^I\in\Rep(\Gc^I)$ is irreducible with highest weight $\lambda_I$ and central character $\lambda_{S,I}=(\lambda_{S,1},\cdots,\lambda_{S,r})\in X^*(S)^I$. Moreover, we can assume \begin{equation}\label{eq:sum0} \sum_{i\in I}\lambda_{S,i}=0\end{equation} since otherwise $\Sht_{G,\lambda_I}=\varnothing$.

Consider the correspondence \[\begin{tikzcd}
    \Bun_S\times C^I & \Hk_{S,I} \ar[l, "\lh_{S,I}"'] \ar[r, "\rh_{S,I}"] & \Bun_S\times C^I
\end{tikzcd}.\] Define \[T_{V^I_{\s}}:\Shv(\Bun_S\times C^I)\to \Shv(\Bun_S\times C^I)\] by \[T_{V^I_{\s}}(-):=\rh_{S,I,!}(\lh_{S,I}^*(-)\otimes V^I_{\s})\] where $V_{\s}^I\in\Shv(\Hk_{G,\lambda_{S,I}})$. This functor preserves the subcategory $\Shv_{\triv}(\Bun_S)\otimes\QLisse(C^I)$ and induces a functor \[T_{V^I_{\s}}:\Shv_{\triv}(\Bun_S)\otimes\QLisse(C^I)\to \Shv_{\triv}(\Bun_S)\otimes\QLisse(C^I).\]

We have a natural transformation \begin{equation}\label{diag:constructisotypic1}
    \begin{tikzcd}
        \Shv_{\triv}(\Bun_S)\otimes \QLisse(C^I) \ar[r, "T_{V_{\s}^I}"] \ar[d, "(f^*(-)\otimes\LL_{\s})\otimes\id"'] &\Shv_{\triv}(\Bun_S)\otimes \QLisse(C^I) \ar[d, "(f^*(-)\otimes\LL_{\s})\otimes\id"] \ar[dl, Rightarrow, "\eta_{\s}"] \\
        \Shv_{\Nilp}(\Bun_G)\otimes\QLisse(C^I) \ar[r, "T_{V^I}"] & \Shv_{\Nilp}(\Bun_G)\otimes\QLisse(C^I)
    \end{tikzcd}
\end{equation}
To define the natural transformation $\eta_{\s}$, we consider the diagram \begin{equation}\label{diag:centralpull1}\begin{tikzcd}
    \Bun_G\times C^I \ar[d, "f\times\id"] & \Hk_{G, \lambda_{I}} \ar[d, "f_{\Hk}"] \ar[l, "\lh_{I}"'] \ar[r, "\rh_{I}"] & \Bun_G\times C^I \ar[d, "f\times\id"] \\
    \Bun_S\times C^I & \Hk_{S,\lambda_{S,I}} \ar[l, "\lh_{S,I}"'] \ar[r, "\rh_{S,I}"] & \Bun_S\times C^I
\end{tikzcd}\end{equation} in which $\lh_{S,I}$ and $\rh_{S,I}$ are isomorphisms, the natural transformation $\eta_{\s}$ is defined as

\[\begin{split}
    ((f^*(-)\otimes\LL_{\s})\otimes\id)\circ T_{V^I_{\s}} &=(f\times\id)^*(\rh_{S,I,!}(\lh_{S,I}^*(-)\otimes V^I_{\s}))\otimes(\LL_{\s}\boxtimes\uk_{C^I}) \\
    &\cong (f\times \id)^*(\rh_{S,I,!}\lh_{S,I}^*(-))\otimes(\LL_{\s}\boxtimes V_{\s}^I) \\
    &\cong (f\times \id)^*(\rh_{S,I,!}\lh_{S,I}^*(-))\otimes\rh_{I,!}(\lh_{I}^*(\LL_{\s}\boxtimes\uk_{C^I})\otimes \IC_{V^I}) \\
    &\cong \rh_{I,!}(\rh_{I}^*(f\times\id)^*\rh_{S,I,!}\lh_{S,I}^*(-)\otimes \lh_{I}^*(\LL_{\s}\boxtimes\uk_{C^I})\otimes\IC_{V^I}) \\
    &\cong \rh_{I,!}(f_{\Hk}^*\rh_{S,I}^*\rh_{S,I,!}\lh_{S,I}^*(-)\otimes \lh_I^*(\LL_{\s}\boxtimes\uk_{C^I})\otimes\IC_{V^I}) \\
    &\cong \rh_{I,!}(f_{\Hk}^*\lh_{S,I}^*(-)\otimes \lh_I^*(\LL_{\s}\boxtimes\uk_{C^I})\otimes\IC_{V^I}) \\
    &\cong \rh_{I,!}(\lh_I^*((f\times\id)^*(-)\otimes\LL_{\s}\boxtimes\uk_{C^I})\otimes\IC_{V^I})\\
    &= T_{V^I}\circ ((f^*(-)\otimes\LL_{\s})\otimes\id)
\end{split}.\]
Adding a Frobenius twist, we get a natural transformation \[\eta_{\s}^{(1)}:((f^*(-)\otimes\LL_{\s})\otimes\id)\circ (\Frob\times\id)_!\circ T_{V^I_{\s}}\to (\Frob\times\id)_! \circ T_{V^I}\circ ((f^*(-)\otimes\LL_{\s})\otimes\id).\] Since the vertical maps in \eqref{diag:constructisotypic1} preserve compact objects by Assumption \ref{assumption:eigencompact}, by Definition \ref{def:funccattr}, we get the left vertical map of the following commutative diagram \begin{equation}\label{diag:constructisotypic1tr}\begin{tikzcd}
    \tr_{\QLisse(C^I)}((\Frob\times\id)_!\circ T_{V^I_{\s}},\Shv_{\triv}(\Bun_S^{\overline{e}})\otimes\QLisse(C^I)) \ar[r, "\LT^{\true}", "\sim"'] \ar[d, "\tr_{\QLisse(C^I)}(\eta_{\s}^{(1)})"] & V_{\s}^I \ar[d, "\xi_{\s,I}^e"] \\
    \tr_{\QLisse(C^I)}((\Frob\times\id)_! \circ T_{V^I},\Shv_{\Nilp}(\Bun_G^e)\otimes\QLisse(C^I)) \ar[r, "\LT^{\Serre}", "\sim"'] & l_{I,!}(\IC_{V^I}|_{\Sht_{G,I}^e})
\end{tikzcd}.
\end{equation}

Here, the map $\xi_{\s,I}^e$ is defined such that the diagram above is commutative. We need to explain the isomorphism \begin{equation}\label{eq:lttrueS}\LT^{\true}:\tr_{\QLisse(C^I)}((\Frob\times\id)_!\circ T_{V^I_{\s}},\Shv_{\triv}(\Bun_S^{\overline{e}})\otimes\QLisse(C^I))\isom V_{\s}^I.\end{equation}

The inclusion \[\iota_{S,\triv}:\Shv_{\triv}(\Bun_S^{\overline{e}})\to \Shv(\Bun_S^{\overline{e}})\] admits a continuous right adjoint \[\iota_{S,\triv,\rmR}:\Shv(\Bun_S^{\overline{e}})\to\Shv_{\triv}(\Bun_S^{\overline{e}}),\] which is the composition of the Beilinson's spectral projector $\iota_{S,\rmR}:\Shv(\Bun_S^{\overline{e}})\to\Shv_{\Nilp}(\Bun_S^{\overline{e}})$ and the projection to the direct summand $\Shv_{\triv}(\Bun_S^{\overline{e}})\sub \Shv_{\Nilp}(\Bun_S^{\overline{e}})$. The category $\Shv_{\triv}(\Bun_S^{\overline{e}})$ is self-dual under the Verdier duality with unit \[\rmu_S=\iota_{S,\triv,\rmR}(\D_{S,*}\om_{\Bun_S^{\overline{e}}})\in \Shv_{\triv}(\Bun_S^{\overline{e}})^{\otimes 2}\] and counit \[\ev_S=\Gamma_{\blacktriangle}\circ\D_S^!:\Shv_{\triv}(\Bun_S^{\overline{e}})^{\otimes 2}\to\Vect.\] Here $\D_S:\Bun_S^{\overline{e}}\to\Bun_S^{\overline{e}}\times\Bun_S^{\overline{e}}$ is the diagonal map, and $\Gamma_{\blacktriangle}:\Shv(\Bun_S^{\overline{e}})\to\Vect$ is the unique continuous functor whose restriction to compact objects is $\Gamma$. This is a particular case of the renormalized functor of direct image, which is denoted $f_{\blacktriangle}$ in \cite[\S A.2.3]{arinkin2022stacklocalsystemsrestricted}.

The map $\LT^{\true}$ is defined as \begin{equation}\label{eq:lttrueSprim}
    \begin{split}
        \LT^{\true}:&\tr_{\QLisse(C^I)}((\Frob\times\id)_!\circ T_{V^I_{\s}},\Shv_{\triv}(\Bun_S^{\overline{e}})\otimes\QLisse(C^I)) \\
        \cong & \pr_{2,\blacktriangle}(\D_S\times\id)^!(\Frob_{\Bun_S^{\overline{e}}}\times \id)_*\circ T_{(V^I\boxtimes\triv)_\s}(\iota_{S,\triv}\iota_{S,\triv,\rmR}\D_{S,*}\om_{\Bun_S^{\overline{e}}}\boxtimes\uk_{C^I}) \\
        \to & \pr_{2,\blacktriangle}(\D_S\times\id)^!(\Frob_{\Bun_S^{\overline{e}}}\times \id)_*\circ T_{(V^I\boxtimes\triv)_\s}(\D_{S,*}\om_{\Bun_S^{\overline{e}}}\boxtimes\uk_{C^I}) \\
        \cong & l_{S,I,*}(V_{\s}^I\otimes\om_{\Sht_{S,\lambda_{S,I}}^{\overline{e}}}) \\
        \cong & l_{S,I,*}(V_{\s}^I|_{\Sht_{S,\lambda_{S,I}}^{\overline{e}}})
    \end{split}.
\end{equation} Here, $\pr_{2}:\Bun_S\times C^I\to C^I$ is the projection to the second coordinate, the second map uses the adjunction $\iota_{S,\triv}\iota_{S,\triv,\rmR}\to\id$, the map $l_{S,I}:\Sht_{S,I}\to C^I$ is the map remembering only the legs. The map $\LT^{\true}$ is a special case of the refined true local term map defined in \cite[\S4.11]{gaitsgory2024localtermscategoricaltrace} in which the same map is denoted by $\LT^{\true}_{c,\blacktriangle}$.

Note that there is a natural map $V_{\s}^I\to l_{S,I,*}(V_{\s}^I|_{\Sht_{S,\lambda_{S,I}}^{\overline{e}}})  $ induced by $(l_{S,I}^*,l_{S,I,*})$-adjunction, which realizes $V_{\s}^I$ as a direct summand of the later. One easily checks that the map \eqref{eq:lttrueSprim} factors through $V_{\s}^I$ and defines the isomorphism \eqref{eq:lttrueS}. This explains the construction of the $\s$-isotypic part $\xi_{\s,I}^e:V_{\s}^I\to l_{I,!}(\IC_{V^I}|_{\Sht_{G,I}^{e}})$.

The $\s$-isotypic part map $\xi_{\s,I}^e$ admits a less canonical but equivalent and simpler construction, which is more convenient to use. For simplicity in explanation, we assume that the curve $C$ admits a rational point $c\in C(\F_q)$.\footnote{In general, one can choose a finite subscheme of $C$ and run a parallel argument.}  In this case, instead of considering the map $f:\Bun_G\to\Bun_S$, one chooses a point $c\in C(\FF_q)$ and consider the map $f_c=(-)_c\circ f:\Bun_G\to [*/S]$ where $(-)_c:\Bun_S\to [*/S]$ is taking stalk at $c\in C$. In this case, one replaces the diagram \eqref{diag:centralpull1} by \begin{equation}\label{diag:centralpull2}
    \begin{tikzcd}
        \Bun_G\times C^I \ar[d, "f_c\times\id"] & \Hk_{G, \lambda_{I}} \ar[d, "f_{c,\Hk}"] \ar[l, "\lh_{I}"'] \ar[r, "\rh_{I}"] & \Bun_G\times C^I \ar[d, "f_c\times\id"] \\
    {[*/S]\times C^I} & {[*/S]\times C^I} \ar[l, "\id"'] \ar[r, "t_{\lambda_{S,I}}"] & {[*/S]\times C^I}
    \end{tikzcd}
\end{equation}
where $t_{\lambda_{S,I}}(\cE_0,c^I)=(\cE_0\otimes\cO(-\lambda_{S,I}\cdot c^I)_c,c^I)\in [*/S]\times C^I$. Define \[T_{c,V_{\s}^I}=t_{\lambda_{S,I},!}(-\otimes V_{\s}^I):\Shv([*/S])\otimes\QLisse(C^I)\to \Shv([*/S])\otimes\QLisse(C^I).\] 

In this case, the diagram \eqref{diag:constructisotypic1} can be composed with the upper square in the following diagram, and we arrive at  \begin{equation}\label{diag:constructisotypic2}
        \begin{tikzcd}
        \Shv([*/S])\otimes\QLisse(C^I) \ar[d, "(-)_c^*\otimes\id "'] \ar[r, "T_{c,V_{\s}^I}"] & \Shv([*/S])\otimes\QLisse(C^I) \ar[d, "(-)_c^*\otimes\id "] \ar[dl, Rightarrow, "\eta_{c,S,\s}^{\overline{e}}"] \\
        \Shv_{\triv}(\Bun_S^{\overline{e}})\otimes \QLisse(C^I) \ar[r, "T_{V_{\s}^I}"] \ar[d, "(f^*(-)\otimes\LL_{\s})\otimes\id"'] &\Shv_{\triv}(\Bun_S^{\overline{e}})\otimes \QLisse(C^I) \ar[d, "(f^*(-)\otimes\LL_{\s})\otimes\id"] \ar[dl, Rightarrow, "\eta_{\s}"] \\
        \Shv_{\Nilp}(\Bun_G^e)\otimes\QLisse(C^I) \ar[r, "T_{V^I}"] & \Shv_{\Nilp}(\Bun_G^e)\otimes\QLisse(C^I)
    \end{tikzcd}
\end{equation} Here, the natural transformation $\eta_{c,S,\s}^{\overline{e}}$ is the obvious one. We denote $\eta_{c,\s}^e=\eta_{\s}\circ\eta_{c,S,\s}^{\overline{e}}$. Since all the vertical maps in \eqref{diag:constructisotypic2} preserves compact objects, by the formalism of Definition \ref{def:funccattr}, we arrive at a commutative square \begin{equation}\label{diag:constructisotypic2tr}
    \begin{tikzcd}
    \tr_{\QLisse(C^I)}((\Frob\times\id)_!\circ T_{c,V_{\s}^I},\Shv([*/S])\otimes\QLisse(C^I)) \ar[r, "\LT^{\true}", "\sim"'] \ar[d, "\tr_{\QLisse(C^I)}(\eta_{c,S,\s}^{\overline{e}(1)})"] & V_{\s}^I \ar[d, "\id"]  \\
         \tr_{\QLisse(C^I)}((\Frob\times\id)_!\circ T_{V^I_{\s}},\Shv_{\triv}(\Bun_S^{\overline{e}})\otimes\QLisse(C^I)) \ar[r, "\LT^{\true}", "\sim"'] \ar[d, "\tr_{\QLisse(C^I)}(\eta_{\s}^{(1)})"] & V_{\s}^I \ar[d, "\xi_{\s,I}^e"]  \\
    \tr_{\QLisse(C^I)}((\Frob\times\id)_! \circ T_{V^I},\Shv_{\Nilp}(\Bun_G^e)\otimes\QLisse(C^I)) \ar[r, "\LT^{\Serre}", "\sim"'] & l_{I,!}(\IC_{V^I}|_{\Sht_{G,I}^e})
    \end{tikzcd}
\end{equation} Here the definition of the top horizontal isomorphism $\LT^{\true}$ is similar to but simpler than \eqref{eq:lttrueS}. The commutativity of the upper square follows from a routine generalization of \cite[Theorem\,0.4(b)(i)]{gaitsgory2024localtermscategoricaltrace} to cohomological correspondences with kernel. Therefore, the outer square of \eqref{diag:constructisotypic2tr} gives a simpler definition of the map $\xi_{\s,I}^e:V_{\s}^I\to l_{I,!}(\IC_{V^I}|_{\Sht_{G,I}^e})$.

\subsubsection{Restriction of special cycle classes to the isotypic part} \label{sec:restricttoisotypic}

Keep the same assumptions as in the previous section. Suppose we are given an affine smooth $G$-variety $X$ and a cohomological correspondence \[\frc\in\Cor_{\Hk_{G,I},\IC_{V^I}\langle -d_I\rangle}(\cP_X\boxtimes\uk_{C^I},\cP_X\boxtimes\uk_{C^I} ),\] we can consider the restriction of the geometric Shtuka construction of special cohomological correspondence $\tr_{\Sht,C^I}(\frc):l_{I,!}(\IC_{V^I}|_{\Sht_{G,I}}\langle-d_I\rangle)\to\uk_{C^I}$ along the isotypic part map $\xi_{\s,I}^e:V_{\s}^I\to l_{I,!}(\IC_{V^I}|_{\Sht_{G,I}^e})$, which is the composition  \[\tr_{\Sht,C^I}(\frc)\circ \xi_{\s,I}^e:V_{\s}^I\langle-d_I\rangle \to \uk_{C^I}.\] Let us revisit its construction. Consider the diagram
\begin{equation}\label{diag:isotypicspecialcyclediag1}
    \begin{tikzcd}
        \Shv([*/S])\otimes \QLisse(C^I) \ar[d, "(f_c^*(-)\otimes\LL_{\s}^e)\otimes\id"'] \ar[r, "T_{c,V_{\s}^I\langle-d_I\rangle}"] & \Shv([*/S])\otimes \QLisse(C^I) \ar[d, "(f_c^*(-)\otimes\LL_{\s}^e)\otimes\id"] \ar[dl, Rightarrow, "\eta_{c,\s}^e"]  \\
        \Shv_{\Nilp}(\Bun_G)\otimes\QLisse(C^I) \ar[d, "\int_{X,\Nilp,I}"'] \ar[r, "T_{V^I\langle -d_I\rangle}"] & \Shv_{\Nilp}(\Bun_G)\otimes\QLisse(C^I) \ar[d, "\int_{X,\Nilp,I}"] \ar[dl, Rightarrow, "\eta_{\frc,\Nilp}"]   \\
        \QLisse(C^I) \ar[r, "\id"] & \QLisse(C^I)
    \end{tikzcd}.
\end{equation}
Since all the vertical maps in the diagram above preserve compact objects, applying the formalism in Definition \ref{def:funccattr}, we arrive at the left column of the commutative diagram \begin{equation}\label{diag:isotypicspecialcyclediag1tr}
    \begin{tikzcd}
        \tr_{\QLisse(C^I)}((\Frob\times\id)_!\circ T_{c,V_{\s}^I\langle-d_I\rangle},\Shv([*/S])\otimes\QLisse(C^I)) \ar[r, "\LT^{\true}", "\sim"'] \ar[d, "\tr_{\QLisse(C^I)}(\eta_{c,\s}^{e(1)})"] & V_{\s}^I \langle-d_I\rangle \ar[d, "\xi_{\s,I}^e"]  \\
    \tr_{\QLisse(C^I)}((\Frob\times\id)_! \circ T_{V^I\langle-d_I\rangle},\Shv_{\Nilp}(\Bun_G^e)\otimes\QLisse(C^I)) \ar[r, "\LT^{\Serre}", "\sim"'] \ar[d, "\tr_{\QLisse(C^I)}(\eta_{\frc,\Nilp}^{(1)})"] & l_{I,!}(\IC_{V^I}|_{\Sht_{G,I}^e}\langle-d_I\rangle) \ar[d, "\tr_{\Sht,C^I}(\frc)"]\\
    \tr_{\QLisse(C^I)}(\id,\QLisse(C^I)) \ar[r, "\sim"] & \uk_{C^I}
    \end{tikzcd}
\end{equation}

Define \begin{equation}\label{eq:etacs}
    \eta_{\frc_{\s}^e}:=\eta_{\frc,\Nilp}\circ \eta_{c,\s}^e.
\end{equation} This gives the identity \begin{equation}
    \tr_{\Sht,C^I}(\frc)\circ \xi_{\s,I}^e=\tr_{\QLisse(C^I)}(\eta_{\frc_{\s}^e}^{(1)}).
\end{equation}

We give another description of the natural transformation $\eta_{\frc_{\s}^e}$. Note that \[\int_{X,\Nilp,I}\circ ((f_c^*(-)\otimes\LL_{\s}^e)\otimes\id)\cong \Gamma_c(-\otimes f_{c,!}(\cP_X\otimes\LL_{\s}^e))\otimes\id.\] We claim that the natural transformation $\eta_{\frc_{\s}^e}$ comes from a cohomological correspondence $\frc_{\s}^e$ via the formalism in \S\ref{sec:cctrnt} where  \begin{equation}\label{eq:isotypiccc}\begin{split}\frc_{\s}^e &\in \Cor_{[*/S]\times C^I,V_{\s}^I\langle-d_I\rangle}(f_{c,!}(\cP_X\otimes\LL_{\s}^e)\boxtimes\uk_{C^I},f_{c,!}(\cP_X\otimes\LL_{\s}^e)\boxtimes\uk_{C^I}) \\ &=\Hom^0(V_{\s}^I\langle -d_I\rangle\otimes t_{\lambda_{S,I}}^*(f_{c,!}(\cP_X\otimes\LL_{\s}^e)\boxtimes\uk_{C^I}),f_{c,!}(\cP_X\otimes\LL_{\s}^e)\boxtimes\uk_{C^I})\end{split}\end{equation} constructed as:
\begin{equation}
    \begin{split}
        V_{\s}^I\langle -d_I\rangle\otimes t_{\lambda_{S,I}}^*(f_{c,!}(\cP_X\otimes\LL_{\s}^e)\boxtimes\uk_{C^I}) &\cong t_{\lambda_{S,I}}^*(f_{c}\times\id)_!(\cP_X\otimes \rh_{I,!}(\lh_I^*(\LL_{\s}\boxtimes\uk_{C^I})\otimes\IC_{V^I}\langle -d_I\rangle) ) \\
        &\cong t_{\lambda_{S,I}}^*(f_{c}\times\id)_!\rh_{I,!}(\rh_{I}^*(\cP_X\boxtimes\uk_{C^I})\otimes\lh_{I}^*(\LL_{\s}\boxtimes\uk_{C^I})\otimes\IC_{V^I}\langle -d_I\rangle) \\
        &\cong (f_{c}\times\id)_!\lh_{I,!}(\rh_{I}^*(\cP_X\boxtimes\uk_{C^I})\otimes\lh_{I}^*(\LL_{\s}\boxtimes\uk_{C^I})\otimes\IC_{V^I}\langle -d_I\rangle) \\
        &\cong (f_{c}\times\id)_!(\lh_{I,!}(\rh_I^*(\cP_X\boxtimes\uk_{C^I})\otimes\IC_{V^I}\langle -d_I\rangle)\otimes(\LL_{\s}\boxtimes\uk_{C^I})) \\
        &\to f_{c,!}(\cP_X\otimes\LL_{\s})\boxtimes\uk_{C^I}
    \end{split}
\end{equation} in which the last step uses the cohomological correspondence $\frc:\lh_{I,!}(\rh_I^*(\cP_X\boxtimes\uk_{C^I})\otimes\IC_{V^I}\langle -d_I\rangle)\to \cP_X\boxtimes\uk_{C^I}$. 

\subsubsection{Fake versus real II}\label{sec:fakevsreal2}
In this section, we would like to generalize the identity \eqref{eq:computationofisotypicssrefined}. We work under the following assumption, which is weaker than Assumption \ref{assumption:vectcompact}:

\begin{assumption}\label{assumption:vectorcompactrefined}
    The complex $\int_{X,\Nilp}\LL_{\s}^e$ is perfect.
\end{assumption}

In this case, we have the fake special cycle classes \[z_{\frc,\s}^e:V_{\s}^I\langle-d_I\rangle \to \uk_{C^I},\] the special cycle classes \[\tr_{\Sht,C^I}(\frc): l_{I,!}(\IC_{V^I}|_{\Sht_{G,I}}\langle-d_I\rangle)\to\uk_{C^I},\] and the isotypic part map \[\xi_{\s,I}^e:V_{\s}^I\to l_{I,!}(\IC_{V^I}|_{\Sht_{G,I}^e}).\] The main result in this subsection is the following:
\begin{prop}\label{prop:computationofisotypicss}
    Assuming Assumption \ref{assumption:vectorcompactrefined} and Assumption \ref{assumption:eigencompact}, we have $z_{\frc,\s}^e=\tr_{\Sht,C^I}(\frc) \circ \xi_{\s,I}^e$.
\end{prop}

\begin{proof}[Proof of Proposition \ref{prop:computationofisotypicss}]
    
The basic idea is to replace the axillary category $\Shv([*/S])$ by $\Vect$, and we will get back to the situation in \S\ref{sec:fakevsreal1}. As a naive tempt, we consider the diagram \begin{equation}\label{diag:isotypicspecialcyclediag2}\begin{tikzcd}
    \QLisse(C^I) \ar[r, "-\otimes V_{\s}^I\langle -d_I\rangle"] \ar[d, "p_c^*\otimes\id"'] & \QLisse(C^I) \ar[d, "p_c^*\otimes\id"] \ar[dl, Rightarrow, "\eta_{p_c}"] \\
    \Shv([*/S])\otimes\QLisse(C^I) \ar[r, "T_{c,V_{\s}^I\langle -d_I\rangle}"] \ar[d, "(\int_{X,\Nilp}\circ (f_c^*(-)\otimes\LL_{\s}^e))\otimes\id"'] & \Shv([*/S])\otimes\QLisse(C^I) \ar[d, "(\int_{X,\Nilp}\circ (f_c^*(-)\otimes\LL_{\s}^e))\otimes\id"] \ar[dl, Rightarrow, "\eta_{\frc_{\s}^e}"]\\
    \QLisse(C^I) \ar[r, "\id"] & \QLisse(C^I)
\end{tikzcd}\end{equation} Where we use the map $p_c:[*/S]\to *$. Here, the lower square is the outer square of \eqref{diag:isotypicspecialcyclediag1}, the natural transformation in the upper square is the obvious one.

Pretending that the functor $p_c^*$ preserves compact objects, one would hope for a commutative diagram
\begin{equation}
    \begin{tikzcd}
        \tr_{\QLisse(C^I)}(-\otimes V_{\s}^I\langle -d_I\rangle,\QLisse(C^I)) \ar[r, "\sim"] \ar[d, "\tr_{\QLisse(C^I)}(\eta_{p_c}^{(1)})"'] & V_{\s}^I\langle -d_I\rangle \ar[d, "\id"] \\
    \tr_{\QLisse(C^I)}((\Frob\times\id)_!\circ T_{c,V_{\s}^I\langle -d_I\rangle},\Shv([*/S])\otimes\QLisse(C^I)) \ar[r, "\LT^{\true}", "\sim"']  \ar[d, "\tr_{\QLisse(C^I)}(\eta_{\frc_{\s}^e}^{(1)})"'] & V_{\s}^I\langle -d_I\rangle \ar[d, "\tr_{\Sht,C^I}(\frc)\circ \xi_{\s,I}^e"]  \\
    \tr_{\QLisse(C^I)}(\id, \QLisse(C^I)) \ar[r, "\sim"] & \uk_{C^I} \\
    \end{tikzcd}.
\end{equation}
Since $\eta_{\frc_{\s}^e}\circ \eta_{p_c}=\eta_{a_{\frc,\s}^e}$, we get the desired identity in Proposition \ref{prop:computationofisotypicss}. However, the functor $p_c^*$ does not preserve compact objects. We cannot directly apply the formalism in Definition \ref{def:funccattr}.

We bypass this point by considering the category $\Shv([*/S])^{\ren}$, which is the renormalization of $\Shv([*/S])$ such that constructible complexes are compact. There is a fully faithful embedding $\ren:\Shv([*/S])\to\Shv([*/S]^{\ren})$ preserving compact objects. We refer to \cite[\S F.5]{arinkin2022stacklocalsystemsrestricted} for a thorough introduction to this renormalized category. 

One can bypass the problem by considering the diagram \begin{equation}\label{diag:isotypicspecialcyclediag3}\begin{tikzcd}
    \QLisse(C^I) \ar[r, "-\otimes V_{\s}^I\langle -d_I\rangle"] \ar[d, "p_c^*\otimes\id"'] & \QLisse(C^I) \ar[d, "p_c^*\otimes\id"] \ar[dl, Rightarrow, "\eta_{p_c}"] \\
    \Shv(*/S)^{\ren}\otimes\QLisse(C^I) \ar[r, "T_{c,V_{\s}^I\langle -d_I\rangle}"]  & \Shv(*/S)^{\ren}\otimes\QLisse(C^I) \\
    \Shv(*/S)\otimes\QLisse(C^I) \ar[r, "T_{c,V_{\s}^I\langle -d_I\rangle}"] \ar[d, "(\int_{X,\Nilp}\circ (f_c^*(-)\otimes\LL_{\s}^e))\otimes\id"'] \ar[u, "\ren\otimes\id"] & \Shv(*/S)\otimes\QLisse(C^I) \ar[d, "(\int_{X,\Nilp}\circ (f_c^*(-)\otimes\LL_{\s}^e))\otimes\id"] \ar[u, "\ren\otimes\id"'] \ar[ul, Rightarrow, "\eta_{\ren}"]  \ar[dl, Rightarrow, "\eta_{\frc_{\s}^e}"] \\
    \QLisse(C^I) \ar[r, "\id"] & \QLisse(C^I)
\end{tikzcd}\end{equation}
in which all the natural transformations are the obvious ones. In this diagram, all vertical arrows preserve compact objects. We can apply the formalism in Definition \ref{def:funccattr} to obtain the diagram
\begin{equation}\label{diag:isotypicspecialcyclediag4}\begin{tikzcd}
    \tr_{\QLisse(C^I)}(-\otimes V_{\s}^I\langle -d_I\rangle,\QLisse(C^I)) \ar[r, "\sim"] \ar[d, "\tr_{\QLisse(C^I)}(\eta_{p_c}^{(1)})"'] & V_{\s}^I\langle -d_I\rangle \ar[d, "\id"] \\
    \tr_{\QLisse(C^I)}((\Frob\times\id)_!\circ T_{c,V_{\s}^I\langle -d_I\rangle},\Shv(*/S)^{\ren}\otimes\QLisse(C^I)) \ar[r, "\LT^{\true}", "\sim"']  & V_{\s}^I\langle -d_I\rangle  \\
    \tr_{\QLisse(C^I)}((\Frob\times\id)_!\circ T_{c,V_{\s}^I\langle -d_I\rangle},\Shv(*/S)\otimes\QLisse(C^I)) \ar[r, "\LT^{\true}", "\sim"'] \ar[u, "\tr_{\QLisse(C^I)}(\eta_{\ren}^{(1)})"] \ar[d, "\tr_{\QLisse(C^I)}(\eta_{\frc_{\s}^e}^{(1)})"'] & V_{\s}^I\langle -d_I\rangle \ar[d, "\tr_{\Sht,C^I}(\frc)\circ \xi_{\s,I}^e"] \ar[u, "\id"'] \\
    \tr_{\QLisse(C^I)}(\id, \QLisse(C^I)) \ar[r, "\sim"] & \uk_{C^I} \\
\end{tikzcd}
\end{equation}

Here, the commutativity of the upper square follows from a direct generalization of \cite[Theorem\,0.4(b)(i)]{gaitsgory2024localtermscategoricaltrace} to cohomological correspondences with kernel. The commutativity of the middle square follows from the same kind of generalization of \cite[Proposition\,4.12]{gaitsgory2024localtermscategoricaltrace}. The bottom square is the outer square of \eqref{diag:isotypicspecialcyclediag1tr}.

By the commutativity of \eqref{diag:isotypicspecialcyclediag4}, we are reduced to show  \begin{equation}\label{eq:technicalreduction1} \tr_{\QLisse(C^I)}(\eta_{\frc_{\s}^e}^{(1)}) \circ \tr_{\QLisse(C^I)}(\eta_{\ren}^{(1)})^{-1}\circ \tr_{\QLisse(C^I)}(\eta_{p_c}^{(1)})=z_{\frc,\s}^e .\end{equation} For this purpose, consider the strictly commutative diagram \begin{equation}\label{diag:technicaldiag1}
    \begin{tikzcd}
         &\QLisse(C^I) \ar[d, "p_c^*\otimes\id"]  \\
        \Shv(*/S)\otimes\QLisse(C^I) \ar[r, "\ren\otimes\id"] \ar[dr, "(\int_{X,\Nilp}\circ (f_c^*(-)\otimes\LL_{\s}^e))\otimes\id"'] & \Shv(*/S)^{\ren}\otimes\QLisse(C^I) \ar[d, "(\int_{X,\Nilp}\circ (f_c^*(-)\otimes\LL_{\s}^e))\otimes\id"]\\
        & \QLisse(C^I)
    \end{tikzcd}
.\end{equation} Moreover, each category in the diagram is equipped with an endomorphism which has been specified in \eqref{diag:isotypicspecialcyclediag3}, among which the only new natural transformation is \begin{equation}
    \begin{tikzcd}
        \Shv(*/S)^{\ren}\otimes\QLisse(c^I) \ar[r, "T_{c,V_{\s}^I\langle -d_I\rangle}"] \ar[d, "(\int_{X,\Nilp}\circ (f_c^*(-)\otimes\LL_{\s}^e))\otimes\id"'] & \Shv(*/S)^{\ren}\otimes\QLisse(c^I) \ar[d, "(\int_{X,\Nilp}\circ (f_c^*(-)\otimes\LL_{\s}^e))\otimes\id"] \ar[dl, Rightarrow, "\eta_{\frc_{\s}^e,\ren}"] \\
        \QLisse(C^I) \ar[r, "\id"] & \QLisse(C^I)
    \end{tikzcd}
\end{equation} such that $ \eta_{\frc_{\s}^e,\ren}\circ\eta_{\ren}=\eta_{\frc_{\s}^e}$. 

By Assumption \ref{assumption:vectorcompactrefined}, we know $(\int_{X,\Nilp}\circ f_c^*(-)\otimes\LL_{\s}^e)(\uk_{[*/S]}) \in \Vect$ is compact. Since $\uk_{[*/S]}\in\Shv([*/S])^{\ren}$ is a compact generator, we know all maps in \eqref{diag:technicaldiag1} preserve compact objects. Therefore, it gives a commutative diagram \begin{equation}\label{diag:isotypicspecialcyclediag5}
    \adjustbox{scale=.77}{%
    \begin{tikzcd}
         &\tr_{\QLisse(C^I)}(-\otimes V_{\s}\langle -d_I\rangle,\QLisse(C^I)) \ar[d, "\tr_{\QLisse(C^I)}(\eta_{p_c}^{(1)})"]  \\
        \tr_{\QLisse(C^I)}((\Frob\times\id)_!\circ T_{c,V_{\s}^I\langle -d_I\rangle},\Shv([*/S])\otimes\QLisse(C^I)) \ar[r, "\tr_{\QLisse(C^I)}(\eta_{\ren}^{(1)})"] \ar[dr, "\tr_{\QLisse(C^I)}(\eta_{\frc_{\s}^e}^{(1)})"'] & \tr_{\QLisse(C^I)}((\Frob\times\id)_!\circ T_{c,V_{\s}^I\langle -d_I\rangle},\Shv([*/S])^{\ren}\otimes\QLisse(C^I)) \ar[d, "\tr_{\QLisse(C^I)}(\eta_{\frc_{\s}^e,\ren}^{(1)})"] \\ & \tr_{\QLisse(C^I)}(\id, \QLisse(C^I))
    \end{tikzcd}
    }
.\end{equation} Since we have $\eta_{\frc_{\s}^e,\ren}\circ\eta_{p_c}=\eta_{a_{\frc,\s}^e}$, we know \[\begin{split} &\tr_{\QLisse(C^I)}(\eta_{\frc_{\s}^e}^{(1)}) \circ \tr_{\QLisse(C^I)}(\eta_{\ren}^{(1)})^{-1}\circ \tr_{\QLisse(C^I)}(\eta_{p_c}^{(1)}) \\
=&\tr_{\QLisse(C^I)}(\eta_{\frc_{\s}^e,\ren}^{(1)})\circ \tr_{\QLisse(C^I)}(\eta_{p_c}^{(1)}) \\
=&\tr_{\QLisse(C^I)}(\eta_{a_{\frc,\s}^e}^{(1)})\\
=&z_{\frc,\s}^e\end{split}.\] This concludes the proof of Proposition \ref{prop:computationofisotypicss}.

\end{proof}

\begin{remark}
    Suppose one replaces $\LT^{\true}$ by $\LT^{\Serre}$ for all the horizontal maps in \eqref{diag:isotypicspecialcyclediag4}, we do not know how to prove the commutativity of the top square.\footnote{It might be possible to generalize the argument in \cite[\S6]{arinkin2022automorphicfunctionstracefrobenius} and show that $\LT^{\true}=\LT^{\Serre}$.} This is the reason that we use $\LT^{\true}$ instead of $\LT^{\Serre}$ even though the latter seems more natural in our setting.
\end{remark}

\subsection{Generically middle-dimensional case}\label{sec:gmdcase}
In this section, we study the case $d_I=0$ and $V^I\in\Rep(\Gc^I)^{\heartsuit}$, which means that the cohomological correspondence is $\frc\in\Cor_{\Hk_{G,I},\IC_{V^I}}(\cP_X\boxtimes\uk_{C^I},\cP_X\boxtimes\uk_{C^I})$. In this case, the geometric trace is $\tr_{\Sht,C^I}(\frc):l_{I,!}(\IC_{V^I}|_{\Sht_{G,I}})\to\uk_{C^I}$. We develop a tool to study its restriction to the isotypic part $\tr_{\Sht,C^I}(\frc)\circ \xi_{\s,I}^e:V_{\s}^I\to k_{C^I}$. In this section, we assume $V^I\in\Rep(\Gc^I)^{\heartsuit}$ is irreducible with highest weight $\lambda_{I}$ whose central character $\lambda_{S,I}\in X_*(S)^I$ satisfies $\sum_{i\in I}\lambda_{S,i}=0$.

In this case, taking stalk at $c^I\in C^I$ induces an injection \[(-)_{c^I}:\Hom^0(V_{\s}^I,\uk_{C^I})\to \Hom^0(V_{\s,c^I}^I,k).\] Therefore, we only need to understand the induced map on the stalk $(\tr_{\Sht,C^I}(\frc)\circ \xi_{\s,I}^e)_{c^I}:V_{\s,c^I}^I\to k$. 

Note that there is another natural map \[\tr_{\Sht,C^I}(\frc_{\s,c^I}^e):V_{\s,c^I}^I\to k.\] We now explain this map. Consider the restriction along $c^I\to C^I$ of the cohomological correspondence \eqref{eq:isotypiccc} which is \[\frc_{\s,c^I}^e\in \Cor_{[*/S],V_{\s,c^I}^I}(f_{c,!}(\cP_X\otimes\LL_{\s}^e)\boxtimes\uk_{C^I}, f_{c,!}(\cP_X\otimes\LL_{\s}^e)\boxtimes\uk_{C^I}).\] Since $\sum_{i\in I}\lambda_{S,i}=0$, we know that the underlying correspondence of $\frc_{\s,c^I}^e$ is the identity correspondence on $[*/S]$. The geometric trace above is the map \[\tr_{\Sht,C^I}(\frc_{\s,c^I}^e): V_{\s,c^I}^I\cong  \Fun([*/S(\FF_q)])\otimes V_{\s,c^I}^I\cong \Gamma_c(\uk_{[*/S(\FF_q)]})\otimes V_{\s,c^I}^I \to k\] in which we are using the identification $\Fun([*/S(\FF_q)])\cong k$ given by evaluation at the unique point.\footnote{Note that this isomorphism is $|S(\FF_q)|$ times the natural adjunction map $\Gamma_c(\uk_{[*/S(\FF_q)]})\to k$.} Note that the trace construction makes sense since $f_{c,!}(\cP_X\otimes\LL_{\s}^e)$ is constructible by Assumption \ref{assumption:eigencompact}.

We have the following proposition:
\begin{prop}\label{prop:gmdcase}
    We have $(\tr_{\Sht,C^I}(\frc)\circ \xi_{\s,I}^e)_{c^I}=\tr_{\Sht,C^I}(\frc_{\s,c^I}^e)\in \Hom^0(V_{\s,c^I}^I,k)$.
\end{prop}
\begin{proof}[Proof of Proposition \ref{prop:gmdcase}]
We first note that there is a commutative diagram \begin{equation}\label{diag:diagisotypiccc1tr}
    \begin{tikzcd}
        \tr(\Frob_!\circ(-\otimes V_{\s,c^I}^I),\Shv([*/S])) \ar[d, "\tr(\eta_{\frc_{\s,c^I}^e}^{(1)})"] \ar[r, "\LT^{\Serre}", "\sim"'] & V_{\s,c^I}^I \ar[d, "\tr_{\Sht,C^I}(\frc_{\s,c^I}^e)"] \\
        \tr(\id,\Vect) \ar[r, "\sim"] & k
    \end{tikzcd}.
\end{equation} in which that the local term map $\LT^{\Serre}$ is defined by \[\begin{split}\LT^{\Serre}:\tr(\Frob_!\circ(-\otimes V_{\s,c^I}^I),\Shv([*/S]))&\cong \Gamma_c(\D_c^*(\Frob\times\id)_!(\D_{c,!}\uk_{[*/S]}\otimes V_{\s,c^I}^I))\\ &\cong \Gamma_c(\uk_{[*/S(\FF_q)]})\otimes V_{\s,c^I}^I \\ &\cong \Fun([*/S(\FF_q)])\otimes V_{\s,c^I}^I \\ &\cong  V_{\s,c^I}^I \end{split}.\] Here we use the map $\D_c:[*/S]\to[*/S]\times[*/S]$, and we use the isomorphism $\Fun([*/S(\FF_q)])\cong k$ given by evaluation at the unique point.

On the other hand, by restricting the outer square of \eqref{diag:isotypicspecialcyclediag1tr} to $c^I$, we have a commutative diagram \begin{equation}\label{diag:diagisotypiccc2tr}
    \begin{tikzcd}
        \tr(\Frob_!\circ(-\otimes V_{\s,c^I}^I),\Shv([*/S])) \ar[d, "\tr(\eta_{\frc_{\s,c^I}^e}^{(1)})"] \ar[r, "\LT^{\true}", "\sim"'] & V_{\s,c^I}^I \ar[d, "(\tr_{\Sht,C^I}(\frc)\circ \xi_{\s,I}^e)_{c^I}"] \\
        \tr(\id,\Vect) \ar[r, "\sim"] & k
    \end{tikzcd}
\end{equation} in which the local term map $\LT^{\true}$ is defined by \[\begin{split}
    \LT^{\true}:\tr(\Frob_!\circ(-\otimes V_{\s,c^I}^I),\Shv([*/S]))&\cong \Gamma_{\blacktriangle}(\D_c^!(\Frob\times\id)_*(\D_{c,*}\om_{[*/S]}\otimes V_{\s,c^I}^I))\\ &\cong \Gamma(\om_{[*/S(\FF_q)]})\otimes V_{\s,c^I}^I \\ &\cong \Fun([*/S(\FF_q)])\otimes V_{\s,c^I}^I \\ &\cong  V_{\s,c^I}^I \end{split}.\] In which we are also using the isomorphism $\Fun([*/S(\FF_q)])\cong k$ given by evaluation at the unique point for the last step. Indeed, the inverse of this map is the adjunction map $k\to \Gamma(\om_{[*/S(\FF_q)]})$.

By \cite[Theorem\,6.1.4]{arinkin2022automorphicfunctionstracefrobenius}, we know $\LT^{\true}=\LT^{\Serre}$.\footnote{Strictly speaking, there is no tensor product $-\otimes V_{\s,c^I}^I$ involved in \cite[Theorem\,6.1.4]{arinkin2022automorphicfunctionstracefrobenius}. However, it is easy to see that this factor is innocuous and goes through the proof in \textit{loc.cit}.} Combining this with \eqref{diag:diagisotypiccc1tr}\eqref{diag:diagisotypiccc2tr}, we obtain \[(\tr_{\Sht,C^I}(\frc)\circ \xi_{\s,I}^e)_{c^I}=\tr(\eta_{\frc_{\s,c^I}^e}^{(1)})=\tr_{\Sht,C^I}(\frc_{\s,c^I}^e).\]
\end{proof}

\subsection{Diagonal cycles}\label{sec:isotypicdiag}
In this section, we study the restriction of the diagonal cycle on the isotypic part.

\subsubsection{Conjectural description}

We take $G=H\times H$, $X=H\backslash H\times H$, and $\s=(\s_H,c^*\s_H)\in \Loc_{\Hc}^{\arith}(k)\times \Loc_{\Hc}^{\arith}(k)$ where $c:\Hc\to \Hc$ is the Cartan involution (same letter for the induced map $c:\Loc_{\Hc}\to\Loc_{\Hc}$). The $\Hc$-local system $c^*\s_H$ is characterized by $V^*_{H,c^*\s}\cong V_{H,\s}$ for any irreducible $V_H\in\Rep(\Hc)$.\footnote{Strictly speaking, we have $V_{H,c^*\s}\cong c^*V_{H,\s}$. However, one has the natural isomorphism $c^*V_H\cong V_H^*$ given by the Geometric Satake equivalence.} In this case, we take a Hecke eigensheaf $\LL_{\s_H}\in\Shv_{\Nilp}(\Bun_H)$ for $\s_H$. One can consider $\DD(\LL_{\s_H})\in\Shv_{\Nilp}(\Bun_H)$ which is naturally a Hecke eigensheaf with eigenvalue $c^*\s_H$. Here, $\DD:\Shv(\Bun_G)_c\to\Shv(\Bun_G)_c^{\mathrm{op}}$ is the Verdier duality. We take \[\LL_{\s}=\LL_{\s_H}\boxtimes \DD(\LL_{\s_H})\in\Shv_{\Nilp}(\Bun_G).\] 

Fix $e\in\pi_1(H)\cong\pi_0(\Bun_H)$. We use $\Bun_H^e\sub \Bun_H$ and $\Bun_G^e\sub \Bun_G$ to denote the corresponding connected component. Define $S_H=H/[H,H]$, then $S=S_H\times S_H$. Consider irreducible representation $V_H\in\Rep(\Hc^I)$ with highest weight $\lambda_{H,I}$. Take $V^I=V_H^I\boxtimes V_H^I\in\Rep(\Gc^I)$. Assume $V_{H}^I$ has central character $\lambda_{S_H,I}=(\lambda_{S_H,1},\cdots,\lambda_{S_H,r})\in X^*(S_H)^I$ and $\sum_{i\in I}\lambda_{S_H,i}=0$. 

Define $\LL_{\s_H}^e=\LL_{\s_{H}}|_{\Bun_H^e}$ and $\LL_{\s}^e=\LL_{\s}|_{\Bun_G^e}$. We have an isotypic part map \[\xi_{\s,I}^e=\xi_{\s_H,I}^e\otimes\xi_{c^*\s_H,I}^e: V_{H,\s_H}^I \otimes V_{H,\s_H}^{I*}\cong  V_{\s}^I\to l_{I,!}(\IC_{V^I}|_{\Sht_{G,I}^e})\cong l_{H,I,!}(\IC_{V_H^I}|_{\Sht_{H,I}^e})\otimes l_{H,I,!}(\IC_{V_H^I}|_{\Sht_{H,I}^e}).\]

Take the diagonal cohomological correspondence \[\frc=\D_{\Hk,I,!}[\Hk_{H, \lambda_{H,I}}/\Bun_H\times C^I]\in\Cor_{\Hk_{G,I},\IC_{V^I}}(\cP_X\boxtimes\uk_{C^I},\cP_X\boxtimes\uk_{C^I})\] as in Theorem  \ref{thm:diaggeo=cyc}. We would like to understand the restriction of intersection pairing on the isotypic part \[\tr_{\Sht,C^I}(\frc)\circ \xi_{\s,I}^e:V_{\s}^I\to\uk_{C^I}.\] Note that there is the natural evaluation map $\ev_{V_{H,\s_H}^I}:V_{\s}^I\cong V_{H,\s_H}^I\otimes V_{H,\s_H}^{I*}\to \uk_{C^I}$. It is natural to ask about the relation between these two maps. The following is a conjectural answer:
\begin{conj}\label{conj:intersection}
    We have $\tr_{\Sht,C^I}(\frc)\circ \xi_{\s,I}^e=\tr(\Frob,\Gamma_c(\LL_{\s_H}^e\otimes\DD(\LL_{\s_H}^e)))\cdot \ev_{V_{H,\s_H}^I} $.
\end{conj}

We need to explain the meaning of the number $\tr(\Frob,\Gamma_c(\LL_{\s_H}^e\otimes\DD(\LL_{\s_H}^e)))$. When $H$ is semisimple, assuming Assumption \ref{assumption:eigenstronglycompact}, the complex $\Gamma_c(\LL_{\s_H}^e\otimes\DD(\LL_{\s_H}^e))$ is perfect, hence, the trace is a well-defined number. When $H$ is not semisimple, the vector space $\Gamma_c(\LL_{\s_H}^e\otimes\DD(\LL_{\s_H}^e))$ is usually infinite-dimensional. In this case, we assume Assumption \ref{assumption:eigencompact}, and the sum $\tr(\Frob,\Gamma_c(\LL_{\s_H}^e\otimes\DD(\LL_{\s_H}^e)))$ will be convergent. In fact, consider the map $f_{c,H}:\Bun_H\to [*/S_H]$ and the diagonal map $\D_c:[*/S_H]\to[*/S]$. Since the functor \[\int_{X,\Nilp}\circ (f_c^*(-)\otimes\LL_{\s})\cong \Gamma_c(-\otimes \D_{c,!} f_{c,H,!}(\LL_{\s_H}^e\otimes\DD(\LL_{\s_H}^e))):\Shv([*/S])\to\Vect \] preserves compact objects, we know $f_{c,H,!}(\LL_{\s_H}^e\otimes\DD(\LL_{\s_H}^e))\in\Shv([*/S_H])$ is constructible. Note that \[f_{c,H,!}(\LL_{\s_H}^e\otimes\DD(\LL_{\s_H}^e))\] is a constant sheaf. Indeed, consider the Cartesian diagram \[\begin{tikzcd}
    {[*/Z(H)]\times\Bun_H} \ar[r, "m"] \ar[d, "\id\times f_{c,H}"] & \Bun_H \ar[d, "f_{c,H}"] \\
    {[*/Z(H)]\times[*/S_H]} \ar[r, "m_c"] & {[*/S_H]}
\end{tikzcd}\] where $m$ is induced by the closed embedding $[*/Z(H)]\sub\Bun_{Z(H)}$ and the natural action of $\Bun_{Z(H)}$ on $\Bun_H$, $m_c$ is induced by the natural map $[*/Z(H)]\to[*/S_H]$ and the natural multiplication on $[*/S_H]$. By base change and Hecke eigen-property of $\LL_{\s_H}$, one has \[m_c^*f_{c,H,!}(\LL_{\s_H}^e\otimes\DD(\LL_{\s_H}^e))\cong (\id\times f_{c,H})_!m^*(\LL_{\s_H}^e\otimes\DD(\LL_{\s_H}^e))\cong \uk_{[*/Z(H)]}\boxtimes f_{c,H,!}(\LL_{\s_H}^e\otimes\DD(\LL_{\s_H}^e)).\] This implies that $f_{c,H,!}(\LL_{\s_H}^e\otimes\DD(\LL_{\s_H}^e))$ is constant.

Therefore, consider the map $i:*\to [*/S_H]$, we have \[\Gamma_c(\LL_{\s_H}^e\otimes\DD(\LL_{\s_H}^e))\cong i^* f_{c,H,!}(\LL_{\s_H}^e\otimes\DD(\LL_{\s_H}^e))\otimes\Gamma_c([*/S_H])\] in which the first factor is a perfect complex. Therefore, we have \begin{equation}\label{eq:diaggeoisotypictr}\tr(\Frob,\Gamma_c(\LL_{\s_H}^e\otimes\DD(\LL_{\s_H}^e)))=\tr(\Frob,i^* f_{c,H,!}(\LL_{\s_H}^e\otimes\DD(\LL_{\s_H}^e)))\cdot |S_H(\FF_q)|^{-1},\end{equation} which is a well-defined number.

\subsubsection{Conjectural description of cohomological correspondence}

By Proposition \ref{prop:gmdcase}, to prove Conjecture \ref{conj:intersection}, we only need to understand the cohomological correspondence \[\frc_{\s,c^I}^e\in \Cor_{[*/S], V_{\s,c^I}^I}(\D_{c,!}f_{c,H,!}(\LL_{\s_H}^e\otimes\DD(\LL_{\s_H}^e)),\D_{c,!}f_{c,H,!}(\LL_{\s_H}^e\otimes\DD(\LL_{\s_H}^e)))\] whose general definition is given in \S\ref{sec:gmdcase}. Note that there is another natural cohomological correspondence \[\begin{split}\ev_{V_{H,\s_H,c^I}^I}\otimes\id &\in \Hom^0(V_{\s,c^I}^I\otimes \D_{c,!}f_{c,H,!}(\LL_{\s_H}^e\otimes\DD(\LL_{\s_H}^e)),\D_{c,!}f_{c,H,!}(\LL_{\s_H}^e\otimes\DD(\LL_{\s_H}^e))) \\ & \cong \Cor_{[*/S], V_{\s,c^I}^I}(\D_{c,!}f_{c,H,!}(\LL_{\s_H}^e\otimes\DD(\LL_{\s_H}^e)),\D_{c,!}f_{c,H,!}(\LL_{\s_H}^e\otimes\DD(\LL_{\s_H}^e)))\end{split}\] in which $\ev_{V_{H,\s_H,c^I}^I}:V_{\s,c^I}^I\cong V_{H,\s_H,c^I}^I\otimes V_{H,\s_H,c^I}^{I*} \to k$ is the natural evaluation map. We have the following conjecture:

\begin{conj}\label{conj:intersectioncc}
    We have $\frc_{\s,c^I}^e=\ev_{V_{H,\s_H,c^I}^I}\otimes\id$.
\end{conj}

\begin{proof}[Proof of Conjecture \ref{conj:intersection} assuming Conjecture \ref{conj:intersectioncc}]
Note that \[ \begin{split}
(\tr_{\Sht,C^I}(\frc)\circ \xi_{\s,I}^e)_{c^I}&=\tr_{\Sht,C^I}(\frc_{\s,c^I}^e)\\
&=\tr_{\Sht,C^I}(\ev_{V_{H,\s_H,c^I}^I}\otimes\id) \\&=\ev_{V_{H,\s_H,c^I}^I}\otimes\tr_{\Sht,C^I}(\id_{\D_{c,!}f_{c,H,!}(\LL_{\s_H}^e\otimes\DD(\LL_{\s_H}^e))}) \\
&=\ev_{V_{H,\s_H,c^I}^I}\otimes \tr(\Frob,i^* f_{c,H,!}(\LL_{\s_H}^e\otimes\DD(\LL_{\s_H}^e)))\cdot |S_H(\FF_q)|^{-1} \\
&= \tr(\Frob,\Gamma_c(\LL_{\s_H}^e\otimes\DD(\LL_{\s_H}^e)))\cdot \ev_{V_{H,\s_H,c^I}^I}
\end{split} .\] Here, the first identity is Proposition \ref{prop:gmdcase}, the second identity is Conjecture \ref{conj:intersectioncc}, the fourth identity follows from \cite[Corollary\,0.9]{gaitsgory2024localtermscategoricaltrace}, the last identity follows from \eqref{eq:diaggeoisotypictr}. Combined with the injectivity of $(-)_{c^I}:\Hom^0(V_{\s}^I,\uk_{C^I})\to\Hom^0(V_{\s,c^I}^I,k)$, we get \[\tr_{\Sht,C^I}(\frc)\circ \xi_{\s,I}^e=\tr(\Frob,\Gamma_c(\LL_{\s_H}^e\otimes\DD(\LL_{\s_H}^e)))\cdot \ev_{V_{H,\s_H}^I} .\]

\end{proof}

\subsubsection{Evidence for Conjecture \ref{conj:intersectioncc}}
In this section, we will prove Conjecture \ref{conj:intersectioncc} under Assumption \ref{assumption:diagonalgenerate}, which we can verify in case $H=\GL_n$. This provides evidence for Conjecture \ref{conj:intersectioncc}.

Recall that for each irreducible representation $V_{H}'\in \Rep(\Hc)$ and $V'=V_H'\boxtimes V_H'\in\Rep(\Gc)$, we have the diagonal cohomological correspondence $\frc_{V_H'}\in\Cor_{\Hk_{G,\{1\}},\IC_{V'}}(\cP_X\boxtimes\uk_{C},\cP_X\boxtimes\uk_{C})$  which gives a map \[a_{\frc_{V_H'},\s}:\Gamma(V_{H,\s_H}'\otimes V_{H,c^*\s_H}')\langle 2\rangle\otimes \int_{X,\Nilp}\LL_{\s}\to \int_{X,\Nilp}\LL_{\s}.\] Putting all these maps for $V_H'\in\Irr(\Rep(\Hc))$ together, one obtains a map \[a_{\s}:\bigoplus_{V_H'\in\Irr(\Rep(\Hc))}\Gamma(V_{H,\s_H}'\otimes V_{H,c^*\s_H}')\langle 2\rangle\otimes\int_{X,\Nilp}\LL_{\s}\to \int_{X,\Nilp}\LL_{\s}.\] Furthermore, one obtains an action map for the free tensor algebra \[a^{ \otimes }_{\s}:(\bigoplus_{V_H'\in\Irr(\Rep(\Hc))}\Gamma(V_{H,\s_H}'\otimes V_{H,c^*\s_H}')\langle 2\rangle )^{\otimes}\otimes\int_{X,\Nilp}\LL_{\s}\to \int_{X,\Nilp}\LL_{\s}.\]

We make the following assumption:
\begin{assumption}\label{assumption:diagonalgenerate}
    The following statements are true:
    \begin{enumerate}
        \item For every $e'\in \pi_1(H)$, we have $\dim H^0(\int_{X,\Nilp}\LL_{\s}^{e'})=1$;
        \item The map \[H^*(a^{\otimes}_{\s})|_{H^0}:H^*((\bigoplus_{V_H'\in\Irr(\Rep(\Hc))}\Gamma(V_{H,\s_H}'\otimes V_{H,c^*\s_H}')\langle 2\rangle)^{\otimes})\otimes H^0(\int_{X,\Nilp}\LL_{\s})\to H^*(\int_{X,\Nilp}\LL_{\s})\] is surjective.
    \end{enumerate}
\end{assumption}

The main result in this section is the following:
\begin{prop}\label{prop:intersectioncc}
    Under Assumption \ref{assumption:diagonalgenerate}, Conjecture \ref{conj:intersectioncc} is true; hence, Conjecture \ref{conj:intersection} is true.
\end{prop}

\begin{proof}[Proof of Proposition \ref{prop:intersectioncc}]
Since the functor $\Gamma_c:\Shv([*/S])_c\to \Vect$ is faithful, we only need to check that \[\Gamma_c(\frc_{\s,c^I}^e)=\ev_{V_{H,\s_H,c^I}^I}\otimes\id \in \Hom^0(V_{\s,c^I}^I\otimes \int_{X,\Nilp}\LL_{\s}^e, \int_{X,\Nilp}\LL_{\s}^e ).\] Note that $\Gamma_c(\frc_{\s,c^I}^e)=a_{\frc_{c^I},\s}^e$ in which the later is defined in \eqref{eq:heckeactiononisotypicpartfixleg}, we are reduced to show \begin{equation} \label{eq:diagccreducedtoshow}a_{\frc_{c^I},\s}^e=\ev_{V_{H,\s_H,c^I}^I}\otimes\id \in \Hom^0(V_{\s,c^I}^I\otimes \int_{X,\Nilp}\LL_{\s}^e, \int_{X,\Nilp}\LL_{\s}^e ).\end{equation}

Since $\int_{X,\Nilp}\LL_{\s}^e=\Gamma_c(\LL_{\s_H}^e\otimes\DD(\LL_{\s_H}^e))$, there is a canonical element \begin{equation}\label{eq:canonicalelement}\ev_{\LL_{\s}^e}\in H^0(\int_{X,\Nilp}\LL_{\s}^e)^*\end{equation} defined as \[\ev_{\LL_{\s}^e}:\int_{X,\Nilp}\LL_{\s}^e \cong \Gamma_c(\LL_{\s_H}^e\otimes\DD(\LL_{\s_H}^e))\to \Gamma_c(\om_{\Bun_H^e})\to k.\] We have the following lemma:

\begin{lemma}\label{lem:diagcccalh0}
We have $a_{\frc_{c^I},\s}^{e,*}\ev_{\LL_{\s}^e}=\ev_{\LL_{\s}^e}\otimes \ev_{V_{H,\s_H,c^I}^I}\in H^0(\int_{X,\Nilp}\LL_{\s}^e)^*\otimes V_{\s,c^I}^{I*}$.
\end{lemma}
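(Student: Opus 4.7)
The plan is to unwind both sides by reducing $\int_{X,\Nilp}\LL_\s^e$ and $\ev_{\LL_\s^e}$ to concrete objects on $\Bun_H^e$, then extract the identity from the Hecke-compatibility of Verdier duality. Since $\cP_X = \D_!\uk_{\Bun_H}$ and $\D^*\LL_\s \cong \LL_{\s_H}\otimes\DD(\LL_{\s_H})$, one has $\int_{X,\Nilp}\LL_\s^e \cong \Gamma_c(\LL_{\s_H}^e\otimes\DD(\LL_{\s_H}^e))$ and $\ev_{\LL_\s^e}$ is the Verdier pairing composed with the counit $\Gamma_c(\om_{\Bun_H^e})\to k$. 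Using the square \eqref{diag:diagcorfund} together with projection formula along $\D_{\Hk,I}$, I would first identify $a^e_{\frc_{c^I},\s}$ with the map on $\Gamma_c$ produced by applying $T_{V_H^I}$ to $\LL_{\s_H}^e$ and to $\DD(\LL_{\s_H}^e)$ separately, restricting the legs to $c^I$, taking tensor product along the diagonal $\D:\Bun_H^e\to\Bun_{H\times H}^e$, and then composing with the Verdier pairing; this is exactly the content of the definition of $\frc$ as $\D_{\Hk,I,!}[\Hk_{H,\lambda_{H,I}}/\Bun_H\times C^I]$.

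Next, invoke the Hecke eigen-property of $\LL_{\s_H}^e$ to obtain $T_{V_H^I}(\LL_{\s_H}^e)|_{c^I}\cong V_{H,\s_H,c^I}^I\otimes\LL_{\s_H}^e$ (component-preserving thanks to the vanishing of the central character in \eqref{eq:sum0}), and apply Verdier duality to obtain the companion isomorphism $T_{V_H^I}(\DD(\LL_{\s_H}^e))|_{c^I}\cong V_{H,\s_H,c^I}^{I*}\otimes\DD(\LL_{\s_H}^e)$, using the identifications $V_{H,c^*\s_H}^*\cong V_{H,\s_H}$ and $\DD\circ T_{V_H^I}\cong T_{c^*V_H^I}\circ\DD$ (the latter being the Cartan-involution compatibility of Verdier duality with Hecke operators coming from the Verdier self-duality of $\IC_{V_H^I}$ up to Cartan involution). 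Combining these two eigen isomorphisms diagonally, $\ev_{\LL_\s^e}\circ a^e_{\frc_{c^I},\s}$ becomes a composition
\begin{equation*}
V^I_{\s,c^I}\otimes\Gamma_c(\LL_{\s_H}^e\otimes\DD(\LL_{\s_H}^e)) \to V^I_{H,\s_H,c^I}\otimes V^{I*}_{H,\s_H,c^I}\otimes\Gamma_c(\LL_{\s_H}^e\otimes\DD(\LL_{\s_H}^e)) \to k
\end{equation*}
in which the first map is the canonical isomorphism $V^I_{\s,c^I}\cong V^I_{H,\s_H,c^I}\otimes V^{I*}_{H,\s_H,c^I}$, and the second map is the tensor product of $\ev_{V^I_{H,\s_H,c^I}}$ with $\ev_{\LL_\s^e}$. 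This yields the desired equality on $H^0$.

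The main obstacle is the bookkeeping of the dualized Hecke eigen-structure of $\DD(\LL_{\s_H}^e)$: one must check that the self-pairing of $V_{H,\s_H,c^I}^I$ obtained by combining the two Hecke eigen isomorphisms along the diagonal cohomological correspondence is precisely the canonical evaluation $\ev_{V_{H,\s_H,c^I}^I}$, with no extraneous scalar or sign. This ultimately reduces to the statement that $\D_{\Hk,I,!}$ applied to the fundamental class of $\Hk_{H,\lambda_{H,I}}$ realizes the coevaluation for the duality between $T_{V_H^I}$ and its Verdier adjoint $T_{c^*V_H^I}$ — a geometric Satake-theoretic fact that parallels, on the level of cohomological correspondences, the assertion of Theorem~\ref{thm:diaggeo=cyc} that the same fundamental class computes the intersection pairing on Shtukas.
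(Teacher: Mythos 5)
Your proposal is correct and follows essentially the same route as the paper: rewrite $\int_{X,\Nilp}\LL_{\s}^e$ as $\Gamma_c(\LL_{\s_H}^e\otimes\DD(\LL_{\s_H}^e))$, use the Hecke eigen-property (and its Verdier dual) to identify the target of $a^e_{\frc_{c^I},\s}$, and reduce everything to the fact that the diagonal cohomological correspondence intertwines the canonical evaluation pairings, i.e.\ $\eta_{\frc_{c^I}}(\cF\boxtimes\DD(\cF))^*\ev_{\cF}=\ev_{T_{V_H^I,c^I}\cF}$. The "main obstacle" you flag at the end is exactly the unproved fact the paper also asserts without further argument, so your level of detail matches the original.
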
 

\begin{proof}[Proof of Lemma \ref{lem:diagcccalh0}]
    Note that $\ev_{\LL_{\s}^e}\otimes \ev_{V_{H,\s_H,c^I}^I}=\ev_{T_{V_{H}^I,c^I}\LL_{\s_H}^e}$ where \[\ev_{T_{V_{H}^I,c^I}\LL_{\s_H}^e}\in \Hom^0(T_{V_{H}^I,c^I}\LL_{\s_H}^e\otimes\DD(T_{V_{H}^I,c^I}\LL_{\s_H}^e),\om_{\Bun_H})\cong\Hom^0(\Gamma_c(T_{V_{H}^I,c^I}\LL_{\s_H}^e\otimes\DD(T_{V_{H}^I,c^I}\LL_{\s_H}^e)),k)\] is the natural map. The claim follows from the following fact: For any $\cF\in\Shv(\Bun_H)_c$, evaluating $\eta_{\frc_{c^I}}$ on $\cF\boxtimes\DD(\cF)$ gives a natural map \[\eta_{\frc_{c^I}}(\cF\boxtimes\DD(\cF)):\Gamma_c(T_{V_H^I,c^I}\cF\otimes\DD(T_{V_H^I,c^I}\cF))\to\Gamma_c(\cF\otimes\DD(\cF)) . \] This map satisfies $\eta_{\frc_{c^I}}(\cF\boxtimes\DD(\cF))^*\ev_{\cF}=\ev_{T_{V_H^I,c^I}\cF}$.
\end{proof}

By Lemma \ref{lem:diagcccalh0}, and Assumption \ref{assumption:diagonalgenerate}(1), we know \begin{equation}\label{eq:diagccreducedtoshowh0} a_{\frc_{c^I},\s}|_{H^0}=(\ev_{V_{H,\s_H,c^I}^I}\otimes\id)|_{H^0} \in \Hom (V_{\s,c^I}^I\otimes H^0(\int_{X,\Nilp}\LL_{\s}), H^0(\int_{X,\Nilp}\LL_{\s})).\end{equation}

We have the following lemma:
\begin{lemma}\label{lem:diagcccom}
    We have \[a_{\frc_{c^I},\s}\circ a_{\s}^{\otimes}=a_{\s}^{\otimes}\circ a_{\frc_{c^I},\s}\in\Hom^0(V_{\s,c^I}^I\otimes (\bigoplus_{V_H'\in\Irr(\Rep(\Hc))}\Gamma(V_{H,\s_H}'\otimes V_{H,c^*\s_H}')\langle 2\rangle )^{\otimes}\otimes\int_{X,\Nilp}\LL_{\s}, \int_{X,\Nilp}\LL_{\s}) .\] Here the composition is understood as in \S\ref{sec:associativity}.
\end{lemma}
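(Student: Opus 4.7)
First, by the associativity property recalled in \S\ref{sec:associativity}, I would reduce the claim to showing that for every single irreducible $V_H'\in\Irr(\Rep(\Hc))$ the Hecke action $a_{\frc_{V_H'},\s}$ (with moving leg) commutes with $a_{\frc_{c^I},\s}$ (with legs fixed at $c^I$). The associativity reduction lets one strip off one factor of the tensor algebra at a time, so the full statement follows once we establish the one-factor case. At this stage one should also reduce further to the case where $I=\{1\}$ and $c^I$ is a single rational point, since the $r$-leg version of $a_{\frc_{c^I},\s}$ is by construction the $r$-fold composition of the one-leg versions at the corresponding points of $c^I$.

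Next, I would apply the automorphic commutator relation developed in \cite[\S 4]{liu2025higherperiodintegralsderivatives} and recalled in \S\ref{sec:commutatorrelation}. For two irreducible representations $V_H'_1$ and $V_H'_2$ giving rise to global diagonal cohomological correspondences $\frc_{V_H'_1}$ and $\frc_{V_H'_2}$ coming from local special cohomological correspondences $\frc^{\loc}_{V_H'_1}$ and $\frc^{\loc}_{V_H'_2}$, the commutator $[a_{\frc_{V_H'_1},\s},a_{\frc_{V_H'_2},\s}]$ is controlled by the local commutator $[\frc^{\loc}_{V_H'_1},\frc^{\loc}_{V_H'_2}]$ in the local Plancherel algebra, up to the cup-product correction term $a_{\frc_{V_H'_1\otimes V_H'_2}}\circ(\cup\otimes\id)$.

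The key point is that for the group case $G=H\times H$ with $X=H\backslash H\times H$, the local Plancherel algebra is the classical local spherical Hecke algebra of $H$ (equivalently, $\cO(\check H)^{\check H}$ via geometric Satake), which is commutative. Hence the local commutator $[\frc^{\loc}_{V_H'_1},\frc^{\loc}_{V_H'_2}]$ vanishes already in the first $\hbar$-order, and both terms in the commutator relation of \S\ref{sec:commutatorrelation} vanish. One can alternatively give a direct argument by unwinding the definition of the diagonal cohomological correspondence: since $\cP_X=\D_!\uk_{\Bun_H}$, the Hecke action $a_{\frc_{V_H'}}$ on $\int_{X,\Nilp}\LL_\s=\Gamma_c(\LL_{\s_H}\otimes\DD(\LL_{\s_H}))$ is induced by the convolution operator $T_{V_H'}$ acting on $\Bun_H$ (applied simultaneously to $\LL_{\s_H}$ and $\DD(\LL_{\s_H})$), and convolution operators $T_{V_1}$ and $T_{V_2}$ on $\Bun_H$ commute with each other by the commutativity of the spherical Hecke algebra (applied leg-by-leg on $C^I$).

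The main obstacle in this plan is to verify rigorously that the cup-product correction term from the general commutator formula actually vanishes in the diagonal setting, i.e., to make precise the assertion that the local Plancherel algebra for $X=H$ is commutative even to the $\hbar$-order controlling the correction term. The cleanest way to finesse this is likely the second route above: work directly with the convolution description of $\frc_{V_H'}$ on $\D_!\uk_{\Bun_H}$ and observe that both $a_{\frc_{c^I},\s}$ and $a_{\frc_{V_H'},\s}$ are pulled back from commuting endomorphisms of the convolution category acting on $\LL_{\s_H}\boxtimes\DD(\LL_{\s_H})$. This bypasses any appeal to \cite[Conjecture\,4.45]{liu2025higherperiodintegralsderivatives} in the diagonal case, at the cost of a short direct computation with the diagram \eqref{diag:diagcorfund}.
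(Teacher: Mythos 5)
Your overall skeleton --- reduce by associativity to commuting a single $a_{\frc_{V_H'},\s}$ past $a_{\frc_{c^I},\s}$, then invoke the automorphic commutator relation of \S\ref{sec:commutatorrelation} --- is the same as the paper's. The gap is in the reason you give for the vanishing of the commutator. You assert that the local Plancherel algebra of the group case is commutative (identifying it with the classical spherical Hecke algebra $\cO(\Hc)^{\Hc}$), so that the first-order term $[\frc^{\loc}_{V_1},\frc^{\loc}_{V_2}]$ vanishes. This is false: $\PL_{X,\hbar}$ is a genuine quantization, and the paper itself uses crucially, in the proof of Theorem \ref{thm:geoLMgln}, that $[\frc^{\loc}_{\Std_n\boxtimes\Std_n},\frc^{\loc}_{\Std_n^*\boxtimes\Std_n^*}]=\kappa\hbar\cdot\frc^{\loc}_{\triv}$ with $\kappa\neq 0$ for exactly this $X$. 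The correct reason the commutator vanishes in Lemma \ref{lem:diagcccom} is a degree argument: all the correspondences entering the lemma are fundamental-class (degree-$0$) diagonal correspondences, so $[\frc^{\loc}_{V_1},\frc^{\loc}_{V_2}]/\hbar$ would have to lie in the degree $-2$ part of the Plancherel algebra, which is zero because $\PL_{X,\hbar}$ is concentrated in non-negative degrees. The same non-negativity is what verifies \cite[Assumption\,4.46]{liu2025higherperiodintegralsderivatives} when $g(C)\neq 1$; you also need to dispose of $g(C)=1$, which the paper does by observing that one set of legs is fixed.

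Your fallback ``direct convolution'' argument does not close the gap either. Away from the locus where the moving leg of $a_{\frc_{V_H'},\s}$ collides with the fixed legs $c^I$, the two operators commute for trivial support reasons, and the entire content of the lemma is what happens on the collision locus. There the composition of the two Hecke correspondences is governed by the convolution Grassmannian over the diagonal of the curve, i.e., precisely by the quantized local Plancherel algebra, and ``commutativity of the spherical Hecke algebra applied leg-by-leg'' says nothing about it. Note also that $a_{\frc_{V}}$ is not the endofunctor $T_V$ itself but $T_V$ followed by a contraction against the eigenvalue local system over $C$; it is this contraction that produces the $\hbar$-corrections measured by the commutator relation, and which your argument would leave uncontrolled.
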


\begin{proof}
    Note that we are in the situation of \S\ref{sec:commutatorrelation}. Indeed, the cohomological correspondences involved here all come from local special cohomological correspondences. Since the local Plancherel algebra has non-negative degrees, \cite[Assumption\,4.46]{liu2025higherperiodintegralsderivatives} is satisfied if the genus of the curve $g(C)\neq 1$. In this case, the statement is true by the discussion in \S\ref{sec:commutatorrelation}. One can easily remove the condition $g(C)\neq 1$ since the leg is fixed in one of the Hecke actions.
\end{proof}

Now, \eqref{eq:diagccreducedtoshow} follows from a combination of \eqref{eq:diagccreducedtoshowh0}, Lemma \ref{lem:diagcccom}, and Assumption \ref{assumption:diagonalgenerate}(2). This concludes the proof of Proposition \ref{prop:intersectioncc} under Assumption \ref{assumption:diagonalgenerate}.

\end{proof}

\section{Application: higher Rankin--Selberg integrals}\label{sec:rs}

In this section, we work towards a proof of Theorem \ref{thm:intro:main}.
\begin{itemize}
    \item In \S\ref{sec:app:isotypicpart}, we review the construction of the $\s$-isotypic part in the cohomology of Shtukas for $G=\GL_n$.
    \item In \S\ref{sec:app:rscycle}, we study the Rankin--Selberg cycle classes for $G=\GL_n\times\GL_{n-1}$.
    \item In \S\ref{sec:application:diagonalcycle}, we study the diagonal cycle classes for $G=\GL_n$.
    \item In \S\ref{sec:app:compute}, we complete the proof of Theorem \ref{thm:intro:main}.
\end{itemize}

\subsection{Isotypic part}\label{sec:app:isotypicpart}
We first define the $\s$-isotypic part map \eqref{eq:intro:spectralsub3} used in the formulation of Theorem  \ref{thm:intro:main}. They will be defined using techniques of \S\ref{sec:isotypicpartscc2}.

For $G=\GL_n$, we have $S=G/[G,G]=\Gm$. We use $\Bun_{\GL_n}^d\sub\Bun_{\GL_n}$ to denote the connected component consisting of vector bundles of degree $d$.

We fix a geometrically irreducible Weil local system $\s_n\in\Loc_{\GL_n}^{\arith}(k)$. There is an associated Hecke eigensheaf $\LL_{\s_n}^{\FGV}\in\Shv_{\Nilp}(\Bun_{\GL_n})$ constructed in \cite{frenkel2002geometric}. We refer to \cite[\S7.2]{liu2025higherperiodintegralsderivatives} for a summary of its properties and normalization (which is slightly different from the normalization in \cite{frenkel2002geometric} by a twist).

In this case, the construction in \S\ref{sec:isotypicpartscc2} makes sense. Indeed, consider the map $f_n=\det:\Bun_{\GL_n}\to\Bun_{\Gm}$. The following proposition verifies Assumption \ref{assumption:eigencompact}.
\begin{prop}\label{prop:glneigencompact}
    The functor \[f_n^*(-)\otimes\LL_{\s_n}^{\FGV}:\Shv_{\Nilp}(\Bun_{\Gm})\to\Shv_{\Nilp}(\Bun_{\GL_n})\] preserves compact objects.
\end{prop}
The proof will be given in \S\ref{sec:glneigencompactproof}.

For each $d\in\ZZ$ and $V^I\in\Rep(\GL_n^I)$, the construction in \S\ref{sec:isotypicpartscc2} gives a map \[\xi_{\s_n,I}^d:V_{\s_n}^I\to l_{I,!}(\IC_{V^I}|_{\Sht_{\GL_n,I}^d}).\] In particular, taking $V^I=\Std_n^{\underline{\e}}$, for each $\underline{\e}\in\{\pm 1\}^r_0$, we get a map \begin{equation}
    \xi_{\s_n,\underline{\e}}=(\xi_{\s_n,\underline{\e}}^d)_{d\in\ZZ}:=(\xi_{\s_n,I}^d)_{d\in\ZZ}:\s_n^{\underline{\e}}\to \prod_{d\in\ZZ}l_{I,!}(\IC_{\Std_{n}^{\underline{\e}}}|_{\Sht_{\GL_n,I}^d})
.\end{equation}

In our application, we consider $G=\GL_n\times\GL_{n-1}$. For geometrically irreducible Weil local system $\s=(\s_n,\s_{n-1})\in\Loc_{\GL_n\times \GL_{n-1}}^{\arith}(k)$, we consider the functor \begin{equation}
    f_n\times f_{n-1}:\Bun_{\GL_n\times\GL_{n-1}}\to\Bun_{\Gm^2}
\end{equation} and use the Hecke eigensheaf \[\LL_{\s}:=\LL_{\s_n}^{\FGV}\boxtimes\LL_{\s_{n-1}}^{\FGV}\in\Shv_{\Nilp}(\Bun_{\GL_n\times\GL_{n-1}}).\] The same construction gives \begin{equation}\label{eq:app:shtcohisotypicpart}
    \xi_{\s,\underline{\e}}=(\xi_{\s,\underline{\e}}^{(d_n,d_{n-1})})_{(d_n,d_{n-1})\in\ZZ^2}:(\s_n\otimes\s_{n-1})^{\underline{\e}}\to \prod_{(d_n,d_{n-1})\in\ZZ^2}l_{I,!}(\IC_{(\Std_{n}\boxtimes\Std_{n-1})^{\underline{\e}}}|_{\Sht_{\GL_n\times\GL_{n-1},I}^{(d_n,d_{n-1})}})
.\end{equation} This is an enhancement of the map \eqref{eq:intro:spectralsub3} which recovers the map \eqref{eq:intro:spectralsub3} by taking global section. The injectivity of the map \eqref{eq:intro:spectralsub3} is a consequence of Theorem  \ref{thm:geoLMglntrsht}.

\subsubsection{Proof of Proposition \ref{prop:glneigencompact}}\label{sec:glneigencompactproof}
In this section, we prove Proposition \ref{prop:glneigencompact}. 
For an irreducible $\SL_n$-local system $\s_n\in\Loc_{\SL_n}(k)$, regarding it as a $\GL_n$-local system, the associated Hecke eigensheaf $\LL_{\s_n}^{\FGV}$ descends to a Hecke eigensheaf $\LL_{\s_n,\PGL_n}^{\FGV}\in \Shv_{\Nilp}(\Bun_{\PGL_n})$. That is, consider the map $h:\Bun_{\GL_n}\to\Bun_{\PGL_n}$, we have $\LL_{\s_n}^{\FGV}\cong h^*\LL_{\s_n,\PGL_n}^{\FGV}$.

We have the following lemma:
\begin{lemma}\label{lem:pglneigencompact}
    For any irreducible $\SL_n$-local system $\s_n\in\Loc_{\SL_n}(k)$, the corresponding Hecke eigensheaf $\LL_{\s_n,\PGL_n}^{\FGV}\in\Shv_{\Nilp}(\Bun_{\PGL_n})$ is compact.
\end{lemma}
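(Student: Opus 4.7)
The plan is to exploit the fact that $\PGL_n$ is semisimple: for semisimple $G$ and a geometrically irreducible parameter, compactness of the Hecke eigensheaf in $\Shv_{\Nilp}(\Bun_G)$ is a known consequence of \cite{GR}, already noted in the excerpt just after Assumption \ref{assumption:eigenstronglycompact}. Applied to $G = \PGL_n$, $\Gc = \SL_n$, and the given irreducible $\s_n$, this directly yields the compactness of $\LL_{\s_n,\PGL_n}^{\FGV}$.

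For a more self-contained argument, I would proceed in three steps. First, the FGV construction \cite{frenkel2002geometric} writes $\LL_{\s_n,\PGL_n}^{\FGV}$ as a finite-type direct image of a Whittaker-type constructible sheaf, so $\LL_{\s_n,\PGL_n}^{\FGV}$ is constructible on $\Bun_{\PGL_n}$. Second, irreducibility of $\s_n$ forces cuspidality of the Hecke eigensheaf: any non-vanishing constant term along a proper parabolic $P \subset \PGL_n$ would be a non-zero Hecke eigensheaf for the Levi $M$ with eigenvalue a reduction of $\s_n$ to $M^\vee \subset \SL_n$, contradicting irreducibility. Since $\PGL_n$ is semisimple, cuspidal sheaves on $\Bun_{\PGL_n}$ are supported on a quasi-compact open substack, via the standard Harder--Narasimhan truncation argument. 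Hence $\LL_{\s_n,\PGL_n}^{\FGV}$ is constructible with quasi-compact support, and therefore compact in $\Shv(\Bun_{\PGL_n})$ by \cite[Appendix F]{arinkin2022stacklocalsystemsrestricted}.

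Finally, I would descend from $\Shv$ to $\Shv_{\Nilp}$. The fully faithful inclusion $\iota : \Shv_{\Nilp}(\Bun_{\PGL_n}) \hookrightarrow \Shv(\Bun_{\PGL_n})$ is continuous and, by Theorem \ref{thm:geo=cat:spectralprojector}, admits a continuous right adjoint $\iota_{\rmR}$. It follows formally that $\iota$ preserves and reflects compactness: for any $\cF \in \Shv_{\Nilp}$ we have $\Hom_{\Shv_{\Nilp}}(\cF,-) \cong \Hom_{\Shv}(\iota\cF, \iota(-))$ with $\iota$ continuous, so $\cF$ is compact in $\Shv_{\Nilp}$ if and only if $\iota\cF$ is compact in $\Shv$. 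Since $\LL_{\s_n,\PGL_n}^{\FGV}$ is compact in $\Shv(\Bun_{\PGL_n})$ by the previous step, it is compact in $\Shv_{\Nilp}(\Bun_{\PGL_n})$, completing the proof.

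The main obstacle in the direct argument is the quasi-compactness of the support in the second step, which crucially uses semisimplicity of $\PGL_n$; the analogous statement genuinely fails for $\GL_n$, which is precisely why the paper introduces the weaker Assumption \ref{assumption:eigencompact} in the non-semisimple setting. An alternative route that avoids this step entirely is to invoke the geometric Langlands equivalence for $\PGL_n$ from \cite{GR} to identify $\LL_{\s_n,\PGL_n}^{\FGV}$ with a skyscraper-type coherent sheaf supported on the connected component of $\s_n$ in $\Loc_{\SL_n}^{\arith}$, which is compact in $\IndCoh_{\Nilp}$ by general coherent-sheaf considerations.
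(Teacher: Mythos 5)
Your first paragraph coincides with the paper's opening remark: the lemma is indeed a direct consequence of \cite{GR}, and the paper says exactly this before giving an independent argument. Your self-contained argument, however, takes a genuinely different route from the paper's. The paper's independent proof identifies $\LL_{\s_n,\PGL_n}^{\FGV}$, up to shift, with $(i_{\s_n}k)*\cP_{\mathrm{Whit}}$, the spectral action of the skyscraper at $\s_n$ on the Whittaker sheaf, and then concludes because $i_{\s_n}k$ and $\cP_{\mathrm{Whit}}$ are compact and the spectral action preserves compact objects. You instead argue via cuspidality: irreducibility of $\s_n$ forces vanishing of constant terms, cuspidal constructible sheaves on $\Bun_G$ for semisimple $G$ have quasi-compact support, and constructible objects with quasi-compact support are exactly the compact ones. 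Both arguments ultimately lean on semisimplicity of $\PGL_n$ (compactness of $\cP_{\mathrm{Whit}}$ in one case, boundedness of the Harder--Narasimhan truncation in the other), and your final step identifying compactness in $\Shv$ and $\Shv_{\Nilp}$ via Theorem \ref{thm:geo=cat:spectralprojector} is the same device the paper uses in the proof of Proposition \ref{prop:glneigencompact}. Your route is arguably more elementary in that it avoids the spectral action, but it relies on two facts you should justify more carefully: the map $\Bun_n'\to\Bun_n$ underlying the FGV construction is \emph{not} of finite type over a whole connected component, so constructibility of the eigensheaf should instead be extracted from the vanishing theorem (the eigensheaf is an irreducible perverse sheaf on each component); and the implication ``cuspidal $\Rightarrow$ quasi-compactly supported'' in the $\ell$-adic setting, while true, deserves a citation rather than the phrase ``standard Harder--Narasimhan truncation argument.'' Neither point is a genuine gap, but both are where the real content of your alternative proof sits.
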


\begin{proof}[Proof of Lemma \ref{lem:pglneigencompact}]
    This is a direct consequence of \cite{GR}. To make our result minimally depend on characteristic zero techniques, we provide an independent proof. Consider the map $i_{\s_n}:*\to \Loc_{\SL_n}$ induced by the point $\s_n\in\Loc_{\SL_n}(k)$. Since $\LL_{\s_n,\PGL_n}^{\FGV}$ agrees with $(i_{\s_n}k)*\cP_{\mathrm{Whit}}$ up to a cohomological shift, we only need to show that $(i_{\s_n}k)*\cP_{\mathrm{Whit}}$ is compact. Here \[\cP_{\mathrm{Whit}}\in\Shv(\Bun_{\PGL_n})\] is the Whittaker sheaf and \[*:\QCoh(\Loc_{\SL_n})\otimes\Shv(\Bun_{\PGL_n})\to\Shv_{\Nilp}(\Bun_{\PGL_n})\] is the spectral action. Since $i_{\s_n}k$ is compact, $\cP_{\mathrm{Whit}}$ is compact, and $-*-$ preserves compact objects, we know that $(i_{\s_n}k)*\cP_{\mathrm{Whit}}$ is compact. This concludes the proof of Lemma \ref{lem:pglneigencompact}.
\end{proof}

\begin{proof}[Proof of Proposition \ref{prop:glneigencompact}]
    By tensoring with a rank $1$ local system on $C$, we can assume $\det\s_n\cong\uk_{C}$. By Theorem  \ref{thm:geo=cat:spectralprojector}, compactness in $\Shv$ and $\Shv_{\Nilp}$ are the same, and we can safely discard the singular support condition everywhere. Consider the commutative diagram \[\begin{tikzcd}\Bun_{\Gm}\times\Bun_{\SL_n}\ar[r, "m"] \ar[d, "\id\times h"] & \Bun_{\GL_n} \ar[r, "f_n"] \ar[d, "h"] & \Bun_{\Gm} \\ \Bun_{\Gm}\times\Bun_{\PGL_n} \ar[r, "\pr_2"] & \Bun_{\PGL_n}
    \end{tikzcd}\] in which $m(\cL,\cE_n)=\cE_n\otimes\cL$. Since for any $\cF\in\Shv(\Bun_{\GL_n})$ one has $m_*m^*\cF\cong m_!m^*\cF\cong \cF\otimes m_!\uk_{\Bun_{\Gm}\times\Bun_{\SL_n}}$ containing $\cF$ as a direct summand, we know that the object $\cF$ is compact whenever $m^*\cF$ is compact. Therefore, we only need to check that \[m^*\circ (f_n^*(-)\otimes\LL_{\s_n}^{\FGV})\cong [n]^*(-)\boxtimes h^*\LL_{\s_n,\PGL_n}^{\FGV} :\Shv(\Bun_{\Gm})\to\Shv(\Bun_{\Gm}\times\Bun_{\SL_n})\] preserves compact objects. Here $[n]:\Bun_{\Gm}\to\Bun_{\Gm}$ is the $n$-th power map. This follows from Lemma \ref{lem:pglneigencompact} because both $[n]^*:\Shv(\Bun_{\Gm})\to \Shv(\Bun_{\Gm})$ and $h^*:\Shv(\Bun_{\PGL_n})\to\Shv(\Bun_{\SL_n})$ preserve compact objects. 
\end{proof}

\subsection{Rankin--Selberg cycles}\label{sec:app:rscycle}

Now we come to study the Rankin--Selberg special cycle classes \eqref{eq:intro:rscycles}. In particular, we would like to study their restriction along the map \eqref{eq:app:shtcohisotypicpart} using tools in \S\ref{sec:fakevsreal2}.

In this section, we take $G=\GL_n\times\GL_{n-1}$, $H=\GL_{n-1}$, $X=H\backslash G$. Define $L=\Gamma(\s_n\otimes\s_{n-1})\langle 1\rangle$, which is an odd vector space.  Let $L^{\underline{\e}}=L^{\e_1}\otimes\cdots\otimes L^{\e_r}$. Define $M=L\oplus L^{*}$. We have $M^{\otimes r}=\bigoplus_{\underline{\e}\in \{\pm 1\}^r} L^{\underline{\e}}$. Define $(M^{\otimes r})_0=\bigoplus_{\underline{\e}\in \{\pm 1\}^r_0} L^{\underline{\e}}\sub M^{\otimes r}$. We would like to understand the elements \begin{equation}\label{eq:app:isotypicrscycle1}
(\pi_{\Sht,I,!}[\Sht_{\GL_{n-1},\Std_{n-1}^{\underline{\e}}}^d])_{\s}=
    \pi_{\Sht,I,!}[\Sht_{\GL_{n-1},\Std_{n-1}^{\underline{\e}}}^d]\circ{\xi_{\s,\underline{\e}}}= \pi_{\Sht,I,!}[\Sht_{\GL_{n-1},\Std_{n-1}^{\underline{\e}}}]\circ\xi_{\s,\underline{\e}}^{(d,d)}\in L^{\underline{\e},*}
\end{equation} defined in \eqref{eq:intro:isotypicrscycles}. Or slightly weaker, we would like to understand the element \begin{equation}\label{eq:app:isotypicrscycle2}
    ((\pi_{\Sht,I,!}[\Sht_{\GL_{n-1},\Std_{n-1}^{\underline{\e}}}])_{\s})_{\underline{\e}\in\{\pm 1\}^r_0}\in (M^{\otimes r})^*_0
\end{equation} whose components are defined in \eqref{eq:intro:totalrscycle}.

Take the cohomological correspondence \begin{equation}\label{eq:app:rscc}\begin{split}\frc_{\Std_{n-1}^{\underline{\e}}}=\pi_{\Hk,I,!}[\Hk_{\GL_{n-1},\Std_{n-1}^{\underline{\e}}}/\Bun_{\GL_{n-1}}\times C^I] \\ \in \\ \Cor_{\Hk_{\GL_n\times\GL_{n-1},I},\IC_{(\Std_n\boxtimes\Std_{n-1})^{\underline{\e}}}\langle -r\rangle}(\cP_X\boxtimes\uk_{C^I},\cP_X\boxtimes\uk_{C^I})\end{split}\end{equation} in Theorem \ref{thm:mingeo=cyc}. In this case, Assumption \ref{assumption:goodfil} is satisfied by Example \ref{eg:rsgoodfil}. Therefore, Theorem \ref{thm:mingeo=cyc} gives the following:
\begin{prop}\label{prop:app:geo=cyclers}
    We have \[\pi_{\Sht,I,!}[\Sht_{\GL_{n-1},\Std_{n-1}^{\underline{\e}}}]=\tr_{\Sht,C^I}(\frc_{\Std_{n-1}^{\underline{\e}}}).\] In particular, we have \[(\pi_{\Sht,I,!}[\Sht_{\GL_{n-1},\Std_{n-1}^{\underline{\e}}}^d])_{\s}=\tr_{\Sht,C^I}(\frc_{\Std_{n-1}^{\underline{\e}}})\circ \xi_{\s,\underline{\e}}^{(d,d)} \in L^{\underline{\e},*}.\] 
\end{prop}

Since Assumption \ref{assumption:vectcompact} is true by Theorem  \ref{thm:geors}, we have the fake special cycle classes \begin{equation} z_{\frc_{\Std_{n-1}^{\underline{\e}}},\s}^{(d,d)}\in L^{\underline{\e},*}\end{equation} defined in \eqref{eq:fakespecialcycleclasses} (see refinement in \eqref{eq:fakespecialcycleclassesref}). Note that Assumption \ref{assumption:eigencompact} is true by Proposition \ref{prop:glneigencompact}, we can apply Proposition \ref{prop:computationofisotypicss} and get the following:
\begin{prop}\label{prop:app:geo=fakers}
    We have $\tr_{\Sht,C^I}(\frc_{\Std_{n-1}^{\underline{\e}}})\circ \xi_{\s,\underline{\e}}^{(d,d)}=z_{\frc_{\Std_{n-1}^{\underline{\e}}},\s}^{(d,d)}\in L^{\underline{\e},*}$.
\end{prop}

Combining Proposition \ref{prop:app:geo=cyclers} and Proposition \ref{prop:app:geo=fakers}, we are reduced to understand the fake special cycle classes $z_{\frc_{\Std_{n-1}^{\underline{\e}}},\s}^{(d,d)}\in L^{\underline{\e},*}$. This is the subject of \cite{liu2025higherperiodintegralsderivatives}, where we recollect the key results below.

Consider \[z_{\frc_{\Std_{n-1}^{\underline{\e}}},\s}=\sum_{d\in\ZZ}z_{\frc_{\Std_{n-1}^{\underline{\e}}},\s}^{(d,d)}\in L^{\underline{\e},*}\] and \begin{equation}\label{eq:app:faketotal}z_{\s,r}:=\sum_{\underline{\e}\in\{\pm 1\}_0^r}z_{\frc_{\Std_{n-1}^{\underline{\e}}},\s}\in (M^{\otimes r})_0^*\sub (M^{\otimes r})^*.\end{equation} By the definition in \eqref{eq:fakespecialcycleclasses}, these elements are defined by \begin{equation}\label{eq:app:fakespecialcyclersdef}
    z_{\s,r}(m_1\otimes\cdots \otimes m_r)=\tr( a^{\otimes}(m_1\otimes\cdots\otimes m_r)\circ\Frob, \int_{X,\Nilp}\LL_{\s})
\end{equation} in which the map $a^{\otimes}$ is \eqref{eq:rsisotypicheckeaction12}.

By Theorem  \ref{thm:geors}, the sequence of elements $\{z_{\s,\bullet}\}$ form a Kolyvagin system in the sense of \cite[\S3]{liu2025higherperiodintegralsderivatives}. Note that the (possibly infinite) sum above involves only finitely many non-zero terms since $\int_{X,\Nilp}\LL_{\s_n}^{\FGV,d}=0$ for all but finitely many $d$ by Theorem \ref{thm:geors}. This confirms the first claim in Conjecture \ref{conj:intro:main} in this setting.

Now we recollect some key properties of this Kolyvagin system. Let $K=\Std_n\boxtimes\Std_{n-1}\oplus\Std_{n}^*\boxtimes\Std_{n-1}^*$. It is equipped with a natural symplectic pairing \begin{equation}\label{eq:app:omk}\om_{K}=\ev_{\Std_n\boxtimes\Std_{n-1}}-\ev_{\Std_{n}^*\boxtimes\Std_{n-1}^*}\end{equation} in which $\ev_{\Std_n\boxtimes\Std_{n-1}}:(\Std_n\boxtimes\Std_{n-1})\otimes(\Std_{n}^*\boxtimes\Std_{n-1}^*)\to k$ is the natural evaluation map and similarly for $\ev_{\Std_{n}^*\boxtimes\Std_{n-1}^*}$. This gives rise to a symplectic pairing on the odd vector space $M$, which we denote by $\om_{M}$. Note that symplecticity here means $\om_{M}(m_1,m_2)=\om_{M}(m_2,m_1)$ since $M$ is odd. The bilinear form $\om_{M}$ induces bilinear form $\om_{M^*}$ by identifying $M^*$ with $M$ using $\om_M$. This induces a bilinear form $\om_{(M^{\otimes r})^*}$ on $(M^{\otimes r})^*$.

We have the following result:
\begin{thm}\cite[Theorem\,1.2]{liu2025higherperiodintegralsderivatives}\label{thm:app:kolydermain}
    We have \[\om_{(M^{\otimes r})^*}(z_{\s,r},z_{\s,r})=\b_{\s}(\ln q)^{-r}\left(\frac{d}{ds}\right)^{r}\Big|_{s=1/2}\widetilde{L}(\s_n\otimes\s_{n-1}\oplus\s_n^*\otimes\s_{n-1}^*,s)\] where $\beta_{\s}=(-1)^{r/2}q^{-n^2(g-1)}\chi^{-n}_{\det\s_{n-1}}(\Om)\chi^{-n+1}_{\det\s_n}(\Om)\e(\s_n\otimes\s_{n-1})$ and \[\widetilde{L}(\s_n\otimes\s_{n-1}\oplus\s_n^*\otimes\s_{n-1}^*,s)=q^{2n(n-1)(g-1)(s-1/2)}L(\s_{n}\otimes\s_{n-1},s)L(\s_{n}^*\otimes\s_{n-1}^*,s)\] is the normalized $L$-function defined in \eqref{eq:intro:normlfunc}.
\end{thm}

\subsubsection{Geometric result}
In this section, we collect all relevant geometric results relating to Rankin--Selberg integrals for the convenience of readers.
Take $\underline{\e}=(1),(-1)$ in \eqref{eq:app:rscc}, we have cohomological correspondences $\frc_{\Std_{n-1}}$ and $\frc_{\Std_{n-1}^*}$. They give rise to Hecke actions on $\int_{X,\Nilp}\LL_{\s}^{(d,d)}$ defined in \eqref{eq:heckeactiononisotypicpart}
\begin{equation} \label{eq:rsisotypicheckeaction1} a_{\frc_{\Std_{n-1}},\s}:L\otimes\int_{X,\Nilp}\LL_{\s}^{(d,d)}\to \int_{X,\Nilp}\LL_{\s}^{(d+1,d+1)}.\end{equation}\begin{equation}\label{eq:rsisotypicheckeaction2} a_{\frc_{\Std_{n-1}^*},\s}:L^*\otimes\int_{X,\Nilp}\LL_{\s}^{(d,d)}\to \int_{X,\Nilp}\LL_{\s}^{(d-1,d-1)}.\end{equation} Combining these two actions, we get an action \begin{equation}\label{eq:rsisotypicheckeaction12}
    a^{\otimes}: M^{\otimes} \to \End( \int_{X,\Nilp}\LL_{\s}).
\end{equation}

Let $\om_{M,X}=(-1)^{n-1}\om_{M}$.\footnote{The role of $\om_M$ and $\om_{M,X}$ are different here: The bilinear form $\om_M$ (independent of $X$) is directly related to the intersection pairing on the isotypic part. The bilinear form $\om_{M,X}$ (dependent on $X$) arises from the Poisson structure on the Plancherel algebra attached to $X$ and controls the Kolyvagin system. The sign in the difference between these two forms can be traced back to \cite[Proposition\,7.1]{liu2025higherperiodintegralsderivatives}.}Define the Clifford algebra $\Cl(M)$ to be the quotient of the tensor algebra $M^{\otimes}$ by the two-sided ideal generated by elements of the form $m_1\otimes m_2+m_2\otimes m_1-\om_{M,X}(m_1,m_2)$. We have the following result:

\begin{thm}\cite[Theorem\,7.8]{liu2025higherperiodintegralsderivatives}\label{thm:geors}
    The action map \eqref{eq:rsisotypicheckeaction12} factors through the Clifford algebra $\Cl(M)$. Moreover, we have \[\int_{X,\Nilp}\LL_{\s}^{d}\cong (\Sym^{d+n(n-1)(g-1)}L) \otimes (\LL_{\det\s_{n-1}})_{\Om^{-n/2}}\otimes (\LL_{\det\s_n})_{\Om^{-(n-1)/2}}\langle(n^2-2)(g-1) \rangle\] for each $d\in\ZZ$. Here, $\LL_{\det\s_n},\LL_{\det\s_{n-1}}\in\Shv(\Bun_{\Gm})$ are the Hecke eigensheaves associated to the rank one local systems $\det\s_n,\det\s_{n-1}$.
\end{thm}

This theorem has the following corollary:
\begin{cor}\label{cor:app:rscliffmodule}
    We have \[\int_{X,\Nilp}\LL_{\s}^{-n(n-1)(g-1)}\cong (\LL_{\det\s_{n-1}})_{\Om^{-n/2}}\otimes (\LL_{\det\s_n})_{\Om^{-(n-1)/2}}\langle(n^2-2)(g-1) \rangle\] as a one-dimensional vector space with Frobenius action. Moreover, the action map \eqref{eq:rsisotypicheckeaction12} induces an isomorphism \[\Sym^{\bullet}L\otimes \int_{X,\Nilp}\LL_{\s}^{-n(n-1)(g-1)}\cong \int_{X,\Nilp}\LL_{\s}. \]
\end{cor}

Using the language of \cite[\S3.4]{liu2025higherperiodintegralsderivatives}, we have the following corollary:
\begin{cor}\label{cor:app:highestwt}
    The $\Cl(M)$-module $\int_{X,\Nilp}\LL_{\s}$ is a lowest weight module with Frobenius eigenvalue on the lowest weight vector equal to $q^{-n^2(g-1)/2}\chi^{-n}_{\det\s_{n-1}}(\Om^{1/2})\chi^{-n+1}_{\det\s_n}(\Om^{1/2})$. Here, $\chi_{\det\s_{n-1}},\chi_{\det\s_n}:\Pic(\FF_q)\to k^{\times}$ are the Hecke characters associated to local systems $\det\s_{n-1},\det\s_n$.
\end{cor}

\begin{remark}\label{rmk:app:lowestwt}
    Dually, one has an isomorphism \[\Sym^{\bullet}L^*\otimes \int_{X,\Nilp}\LL_{\s}^{n(n-1)(g-1)}\cong \int_{X,\Nilp}\LL_{\s} \] where \[\int_{X,\Nilp}\LL_{\s}^{n(n-1)(g-1)}\cong \det(\Gamma(\s_n\otimes\s_{n-1})\langle 1\rangle)\otimes (\LL_{\det\s_{n-1}})_{\Om^{-n/2}}\otimes (\LL_{\det\s_n})_{\Om^{-(n-1)/2}}\langle(n^2-2)(g-1) \rangle.\] This implies that the $\Cl(M)$ is a highest weight module with Frobenius eigenvalue on the highest weight vector equal to $q^{-n^2(g-1)/2}\chi^{-n}_{\det\s_{n-1}}(\Om^{1/2})\chi^{-n+1}_{\det\s_n}(\Om^{1/2})\e(\s_n\otimes\s_{n-1})$
\end{remark}

We would also like to mention a relevant result, which will be used in the study of diagonal cycle in \S\ref{sec:application:diagonalcycle}.

For any $\cL\in\Bun_{\Gm}(\FF_q)$, let $\Bun_{\GL_n}'^{\cL}=\{(\Om^{(n-1)/2}\otimes\cL\sub\cE)|\cE\in\Bun_{\GL_n}\}$ be the moduli stack of a rank $n$ vector bundle $\cE$ together with an injection of coherent sheaves $\Om^{(n-1)/2}\otimes\cL\sub\cE$. Consider the natural forgetful map $\rho_{n}^{\cL}:\Bun_{\GL_n}'^{\cL}\to \Bun_{\GL_n}$ defined by $\rho_{n}^{\cL}(\Om^{(n-1)/2}\otimes\cL\sub\cE)=\cE$. It restricts to maps on connected components $\rho_{n}^{\cL,d}:\Bun_{\GL_n}'^{\cL,d}\to\Bun_{\GL_n}^d$.
\begin{thm}\cite{lysenko2002local}\label{thm:equalrkrs}
    For any geometrically irreducible local systems $\s_n,\s_n'\in\Loc_{\GL_n}(k)$, there is a canonical isomorphism \[\begin{split}
    \Gamma_c(\rho_{n}^{\cL,d,*}(\LL_{\s_n}^{\FGV,d}\otimes \LL_{\s_n'}^{\FGV,d}))\\ \cong \\ \Sym^{d_{\cL}}(\Gamma(\s_n\otimes\s_n')\langle 2\rangle)\langle-2(d_{\cL}-n(g-1))\rangle\otimes (\LL_{\det\s_n})_{\cL}\langle-(g-1)\rangle\otimes(\LL_{\det\s_n'})_{\cL}\langle-(g-1)\rangle
    \end{split}.\] where $d_{\cL}=d-n\deg\cL\otimes\Om^{(n-1)/2}$.
\end{thm}

\subsection{Diagonal cycles}\label{sec:application:diagonalcycle}
We are now going to use techniques from \S\ref{sec:isotypicdiag} to study the diagonal cycle classes. In this section, we take $H=\GL_n$, $G=H\times H$, and $X=H \backslash G$. For any $V^I\in\Rep(\GL_n^I)^{\heartsuit}$ and $d\in\ZZ$, we
would like to understand the diagonal cycle \begin{equation}
    \langle-,-\rangle_{V^I}^{d}=\D_{\Hk,I,!}[\Sht_{\GL_n,V^I}^d/C^I]\in \Hom^0((l_{I,!}(\IC_{V^I}|_{\Sht_{\GL_n,I}^d}))^{\otimes 2},\uk_{C^I} ).
\end{equation} That is, for any geometrically irreducible Weil local system $\s_n\in\Loc_{\GL_n}^{\arith}(k)$, we want to understand its restriction to the $\s_n$-isotypic part  \begin{equation}\begin{split}
    \langle-,-\rangle_{V^I,\s_n}^{d}&=\D_{\Hk,I,!}[\Sht_{\GL_n,V^I}^d/C^I]\circ (\xi_{\s_n,I}\otimes\xi_{\s_n^*,I}) \\&=\D_{\Hk,I,!}[\Sht_{\GL_n,V^I}/C^I]\circ (\xi_{\s_n,I}^d\otimes\xi_{\s_n^*,I}^d)\\&\in\Hom^0(V_{\s_n}^{I}\otimes V_{\s_n^*}^I,\uk_{C^I})
\end{split}.\end{equation} Note that when $V^I=\Std_n^{\underline{\e}}$, this element is \eqref{eq:intro:diagisotypiccycle}.

For this, we take the diagonal cohomological correspondence defined in \eqref{eq:geo=cycle:diagcc}  \begin{equation}\frc_{V^I}\in\Cor_{\Hk_{G,I},\IC_{V^I\boxtimes V^I}}(\cP_X\boxtimes\uk_{C^I},\cP_X\boxtimes\uk_{C^I}).\end{equation} The result of Theorem  \ref{thm:diaggeo=cyc} gives us \begin{prop}\label{prop:app:geo=cyclediaggln}
    We have \[\D_{\Hk,I,!}[\Sht_{\GL_n,V^I}/C^I]=\tr_{\Sht,C^I}(\frc_{V^I}).\] In particular, we have \[\langle-,-\rangle_{V^I,\s_n}^{d}=\tr_{\Sht,C^I}(\frc_{V^I})\circ (\xi_{\s_n,I}^d\otimes\xi_{\s_n^*,I}^d).\]
\end{prop}

Since $\LL_{\s_n^*}^{\FGV}\cong\DD(\LL_{\s_n}^{\FGV})$, we can apply results in \S\ref{sec:isotypicdiag}, in particular Conjecture \ref{conj:intersection}. By Proposition \ref{prop:intersectioncc}, we only need to verify Assumption \ref{assumption:diagonalgenerate}. In our case, this is provided by Theorem  \ref{thm:geoLMgln}. Therefore, we obtain the following:

\begin{thm}\label{thm:geoLMglntrsht}
    For every $d\in\ZZ$ and $V^I\in\Rep(\GL_n^I)$, we have \begin{equation}\label{eq:app:geoLMnn-1trsht}\tr_{\Sht,C^I}(\frc_{ V^I})\circ(\xi_{\s_n,I}^d\otimes\xi_{\s_n^*,I}^d)=(\ln q)\cdot \Res_{s=1} L(\s_n\otimes\s_n^*,s)\cdot  \ev_{V_{\s_n}^I} \in \Hom^0(V_{\s_n}^I\otimes V_{\s_n^*}^I,\uk_{C^I}).\end{equation} Here $ \ev_{V_{\s_n}^I}:V_{\s_n}^I\otimes V_{\s_n^*}^I\to \uk_{C^I}$ is the evaluation map.
\end{thm}

The proof will be given in \S\ref{sec:app:proofdiag}.

Combing Proposition \ref{prop:app:geo=cyclediaggln} and Theorem  \ref{thm:geoLMglntrsht}, we obtain
\begin{cor}\label{cor:app:nondegenracy}
    For any irreducible representation $V^I\in\Rep(\GL_n^I)^{\heartsuit}$ and $d\in\ZZ$, the pairing \[\langle-,-\rangle_{{V^I},\s_n}^d:V_{\s_n}^I\otimes V_{\s_n^*}^I\to\uk_{C^I}\] is non-degenerate. 
\end{cor}

In our application, we would like to take $H=\GL_n\times\GL_{n-1}$. In this case, for each $(d_n,d_{n-1})\in\ZZ^2$, we study the diagonal cycle \begin{equation}\begin{split}
    \langle-,-\rangle_{\underline{\e}}^{(d_n,d_{n-1})}=\D_{\Hk,I,!}[\Sht_{\GL_n\times\GL_{n-1},(\Std_n\boxtimes\Std_{n-1})^{\underline{\e}}}^{(d_n,d_{n-1})}/C^I] \\ \in  \Hom^0((l_{I,!}(\IC_{(\Std_n\boxtimes\Std_{n-1})^{\underline{\e}}}|_{\Sht_{\GL_n\times\GL_{n-1},I}}^{(d_n,d_{n-1})}))^{\otimes 2},\uk_{C^I} )
\end{split}\end{equation} and the diagonal cohomological correspondence \[\frc_{(\Std_n\boxtimes\Std_{n-1})^{\underline{\e}}}\] as well as the restriction of the intersection pairing to $\s$-isotypic part \begin{equation}
    \langle-,-\rangle_{\underline{\e},\s}^{(d_n,d_{n-1})}=\langle-,-\rangle_{\underline{\e}}^{(d_n,d_{n-1})}\circ(\xi_{\s,\underline{\e}}\otimes\xi_{\s^*,\underline{\e}})\in\Hom^0((\s_n\otimes\s_{n-1})^{\underline{\e}}\otimes (\s_n^*\otimes\s_{n-1}^*)^{\underline{\e}},\uk_{C^I})
\end{equation} where $\s=(\s_n,\s_{n-1})$ and $\s^*=(\s_n^*,\s_{n-1}^*)$. Then results similar to Proposition \ref{prop:app:geo=cyclediaggln} and Theorem  \ref{thm:geoLMglntrsht} hold with Theorem  \ref{thm:geoLMglntrsht} replaced by \begin{equation}\label{eq:app:nn-1LM}\begin{split}
    \tr_{\Sht,C^I}(\frc_{(\Std_n\boxtimes\Std_{n-1})^{\underline{\e}}})\circ(\xi_{\s,\underline{\e}}^{(d_n,d_{n-1})}\otimes\xi_{\s^*,\underline{\e}}^{(d_n,d_{n-1})}) \\ =(\ln q)^2\cdot \Res_{s=1} L(\s_n\otimes\s_n^*,s)\Res_{s=1} L(\s_{n-1}\otimes\s_{n-1}^*,s)\cdot  \ev_{(\s_n\otimes\s_{n-1})^{\underline{\e}}} \\ \in \Hom^0((\s_n\otimes\s_{n-1})^{\underline{\e}}\otimes (\s_n^*\otimes\s_{n-1}^*)^{\underline{\e}},\uk_{C^I}).
\end{split}\end{equation}

\subsubsection{Geometric result}
To apply Conjecture \ref{conj:intersection}, we need to verify Assumption \ref{assumption:diagonalgenerate} to apply Proposition \ref{prop:intersectioncc}. We also need to compute the scalar $\tr(\Frob,\Gamma_c(\LL_{\s_n}^{\FGV,d}\otimes\LL_{\s_n^*}^{\FGV,d}))$ involved in Conjecture \ref{conj:intersection}. All of these will follow from a description of $\Gamma_c(\LL_{\s_n}^{\FGV}\otimes\LL_{\s_n^*}^{\FGV})$ which we are going to give.

Take $V=\Std_n\in\Rep(\GL_n)$ and the diagonal cohomological correspondence $\frc_{\Std_n\boxtimes\Std_n}$, we obtain a Hecke action map introduced in \S\ref{sec:heckeaction} \begin{equation}\label{eq:glndiagheckeaction} a_{\Std_n\boxtimes\Std_n,\s_n\boxtimes\s_n^*}:\Gamma(\s_n\otimes\s_n^*)\langle 2\rangle \otimes \Gamma_c(\LL_{\s_n}^{\FGV}\otimes\LL_{\s_n^*}^{\FGV})\to \Gamma_c(\LL_{\s_n}^{\FGV}\otimes\LL_{\s_n^*}^{\FGV}) .\end{equation}

Recall we have the canonical elements \[\ev_{\LL_{\s_n}^{\FGV,d}}\in H^0(\Gamma_c(\LL_{\s_n}^{\FGV,d}\otimes\LL_{\s_n^*}^{\FGV,d}))^*\] defined in \eqref{eq:canonicalelement}. We also have the fundamental class $\coev_{\s_n}([C])\in H^0(\Gamma(\s_n\otimes\s_n^*)\langle 2\rangle)$. 

The following result completely describes $\Gamma_c(\LL_{\s_n}^{\FGV}\otimes\LL_{\s_n^*}^{\FGV})$:

\begin{thm}\label{thm:geoLMgln}
For every $d\in \ZZ$, we have $\dim H^0(\Gamma_c(\LL_{\s_n}^{\FGV,d}\otimes\LL_{\s_n^*}^{\FGV,d}))=1$, and the action map \eqref{eq:glndiagheckeaction} induces an isomorphism of graded vector spaces \begin{equation}\label{eq:geoLMgln}H^*(\Sym^{\bullet}(\Gamma(\s_n\otimes\s_n^*)\langle 2\rangle))[\frac{1}{\coev_{\s_n}([C])}]\cdot H^0(\Gamma_c(\LL_{\s_n}^{\FGV,d}\otimes\LL_{\s_n^*}^{\FGV,d}))\isom H^*(\Gamma_c(\LL_{\s_n}^{\FGV}\otimes\LL_{\s_n^*}^{\FGV})).\end{equation}
\end{thm}

\begin{proof}[Proof of Theorem  \ref{thm:geoLMgln}]
First note that both sides of \eqref{eq:geoLMgln} carry bi-gradings defined as follows: on the right-hand side, we set \[\deg H^i(\Gamma_c(\LL_{\s_n}^{\FGV,e}\otimes\LL_{\s_n^*}^{\FGV,e}))=(i,e).\] On the left hand side, we have \[\deg H^i(\Sym^{e}(\Gamma(\s_n\otimes\s_n^*)\langle 2\rangle))=(i,e).\] 

We first prove the following:
\begin{lemma}
    Both sides of \eqref{eq:geoLMgln} have the same bi-graded dimension. 
\end{lemma}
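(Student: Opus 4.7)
The plan is to compute the bi-graded Poincaré series $\sum_{i,e\in\ZZ} t^i u^e \dim H^i(\cdot^{,e})$ on both sides of \eqref{eq:geoLMgln} in closed form and verify they agree term-by-term; this will simultaneously establish the lemma and the auxiliary claim $\dim H^0(\Gamma_c(\LL_{\s_n}^{\FGV,d}\otimes \LL_{\s_n^*}^{\FGV,d}))=1$.

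For the left-hand side, geometric irreducibility of $\s_n$ gives $H^0(\s_n\otimes\s_n^*)=H^2(\s_n\otimes\s_n^*)=k$, and Euler characteristic then forces $\dim H^1(\s_n\otimes\s_n^*)=2n^2(g-1)+2$. This fully determines the bi-graded dimensions of the super vector space $\Gamma(\s_n\otimes\s_n^*)\langle 2\rangle$, and hence a closed-form Poincaré series for its symmetric algebra. Localizing at the bosonic generator $\coev_{\s_n}([C])$ of bi-degree $(0,1)$ inverts a single even element, which on the level of Poincaré series amounts to multiplying by a geometric series in $u^{-1}$; tensoring with the (conjecturally one-dimensional) $H^0(\Gamma_c(\LL_{\s_n}^{\FGV,d}\otimes \LL_{\s_n^*}^{\FGV,d}))$ placed in bi-degree $(0,d)$ then yields an explicit rational expression for the LHS series.

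For the right-hand side, the main input is Lysenko's Theorem~\ref{thm:equalrkrs} with $\s_n'=\s_n^*$, which identifies $\Gamma_c(\rho_n^{\cL,e,*}(\LL_{\s_n}^{\FGV,e}\otimes \LL_{\s_n^*}^{\FGV,e}))$ with the symmetric power $\Sym^{e_\cL}(\Gamma(\s_n\otimes\s_n^*)\langle 2\rangle)$ up to explicit shifts and one-dimensional Frobenius-equivariant twists, where $e_\cL=e-n\deg(\cL\otimes\Om^{(n-1)/2})$. To pass from this to $\Gamma_c(\LL_{\s_n}^{\FGV,e}\otimes \LL_{\s_n^*}^{\FGV,e})$ itself, I use that for $e_\cL$ sufficiently large the forgetful map $\rho_n^{\cL,e}:\Bun_{\GL_n}'^{\cL,e}\to \Bun_{\GL_n}^e$ is representable and, away from a Brill--Noether-type locus, is a Zariski-locally trivial fibration whose fibers are open subsets of the affine space $\Hom(\Om^{(n-1)/2}\otimes\cL,\cE)$ of dimension $e_\cL-n(g-1)$ given by Riemann--Roch; hence $\rho_n^{\cL,e,*}$ is fully faithful on $\Gamma_c$ up to a predictable shift and twist in the relevant cohomological range. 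Letting $-\deg\cL\to\infty$ with $e$ fixed reads off $H^*(\Gamma_c(\LL_{\s_n}^{\FGV,e}\otimes \LL_{\s_n^*}^{\FGV,e}))$ directly from Lysenko's formula, and summing over $e$ assembles the Poincaré series of the RHS.

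Matching the two series term-by-term yields the dimension equality, and extracting the $(0,d)$-coefficient forces $\dim H^0(\Gamma_c(\LL_{\s_n}^{\FGV,d}\otimes \LL_{\s_n^*}^{\FGV,d}))=1$. The main obstacle I anticipate is the Brill--Noether analysis on $\Bun_{\GL_n}^e$: one must verify that the stratum where $\dim\Hom(\Om^{(n-1)/2}\otimes\cL,\cE)$ exceeds its generic value has codimension tending to infinity with $e_\cL$, so that its contribution to $\Gamma_c\rho_n^{\cL,e,*}$ becomes invisible in any fixed cohomological degree. This reduces to a standard stratification-and-dimension-count argument on the moduli of rank $n$ bundles with prescribed $h^0$ of twists, combined with the smoothness of $\Bun_{\GL_n}$.
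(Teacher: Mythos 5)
Your overall strategy coincides with the paper's: both arguments rest on Lysenko's Theorem~\ref{thm:equalrkrs} applied with $\s_n'=\s_n^*$, the observation that $\rho_n^{\cL,d}$ is (away from a bad locus) the complement of the zero section in a vector bundle of rank $d_\cL-n(g-1)$, and a stabilization as $\deg\cL\to-\infty$ that identifies $H^i(\Gamma_c(\LL_{\s_n}^{\FGV,d}\otimes\LL_{\s_n^*}^{\FGV,d}))$ with $H^i(\Sym^{k}(\Gamma(\s_n\otimes\s_n^*)\langle 2\rangle))$ for $k\gg 0$ in each fixed degree $i$; matching stabilized dimensions is then exactly the statement that the localization at $\coev_{\s_n}([C])$ has the right bi-graded dimension.

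However, there is a genuine gap in the step where you control the locus on which the fibration structure of $\rho_n^{\cL,d}$ fails. You propose a Brill--Noether stratification of $\Bun_{\GL_n}^d$ and a codimension count showing the stratum where $h^0(\Hom(\Om^{(n-1)/2}\otimes\cL,\cE))$ jumps becomes invisible as $d_\cL\to\infty$. This cannot work uniformly over the whole stack: $\Bun_{\GL_n}^d$ is not quasi-compact, and for any fixed $\cL$ there exist bundles $\cE$ (arbitrarily deep in the Harder--Narasimhan stratification) with $H^1(\Hom(\cL\otimes\Om^{(n-1)/2},\cE))\neq 0$; the jumping locus is not of high codimension near infinity, so its contribution to $\Gamma_c$ cannot be discarded by a dimension count alone. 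The paper's proof avoids this entirely by invoking a special property of the eigensheaf rather than of the stack: $\LL_{\s_n}^{\FGV,d}$ is a \emph{clean extension} from a quasi-compact open substack $U\subset\Bun_{\GL_n}^d$, so one may replace $\Bun_{\GL_n}^d$ by $U$, choose $\deg\cL$ small enough that $H^1(\Hom(\cL\otimes\Om^{(n-1)/2},\cE))=0$ for \emph{every} $\cE\in U$ (possible precisely because $U$ is quasi-compact), and then use that $\Gamma_c$ on $U$ has bounded-above cohomological dimension to kill the highly connective error term coming from the zero section. Your argument becomes correct once you insert this clean-extension/quasi-compactness input in place of the global Brill--Noether estimate; without it, the limit $\deg\cL\to-\infty$ does not converge degreewise over all of $\Bun_{\GL_n}^d$.
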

\begin{proof}[Proof of lemma]
This follows from \cite[Main Global Theorem]{lysenko2001globalgeometrisedrankinselbergmethod}. For the convenience of readers, we reproduce the proof here.

For each $d\in\ZZ$, there exists a quasi-compact open substack $j:U\hookrightarrow\Bun_{\GL_n}^d$ and an integer $e\in\ZZ$ such that \begin{itemize}
    \item $\LL_{\s_n}^{\FGV,d}$ is a clean extension from $U\sub\Bun_{\GL_n}^d$;
    \item For any $\cE\in U$ and $\cL\in\Pic$ with $\deg \cL\leq e$ , we have $H^1(\Hom(\cL\otimes\Om^{(n-1)/2},\cE))=0$.
\end{itemize}
Take $U$,$e$ as above. For any $\cL$ as above, after restricting to $U$, the map $\rho_{n}^{\cL,d}$ is the complementary of zero section in a smooth vector bundle of rank $H^0(\Hom(\cL\otimes\Om^{(n-1)/2},\cE))=d_{\cL}-n(g-1)$. Therefore, we have a fiber sequence of objects in $\Shv(U)$: \begin{equation}
   \cF\to  (\rho_{n,!}^{\cL,d}\uk_{\Bun_{\GL_n}'^{\cL,d}})|_{U}\langle 2(d_{\cL}-n(g-1)) \rangle\to \uk_U.
\end{equation} where $\cF\in\Shv(U)^{\leq -(d_{\cL}-n(g-1))}$. Applying the functor $\Gamma_c(\LL_{\s_n}^{\FGV,d}\otimes\LL_{\s_n^{*}}^{\FGV,d}\otimes j_!(-))$ to this fiber sequence, we obtain a fiber sequence
\begin{equation}
    \Gamma_c(j^*(\LL_{\s_n}^{\FGV,d}\otimes\LL_{\s_n^{*}}^{\FGV,d})\otimes\cF)\to \Gamma_c(\rho_{n}^{\cL,d,*}(\LL_{\s_n}^{\FGV,d}\otimes\LL_{\s_n^{*}}^{\FGV,d}))\langle 2(d_{\cL}-n(g-1))\rangle\to \Gamma_c(\LL_{\s_n}^{\FGV,d}\otimes\LL_{\s_n^{*}}^{\FGV,d})
\end{equation}
By Theorem  \ref{thm:equalrkrs}, we have \[\Gamma_c(\rho_{n}^{\cL,d,*}(\LL_{\s_n}^{\FGV,d}\otimes\LL_{\s_n^{*}}^{\FGV,d}))\langle 2(d_{\cL}-n(g-1))\rangle\cong \Sym^{d_{\cL}}(\Gamma(\s_n\otimes\s_n^*)\langle 2\rangle).\] Therefore, the fiber sequence above is identified with \[\Gamma_c(j^*(\LL_{\s_n}^{\FGV,d}\otimes\LL_{\s_n^{*}}^{\FGV,d})\otimes\cF)\to \Sym^{d_{\cL}}(\Gamma(\s_n\otimes\s_n^*)\langle 2\rangle)\to \Gamma_c(\LL_{\s_n}^{\FGV,d}\otimes\LL_{\s_n^{*}}^{\FGV,d})\]
Since the functor $\Gamma_c:\Shv(U)\to\Vect$ has bounded above cohomological dimension, by taking $\deg\cL$ sufficiently small, we can make the first term arbitrarily connective. This implies that for any $i\in\ZZ$, there are isomorphisms \[H^i(\Sym^{k}(\Gamma(\s_n\otimes\s_n^*)\langle 2\rangle))\isom H^i(\Gamma_c(\LL_{\s_n}^{\FGV,d}\otimes\LL_{\s_n^{*}}^{\FGV,d})) \] for sufficiently large $k\in\ZZ$. One easily sees that this implies that both sides of \eqref{eq:geoLMgln} have the same bi-graded dimension. This ends the proof of the lemma.
\end{proof}

In particular, we know that $\dim H^0(\Gamma_c(\LL_{\s_n}^{\FGV,d}\otimes\LL_{\s_n^*}^{\FGV,d}))=1$.

When $g(C)=1$ and $n>1$, such $\s_n$ does not exist, and there is nothing to prove. When $g(C)=1$ and $n=1$, one can verify the statement by hand. Therefore, we can assume $g(C)\neq 1$. In this case, by the proof of Lemma \ref{lem:diagcccom}, the action map \eqref{eq:glndiagheckeaction} induces a map \begin{equation}\label{eq:glndiagheckeactionsym} a_{\frc_{\Std_n\boxtimes\Std_n},\s_n\boxtimes\s_n^*}^{\Sym}:H^*(\Sym^{\bullet}(\Gamma(\s_n\otimes\s_n^*)\langle 2\rangle))\otimes H^*(\Gamma_c(\LL_{\s_n}^{\FGV}\otimes\LL_{\s_n^*}^{\FGV}))\to H^*(\Gamma_c(\LL_{\s_n}^{\FGV}\otimes\LL_{\s_n^*}^{\FGV})).\end{equation} This equips $H^*(\Gamma_c(\LL_{\s_n}^{\FGV}\otimes\LL_{\s_n^*}^{\FGV}))$ with a structure of $H^*(\Sym^{\bullet}(\Gamma(\s_n\otimes\s_n^*)\langle 2\rangle))$-module. We have the following lemma:

\begin{lemma}
    The induced map \begin{equation}\label{eq:glndiagheckeactioninj} H^*(\Sym^{\bullet}(\Gamma(\s_n\otimes\s_n^*)\langle 2\rangle))\cdot H^0(\Gamma_c(\LL_{\s_n}^{\FGV,d}\otimes\LL_{\s_n^*}^{\FGV,d}))\to H^*(\Gamma_c(\LL_{\s_n}^{\FGV}\otimes\LL_{\s_n^*}^{\FGV}))\end{equation} is an injection.
\end{lemma}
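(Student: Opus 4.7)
The plan is to reduce injectivity to surjectivity via a bi-graded dimension count, and then prove surjectivity by identifying the action map with the Lysenko isomorphism in the stable range.

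First I would exploit the bi-graded dimension equality proved in the previous lemma for \eqref{eq:geoLMgln}. Since $\coev_{\s_n}([C])$ is an even element of bi-degree $(0,1)$ in the polynomial/exterior algebra $H^*(\Sym^\bullet(\Gamma(\s_n\otimes\s_n^*)\langle 2\rangle))$, it is a non-zero divisor there, and the localization in \eqref{eq:geoLMgln} does not alter the source dimension in any bi-degree $(i,e)$ with $e\geq d$. Combined with the lemma this gives
\[\dim H^i\bigl(\Sym^{e-d}(\Gamma(\s_n\otimes\s_n^*)\langle 2\rangle)\bigr)=\dim H^i\bigl(\Gamma_c(\LL_{\s_n}^{\FGV,e}\otimes\LL_{\s_n^*}^{\FGV,e})\bigr)\]
for every $e\geq d$. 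The source of \eqref{eq:glndiagheckeactioninj} vanishes for $e<d$ (the action only shifts bundle degrees upward), so injectivity reduces to surjectivity in each bi-degree $(i,e)$ with $e\geq d$.

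Then I would prove surjectivity in each such bi-degree by using the Lysenko computation of the previous lemma's proof. For fixed $(i,e)$, choose $\cL\in\Pic$ with $\deg\cL\ll 0$ so that the error term $\Gamma_c(j^*(\LL_{\s_n}^{\FGV,e}\otimes\LL_{\s_n^*}^{\FGV,e})\otimes\cF)$ is sufficiently connective; then the fiber sequence in the previous lemma's proof, combined with Theorem~\ref{thm:equalrkrs}, produces an isomorphism on $H^i$ from $\Sym^{d_\cL}(\Gamma(\s_n\otimes\s_n^*)\langle 2\rangle)$ onto $\Gamma_c(\LL_{\s_n}^{\FGV,e}\otimes\LL_{\s_n^*}^{\FGV,e})$ (up to the appropriate shift). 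The essential point is to match this Lysenko-type map with the iterated Hecke action: an injection $\cL\otimes\Om^{(n-1)/2}\hookrightarrow\cE$ whose quotient has length $d_\cL$ can, generically in the moduli, be decomposed into a chain $\cL\otimes\Om^{(n-1)/2}=\cE_0\subsetneq\cE_1\subsetneq\cdots\subsetneq\cE_{d_\cL}=\cE$ of elementary Hecke modifications by $\Std_n$. Under this decomposition the pullback--pushforward along $\rho_n^{\cL,e}$ is identified with a composition of $d_\cL$ copies of the single-step diagonal cohomological correspondence $\frc_{\Std_n\boxtimes\Std_n}$. By the associativity of Hecke actions from \S\ref{sec:associativity} and the commutator relations of \S\ref{sec:commutatorrelation}, this composition descends to the symmetric algebra and agrees with $a^{\Sym}_{\frc_{\Std_n\boxtimes\Std_n},\s_n\boxtimes\s_n^*}(-\otimes v)$ for $v\in H^0(\Gamma_c(\LL_{\s_n}^{\FGV,d}\otimes\LL_{\s_n^*}^{\FGV,d}))$ a generator, which is precisely the map \eqref{eq:glndiagheckeactioninj} in the relevant bi-degree.

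The main obstacle is the geometric matching in the second paragraph: reconciling Lysenko's extension-based description of $\Gamma_c(\rho_n^{\cL,e,*}(\LL_{\s_n}^{\FGV,e}\otimes\LL_{\s_n^*}^{\FGV,e}))$ with the iterated Hecke-correspondence description of the action map. Both constructions compute the same space but through different moduli, one via the space of coherent sheaf injections $\Bun_{\GL_n}'^{\cL,e}$ and the other via iterated $\Hk_{\GL_n,\Std_n}$ correspondences. The bridge is the birational equivalence between these two moduli together with a careful diagram chase through ULA properties and proper pushforwards, in the spirit of \S\ref{sec:geo=cycle}, ensuring that the pullback--pushforward computations match the trace of the iterated cohomological correspondence. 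Once this geometric identification is in hand, injectivity follows from the dimension comparison of the first paragraph, completing the proof.
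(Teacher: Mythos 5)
Your first step contains a genuine gap that sinks the reduction. You claim that localizing at $\coev_{\s_n}([C])$ does not change the source in bi-degrees $(i,e)$ with $e\geq d$, and hence that $\dim H^i(\Sym^{e-d}(\Gamma(\s_n\otimes\s_n^*)\langle 2\rangle))=\dim H^i(\Gamma_c(\LL_{\s_n}^{\FGV,e}\otimes\LL_{\s_n^*}^{\FGV,e}))$ for all $e\geq d$. This is false: since $\Gamma(\s_n\otimes\s_n^*)\langle 2\rangle$ sits in cohomological degrees $-2,-1,0$, the cohomology $H^i(\Sym^m)$ only stabilizes for $m\geq -i$, and multiplication by $c_0=\coev_{\s_n}([C])$ is an isomorphism on $H^i$ only in that stable range. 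For example, in bi-degree $(-1,d)$ the unlocalized source is $H^{-1}(\Sym^0)\cdot H^0=0$ while the target $H^{-1}(\Gamma_c(\LL_{\s_n}^{\FGV,d}\otimes\LL_{\s_n^*}^{\FGV,d}))$ has dimension $2n^2(g-1)+2$. So the map \eqref{eq:glndiagheckeactioninj} is \emph{not} surjective --- this is precisely why Theorem \ref{thm:geoLMgln} must invert $\coev_{\s_n}([C])$ --- and injectivity cannot be deduced from a source--target dimension match. Separately, the identification of the Lysenko pullback--pushforward along $\rho_n^{\cL,e}$ with the $d_\cL$-fold iterate of $a_{\frc_{\Std_n\boxtimes\Std_n},\s_n\boxtimes\s_n^*}$, which you correctly flag as the main obstacle, is left unproved; it is a nontrivial compatibility that would need an argument of its own.

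The paper's proof avoids both problems by a representation-theoretic device: it constructs a second, degree-lowering Hecke action $a_{\frc_{\Std_n^*\boxtimes\Std_n^*},\s_n\boxtimes\s_n^*}$ (via the spherical variety $\AA^n\times^{\GL_n}(\GL_n\times\GL_n)$ and \cite{BFGT}) satisfying the Heisenberg-type commutator relation \eqref{eq:glnLMclifford} with $\kappa\neq 0$. Since the lowering operators annihilate $H^0(\Gamma_c(\LL_{\s_n}^{\FGV,d}\otimes\LL_{\s_n^*}^{\FGV,d}))$ for degree reasons, any relation $c_0^{h_0}f(c_{-1,\bullet})c_{-2}^{h_{-2}}\cdot H^0=0$ can be stripped down, by repeated commutation, to $c_0^{h_0}\cdot H^0=0$; this contradicts Lemma \ref{lem:diagcccalh0}, which shows $c_0$ pairs nontrivially against $\ev_{\LL_{\s_n}^{\FGV,d}}$ and hence acts injectively on the one-dimensional space $H^0$. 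If you want to salvage your approach, you would need both to restrict the surjectivity argument to the stable range $e-d\geq -i$ (leaving the unstable range unaddressed) and to actually prove the Lysenko-versus-iterated-Hecke compatibility; the lowering-operator argument is considerably shorter.
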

\begin{proof}[Proof of lemma]
    We claim that there exists another bi-graded action map \begin{equation}
        a_{\frc_{\Std_n^*\boxtimes\Std_n^*},\s_n\boxtimes\s_n^*}:\Gamma(\s_n^*\otimes\s_n)\otimes \Gamma_c(\LL_{\s_n}^{\FGV}\otimes\LL_{\s_n^*}^{\FGV})\to \Gamma_c(\LL_{\s_n}^{\FGV}\otimes\LL_{\s_n^*}^{\FGV})
    \end{equation} in which $\deg H^i(\Gamma(\s_n^*\otimes\s_n))=(i,-1)$ such that there exists a non-zero number $\kappa\in k^{\times}$ satisfying \begin{equation}\label{eq:glnLMclifford}
        a_{\frc_{\Std_n\boxtimes\Std_n},\s_n\boxtimes\s_n^*}\circ  a_{\frc_{\Std_n^*\boxtimes\Std_n^*},\s_n\boxtimes\s_n^*}-a_{\frc_{\Std_n^*\boxtimes\Std_n^*},\s_n\boxtimes\s_n^*}\circ a_{\frc_{\Std_n\boxtimes\Std_n},\s_n\boxtimes\s_n^*}=\kappa\cdot \ev_{\Gamma(\s_n\otimes\s_n^*)}\otimes\id
    \end{equation} as maps \[\Gamma(\s_n\otimes\s_n^*)\langle 2\rangle \otimes \Gamma(\s_n^*\otimes\s_n)\otimes \Gamma_c(\LL_{\s_n}^{\FGV}\otimes\LL_{\s_n^*}^{\FGV})\to \Gamma_c(\LL_{\s_n}^{\FGV}\otimes\LL_{\s_n^*}^{\FGV}).\] Here, the map $\ev_{\Gamma(\s_n\otimes\s_n^*)}: \Gamma(\s_n\otimes\s_n^*)\langle 2\rangle \otimes \Gamma(\s_n^*\otimes\s_n)\to k$ is the evaluation map of the duality induced by cup product. 
    
    This is a particular case of the automorphic commutator relation \cite[Corollary\,6.12]{liu2025higherperiodintegralsderivatives}. To explain this, we will freely use notations in \emph{loc.cit}. Consider the spherical variety $X'=\AA^n\times^{\GL_n}(\GL_n\times\GL_n)$ where $\GL_n$ acts on $\AA^n$ via the standard action. Note that we have a closed embedding $i:X=\GL_n\to X'$ given by the zero-section. This gives a functor $i^*:\Shv(LX'/L^+G\rtimes\Aut(D))\to\Shv(LX/L^+G\rtimes\Aut(D))$ as $\Sat_{G,\hbar}$-module categories. Since $i^*\d_{X'}=\d_{X}$, this induces a map between Plancherel algebras $\PL_{X',\hbar}\to\PL_{X,\hbar}$ as algebra objects in $\Rep(\GL_n\times\GL_n)$. By \cite{BFGT}, there are canonical local special cohomological correspondences for $X'$ \[\frc^{X',\loc}_{\Std_n\boxtimes\Std_n}\in \Hom^0(\Std_n\boxtimes\Std_n,\PL_{X',\hbar})\]  and \[\frc^{X',\loc}_{\Std_n^*\boxtimes\Std_n^*}\in \Hom^0(\Std_n^*\boxtimes\Std_n^*\langle -2\rangle,\PL_{X',\hbar})\] such that \[[\frc^{X',\loc}_{\Std_n\boxtimes\Std_n},\frc^{X',\loc}_{\Std_n^*\boxtimes\Std_n^*}]=\kappa\hbar\cdot \frc^{X',\loc}_{\triv}\] for some $\kappa\in k^{\times}$. Here $\frc^{X',\loc}_{\triv}\in\Hom^0(\triv,\PL_{X',\hbar})$ is the trivial cohomological correspondence. By composing with the map $\PL_{X',\hbar}\to\PL_{X,\hbar}$, we get local cohomological correspondences for $X$  \[\frc^{\loc}_{\Std_n\boxtimes\Std_n}\in \Hom^0(\Std_n\boxtimes\Std_n,\PL_{X,\hbar})\]  and \[\frc^{\loc}_{\Std_n^*\boxtimes\Std_n^*}\in \Hom^0(\Std_n^*\boxtimes\Std_n^*\langle -2\rangle,\PL_{X,\hbar})\] such that \[[\frc^{\loc}_{\Std_n\boxtimes\Std_n},\frc^{\loc}_{\Std_n^*\boxtimes\Std_n^*}]=\kappa\hbar\cdot \frc^{\loc}_{\triv}.\] Note that the local-to-global procedure in \cite[\S4.4]{liu2025higherperiodintegralsderivatives} produces from $\frc_{\Std_n\boxtimes\Std_n}^{\loc}$ the diagonal cohomological correspondence $\frc_{\Std_n\boxtimes\Std_n}$. Therefore, we can take $\frc_{\Std_n^*\boxtimes\Std_n^*}$ to be the globalization of $\frc_{\Std_n^*\boxtimes\Std_n^*}^{\loc}$, and take $a_{\frc_{\Std_n^*\boxtimes\Std_n^*},\s_n\boxtimes\s_n^*}$ to be the induced Hecke action. The identity \eqref{eq:glnLMclifford} follows from \cite[Corollary\,6.12]{liu2025higherperiodintegralsderivatives}.

    Now we prove the injectivity of the map in the lemma. Write \[H^0(\Gamma(\s_n\otimes\s_n^*)\langle 2\rangle)=k\cdot c_0\] \[H^{-1}(\Gamma(\s_n\otimes\s_n^*)\langle 2\rangle)=\bigoplus_{j}k\cdot c_{-1,j}\] \[H^{-2}(\Gamma(\s_n\otimes\s_n^*)\langle 2\rangle)=k\cdot c_{-2}\] where $c_0=\coev_{\s_n}([C])$. 
    Assuming the contrary, suppose $c_0^{h_0}f(c_{-1,1},c_{-1,2},\cdots)c_{-2}^{h_{-2}}\cdot H^0(\Gamma_c(\LL_{\s_n}^{\FGV,d}\otimes\LL_{\s_n^*}^{\FGV,d}))=0$ for some $h_0,h_{-2}\in\ZZ_{\geq 0}$ and a polynomial $f(c_{-1,1},c_{-1,2},\cdots)\in k[c_{-1,1},c_{-1,2},\cdots]$. Since the action of $H^{i}(\Gamma(\s_n^*\otimes\s_n))$ annihilates $H^0(\Gamma_c(\LL_{\s_n}^{\FGV,d}\otimes\LL_{\s_n^*}^{\FGV,d}))$ for $i=1,2$ for degree reasons, we can apply the relation \eqref{eq:glnLMclifford} and get $c_0^{h_0}\cdot H^0(\Gamma_c(\LL_{\s_n}^{\FGV,d}\otimes\LL_{\s_n^*}^{\FGV,d})) =0$. However, Lemma \ref{lem:diagcccalh0} implies that $(c_0\cdot-)^*\ev_{\LL_{\s_n}^{\FGV,d}}=\ev_{\LL_{\s_n}^{\FGV,d-1}}$ for all $d\in\ZZ$. Therefore, we have $c_0^{h_0}\cdot H^0(\Gamma_c(\LL_{\s_n}^{\FGV,d}\otimes\LL_{\s_n^*}^{\FGV,d}))=H^0(\Gamma_c(\LL_{\s_n}^{\FGV,d+h_0}\otimes\LL_{\s_n^*}^{\FGV,d+h_0}))\neq 0$. This gives a contradiction and proves the lemma.

\end{proof}

During the proof above, one sees that \[\coev_{\s_n}([C])\cdot -:H^0(\Gamma_c(\LL_{\s_n}^{\FGV,d}\otimes\LL_{\s_n^*}^{\FGV,d}))\isom H^0(\Gamma_c(\LL_{\s_n}^{\FGV,d+1}\otimes\LL_{\s_n^*}^{\FGV,d+1})).\] This implies that the map \eqref{eq:glndiagheckeactioninj} naturally extends to a map \eqref{eq:geoLMgln}, which has to be an isomorphism since both sides have the same bi-graded dimension. This concludes the proof of Theorem  \ref{thm:geoLMgln}.

\end{proof}

\subsubsection{Proof of Theorem  \ref{thm:geoLMglntrsht}}\label{sec:app:proofdiag}
\begin{proof}[Proof of Theorem  \ref{thm:geoLMglntrsht}]
By Theorem  \ref{thm:geoLMgln}, Assumption \ref{assumption:diagonalgenerate} is satisfied. Therefore, Conjecture \ref{conj:intersection} is true by Proposition \ref{prop:intersectioncc}. Let $\alpha_1,\cdots,\alpha_{2n^2(g-1)+2}$ be the Frobenius eigenvalues for $H^1(\Gamma(\s_n\otimes\s_n^*))$. Again by Theorem  \ref{thm:geoLMgln}, we know \[\begin{split}\tr(\Frob,\Gamma_c(\LL_{\s_n}^d\otimes\LL_{\s_n^*}^d))&= \tr(\Frob, \Sym^{\bullet}\tau^{\leq -1}(\Gamma(\s_n\otimes\s_n^*)\langle 2\rangle)) \\
&=\frac{\prod_{i=1}^{2n^2(g-1)+2}(1-\alpha_iq^{-1})}{1-q^{-1}} \\
&=(\ln q)\Res_{s=1}\frac{\prod_{i=1}^{2n^2(g-1)+2}(1-\alpha_iq^{-s})}{(1-q^{-s})(1-q^{1-s})}\\
&=(\ln q)\Res_{s=1} L(\s_n\otimes\s_n^*,s)
\end{split}.\] This concludes the proof of Theorem  \ref{thm:geoLMglntrsht} by Conjecture \ref{conj:intersection}.

\end{proof}

\subsection{Computing intersection number}\label{sec:app:compute}
We are now ready to wrap up the proof.
\begin{proof}[Proof of Theorem  \ref{thm:intro:main}]
The first claim about the finiteness of non-zero terms has been addressed in \S\ref{sec:app:rscycle}. We only need to prove the identity \eqref{eq:intro:main}. Consider the pairing \begin{equation}
    \langle-,-\rangle_{\s,r}^{(d_n,d_{n-1})}=\sum_{\underline{\e}\in\{\pm 1\}^r_{0}}\langle-,-\rangle_{\underline{\e},\s}^{(d_n,d_{n-1})}\in \Hom^0((M^{\otimes r})_0\otimes (M^{\otimes r})_0,k)
.\end{equation}
By Proposition \ref{prop:app:geo=cyclers} and Proposition \ref{prop:app:geo=fakers}, the desired identity \eqref{eq:intro:main} translates to \begin{equation}\label{eq:app:desired}\begin{split}
    \langle z_{\s,r}, z_{\s^*,r}\rangle^{(d_n,d_{n-1}),*}_{\s,r}\\=\\ q^{\dim\Bun_{\GL_{n-1}}}(\ln q)^{-r-2}\frac{\left(\frac{d}{ds}\right)^{r}\Big|_{s=1/2}\widetilde{L}(\s_n\otimes\s_{n-1}\oplus\s_n^*\otimes\s_{n-1}^*,s)}{\Res_{s=1}\widetilde{L}(\s_n\otimes\s_n^*,s)\Res_{s=1}\widetilde{L}(\s_{n-1}\otimes\s_{n-1}^*,s)}
\end{split}\end{equation} where $z_{\s,r}\in (M^{\otimes r})_0^*$ is introduced in \eqref{eq:app:faketotal}.
We want to compare this with Theorem  \ref{thm:app:kolydermain}. For this purpose, we first compare $z_{\s,r}\in (M^{\otimes r})_0^*$ with $z_{\s^*,r}\in (M^{\otimes r})_0^*$. By comparing Corollary \ref{cor:app:highestwt} and Remark \ref{rmk:app:lowestwt} for $\s$ and $\s^*$, we know that \begin{equation}\label{eq:app:fakevsfakedual}\begin{split}
    z_{\s^*,r}&=\frac{q^{-n^2(g-1)/2}\chi^{-n}_{\det\s_{n-1}^*}(\Om^{1/2})\chi^{-n+1}_{\det\s_n^*}(\Om^{1/2})}{q^{-n^2(g-1)/2}\chi^{-n}_{\det\s_{n-1}}(\Om^{1/2})\chi^{-n+1}_{\det\s_n}(\Om^{1/2})\e(\s_n\otimes\s_{n-1})} z_{\s,r}\\&=\chi^{n}_{\det\s_{n-1}}(\Om)\chi^{n-1}_{\det\s_n}(\Om)\e(\s_n\otimes\s_{n-1})^{-1}z_{\s,r}
\end{split}.\end{equation} 
Then we compare $\om_{M^{\otimes r}}|_{(M^{\otimes r})_0^{\otimes 2}}$ with $\langle -, -\rangle^{(d_n,d_{n-1})}_{\s,r}$. By combining Proposition \ref{prop:app:geo=cyclediaggln} and Theorem \ref{thm:geoLMglntrsht} (with identity replaced by \eqref{eq:app:nn-1LM}), we get \begin{equation}\begin{split}
    \langle -, -\rangle^{(d_n,d_{n-1})}_{\s,r}\\=\\(-1)^{r/2}(\ln q)^2\cdot \Res_{s=1} L(\s_n\otimes\s_n^*,s)\Res_{s=1} L(\s_{n-1}\otimes\s_{n-1}^*,s)\om_{M^{\otimes r}}|_{(M^{\otimes r})_0^{\otimes 2}}
\end{split}.\end{equation} Here the sign $(-1)^{r/2}$ arises from our definition of $\om_{M}$, which can be tracked back to the negative sign in \eqref{eq:app:omk}.\footnote{This is because $\om_{M^{\otimes r}}|_{(M^{\otimes r})_0^{\otimes 2}}$ involves $r/2$-times $\ev_{\Std_n\otimes\Std_{n-1}}$ and $r/2$-times $-\ev_{\Std_n^*\otimes\Std_{n-1}^*}$} By passing to the pairing on the dual space, one has \begin{equation}\label{eq:app:fakevsint}\begin{split}
    \langle -, -\rangle^{(d_n,d_{n-1}),*}_{\s,r}\\=\\(-1)^{r/2}\frac{1}{(\ln q)^2\cdot \Res_{s=1} L(\s_n\otimes\s_n^*,s)\Res_{s=1} L(\s_{n-1}\otimes\s_{n-1}^*,s)}\om_{(M^{\otimes r})^*}|_{(M^{\otimes r})_0^{*\otimes 2}}
\end{split}.\end{equation}

Combining Theorem  \ref{thm:app:kolydermain}, \eqref{eq:app:fakevsfakedual}, \eqref{eq:app:fakevsint}, we get \[\begin{split}
     \langle z_{\s,r}, z_{\s^*,r}\rangle^{(d_n,d_{n-1}),*}_{\s,r}\\=\\ q^{-n^2(g-1)}(\ln q)^{-r-2}\frac{\left(\frac{d}{ds}\right)^{r}\Big|_{s=1/2}\widetilde{L}(\s_n\otimes\s_{n-1}\oplus\s_n^*\otimes\s_{n-1}^*,s)}{\Res_{s=1}L(\s_n\otimes\s_n^*,s)\Res_{s=1}L(\s_{n-1}\otimes\s_{n-1}^*,s)}
\end{split}
.\] 

Then the desired equality \eqref{eq:app:desired} follows from the equality above and
\[
    \widetilde{L}(\s_n\otimes\s_n^*,s)=q^{n^2(g-1)s}L(\s_n\otimes\s_n^*,s)
\] \[
    \widetilde{L}(\s_{n-1}\otimes\s_{n-1}^*,s)=q^{(n-1)^2(g-1)s}L(\s_{n-1}\otimes\s_{n-1}^*,s)
\]\[\dim\Bun_{\GL_{n-1}}=(n-1)^2(g-1).\] This concludes the proof of Theorem  \ref{thm:intro:main}.

\end{proof}

\bibliographystyle{amsalpha}
\bibliography{Bibliography}
\end{document}